\numberwithin{equation}{section}
\theoremstyle{plain}
 \newtheorem{theorem}{Theorem}[section]
 \newtheorem{lemma}[theorem]{Lemma}
 \newtheorem{prop}[theorem]{Proposition}
 \newtheorem{cor}[theorem]{Corollary}
 \newtheorem{claim}[theorem]{Claim}
\theoremstyle{remark}
 \newtheorem{assumption}[theorem]{Assumption}
 \newtheorem{remark}[theorem]{Remark}
 \newtheorem{defn}[theorem]{Definition}
\newcommand\nc\newcommand
\DeclareMathOperator
\newcommand{\abbr}[1]{{\sc\lowercase{#1}}}
\DeclareFontFamily{U}{mathx}{\hyphenchar\font45}
\DeclareFontShape{U}{mathx}{m}{n}{
      <5> <6> <7> <8> <9> <10>
      <10.95> <12> <14.4> <17.28> <20.74> <24.88>
      mathx10
      }{}
\DeclareSymbolFont{mathx}{U}{mathx}{m}{n}
\DeclareMathAccent{\widecheck}{0}{mathx}{"71}
\nc{\cf}{cf.\ }
\nc{\ie}{i.e.\ }
\nc{\eg}{e.g.\ }
\nc{\cA}{\mathcal{A}}
\nc{\cB}{\mathcal{B}}
\nc{\cC}{\mathcal{C}}
\nc{\cD}{\mathcal{D}}
\nc{\cE}{\mathcal{E}}
\nc{\cF}{\mathcal{F}}
\nc{\cG}{\mathcal{G}}
\nc{\cH}{\mathcal{H}}
\nc{\cI}{\mathcal{I}}
\nc{\cJ}{\mathcal{J}}
\nc{\cK}{\mathcal{K}}
\nc{\cL}{\mathcal{L}}
\nc{\cM}{\mathcal{M}}
\nc{\cN}{\mathcal{N}}
\nc{\cO}{\mathcal{O}}
\nc{\cP}{\mathcal{P}}
\nc{\cQ}{\mathcal{Q}}
\nc{\cR}{\mathcal{R}}
\nc{\cS}{\mathcal{S}}
\nc{\cT}{\mathcal{T}}
\nc{\cU}{\mathcal{U}}
\nc{\cV}{\mathcal{V}}
\nc{\cW}{\mathcal{W}}
\nc{\cX}{\mathcal{X}}
\nc{\cY}{\mathcal{Y}}
\nc{\cZ}{\mathcal{Z}}
\nc{\E}{\mathbb{E}}
\nc{\Z}{\mathbb{Z}}
\nc{\N}{\mathbb{N}}
\nc{\R}{\mathbb{R}}
\nc{\B}{\mathbb{B}}
\def \HS {\mathrm{HS}}
\nc{\eps}{\varepsilon}
\renewcommand{\t}{\tilde}
\nc{\ul}{\underline}
\nc{\wt}{\widetilde}
\nc{\wh}{\widehat}
\nc{\wch}{\widecheck}
\nc{\ER}{Erd\H{o}s--R\'enyi }
\renewcommand{\P}{\mathbb{P}}
\nc{\eqd}{\stackrel{\text{\tiny $d$}}{=}}
\dmo{\ind}{\mathbb{I}}
\nc{\ls}{\lesssim}
\nc{\gs}{\gtrsim}
\def \lf {\lfloor}
\def \rf {\rfloor}
\renewcommand{\d}{\,d}
\nc\notni{\not\owns}
\nc{\edges}{\mathsf{e}}
\nc{\verts}{\mathsf{v}}
\nc{\Edges}{\mathsf{E}}
\nc{\Verts}{\mathsf{V}}
\dmo{\DKL}{D}
\nc{\bG}{{\bs G}}
\nc{\bT}{{\mathsf T}}
\nc{\Gnp}{{\bG_{n,p}}}
\nc{\Ham}{{\mathrm{H}}}
\nc{\hamlet}{{h}}
\nc{\ube}{\hamlet}
\nc{\uF}{{\underline{F}}}
\nc{\uFs}{{\underline{F}^+}}
\nc{\up}{{s}}
\nc{\uup}{{\ul{s}}}
\nc{\uups}{{\ul{s}^+}}
\nc\ugamma{{\ul{\gamma}}}
\nc{\rate}{{r}}
\nc\field{{\alpha}}
\nc{\CH}{{Q}}
\nc{\graphs}{{\cG}}
\nc{\tf}{{\widetilde{f}}}
\nc{\hf}{{\widehat{f}}}
\nc{\chf}{{\widecheck{f}}}
\nc{\ipoly}{{P}}
\nc{\sfC}{{C}}
\dmo{\I}{{I}}
\dmo{\II}{{II}}
\nc{\Base}{{\mathsf{B}}}		
\dmo{\ME}{{ME}}
\nc{\tmax}{{T}}
\nc{\csize}{{a}}
\nc{\bsize}{{b}}
\nc{\bsizes}{{b_\star}}
\nc{\csizes}{{a_\star}}
\nc{\bsizesp}{{b'_\star}}
\nc{\csizesp}{{a'_\star}}
\nc{\betas}{{\beta_\star}}
\nc{\qs}{{q_\star}}
\nc{\Ks}{{K^\star}}
\nc{\Region}{{R}}
\nc{\hx}{{\hat{x}}}
\nc{\hy}{{\hat{y}}}
\nc{\ha}{{\hat{a}}}
\nc{\hb}{{\hat{b}}}
\nc{\eye}{\mathrm{I}}
\nc{\jay}{\mathrm{J}}
\nc{\UTnmf}{\Phi}
\nc{\probFEnmf}{\Psi}
\nc{\UTab}{\phi}
\nc{\FEab}{\psi}
\nc{\UTg}{\upsilon}
\nc{\bvec}{{\ul{\smash{\beta}}}}
\nc{\logmgf}{{\Lambda_{n,p}^\Ham}}
\nc{\nmf}{{\Psi_{n,p}^\Ham}}
\nc{\ratenp}{{\rate_{n,p}}}
\dmo{\Opt}{{Opt}}
\nc{\amir}[1]{{\color{blue} #1}}
\nc{\nicka}[1]{{\color{blue} #1}}
\begin{document}

\begin{frontmatter}
\title{Typical structure of sparse\\ exponential random graph models}
\runtitle{Sparse exponential random graph models}

\begin{aug}
\author[A]{\fnms{Nicholas A.}~\snm{Cook}\ead[label=e1]{nickcook@math.duke.edu}}
\and
\author[B]{\fnms{Amir}~\snm{Dembo}\ead[label=e2]{adembo@stanford.edu}}
\address[A]{Department of Mathematics, Duke University\printead[presep={,\ }]{e1}}

\address[B]{Department of Mathematics, Stanford University\printead[presep={,\ }]{e2}}
\end{aug}

\begin{abstract}
We consider general exponential random graph models (\abbr{ERGM}s) where the sufficient statistics are functions of homomorphism counts for a fixed collection of simple graphs $F_k$. 
Whereas previous work has shown a degeneracy phenomenon in dense \abbr{ERGM}s, we show this can be cured by raising the sufficient statistics to a fractional power. 
We rigorously establish the na\"ive mean-field approximation for the partition function of the corresponding
Gibbs measures, and in case of  ``ferromagnetic'' models with vanishing edge density
show that typical samples 
resemble a typical Erd\H{o}s--R\'enyi graph 
with a planted 
clique and/or a planted complete bipartite graph of appropriate sizes. We 
establish such behavior also for the conditional structure 
of the Erd\H{o}s--R\'enyi graph in the large deviations regime for excess $F_k$-homomorphism counts. 
These structural results are obtained by combining quantitative large deviation principles, established in previous works, with a novel stability form of a result of \cite{BGLZ} on the asymptotic solution for the associated entropic variational problem. A technical ingredient of independent interest is a stability form of Finner's generalized H\"older inequality.
\end{abstract}

\begin{keyword}[class=MSC]
\kwd[Primary ]{60F10, 05C80, 60C05,82B26}
\end{keyword}

\begin{keyword}
\kwd{Large deviations, Erd\H{o}s--R\'enyi graphs,
homomorphism counts, 
Gibbs measures, variational problems, upper tails, Brascamp--Lieb inequality}
\end{keyword}

\end{frontmatter}
\tableofcontents


\section{Introduction}

With $[n]:=\{1,\dots, n\}$, let $\cG_n$ be the set of symmetric $\{0,1\}$-valued functions on $[n]^2$ with zero diagonal, i.e.
\begin{equation}	\label{def:Gn}
G:[n]^2\to \{0,1\}\,,\quad (i,j)\mapsto G_{i,j}\,,\qquad \text{ with }\quad G_{i,j}=G_{j,i} \;,\quad G_{i,i}=0 \quad \forall i,j\in [n].
\end{equation}
Elements of $\cG_n$ are naturally identified with simple graphs (undirected and loop-free with at most one edge between any pair of vertices) over the labeled vertex set $[n]$, with $G_{i,j}=1$ when $i,j$ are joined by an edge. We are concerned with the case that $n$ is large or tending to infinity.

An exponential random graph model (\abbr{ergm}) is a probability measure on $\cG_n$ with density of the form
\begin{equation}	\label{ergm.intro}
\frac1{Z_n(\alpha,\bvec)} \exp\big(\, n^2\Ham(G;\bvec) - \alpha \edges(G)\,\big)\,,\qquad G\in \cG_n
\end{equation}
for parameters $\alpha\in \R, \bvec\in \R^m$, where $Z_n(\alpha,\bvec)$ is the normalizing constant, or \emph{partition function}, $\edges(G):= \sum_{i<j} G_{i,j}$ is the total number of edges in $G$, and the \emph{Hamiltonian} $\Ham(G;\bvec)$ is  of the form
\begin{equation}	\label{Ham.intro}
\Ham(G;\bvec) = \sum_{k=1}^m \beta_k f_k(G)
\end{equation}
for a fixed collection of functions $f_k:\cG_n\to \R$. The $f_k$ are typically taken as graph functionals that can be estimated through sampling, such as the frequency of a particular subgraph.
The parameter $\alpha$ controls the sparsity of typical samples from the \abbr{ergm}, with large positive/negative values of $\alpha$ leading to sparse/dense graphs, respectively; in physical terms $\alpha$ plays the role of the strength of an external field.
(We could have included $\alpha \edges(G)$ as one of the terms in the Hamiltonian \eqref{Ham.intro}, but it will be convenient to keep this term separate.)

E\abbr{rgm}s generalize the \ER (or binomial) random graph. Indeed, with 
\begin{equation}	\label{alpha-p}
\alpha= \log\frac{1-p}p
\end{equation}
for $p\in (0,1)$ and the Hamiltonian set to zero, the partition function is $Z_n= (1-p)^{-{n\choose2}}$, and the density \eqref{ergm.intro} is then $
p^{\edges(G)} (1-p)^{{n\choose 2} - \edges(G)}
$.
In general, under the parametrization \eqref{alpha-p} one can view the \abbr{ergm} \eqref{ergm.intro} as the tilt of the Erd\H os--R\'enyi($p$) distribution by the function $\exp(n^2 \Ham(\,\cdot\;;\bvec))$. 
This is the perspective we will take later in the article -- see \eqref{def:ergm} -- as it will be convenient for importing results on the large deviation theory of \ER graphs.
We further note that \eqref{def:ergm} covers a more general class of distributions than in \eqref{ergm.intro}--\eqref{Ham.intro} by accommodating sparse models, where 
$p=p(n) \to 0$ as $n \to \infty$.

Apart from the \ER model, perhaps the best-known non-trivial \abbr{ergm} is the \emph{edge-triangle model}, with Hamiltonian 
\begin{equation}	\label{edgeK3.intro}
\Ham(G;\beta) =\beta t(C_3,G)=\beta\frac1{n^3} \sum_{i_1,i_2,i_3=1}^n G_{i_1,i_2}G_{i_2,i_3} G_{i_3,i_1}\,.
\end{equation}
With the normalization by $n^3$, $t(C_3,G)$ is the probability that three independently and uniformly sampled vertices of $G$ form a triangle. 
(We use the standard notation $C_\ell$ for the cycle on $\ell$ vertices.)
Thus, typical samples should have more (resp.\ fewer) triangles than an \ER graph when $\beta$ is taken to be positive (resp.\ negative).

\subsection{Previous works}
\label{sec:background}

E\abbr{RGM}s were introduced and developed in the statistics and social sciences literature in the 80s and 90s \cite{HoLe81, FrSt86, WaPa96}; see \cite{Fienberg1,Fienberg2} for a survey of the subsequent vast literature.
The motivation was to develop a parametric class of distributions on graphs that could be fit to social networks via parameter estimation. 
A key feature of social networks is \emph{transitivity} -- that friends of friends are more likely to be friends -- a feature that is not present in typical \ER graphs. In particular, it was hoped that transitivity would arise by re-weighting the \ER distribution to promote triangles, as in the edge-triangle model \eqref{edgeK3.intro} with $\beta>0$.

The general form of the  \abbr{ergm}s \eqref{ergm.intro}--\eqref{Ham.intro} is appealing as they are exponential families, and the separable form of the Hamiltonian means that the functions $f_k(G)$ and edge density $n^{-2}\edges(G)$ are sufficient statistics for the model parameters $\beta_k$ and $\alpha$, respectively. 
For Bayesian inference and maximum likelihood estimation of the parameters it is important to have an accurate approximation for the partition function $Z_n(\alpha, \bvec)$, which has often been obtained via Markov chain Monte Carlo sampling schemes.
Sampling is also used to understand the typical structure of \abbr{ergm}s.

Problems with various aspects of this program were noted empirically from early on, ranging from the inability of \abbr{ergm}s to fit realistic social networks, to the inability of sampling algorithms to converge in a reasonable time \cite{Strauss86,Snijders02,Handcock}. With regards to the former, it was observed that in large regimes of the parameter space, typical samples exhibit no transitivity.
Moreover, in some regimes \abbr{ergm}s seem to concentrate in neighborhoods of a small number of graphs with trivial structure -- such as the empty graph and the complete graph -- a phenomenon known as \emph{degeneracy}.
We refer to \cite{SPRH06} for discussion of these and other problems, as well as some proposals to circumvent them.

More recent mathematically rigorous works have helped to clarify these issues.
An important work of Bhamidi, Bressler and Sly \cite{BBSmixing} considered the case that the functions $f_k$ in \eqref{Ham.intro} are densities $t(F_k,G)$ of a fixed collection of graphs $F_k, k=1,\dots, m$, that is
\begin{equation}	\label{dense-model}
\Ham(G;\bvec) = \sum_{k=1}^m \beta_k t(F_k,G)
\end{equation}
generalizing the edge-triangle model \eqref{edgeK3.intro} (see \eqref{def:t} below for the general definition of $t(F,G)$).
For the case that the model is in the ``ferromagnetic'' parameter regime with all $\beta_k>0$,
they are able to characterize a ``low-temperature'' parameter regime where local MCMC sampling schemes take exponential time to converge; in the complementary high-temperature regime where sampling algorithms have polynomial convergence time, typical samples exhibit the structure of an \ER graph, in particular lacking the transitivity property. 

Another major development on models of the form \eqref{dense-model} was made in work of Chatterjee and Diaconis \cite{ChDi}, where they applied the large deviation theory of \cite{ChVa11} for the \ER graph to rigorously establish a variational approach to estimating the partition function known as the n\"aive mean-field (\abbr{nmf}) approximation from statistical physics. They also show that in the ferromagnetic regime, \abbr{ergm}s are close to a mixture of \ER graphs -- that is, with the parameter $p$ sampled from some distribution. 
For the case of the edge-triangle model \eqref{edgeK3.intro}, they rigorously establish a degeneracy phenomenon wherein for large positive values of $\alpha$, as $\beta$ increases through a critical threshold $\betas(\alpha)\ge0$ the expected edge density  jumps from nearly zero to nearly one; see \cite[Theorem 5.1]{ChDi}. 

The works \cite{BBSmixing,ChDi} focus on \abbr{ergm}s with Hamiltonians having the special form \eqref{dense-model}, and with parameters $\alpha,\beta_k$ fixed independent of $n$, which means that samples are typically \emph{dense}, i.e. with constant edge density.
One further work on dense \abbr{ergm}s of particular relevance to the present work is that of Lubetzky and Zhao \cite{LuZh12}, who had the insight to consider a modified edge-$F$ model, with Hamiltonian of the form
\begin{equation}	\label{Ham.LuZh}
\Ham(G;\beta) = \beta t(F,G)^\gamma
\end{equation}
for a fractional power $\gamma\in(0,1)$. 
In particular, they show that for $\Delta$-regular $F$ and $\alpha$ above a certain threshold depending on $\Delta$, taking $\gamma\in (0,\frac\Delta{\edges(F)})$ cures the degeneracy phenomenon established in \cite{ChDi}, in the sense that there exists an open interval of values of $\beta$ for which a typical sample from the \abbr{ergm} does not look like an \ER graph with high probability. However, they left open the problem of determining what a typical sample \emph{does} look like. 
A similar strategy to cure degeneracy problems was also proposed in the physics literature \cite{HCT15}, where it was shown to yield better fits to real social network data.
The structure of such models in the limit as $(\alpha,\beta)$ tend to infinity along rays was studied in \cite{Demuse}, extending the work of \cite{YRF} for the case $\gamma=1$.

Starting with the work \cite{ChDe14} of Chatterjee and the second author, the recent development of quantitative approaches to large deviations for nonlinear functions on product spaces -- such as subgraph counts in \ER graphs -- has opened up the analysis of \abbr{ergm}s in the sparse regime where $\alpha$ and $\bvec$ depend on $n$. We refer to \cite{Chatterjee:book} for an introduction to this recent and rapidly developing area. In particular, the works \cite{ChDe14,Eldan} established the \abbr{nmf} approximation for the partition function under some growth conditions on the parameters.
In \cite{ElGr18ergms}, building on the nonlinear large deviations theory from \cite{Eldan, ElGr-decomp}, Eldan and Gross showed that under some growth conditions on $\alpha,\bvec$, models of the form \eqref{dense-model} are close to low-complexity mixtures of stochastic block models, with barycenters close to critical points of the \abbr{nmf} free energy. 
We also mention that the correlation between fixed edges in sparse \abbr{ergm}s with negative $\beta$ were studied in \cite{YiZh17}.
Mixing properties of the Glauber dynamics were used to establish concentration inequalities and \abbr{CLT}s in \cite{GaNa}.

In the language of statistical physics, \abbr{ergm}s are \emph{grand canonical ensembles}, and we mention there has been a long line of works on the structure of the corresponding \emph{microcanonical ensembles}, which are
graphs drawn uniformly under hard constraints on subgraph counts; see \cite{RaSa13,RRS14,KRRS17-multipodal,KeYi,NRS20}.

\subsection{Generalized ERGMs}

In the present work we apply results from \cite{CoDe,CDP} on a quantitative large deviations theory for the \ER graph to extend the \abbr{nmf} approximation to sparser \abbr{ergm}s than in previous works, and to establish the typical structure of samples.
Our setup also allows for a more general Hamiltonian than the separable form of \eqref{dense-model}.

For sparse models we need to introduce some scaling in the model \eqref{ergm.intro}--\eqref{Ham.intro}. 
Generalizing \eqref{def:Gn}, 
for a set $S$, 
an $S$-\emph{weighted graph} over $[n]$ is a symmetric $S$-valued function on $[n]^2$ with zero diagonal. 
Thus, elements of $\cG_n$ are $\{0,1\}$-weighted graphs. 
For an $\R$-weighted graph $X$ over $[n]$ and a fixed graph $F=(\Verts(F),\Edges(F))$, we define the \emph{homomorphism density of $F$ in $X$} by
\begin{equation}
\label{def:t}
t(F,X):= \frac{1}{n^{\verts(F)}}\sum_{\phi:\Verts(F)\to[n]} \prod_{\{u,v\}\in \Edges(F)} X_{\phi(u),\phi(v)}.
\end{equation}
For the case that $X$ is a $\{0,1\}$-weighted graph, $t(F,X)$ is the probability that a uniform random mapping of the vertices of $F$ into $[n]$ is a graph homomorphism from $F$ to the graph associated with $X$ -- that is, maps edges onto edges. Letting $p\in (0,1)$ (possibly depending on $n$),
and $\bG=\bG_{n,p}\in \cG_n$ denoting an Erd\H{o}s--R\'enyi($p$) graph, note  
that for $p=o(1)$, the typical value of $t(F,\bG)$ decays at rate $p^{\edges(F)}$ 
which depends on $F$. Hence, to combine 
on equal footing different graphs $F_k$ in the Hamiltonian, we instead use
$t(F_k,\bG/p)$ which are of $O(1)$. Specifically, fixing $m \ge 1$, 
graphs $\uF=(F_1,\dots, F_m)$ and a function $\hamlet:\R_{\ge0}^m\to\R$,
we define a Hamiltonian function on the space of $\R$-weighted graphs by
\begin{equation}	\label{hamilton}
\Ham(X) :=   \hamlet\big(\,t(F_1,X),\ldots,t(F_m,X)\,\big) \,,
\end{equation}
and use in the \abbr{ergm} the Hamiltonian $\Ham(\bG/p)$. 
The rate of decay for $O(1)$ upper deviations of $t(F,\bG/p)$ is known to be 
\begin{equation}	\label{def:rate}
r = \rate_{n,p} := n^2 p^\Delta \log(1/p)\,,
\end{equation}
where $\Delta \ge 2$ denotes the maximal degree of $F$ (see \cite{Chatterjee:book}). Thus, for $p=o(1)$ 
one can assume \abbr{wlog} that the graphs $F_k$ in \eqref{hamilton} all have the 
same maximal degree $\Delta \ge 2$ (as the effect of any $F_k$ of maximal degree 
less than $\Delta$ be negligible on scale $r_{n,p}$), and to simplify our presentation 
we make this assumption hereafter even for $p=O(1)$. Moreover, to avoid degeneracy in \abbr{ergm}s 
based on such $\Ham(\bG/p)$, one has to replace the factor $n^2$ of \eqref{ergm.intro}, 
by $r_{n,p}$ of \eqref{def:rate}, thereby making the effect of the Hamiltonian comparable 
to that of the large deviations induced by $\bG_{n,p}$. Specifically, such an \abbr{ergm} 
with $o(r_{n,p})$ scaling is merely a small perturbation of the corresponding \ER graph model, 
whereas using factors $r \gg r_{n,p}$ will result with an \abbr{ergm} those sample be 
close at large $n$ to the complete graph, regardless of $p$. We shall thus consider
the measure $\nu_{n,p}^\Ham$ on $\cG_n$ whose density with respect to $\bG_{n,p}$ is
\begin{equation}	\label{def:ergm}
\nu_{n,p}^\Ham(G) =  \exp(\ratenp\Ham(G/p) -\Lambda_{n,p}^\Ham)\,,\quad G\in \cG_n
\end{equation}
where we denote the log-moment generating function
\begin{equation}\label{def:mgf}
\Lambda_{n,p}^\Ham := \log\E \exp( \ratenp \Ham(\bG_{n,p}/p)).
\end{equation}
A sample from $\nu_{n,p}^\Ham$ is denoted by $\bG_{n,p}^\Ham$ (thus $\bG_{n,p}\eqd\bG_{n,p}^0$). 

We require $\hamlet$ of \eqref{hamilton} to correspond to a ferromagnetic model, i.e. 
be non-decreasing. Also, even at the proper scale of \eqref{def:rate} we must restrict the growth 
of $\hamlet$ at infinity in order to avoid degeneracy (similarly to the proposal \eqref{Ham.LuZh} 
of \cite{LuZh12} in the context of the dense edge-$F$ model). Indeed, such restricted growth 
guarantees 
the existence of finite optimizers for 
$\psi_{\uF,\hamlet}$ of \eqref{def:psi}, on which our main results are based.
Specifically, we make the following assumptions on $\hamlet$.
\begin{assumption}
\label{assume:h}
The function $\hamlet:\R_{\ge0}^m\to\R$ in \eqref{hamilton} is continuous, coordinate-wise 
non-decreasing, and satisfies the growth condition
\begin{equation}	\label{assu-h.growth}
\hamlet(\ul{x})=o_{\|\ul{x}\| \to\infty}\Big(\sum_{k=1}^m x_k^{\Delta/\edges(F_k)}\Big) \,.
\end{equation}
\end{assumption}

To connect our notation with the normalizing constant $Z_n$ for \abbr{ergm}s discussed in Section \ref{sec:background}, with  $\alpha=\log\frac{1-p}p$ we can alternatively express the density \eqref{def:ergm} in the form 
\begin{equation}	\label{ergmZ}
\frac1{Z}\exp\big(\ratenp \Ham(G/p) - \alpha \edges(G)\big)
\end{equation}
(as in \eqref{ergm.intro} but with $\rate_{n,p}$ in place of $n^2$ and $G$ scaled by $1/p$).
Then 
the normalizing factor $\exp(\Lambda_{n,p}^\Ham)$ in \eqref{def:ergm} is related to the \emph{free energy} $\log Z$ by
\begin{equation}	\label{MGF-Z}
\Lambda_{n,p}^\Ham = \log Z + {n\choose2} \log (1-p)\,.
\end{equation}
We note that (under mild conditions on $\hamlet$) $\logmgf$ is of order $\ratenp$, which is of lower order than the second term on the right hand side above when $p\ll1$. Hence, in the sparse case, our results on $\logmgf$ below provide asymptotics for the nontrivial sub-leading order of the free energy.

As an example, under our scaling the edge-triangle model (with some choice of $\hamlet$) has density proportional to
\begin{align}
&\exp\Big( r_{n,p} \cdot h\big(  t(C_3,G/p) \big) - \log\Big( \frac{1-p}p\Big) \edges(G) \Big)	\label{edgeK3.sparse}\\
&\qquad =\exp\bigg( n^2 p^2\log(1/p) \cdot h\Big( \frac1{n^3p^3} \sum_{i_1,i_2,i_3=1}^n G_{i_1,i_2} G_{i_2,i_3}G_{i_3,i_1}\Big) -  \log\Big(\frac{1-p}p\Big) \sum_{i<j} G_{i,j}  \bigg)\,.	\notag
\end{align}
In Corollary \ref{cor:edge-triangle} we determine the typical structure of samples from this model when $\hamlet=\beta f$ for a parameter $\beta>0$ and a fixed function $f$.
Following the insight of \cite{LuZh12} (see \eqref{Ham.LuZh}) it will be crucial to impose the growth condition \eqref{assu-h.growth}, which translates to $x^{-2/3}f(x)\to0$ as $x\to\infty$.

\subsection{The NMF approximation}

Under the definition \eqref{def:ergm} we have that $\nu_{n,p}^0$ is the Erd\H{o}s--R\'enyi($p$) measure on $\cG_n$.
The Gibbs (or Donsker--Varadhan) variational principle states that
\begin{equation}	\label{gibbs}
\logmgf=\sup_\mu\Big\{ \rate_{n,p} \E_{G\sim \mu} \Ham(G/p) - \DKL(\mu\|\nu_{n,p}^0)\Big\}
\end{equation}
where the supremum is taken over all probability measures on $\cG_n$, and $\DKL(\cdot\|\cdot)$ is the relative entropy. 
While this yields a formula for the normalizing constant of \abbr{ergm}s via \eqref{MGF-Z}, it is not a computationally feasible alternative to sampling since $\mu$ ranges over a set of dimension exponential in $n^2$. 

The \abbr{nmf} approximation posits that the supremum in \eqref{gibbs} is approximately attained on the subset of product probability measures on $\cG_n$, which are parametrized by the ${n\choose2}$-dimensional cube of $[0,1]$-weighted graphs
\begin{equation*}
\cQ_n:= \big\{ Q:[n]^2\to [0,1]: Q_{i,j}=Q_{j,i}\,,\; Q_{i,i}=0\; \forall i,j\in [n]\big\}.
\end{equation*}
Indeed, product probability measures on $\cG_n$ are of the form $\mu_Q$, the distribution of an inhomogeneous \ER graph that independently includes edges $\{i,j\}$ with probability $Q_{i,j}$. 

In our setting the \abbr{nmf} approximation takes the form\footnote{In \eqref{NMF} we have also 
replaced $\E_{G\sim \mu_Q} \Ham(G/p)$ by $\Ham(\E_{G\sim \mu_Q} G/p)$. The difference turns out to be negligible in our setting and we follow \cite{ChDe14,BGLZ} by taking $\nmf$ as the definition of the \abbr{nmf} approximation.}
\begin{equation}
\label{NMF}
\logmgf \approx \sup_{Q\in\cQ_n} \Big\{ \rate_{n,p} \Ham(Q/p) - \sum_{i<j} \eye_p(Q_{i,j})\Big\} =: \nmf
\end{equation}
where we use the common notation 
\begin{equation}	\label{def:eyepq}
\eye_p(q) :=\DKL(\text{Ber}(q)\|\text{Ber}(p)) =  q\log\frac{q}p + (1-q)\log\frac{1-q}{1-p}
\end{equation}
for the relative entropy of the Bernoulli($q$) law on $\{0,1\}$ with respect to the Bernoulli($p$) law. 

While \eqref{NMF} is not always true for Gibbs measures (as is notably the case for spin glass models), our first result shows it is a good approximation in our setting of generalized \abbr{ergm}s, under some conditions on $p$ and $\hamlet$.
See Section \ref{sec:notation} for our conventions on asymptotic notation. 

\begin{prop}
\label{prop:nmf}
Assume $n^{-1/(\Delta+1)}\ll p$ and that $\hamlet$ satisfies Assumption \ref{assume:h}.
Then 
\begin{equation*}
\logmgf=\nmf + o(\ratenp).
\end{equation*}
\end{prop}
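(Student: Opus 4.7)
The plan is to establish matching upper and lower bounds for $\logmgf$ against $\nmf$, each with error $o(\ratenp)$.

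For the lower bound, I would start from the Gibbs variational identity \eqref{gibbs} and restrict the supremum to the subclass of product measures $\{\mu_Q : Q \in \cQ_n\}$, for which $\DKL(\mu_Q \| \nu_{n,p}^0) = \sum_{i<j}\eye_p(Q_{i,j})$ on the nose. It then remains to replace $\E_{G \sim \mu_Q}\, \hamlet\big(t(F_1,G/p),\ldots,t(F_m,G/p)\big)$ by $\hamlet\big(t(F_1,Q/p),\ldots,t(F_m,Q/p)\big)$ at additive cost $o(\ratenp)$. Under $\mu_Q$, Janson-type concentration for homomorphism counts forces $t(F_k,G/p) = t(F_k,Q/p) + o(1)$ outside an exponentially small event; the growth hypothesis \eqref{assu-h.growth} forces any $Q$ that nearly attains $\nmf$ to have $t(F_k,Q/p) = O(1)$, so continuity of $\hamlet$ propagates the concentration estimate to $\hamlet$, with the same growth bound discarding the rare event.

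The upper bound is the real content. Here I would invoke the quantitative large deviation results of \cite{CoDe,CDP} for the joint law of $\big(t(F_k,\bG_{n,p}/p)\big)_{k=1}^m$ at speed $\ratenp$, whose rate function is
\begin{equation*}
\phi^*(\ul{x}) := \inf\Big\{\ratenp^{-1}\sum_{i<j}\eye_p(Q_{i,j}) : Q \in \cQ_n,\ t(F_k,Q/p)=x_k\ \forall k\Big\}.
\end{equation*}
Split $\E\exp(\ratenp\Ham(\bG_{n,p}/p))$ according to whether $\ul{x} := (t(F_k,\bG_{n,p}/p))_k$ lies in the ball $\{\|\ul{x}\|\le M\}$ or its complement. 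The known growth $\phi^*(\ul{x}) \gtrsim \sum_k x_k^{\Delta/\edges(F_k)}$ underlying the BGLZ-type formula for the upper-tail rate, combined with \eqref{assu-h.growth}, makes the tail $\|\ul{x}\| > M$ contribute at most $\nmf + o(\ratenp)$ once $M$ is taken large. On the compact region $\{\|\ul{x}\| \le M\}$, cover by finitely many small cells and apply the LDP upper bound on each, giving
\begin{equation*}
\logmgf \le \sup_{\|\ul{x}\|\le M}\ratenp\big\{\hamlet(\ul{x}) - \phi^*(\ul{x})\big\} + o(\ratenp) \le \nmf + o(\ratenp),
\end{equation*}
the last inequality being the definitions of $\phi^*$ and $\nmf$.

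The principal obstacle is the upper bound: one must extract from \cite{CoDe,CDP} a quantitative LDP that is uniform over the joint range of the homomorphism densities and loses only $o(\ratenp)$ in the exponent. The hypothesis $p \gg n^{-1/(\Delta+1)}$ is precisely the regime in which these nonlinear LDPs hold at the required scale, while \eqref{assu-h.growth} is what permits the truncation to a compact region of $\ul{x}$ without paying more than $o(\ratenp)$.
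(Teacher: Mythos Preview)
Your upper bound is essentially the paper's argument: truncate using the growth hypothesis \eqref{assu-h.growth} (the paper packages this as Lemma~5.1, proved via the non-asymptotic tail bound of Proposition~C.1), then discretize the bounded region and apply the LDP upper bound from \cite{CDP} on each cell. One cosmetic difference: rather than your $\phi^*$ with equality constraints, the paper works with the superlevel-set functional $\Phi_{n,p}(\uF,\uup)$ of \eqref{NMF.tails} and the identity $\nmf = \sup_{\uup\ge 0}\{\ratenp h(\1+\uup) - \Phi_{n,p}(\uF,\uup)\}$, which interfaces more cleanly with the upper-tail LDP and with the monotonicity of $h$.

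The lower bound is where you diverge. The paper does not go through the Gibbs variational principle or concentration under $\mu_Q$. It uses the one-line inequality
\[
\logmgf \ge \ratenp\, h\big((1-\zeta)\1 + \uup\big) + \log \P\big(\Gnp \in \cU_p(\uF,\uup-\zeta\1)\big),
\]
immediate from monotonicity of $h$, and then invokes the LDP lower bound from \cite[Prop.~9.1]{CDP} for the probability on the right. Optimizing over $\uup$ and sending $\zeta\to 0$ recovers $\nmf$ via the identity above. Your route instead requires establishing that $t(F_k,G/p)$ concentrates around $t(F_k,Q/p)$ under $\mu_Q$ for a near-optimal $Q$; this is doable (and $h$ being bounded below by $h(\underline{0})$ handles the rare event as you indicate), but it is not an off-the-shelf ``Janson-type'' statement, since Janson's inequality addresses lower tails under $\Gnp$, not two-sided concentration under a general product measure. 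The paper's tilting argument sidesteps all of this.
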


\begin{remark}
Proposition \ref{prop:nmf} extends to $n^{-1/\Delta}\ll p$, when  
every vertex of degree $\Delta$ in $F_k$, $k \in [m]$, is in an isolated star.
Using results from \cite{CoDe} we could also allow $p$ as small as $(\log n)^Cn^{-1/\Delta}$ for a sufficiently large constant $C$ if every $F_k$ is a cycle (so $\Delta=2$ in this case). 
From \cite{CDP} 
we also have a matching \abbr{LDP} lower bound 
in Proposition \ref{prop:LD.Bstar}, and by relying on the latter result one can 
eliminate the restriction to non-decreasing $h(\cdot)$ in Proposition \ref{prop:nmf}.
 We skip the proofs of such refinements here.
\end{remark}

In the case that $p=o(1)$, our next result says that \eqref{NMF} can be further reduced to a \emph{two}-dimensional variational problem.
It involves a certain function $T_F:\R_{\ge0}^2\to \R_{\ge0}$ associated to a graph $F$ that was identified in the work \cite{BGLZ} on the upper tail problem for $t(F,\Gnp)$.
Recall the independence polynomial of a graph $F$ is defined as
\begin{equation}	\label{def:indep.poly}
\ipoly_F(x) = 1 + \sum_{\emptyset \ne U\in \cI(F)} x^{|U|}
\end{equation}
where the sum runs over the non-empty independent sets in $\Verts(F)$. 
Letting $F^\star$ denote the induced subgraph of $F$ on the vertices of maximal degree, we set
\begin{equation}	\label{def:TF}
T_F(a,b) := \ipoly_{F^\star}(b) + a^{\verts(F)/2} \ind(\text{$F$ is regular}).
\end{equation}
Define 
\begin{equation}
\label{def:psi}
\psi_{\uF,\hamlet} := \sup_{a,b\ge0} \Big\{ h( T_{F_1}(a,b), \dots, T_{F_m}(a,b)) - \frac12a-b\Big\}.
\end{equation}

\begin{theorem}
\label{thm:nmf2}
Assume $n^{-1/(\Delta+1)}\ll p\ll 1$ and that $\hamlet$ satisfies Assumption \ref{assume:h}. Then
\begin{equation}
\lim_{n\to\infty} \frac1\ratenp\logmgf = \psi_{\uF,\hamlet}\,.
\end{equation}
\end{theorem}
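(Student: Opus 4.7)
The plan is to combine Proposition~\ref{prop:nmf} with an asymptotic analysis of the mean-field variational problem $\nmf$, drawing on the BGLZ-type resolution of the associated entropic variational problem. By Proposition~\ref{prop:nmf} it suffices to show
\[
\nmf/\ratenp \;=\; \sup_{Q\in \cQ_n}\Big\{\, \hamlet\bigl(t(F_1,Q/p),\dots,t(F_m,Q/p)\bigr) - \ratenp^{-1}\sum_{i<j}\eye_p(Q_{i,j})\,\Big\} \;\longrightarrow\; \psi_{\uF,\hamlet},
\]
for which I will prove matching lower and upper bounds.

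For the lower bound I would test planted clique-plus-hub configurations. For each $a,b\ge 0$, fix disjoint vertex sets $A,B\subset[n]$ with $|A|=\lfloor \sqrt{a}\,np^{\Delta/2}\rfloor$ and $|B|=\lfloor bnp^\Delta\rfloor$, and define $Q^{a,b}\in\cQ_n$ by $Q^{a,b}_{i,j}=1$ when $\{i,j\}\subset A$ or $\{i,j\}\cap B\ne\emptyset$, and $Q^{a,b}_{i,j}=p$ otherwise. Using $\eye_p(1)=\log(1/p)$, direct counting yields $\ratenp^{-1}\sum_{i<j}\eye_p(Q^{a,b}_{i,j}) = a/2+b+o(1)$. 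For $t(F_k,Q^{a,b}/p)$, decomposing each homomorphism $\phi\colon \Verts(F_k)\to[n]$ by its pre-image locations in $A$, $B$ and $[n]\setminus(A\cup B)$ and tracking powers of $p$, the only contributions surviving as $p\to 0$ are: (i) $\phi$ mapping $\Verts(F_k)$ entirely into $A$, which persists only when $F_k$ is $\Delta$-regular and then contributes $a^{\verts(F_k)/2}$; and (ii) $\phi$ mapping some $\Verts(F_k)\setminus U$ into $B$ for $U\in \cI(F_k^\star)$ (the independence constraint emerging from requiring the $p$-exponent to vanish), which collectively contribute $\ipoly_{F_k^\star}(b)$. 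Altogether $t(F_k,Q^{a,b}/p)\to T_{F_k}(a,b)$ for every $k$, so continuity of $\hamlet$ followed by optimization over $(a,b)\ge 0$ gives $\liminf_n \nmf/\ratenp \ge \psi_{\uF,\hamlet}$.

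For the upper bound, let $Q_n\in\cQ_n$ be near-optimizers and write $\gamma_n := \ratenp^{-1}\sum_{i<j}\eye_p(Q_{n;i,j})$ and $\tau_{n,k}:=t(F_k,Q_n/p)$. Finner's generalized H\"older inequality supplies an entropy--moment bound of the form $\tau_{n,k}^{\Delta/\edges(F_k)} \ls \gamma_n + O(1)$, which combined with the growth hypothesis \eqref{assu-h.growth} forces $(\gamma_n)$ to stay bounded; passing to a subsequence, $(\gamma_n,\tau_n)\to(\gamma^\infty,\tau^\infty)$. The BGLZ asymptotic for the entropic cost of an excess homomorphism count,
\[
\lim_n\;\inf\!\Big\{\,\ratenp^{-1}\textstyle\sum_{i<j}\eye_p(Q_{i,j}) \,:\, t(F_k,Q/p)\ge \tau_k \;\forall k\Big\} \;=\; \inf\{\tfrac{a}{2}+b:\ T_{F_k}(a,b)\ge \tau_k \;\forall k\}\,,
\]
applied simultaneously to all $F_k$ (which is possible because a single clique+hub configuration with parameters $(a,b)$ realizes $T_{F_k}(a,b)$ for every $k$ at once) yields $\gamma^\infty\ge I(\tau^\infty)$, where $I(\tau):=\inf\{a/2+b : T_{F_k}(a,b)\ge\tau_k \,\forall k\}$. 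By monotonicity of $\hamlet$ one then has
\[
\hamlet(\tau^\infty)-\gamma^\infty \;\le\; \hamlet(\tau^\infty)-I(\tau^\infty) \;\le\; \sup_{a,b\ge0}\{\hamlet(T_{F_1}(a,b),\dots,T_{F_m}(a,b))-a/2-b\} \;=\; \psi_{\uF,\hamlet}\,,
\]
which is the matching upper bound.

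The main obstacle is the joint BGLZ step above: the single-graph result of \cite{BGLZ} must be upgraded so that a near-optimizer of the multi-$F_k$ entropic problem can be compared to a \emph{single} clique+hub configuration whose cost $a/2+b$ dominates $t(F_k,Q/p)$ for every $k$ simultaneously. This relies on the quantitative large-deviations machinery from \cite{CoDe,CDP} together with the Finner-based compactness above, which confines the relevant $(a,b)$ to a bounded region and packages the subsequential limits into the clean variational identity claimed.
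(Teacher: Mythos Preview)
Your approach is essentially the same as the paper's: reduce via Proposition~\ref{prop:nmf} and then match $\nmf/\ratenp$ to $\psi_{\uF,\hamlet}$ using a clique-hub test configuration for the lower bound and the joint BGLZ resolution of $\Phi_{n,p}(\uF,\uup)$ for the upper bound, with compactness of near-optimizers coming from the growth condition \eqref{assu-h.growth}. The paper organizes the upper bound slightly differently, first rewriting $\nmf=\sup_{\uup\ge0}\{\ratenp\,\hamlet(\1+\uup)-\Phi_{n,p}(\uF,\uup)\}$ (this identity is \eqref{eq:Phi-Psi}, derived in the proof of Proposition~\ref{prop:nmf}) and then invoking the duality \eqref{psi-phi}, but this is equivalent to your subsequence argument on $(\gamma_n,\tau_n)$.

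Two clarifications. First, what you call ``the main obstacle''—the joint asymptotic $\Phi_{n,p}(\uF,\uup)/\ratenp\to\phi_{\uF}(\uup)$—is already available as \cite[Prop.~1.10]{BhDe}; the paper simply cites it (see \eqref{eq:probUTab}). Your parenthetical justification (``a single clique+hub realizes $T_{F_k}(a,b)$ for every $k$ at once'') only supplies the \emph{easy} direction $\limsup\Phi_{n,p}/\ratenp\le\phi_\uF$; the inequality you actually need, $\liminf\Phi_{n,p}/\ratenp\ge\phi_\uF$, is the hard BGLZ lower bound, and it is purely an entropic/variational statement that does \emph{not} require the quantitative large-deviations machinery of \cite{CoDe,CDP}. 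That machinery is used only in Proposition~\ref{prop:nmf}, which you are already taking as given. Second, a small slip in your lower-bound computation: in case (ii) it is the set $U\in\cI(F_k^\star)$ that is mapped into the hub $B$ (not $\Verts(F_k)\setminus U$), since each vertex of $U$ must contribute a factor $p^{-\Delta}$ from its $\Delta$ incident edges to cancel the $|B|/n\asymp p^\Delta$ cost of landing in $B$.
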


\subsection{Typical structure for sparse ERGMs and conditioned \ER graphs}

Proposition \ref{prop:nmf} and Theorem \ref{thm:nmf2} follow in a relatively straightforward way  from recent results on joint upper tails for homomorphism densities in \ER graphs \cite{CDP,BGLZ} via Varadhan lemma-type arguments.
Now we state our main results, which determine the typical structure of samples -- both from the generalized \abbr{ergm}s \eqref{def:ergm}, as well as the \ER graph conditioned on a joint upper-tail event for homomorphism densities. 
In both cases, samples concentrate around a two-parameter family of weighted graphs identified in \cite{BGLZ}.
This is the reason for the two-dimensional reduction of Theorem \ref{thm:nmf2} -- indeed, the functions $T_F(a,b)$ (appropriately rescaled) from \eqref{def:TF} give the behavior of the homomorphism density functionals $t(F, \cdot/p)$ on this two-parameter family. 

For $\xi>0$ and $I,J\subset[n]$, let $\cG_n^{I,J}(\xi)$ be the set of $G\in\cG_n$ satisfying
\begin{equation}	\label{almost-CH}
\sum_{i,j\in I} G_{i,j} \ge |I|^2 - 2 \xi n^2p^\Delta
\quad \text{ and } \quad 
\sum_{i\in J, j\in J^c} G_{i,j} \ge |J| (n-|J|) - \xi n^2 p^\Delta\,,
\end{equation}
and for $a,b\ge0$, set
\begin{equation}	\label{def:Gn'}
\cG_n^1(a,b,\xi) := \bigcup_{ \substack{I,J\subset[n] \text{ disjoint}\\ |I| = \lf (ap^\Delta)^{1/2} n\rf, |J| = \lf b p^\Delta n\rf}} \cG_n^{I,J}(\xi). 
\end{equation}
That is, for small $\xi$ the elements of $\cG_n^1(a,b,\xi)$ correspond to graphs containing an almost-clique over the vertex set $I$ and an almost-biclique across the vertex bipartition $(J, J^c)$, with $I,J$ of appropriate size. 
Note that for 
$n^{-\Delta}\ll p\ll1$ and $a,b>0$ fixed independent of $n$
we have $1\ll |J|\ll |I|\ll n$. 
(In particular it is only a matter of convenience to insist that $I,J$ be disjoint, as the number of edges in $I\times J$ is $o(p^\Delta n^2)$ and hence negligible.)

\begin{figure}
\includegraphics[width=3cm]{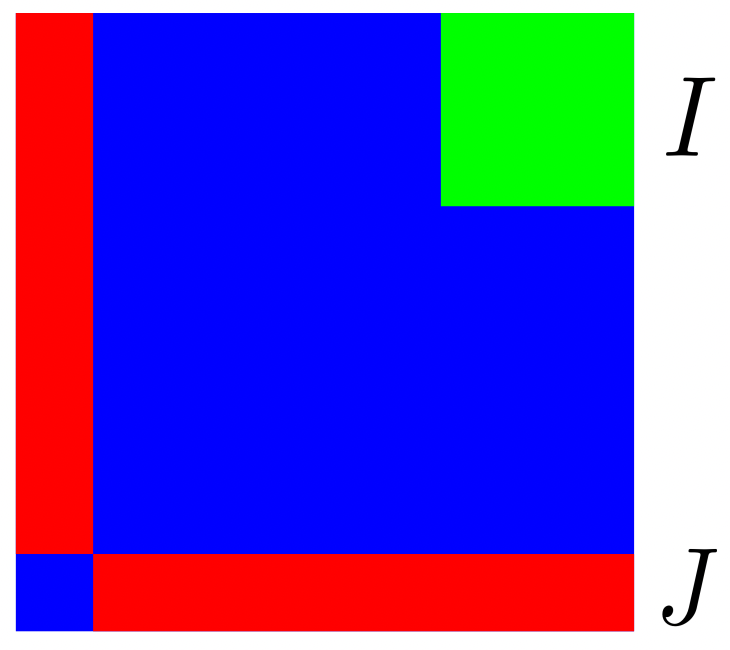}
\caption{
Depiction of a weighted clique-hub graph $Q^{I,J}:[n]\times[n]\to[0,1]$, defined in \eqref{def:QIJ}, taking value 1 in the clique region $I\times I$ (green) and hub region $(J\times J^c)\cup(J^c\times J)$ (red), and value $p$ elsewhere (blue). (We also have $Q^{I,J}_{i,i}\equiv 0$, but the diagonal is invisible when $n$ is large.) \Cref{thm:struct.ERGM} states that typical samples from sparse \abbr{ergm} models are close to such a weighted graph for some $I,J$ with $|I|\sim (ap^\Delta)^{1/2}n$ and $|J|\sim bp^\Delta n$, where ``closeness'' is quantified in two different ways for appropriate ranges of $p=o(1)$. (Hence $I$ and $J$ will cover a vanishing proportion of vertices, so  the figure exaggerates their relative sizes.)}
\label{fig:Qij}
\end{figure}

We also define another neighborhood of the graphs with an almost-clique at $I$ and an almost-biclique at $(J,J^c)$. 
Given $I,J\subset[n]$ disjoint, define the associated \emph{weighted clique-hub graph} $Q^{I,J}\in\cQ_n$, with 
\begin{equation}	\label{def:QIJ}
Q^{I,J}_{i,j} = p \ind_{i\ne j} + (1-p)\big[ \ind_{i,j\in I, i\ne j} + \ind_{(i,j)\in J\times J^c}
+ \ind_{(i,j)\in J^c\times J}
\big]\,.
\end{equation}
See \Cref{fig:Qij}.
(We informally refer to $J$ as a ``hub'' as it is possible to move between any two vertices $i,j$ using two edges of weight 1 by passing through a vertex of $J$.)
For $a,b\ge0$ let
\begin{equation}	\label{def:Qab}
\cQ_n(a,b) = \big\{ Q^{I,J} : |I| = \lf (ap^\Delta)^{1/2} n\rf, |J| = \lf b p^\Delta n\rf\big\}
\end{equation}
and for $\xi>0$ let
\begin{equation}	\label{def:Gn''}
\cG_n^2(a,b,\xi) := \bigcup_{Q\in \cQ_n(a,b)} \Big\{ G\in \cG_n: \|G-Q\|_{2\to2} < \xi np^{\Delta/2} \Big\}
\end{equation}
where with slight abuse we extend the spectral operator norm for matrices to $\R$-weighted graphs $X$, that is
\begin{equation}	\label{def:specnorm}
\|X\|_{2\to 2} = \sup_{0\ne u,v:[n]\to \R} \frac{ \sum_{i,j=1}^n X_{i,j} u(i)v(j)}{ \|u\|_{\ell^2([n])} \|v\|_{\ell^2([n])}}. 
\end{equation}

We note how the spectral norm enforces control on edge discrepancies: 
Consider any $Q^{I,J}\in\cQ_n(a,b)$ and $G\in\cG_n$ with $\|G-Q\|_{2\to2}<\xi n p^{\Delta/2}$. 
For $A,B\subseteq [n]$ let
\[
\edges_G(A,B):= \sum_{i\in A, j\in B} G_{i,j}
\]
be the number of edges in $G$ with one end in $A$ and the other in $B$ (counting edges contained in $A\cap B$ twice).
Then taking $u,v$ of the form $\pm\1_{A},\1_{B}$ in \eqref{def:specnorm}, 
 we have that for either $A,B\subseteq I$ or $A\subseteq J, B\subseteq J^c$,
\begin{equation}	\label{discrep1}
0 \le 1 - \frac{\edges_G(A,B)}{|A||B|} < \xi \Big( \frac{n^2 p^\Delta}{|A||B|}\Big)^{1/2}
\end{equation}
and for all $A,B\subset J^c$ such that at least one of $A,B$ lies in $I^c$, 
\begin{equation}	\label{discrep2}
\Big|\frac{\edges_G(A,B)}{p|A||B|} - 1\Big| < \xi \Big( \frac{n^2 p^{\Delta-2}}{|A||B|}\Big)^{1/2} .
\end{equation}
Comparing \eqref{almost-CH} with \eqref{discrep1} at $A=B=I$ and $A=J, B=J^c$, we see
that $\cG_n^2(a,b,\xi) \subseteq \cG_n^1(a,b,\sqrt{c} \xi)$ for $c=\max(a,b)$. In view of \eqref{discrep2} 
we conclude that for small $\xi$ the elements of $\cG_n^2(a,b,\xi)$ not only have an almost-clique and almost-hub of appropriate size, but also look uniformly like \ER graphs outside of $I,J$. 

In the following we use the parameter
\begin{equation*}
\Delta_\star=\Delta_\star(\uF) := \frac12\max_{1\le k\le m} \max_{\{u,v\}\in \Edges(F_k)}  \big\{ \deg_{F_k}(u) + \deg_{F_k}(v) \big\} \,,
\end{equation*}
where $\deg_{F_k}(u) =|\{u'\in\Verts(F): \{u,u'\}\in \Edges(F)\}|$ is the degree of $u$ in $F_k$. 
Note that $\Delta+1\le 2\Delta_\star\le 2\Delta$, with the lower bound holding when each $F_k$ is a $\Delta$-armed star, and the upper bound when each $F_k$ is $\Delta$-regular. 
We denote the set of optimizers in \eqref{def:psi} by
\begin{equation}	\label{def:opt.psi}
\Opt(\psi) := \Big\{ (a,b)\in \R_{\ge0}^2: h( T_{F_1}(a,b), \dots, T_{F_m}(a,b)) - \frac12a-b = \psi_{\uF,\hamlet} \Big\}\,.
\end{equation}
(Note this set depends on $\uF$ and $\hamlet$, but we suppress this from the notation.)

\begin{theorem}
\label{thm:struct.ERGM}
Assume that the graphs $F_1,\dots, F_m$ are connected and that $\hamlet$ satisfies Assumption \ref{assume:h}.
Then for any $\xi>0$ there exists $\eta_0>0$ depending only on $\uF,\hamlet$ and $\xi$ such that the following hold:
\begin{enumerate}
\item[(a)] If $n^{-1/(\Delta+1)}\ll p \ll 1$, then for all $n$ sufficiently large,
\begin{equation}\label{eq:Gn1}
\P\Big( \bG_{n,p}^\Ham \in \bigcup_{(a,b)\in \Opt(\psi)} \cG_n^1(a,b,\xi) \Big) \ge 1- \exp(-\eta_0\ratenp).
\end{equation}

\item[(b)] If further $n^{-1}\log n \ll p^{2\Delta_\star} \ll 1$, then for all $n$ sufficiently large,
\begin{equation}\label{eq:Gn2}
\P\Big( \bG_{n,p}^\Ham \in \bigcup_{(a,b)\in \Opt(\psi)} \cG_n^2(a,b,\xi) \Big) \ge 1- \exp(-\eta_0\ratenp).
\end{equation}

\end{enumerate}

\end{theorem}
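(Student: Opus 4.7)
The plan is a change-of-measure argument together with a quantitative large deviations upper bound for the tilted expectation restricted to the complement of $\bigcup_{(a,b)\in \Opt(\psi)} \cG_n^i(a,b,\xi)$. For any event $\cB \subseteq \cG_n$,
\[
\P(\bG_{n,p}^\Ham \notin \cB) = \exp(-\logmgf)\, \E\!\left[\exp\!\big(\ratenp\Ham(\bG_{n,p}/p)\big) \1_{\bG_{n,p}\notin \cB}\right].
\]
By Theorem \ref{thm:nmf2}, the denominator is $\exp(\ratenp(\psi_{\uF,\hamlet}+o(1)))$. So in each case it suffices to show that for some $\eta_0=\eta_0(\uF,\hamlet,\xi)>0$,
\[
\log \E\!\left[\exp\!\big(\ratenp\Ham(\bG_{n,p}/p)\big) \1_{\bG_{n,p}\notin \cB}\right] \le \ratenp(\psi_{\uF,\hamlet} - 3\eta_0),
\]
for $n$ large, where $\cB$ is the union in \eqref{eq:Gn1} or \eqref{eq:Gn2}.

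To establish this, I would invoke a joint upper-tail LDP/NMF upper bound from \cite{CDP,BGLZ} for the \ER graph, applied to events $\{\bG_{n,p}\in \cB^c\}$ intersected with level sets of the homomorphism profile $(t(F_k,\bG/p))_{k=1}^m$. A Varadhan-type discretization of the range of $\Ham(\cdot/p)$ via the continuity and monotonicity from Assumption \ref{assume:h}, combined with the joint upper-tail asymptotics, bounds $\frac1{\ratenp}$ times the above log-expectation by
\[
\sup_{\ul{\tau}\in \R_{\ge0}^m}\Big\{ \hamlet(\ul{\tau}) - \Phi_{\uF}(\ul\tau;\cB^c) \Big\} + o(1),
\]
where $\Phi_{\uF}(\ul\tau;\cB^c)$ is the restricted rate function for joint upper deviations $t(F_k,\bG/p)\ge \tau_k$ subject to $\bG\in \cB^c$. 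The growth hypothesis \eqref{assu-h.growth} together with the scaling $T_{F_k}(a,b) \asymp a^{\verts(F_k)/2}+b^{|F_k^\star|}$ ensures this sup is attained on a compact set uniformly in $n$.

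The crux is a \emph{stability} input for the BGLZ variational formula: when $\bG\in \cB^c$, any near-minimizing configuration of the rate function must have its planted clique/hub sizes $(a,b)$ bounded away from $\Opt(\psi)$. Consequently, restricting to $\cB^c$ forces
\[
\Phi_{\uF}(\ul\tau;\cB^c) \ge \inf\Big\{ \tfrac12 a + b : (a,b)\notin \textstyle\bigcup_{(a^\star,b^\star)\in\Opt(\psi)} U_\xi(a^\star,b^\star),\; T_{F_k}(a,b)\ge \tau_k\;\forall k\Big\},
\]
where $U_\xi$ denotes small neighborhoods tied to the geometry of $\cG_n^i(a,b,\xi)$. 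Combining this lower bound with the definition of $\psi_{\uF,\hamlet}$ and continuity/compactness of the two-dimensional variational problem yields the strict gap of size $3\eta_0>0$. This stability statement is precisely the ``novel stability form of \cite{BGLZ}'' promised in the abstract, and reducing it to a workable form is the main obstacle: one must argue that away from $\Opt(\psi)$, either the objective $h(T_{F_1}(a,b),\dots,T_{F_m}(a,b))-\tfrac12 a - b$ is strictly suboptimal, or the constraints $\bG\in \cB^c$ force an entropy cost strictly exceeding $\tfrac12 a + b$.

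For part (a), $\cG_n^1$ is defined through combinatorial (edge-count) proxies of the clique/hub, and the required stability reduces to controlling how close the homomorphism profile forces $\bG$ to a planted clique and biclique of the right sizes --- this uses the [CDP] refinement which identifies the clique/hub support from the rate-function optimizer. For part (b), the strengthened range $p^{2\Delta_\star}\gg n^{-1}\log n$ permits the spectral-norm refinement from \cite{CDP}: outside the clique and hub, the remaining graph is spectrally close to a uniform Erd\H{o}s--R\'enyi($p$) graph, upgrading $\cG_n^1$-containment to $\cG_n^2$-containment. The hardest technical step throughout is the quantitative version of the BGLZ stability (and its Finner-inequality underpinning), as qualitative uniqueness of the clique-hub optimizer is insufficient; one needs an explicit modulus of stability to translate into the exponential concentration exponent $\eta_0$.
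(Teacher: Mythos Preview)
Your outline is essentially the paper's approach: change of measure, Theorem~\ref{thm:nmf2} for the normalizing constant, a Varadhan-type discretization over level sets of the homomorphism profile, and a stability strengthening of the BGLZ solution (via Finner) to produce the strict gap $\eta_0$. The paper packages the argument slightly differently: it first proves Theorem~\ref{thm:struct.ER} (conditional structure of $\bG_{n,p}$ on the joint upper-tail event) as a standalone result from the graphon-level stability of Proposition~\ref{prop:graphon.stab}, and then deduces Theorem~\ref{thm:struct.ERGM} from it by the discretization you describe, together with a continuity lemma (Lemma~\ref{lem:contain}) saying $\cG_n^u(a,b,\xi)\subset\cG_n^u(a',b',2\xi)$ for $(a,b)$ close to $(a',b')$; this lemma is needed to pass from $\Opt(\phi;\uup)$ (which governs the conditional structure at level $\uup$) to $\Opt(\psi)$.

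One caution on your formulation of the stability step: the displayed lower bound
\[
\Phi_{\uF}(\ul\tau;\cB^c)\ \ge\ \inf\Big\{\tfrac12 a+b:\ (a,b)\notin\textstyle\bigcup_{(a^\star,b^\star)\in\Opt(\psi)}U_\xi(a^\star,b^\star),\ T_{F_k}(a,b)\ge\tau_k\ \forall k\Big\}
\]
is not what is actually established, and as written it presupposes that any near-minimizer of the restricted rate is clique-hub-like with \emph{some} $(a,b)$ --- which is the content one is trying to prove. The paper's logic is rather: (i) for $\uup$ bounded away from the maximizing set $S^\star$ of \eqref{psi-phi}, the unconstrained objective $h(\1+\uup)-\phi_{\uF}(\uup)$ is already strictly below $\psi_{\uF,\hamlet}$ by compactness; (ii) for $\uup$ near $S^\star$, Theorem~\ref{thm:struct.ER} gives that conditionally on $\cU_p(\uF,\uup)$, $\bG_{n,p}$ lies in $\bigcup_{(a,b)\in\Opt(\phi;\uup)}\cG_n^u(a,b,\xi)$ with exponentially high probability, and since $\Opt(\phi;\uup)$ is then close to $\Opt(\psi)$ (Proposition~\ref{prop:opt}(d) plus continuity), Lemma~\ref{lem:contain} places this inside $\cB$. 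You correctly flag both cases at the end, so this is a matter of precision rather than a gap.
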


\begin{remark}
With $n^{-1} \ratenp$ bounded away from zero in the range of $p$ values considered 
in Theorem \ref{thm:struct.ERGM}, it follows by Borel--Cantelli that for arbitrarily small $\xi$,
a.s. $\bG_{n,p}^\Ham$ must be for all sufficiently large $n$, in the sets given on the \abbr{lhs} of \eqref{eq:Gn1} or \eqref{eq:Gn2}, respectively. We also note in passing that 
since $p \ll 1$, for large $n$ one should be able to recover with high 
accuracy the predicted hub $J$ within the sample $\bG_{n,p}^\Ham$, by thresholding its 
degree sequence at $(1-\delta) n$. Further, in the range of $p$ values 
considered here, the predicted clique size $|I| 
\gg \sqrt{n p}/p$. Thus, a second thresholding of the remaining degrees at $n p + \sqrt{n}$ should reveal 
the clique $I$ with high accuracy for large $n$ (and thereby $p \ll 1$ small enough).
\end{remark}

\begin{remark} \Cref{thm:struct.ERGM} is a somewhat negative result 
from a modeling perspective, as it shows that for large $n$ and $n^{-1/(\Delta+1)} \ll p \ll 1$, 
a ferromagnetic \abbr{ergm} of the type $\bG_{n,p}^\Ham$ is 
essentially equivalent to the corresponding \ER graph with a uniformly chosen 
planted clique-hub of a suitable size (or, to a mixture of such, if $\Opt(\psi)$
is not a singleton). For $\Delta$-regular $F$, the results of \cite{HMS,BaBa} 
suggest this behavior of $\bG_{n,p}^\Ham$ even up to 
$p \, n^{2/\Delta} \gg (\log n)^{c(F)}$ (with $c(F)=\frac{2}{\Delta(\verts(F)-2)}$).
That is, only for $(\log n)^{c(F)} \gg p\, n^{2/\Delta} \gg 1$
can we expect a non-local distribution of the excess edges in such ferromagnetic \abbr{ergm}s.
\end{remark}

We deduce Theorem \ref{thm:struct.ERGM} from the next result concerning the structure of an \ER graph $\bG_{n,p}$ conditioned on a joint upper tail event for homomorphism densities. For given $\uup\in \R_{\ge0}^m$, denote the joint superlevel set
\begin{equation}\label{def:Up}
\cU_p(\uF,\uup) := \bigcap_{k\in[m]} \big\{ Q\in\cQ_n: t(F_k,Q/p)\ge 1+\up_k 
\,\big\} \,.
\end{equation}
Analogously to \eqref{NMF}, the \abbr{nmf} approximation for joint upper tail probabilities states that $-\log\P(\Gnp\in\cU_p(\uF,\uup))$ is approximately given by
\begin{equation}	\label{NMF.tails}
 \Phi_{n,p}(\uF,\uup):=\inf_{Q\in\cQ_n} \Big\{ \sum_{i<j} \eye_p(Q_{i,j}) : Q\in\cU_p(\uF,\uup) \Big\} \,.
\end{equation}
This and closely related asymptotics were established in several recent works under various hypotheses on $p$ and $\uF$ \cite{ChVa11,ChDe14,Eldan,CoDe,Augeri,HMS,BaBa,CDP}. 
Parallel works \cite{LuZh14,BGLZ,BhDe} have shown that the optimization problem $\Phi_{n,p}(\uF,\uup)$ over the ${n\choose2}$-dimensional cube $\cQ_n$ asymptotically reduces (after normalization by $\ratenp$) to the following optimization problem over the plane:
\begin{equation}\label{def:phi}
\UTab_\uF (\uup)
:= 
\inf_{(a,b)\in V_\uF(\uup)} \Big\{ \frac12a+b \Big\} \,,
\quad
V_\uF(\uup):=\bigcap_{k\in[m]}\Big\{ (a,b)\in\R_{\ge0}^2: \,
T_{F_k}(\csize,\bsize)\ge1+s_k 
\Big\}\,.
\end{equation}
An illustration of this optimization problem for the case $\underline{F}=(K_{1,2},C_3,C_4)$ is provided in Figure~\ref{fig:ab1}.

An easy computation shows that the probability that $\Gnp$ lies in an appropriate neighborhood of $\cQ_n(a,b)$ (such as $\cG_n^1(a,b,\xi)$ or $\cG_n^2(a,b,\xi)$) 
is roughly $\exp( - (\frac12a+b)\ratenp)$, and moreover that $t(F_k,Q/p)\sim T_{F_k}(a,b)$ for $Q\in\cQ_n(a,b)$. 
The asymptotic 
\begin{equation}	\label{UT.asymp}
\log\P(\Gnp\in\cU_p(\uF,\uup)) \sim- \phi_{\uF}(\uup) \ratenp
\end{equation}
established in previous works
thus suggests that the joint upper tail event roughly coincides with the event that $\Gnp$ lies in a neighborhood of $\cQ_n(a,b)$ for some $(a,b)$ attaining the infimum in \eqref{def:phi}. Note that such a conditional structure result does not follow from \eqref{UT.asymp} since the tail probability is only determined to leading order in the exponent.

The following establishes such conditional structure results for joint upper tail events under decay conditions on $p$.
We denote the set of optimizers in \eqref{def:phi} by $\Opt(\phi; \uup)$ (suppressing the dependence on $\uF$). 

\begin{theorem}
\label{thm:struct.ER}
Suppose $F_1,\ldots,F_m$ are connected. For any fixed $\xi>0$ and $\uup\in\R_{\ge0}^m$
there exists $\eta_1=\eta_1(\uF,\uup,\xi)>0$ such that the following hold: 
\begin{enumerate}
\item[(a)] If $n^{-1/(\Delta+1)}\ll p\ll1$, then for all $n$ sufficiently large,
\begin{equation}	\label{Gn'}
\P\Big(  \Gnp \in \bigcup_{(\csize,\bsize)\in \Opt(\phi;\uup)} \cG_n^1(\csize,\bsize,\xi) \,\bigg|\, \Gnp \in \cU_p(\uF,\uup)\,\Big) \ge 1 - \exp(-\eta_1\ratenp) \,.
\end{equation}
\item[(b)] If further $n^{-1}\log n \ll p^{2\Delta_\star}\ll 1$, then for all $n$ sufficiently large,
\begin{equation}	\label{Gn''}
\P\Big(  \Gnp \in \bigcup_{(\csize,\bsize)\in \Opt(\phi;\uup)} \cG_n^2(\csize,\bsize,\xi) \,\bigg|\, \Gnp\in \cU_p(\uF,\uup)\,\Big) \ge 1 - \exp(-\eta_1\ratenp)\, .
\end{equation}
\end{enumerate}
\end{theorem}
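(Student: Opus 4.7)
The plan is to sandwich the conditional probability by combining two inputs: a quantitative large deviation upper bound that relates tail probabilities of graph events to minimum NMF costs, and a new stability version of the \cite{BGLZ} asymptotic reduction from the full NMF problem over $\cQ_n$ to the two-dimensional variational problem $\phi_{\uF}$. In part (a) closeness to optimizers is measured through the $\cG_n^1$ (edge-discrepancy) notion, in part (b) through the stronger $\cG_n^2$ (spectral-norm) notion, and the proofs of the two parts run in parallel modulo this distinction.

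On the denominator, the asymptotic \eqref{UT.asymp} under the stated hypotheses on $p$ gives the lower bound $\P(\Gnp\in\cU_p(\uF,\uup))\ge\exp\bigl(-(\phi_\uF(\uup)+o(1))\ratenp\bigr)$, using the explicit clique-hub test constructions $Q^{I,J}$. For the numerator, quantitative LDP upper bounds from \cite{CDP} (respectively \cite{HMS,BaBa} for the finer spectral regime used in part (b)) yield that for any sufficiently regular subset $\cR\subseteq\cG_n$,
\[
-\log\P(\Gnp\in\cR)\;\ge\;(1-o(1))\cdot \inf\Big\{\,\sum_{i<j}\eye_p(Q_{i,j}):Q\in\cR\cap\cU_p(\uF,\uup)\,\Big\}\,.
\]
Apply this with $\cR$ the complement of $\bigcup_{(a,b)\in\Opt(\phi;\uup)}\cG_n^{\,i}(a,b,\xi)$, for $i=1$ in part (a) and $i=2$ in part (b). The conditional bounds \eqref{Gn'}--\eqref{Gn''} will then follow once we establish the quantitative stability claim that, for some $\eta=\eta(\xi,\uF,\uup)>0$,
\[
\inf\Big\{\,\sum_{i<j}\eye_p(Q_{i,j}):Q\in\cR\cap\cU_p(\uF,\uup)\,\Big\}\;\ge\;\bigl(\phi_\uF(\uup)+\eta\bigr)\ratenp\,.
\]

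This quantitative bound is the heart of the argument and is proved through the stability form of the BGLZ reduction. One argues that if $Q\in\cU_p(\uF,\uup)$ has NMF cost within $\eta\ratenp$ of $\phi_\uF(\uup)\ratenp$, then after decomposing $Q$ into a ``clique-hub plus background'' piece plus a controlled residual, $Q$ must lie within the desired neighborhood of some $Q^{I,J}\in\cQ_n(a,b)$ with $(a,b)\in\Opt(\phi;\uup)$. Contrapositively, being $\xi$-far from every such clique-hub graph forces an extra $\eta(\xi)\ratenp$ in NMF energy. The passage from $\cQ_n$-closeness to the $\cG_n^{\,i}$ notions is governed by standard comparisons between cut-norm, edge-discrepancy, and spectral-norm, with the stronger assumption $p^{2\Delta_\star}\gg n^{-1}\log n$ in (b) used to control the ER spectral-norm fluctuations that would otherwise dominate the signal. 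A final compactness step uses continuity of $\phi_\uF$ and closedness of $\Opt(\phi;\uup)$ to turn pointwise stability into a uniform gap, and converts the $o(1)$ losses from the LDP and denominator into the clean bound $\exp(-\eta_1\ratenp)$.

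The main obstacle is the stability form of the BGLZ reduction. The original analysis relies delicately on Finner's generalized Hölder inequality to identify the two-parameter family $(a,b)\mapsto T_{F_k}(a,b)$ as capturing the leading behavior of $t(F_k,\cdot/p)$ on near-optimal inhomogeneous \ER graphs, but yields only equality of the infima, with no information on the shape of near-optimizers. Upgrading this requires the stability form of Finner's inequality advertised in the abstract, which pins down that near-equality forces the input to be close to an indicator/tensor-product structure, combined with a careful peeling argument that separates the high-degree vertices contributing to the clique region $I$ from those forming the hub $J$, and shows that any other configuration compatible with $Q\in\cU_p(\uF,\uup)$ is strictly more expensive. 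Once this quantitative peeling is in place, the remaining analytic and probabilistic components are essentially bookkeeping on the comparison of graph-distance notions alluded to in \eqref{discrep1}--\eqref{discrep2}.
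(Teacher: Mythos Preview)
Your overall strategy matches the paper's: combine a quantitative upper-LDP with a stability analysis of the NMF optimization problem (Proposition~\ref{prop:graphon.stab}), the latter resting on the stability form of Finner's inequality. However, several technical points are mis-identified or missing.

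First, the spectral-norm regime in part~(b) does not use \cite{HMS,BaBa}; it uses the spectral-norm upper-LDP and counting lemma from \cite{CoDe} (Proposition~\ref{prop:LD.spec}), while part~(a) uses the $\Base^\star$-norm versions from \cite{CDP} (Proposition~\ref{prop:LD.Bstar}). The LDP bounds are not applied to ``sufficiently regular'' sets in a vague sense: they require working with specific $\mathbb{K}$-neighborhoods (balls in the relevant norm), and the counting lemmas are what transfer membership in $\cU_p(\uF,\uup)$ from $G$ to nearby $Q$. Crucially, the counting lemmas need an a~priori bound $t(F',G/p)\le L$ on all proper subgraph counts, which the paper secures by first intersecting with $\cL_{\subseteq}(\uF,L)$ via a separate tail estimate (\cite[Lemma~7.2]{CDP}); without this truncation your application of the LDP does not go through.

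Second, your description of the peeling reverses the roles: in the decomposition the \emph{high}-degree set $D$ corresponds to the hub $T$ (since hub vertices are adjacent to nearly all of $[n]$), while the clique $S$ sits inside the \emph{low}-degree part $D^c$. This matters for correctly matching the approximations $\tf\approx\chi_{T\times T^c}$ and $\hf\approx\chi_{S\times S}$.

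Finally, the stability is established at the graphon level (Proposition~\ref{prop:graphon.stab}) via the embedding $Q\mapsto g_Q$, yielding $L_2$-closeness to some $g_{S,T}$; the passage back to $\cG_n^1$ and $\cG_n^2$ is not ``standard comparison'' but requires separate arguments---for part~(a), a direct HS-norm lower bound computed from the $\Base^\star$-norm discrepancy conditions (see \eqref{structa.goal3}), and for part~(b), the chain $\|\cdot\|_{2\to2}\le\|\cdot\|_{\HS}=n\|\cdot\|_2$.
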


\begin{remark}
We note that 
\cite{HMS} establishes \eqref{Gn'} in an essentially optimal 
range of $p$ for the case 
$m=1$ and $F_1$ a clique. 
However, in view of \cite{BaBa}
one expects to find conditionally on $\cU_p(\uF,\uup)$ different structures
for bipartite $\Delta$-regular $F_1$ and $p = o(n^{-1/\Delta})$ (and the same thus 
applies for typical samples from the \abbr{ERGM}-s corresponding 
to such such $F_1$ and $p$).
\end{remark}

\begin{figure}
\includegraphics[width=9cm]{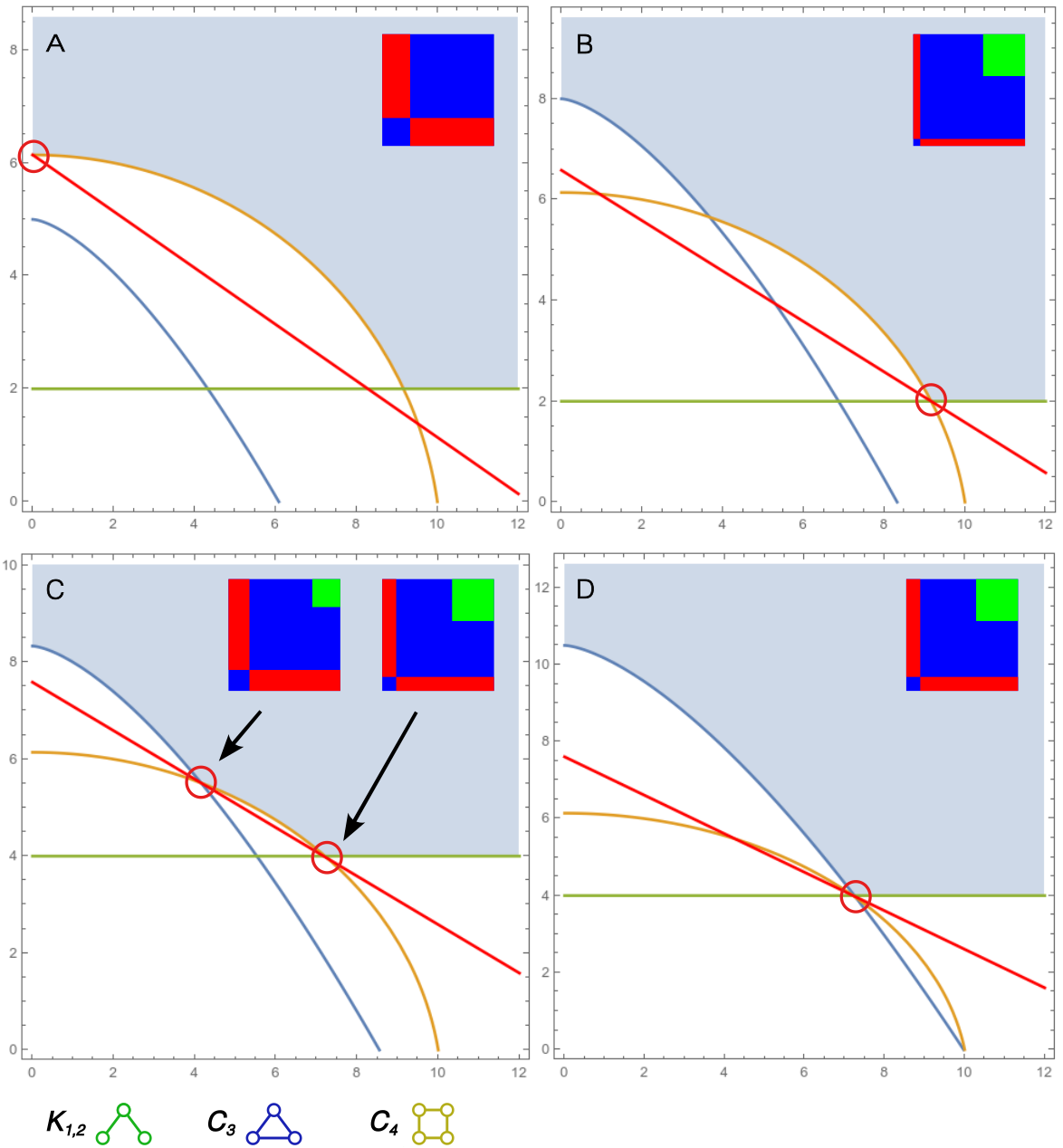}
\caption{
Plot of the feasible region $V_\uF(\uup)$ (light blue) of the $(a,b)$-plane for the optimization problem \eqref{def:phi}
for $(F_1,F_2,F_3)=(K_{1,2},C_3,C_4)$, with $s_3=100$ and four choices of $(s_1,s_2)$: $(2,15)$ (A); $(2,24)$ (B); $(4,25)$ (C); (4,31.5) (D).  The level curves $T_{F_k}(a,b)=1+s_k$ are plotted for $k=1,2,3$ in green, blue and yellow, respectively, and the line $\frac12a+b=\phi_\uF(\uup)$ is in red. Points $(\csizes,\bsizes)$ in the set $\Opt(\phi;\uup)$ of minimizers are circled in red.
The conditional structure of $\Gnp$ for each choice of $\uup$ is indicated in the top-right of each frame with a depiction of the corresponding weighted graph $Q^{I,J}$ (see \Cref{fig:Qij}). 
In C we see a phase transition in the sizes of the clique and hub at $\uup=(4,25,100)$.
C and D show that $\uup=(4,25,100)$ and $(4,31.5,100)$ are ``tricritical points'' where a qualitative change in the conditional graph structure can be achieved by perturbing any of $s_1,s_2,s_3$.
\label{fig:ab1}
}
\end{figure}

The following gives some information on the relation between the optimization problems $\psi_{\uF,\hamlet}, \phi_\uF$ in \eqref{def:psi},\eqref{def:phi}, and on their sets of optimizers $\Opt(\psi),\Opt(\phi;\uup)$.
The proof is given in Appendix \ref{sec:opt}.

\begin{prop}\label{prop:opt} ~\\
(a).\ We have that $\phi_\uF$ is continuous and non-decreasing in each argument,
with $\phi_\uF(\underline{0})=0$
and
\begin{equation}	\label{phi.LB}
\phi_\uF(\uup) \gs_{\uF} \sum_{k=1}^m s_k^{\Delta/\edges(F_k)}\qquad \forall\; \uup\in \R_{\ge0}^m : \|\uup\|_\infty\ge C(\uF)
\end{equation}
for a sufficiently large constant $C(\uF)>0$. \\
(b).
For any $\uup\in \R_{\ge0}^m$,
the set $\Opt(\phi;\uup)\subset\R_{\ge0}^2$ of
optimizers in \eqref{def:phi} is a non-empty, finite set of points on the closed line segment $\{ (\csize,\bsize): \frac12\csize+\bsize=\phi_{\uF}(\uup), \csize\ge0,\bsize\ge0\}$.\\
(c).
For $h$ satisfying Assumption \ref{assume:h}, we have the duality relation
\begin{equation}\label{psi-phi}
\psi_{\uF,\hamlet} = \sup_{\uup \in \R_{\ge 0}^m} 
\big\{ \hamlet (\1+\uup) - \phi_\uF (\uup) \big\} \,.
\end{equation}
The supremum is attained on a nonempty bounded set $S^\star\subset\R_{\ge0}^m$.\\
(d).
For $h$ satisfying Assumption \ref{assume:h}, we have
\begin{equation}	\label{opt-opt}
\Opt(\psi) = \bigcup_{\uup \in S^\star} \Opt(\phi;\uup)\,,
\end{equation}
and in particular that $\Opt(\psi)$ is a nonempty bounded subset of $\R_{\ge0}^2$.
\end{prop}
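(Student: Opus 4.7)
The plan is to prove parts (a) and (b) by direct analysis of the constrained minimization \eqref{def:phi}, then obtain parts (c) and (d) by a Legendre-type duality argument.

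For part (a), monotonicity and $\phi_\uF(\underline 0)=0$ are immediate: enlarging $\uup$ coordinate-wise shrinks $V_\uF(\uup)$, and $T_{F_k}(0,0)=\ipoly_{F_k^\star}(0)=1$ shows $(0,0)\in V_\uF(\underline 0)$. Continuity of $\phi_\uF$ then follows from standard semi-continuity arguments applied to the polynomial constraint functions $T_{F_k}$, using coercivity of $\frac12 a+b$ on $V_\uF(\uup)$ supplied by the lower bound \eqref{phi.LB}. The key combinatorial input for \eqref{phi.LB} is the inequality $\alpha(F_k^\star)\le \edges(F_k)/\Delta$: given an independent set $I\subset V(F_k^\star)$ (which is also independent in $F_k$ since $F_k^\star$ is induced), the $\Delta |I|$ edges of $F_k$ incident to $I$ are all distinct and none lies inside $I$, so $\Delta|I|\le \edges(F_k)$. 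With this, the constraint $T_{F_k}(a,b)\ge 1+s_k$ forces either $a^{\verts(F_k)/2}\ge s_k/2$ (only possible for regular $F_k$, where $2/\verts(F_k)=\Delta/\edges(F_k)$) or $\ipoly_{F_k^\star}(b)\ge 1+s_k/2$ (yielding $b\gs s_k^{1/\alpha(F_k^\star)}\ge s_k^{\Delta/\edges(F_k)}$ once $s_k\ge C(\uF)$); either way $\frac12 a+b \gs_{\uF} s_k^{\Delta/\edges(F_k)}$, and taking the maximum over $k$ gives \eqref{phi.LB} up to a factor of $m$.

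For part (b), the sub-level set $V_\uF(\uup)\cap\{\frac12 a+b\le M\}$ is compact for every finite $M$ by (a), so existence of a minimizer follows from continuity of the linear objective; linearity also places all minimizers on the closed line segment $\{\frac12 a+b=\phi_\uF(\uup)\}\cap V_\uF(\uup)$. For finiteness, any minimizer must satisfy $T_{F_k}(a,b)=1+s_k$ for at least one $k$ (else $b$ could be decreased slightly while remaining feasible), so $\Opt(\phi;\uup)$ is contained in a finite union of intersections of the algebraic curves $\{T_{F_k}(a,b)=1+s_k\}$ with the fixed line, each of which reduces to the zero set of a univariate polynomial and is therefore finite.

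For part (c), \eqref{psi-phi} follows by a change of variables between the two optimization problems. Given $(a,b)\ge 0$ in \eqref{def:psi}, setting $s_k:=T_{F_k}(a,b)-1\ge 0$ places $(a,b)$ in $V_\uF(\uup)$, so $\frac12 a+b\ge\phi_\uF(\uup)$; consequently the $\psi$-objective at $(a,b)$ is bounded by $\hamlet(\1+\uup)-\phi_\uF(\uup)$, yielding $\psi_{\uF,\hamlet}\le\sup_\uup[\hamlet(\1+\uup)-\phi_\uF(\uup)]$. Conversely, for any $\uup\ge 0$ and any minimizer $(a,b)\in\Opt(\phi;\uup)$ from part (b), monotonicity of $\hamlet$ gives $\hamlet(T_{F_1}(a,b),\ldots,T_{F_m}(a,b))\ge\hamlet(\1+\uup)$, so $\psi_{\uF,\hamlet}\ge\hamlet(\1+\uup)-\phi_\uF(\uup)$. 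Attainment on a nonempty bounded $S^\star$ is then automatic, since the growth condition \eqref{assu-h.growth} combined with the polynomial lower bound \eqref{phi.LB} on $\phi_\uF$ forces $\hamlet(\1+\uup)-\phi_\uF(\uup)\to-\infty$ as $\|\uup\|\to\infty$. For part (d), tracing the inequalities of (c) with equality shows that $(a,b)\in\Opt(\psi)$ iff $\uup(a,b):=(T_{F_k}(a,b)-1)_{k\in[m]}$ belongs to $S^\star$ and $(a,b)\in\Opt(\phi;\uup(a,b))$; this yields \eqref{opt-opt}, and boundedness of $\Opt(\psi)$ follows from boundedness of $S^\star$ together with the identity $\frac12 a+b=\phi_\uF(\uup(a,b))$ holding on $\Opt(\psi)$.

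The main subtle point is the lower bound in part (a), which rests on the combinatorial inequality $\alpha(F_k^\star)\le\edges(F_k)/\Delta$ and the matching of exponents in both branches of \eqref{def:TF} (regular vs.\ irregular $F_k$). Once this is in place, the remainder is essentially convex-analytic bookkeeping; the two-dimensional structure of \eqref{def:phi} renders the otherwise abstract Legendre duality elementary.
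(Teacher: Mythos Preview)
Your proof is correct and follows essentially the same approach as the paper. The one notable difference is in part (b): the paper argues geometrically that the boundary of the feasible region decomposes into finitely many smooth arcs (a vertical ray on the $b$-axis, a horizontal ray or segment, and sub-arcs of the strictly convex level curves $\{T_k=1+s_k\}$ for regular $F_k$), on none of which the linear objective $\frac12 a+b$ can attain an interior minimum; hence optimizers occur only at the finitely many corner points. Your algebraic route---intersecting each active-constraint curve with the fixed optimal line---is arguably more direct, but two small slips should be patched: (i) for regular $F_k$ with $\verts(F_k)$ odd (e.g.\ $F_k=C_3$), the term $a^{\verts(F_k)/2}$ is not a polynomial in $a$, so the intersection is not literally ``the zero set of a univariate polynomial'' (fix by substituting $a=\alpha^2$, or by invoking real-analyticity on $(0,\infty)$); (ii) the ``decrease $b$'' step needs the caveat ``or decrease $a$ if $b=0$'' to handle minimizers on the $a$-axis.
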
\begin{remark}
For certain choices of $\hamlet$ we can have that $S^\star$, and hence $\Opt(\psi)$, are uncountable, though for generic choices they will be finite sets, such as when $\hamlet$ is of the form $\hamlet(\ul{x}) = \sum_k \beta_k (x_k-a_k)_+^{\gamma_k}$ for constants $\beta_k>0$, $a_k\in\R$ and $\gamma_k\in (0,\Delta/\edges(F_k))$. 
\end{remark}

\subsection{Edge-$F$ models}

In this subsection we specialize the measures \eqref{def:ergm} to the case that the Hamiltonian of \eqref{hamilton} takes the form
\begin{equation}
\Ham(X) = \Ham(X;\beta) = \beta f( t(F,X))
\end{equation}
for fixed connected graph $F$, 
$\beta>0$ 
and 
$f:\R_{\ge0}\to \R$ that is continuous, and strictly increasing and differentable on $(1,\infty)$,
with $f(x)=o_{x\to\infty}(x^{\Delta/\edges(F)})$.
We let $\bG_{n,p,\beta}$ denote a sample from the corresponding generalized \abbr{ergm} $\nu_{n,p}^\Ham$.

In this case it turns out that the optimizers in \eqref{def:psi} lie on the $a$ or $b$ axis, as was shown to be the case for the large deviation optimization problem \eqref{def:phi} in \cite{BGLZ}. 
Indeed, the main result in \cite{BGLZ} states that for $n^{-1/\Delta}\ll p\ll1$ we have
\begin{equation*}
\phi_F(s) = \begin{cases} P_{F^\star}^{-1}(1+s) & F \text{ is irregular} \\ \min\{ \frac12s^{2/\verts(F)} , P_F^{-1}(1+s)\} & F \text{ is $\Delta$-regular}.\end{cases}
\end{equation*}
In the latter case, the minimum is given by the first expression $ \frac12s^{2/\verts(F)}$ for 
$s\in [s_c(F),\infty)$ and  the second expression $P_F^{-1}(1+s)$ for $s\in [0,s_c(F)]$, where $s_c(F)$ is the unique positive solution of the equation 
\begin{equation}\label{def:sc-F}
\frac12s^{2/\verts(F)} = P_F^{-1}(1+s) \,.
\end{equation}

Let
\begin{equation}	\label{def:Ubs}
U(\beta,s) = \beta f(1+s) - \phi_F(s) 
\end{equation}
be the objective function in \eqref{psi-phi}. When $F$ is regular we further denote the restrictions of $U(\beta,\cdot)$ to $[0,s_c(F)]$ and $[s_c(F),\infty)$ by
\begin{align}\label{eq:Uhub}
U_{hub}(\beta,s) &= \beta f(1+s) - P_{F^\star}^{-1}(1+s)\,,\quad s\in [0,s_c(F)],\\
U_{clique}(\beta,s) &= \beta f(1+s) - \frac12 s^{2/\verts(F)}\,,\quad\quad\, s\in[s_c(F),\infty).
\label{eq:Uclique}
\end{align}

The following result about the typical structure of Edge-$F$ models is 
proved in Section \ref{sec:edgeF}.

\begin{prop}
\label{prop:edgeF}
In the above setting, the following hold:

\underline{Case of $F$ irregular.} Suppose that $f$ is such that for all $\beta\ge0$, $U(\beta,\cdot)$ attains its maximum on $\R_{\ge0}$ at a unique point $s^\star(\beta)$. 
Then for every $\beta,\xi>0$ there exists $c=c(\beta,\xi,F,f)>0$ such that when $n^{-1/(\Delta+1)}\ll p\ll1$, we have
\begin{equation*}
\P\big(\, \bG_{n,p,\beta}\in \cG_n^1(0, \bsizes(\beta), \xi)\,\big) \ge 1-\exp(-c\ratenp)
\end{equation*}
for all $n$ sufficiently large, where $\bsizes(\beta)= P_{F^\star}^{-1}(1+s^\star(\beta))$. 

\underline{Case of $F$ regular.} Suppose $f$ is such that for all $\beta\ge0$, 
$U_{hub}(\beta,\cdot)$ and $U_{clique}(\beta,\cdot)$ attain their maxima at unique points 
$s_{clique}^\star(\beta)$ and $s_{hub}^\star(\beta)$ in their respective domains. 
Then there exists $\beta_c(F,f)>0$ such that for any fixed $\beta,\xi>0$ there exists $c=c(\beta,\xi,F,f)>0$ such that
the following holds when $n^{-1/(\Delta+1)}\ll p\ll1$.
If $\beta<\beta_c(F,f)$, then for all $n$ sufficiently large,
\[
\P\big(\, \bG_{n,p,\beta}\in \cG_n^1(0, \bsizes(\beta), \xi)\,\big) \ge 1-\exp(-c\ratenp)
\]
while if $\beta>\beta_c(F,f)$, then for all $n$ sufficiently large,
\[
\P\big(\, \bG_{n,p,\beta}\in \cG_n^1(\csizes(\beta), 0, \xi)\,\big) \ge 1-\exp(-c\ratenp)\,,
\]
where $\csizes(\beta)= s_{clique}^\star(\beta)^{2/\verts(F)}$ and $\bsizes(\beta) = P_F^{-1}(1+s_{hub}^\star(\beta))$. 

Moreover, the same conclusions hold in all cases with $\cG_n^2$ in place of $\cG_n^1$ when $n^{-1}\log n\ll p^{2\Delta_\star(F)}\ll1$. 
\end{prop}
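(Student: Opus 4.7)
The plan is to combine \Cref{thm:struct.ERGM} with \Cref{prop:opt}(c)--(d) and the explicit form of $\phi_F$ for edge-$F$ models established in \cite{BGLZ}. The task reduces to showing that $\Opt(\psi)$ is the singleton $\{(0,\bsizes(\beta))\}$ (hub phase) or $\{(\csizes(\beta),0)\}$ (clique phase), after which the conclusions follow by invoking \Cref{thm:struct.ERGM}(a) in the range $n^{-1/(\Delta+1)}\ll p\ll 1$ and (b) in the range $n^{-1}\log n\ll p^{2\Delta_\star}\ll 1$.

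The first step is to identify $\Opt(\phi;s)$ for each $s\ge 0$ using the BGLZ formula for $\phi_F$ recalled above. For irregular $F$, $T_F(a,b)=\ipoly_{F^\star}(b)$ has no $a$-dependence, so $T_F\ge 1+s$ forces $b\ge \ipoly_{F^\star}^{-1}(1+s)$, and the unique minimizer of $\tfrac12 a+b$ is the hub point $(0,\ipoly_{F^\star}^{-1}(1+s))$. For $\Delta$-regular $F$, $F^\star=F$ and $T_F(a,b)=\ipoly_F(b)+a^{\verts(F)/2}$ is strictly convex in $a$ along every line $\tfrac12 a+b=c$ (using $\verts(F)\ge 3$, which follows from $\Delta\ge 2$). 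A direct computation using $\phi_F(s)=\min\{\tfrac12 s^{2/\verts(F)},\,\ipoly_F^{-1}(1+s)\}$ then shows that on the minimal line $\tfrac12 a+b=\phi_F(s)$ the only candidates for $T_F\ge 1+s$ are the two axis endpoints, of which exactly one is feasible when $s\ne s_c(F)$ (the hub for $s<s_c$, the clique for $s>s_c$), while both are feasible at $s=s_c$.

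By \Cref{prop:opt}(c)--(d) it remains to locate $S^\star=\arg\max_{s\ge 0}U(\beta,s)$ with $U(\beta,s)=\beta f(1+s)-\phi_F(s)$. In the irregular case the uniqueness hypothesis yields $S^\star=\{s^\star(\beta)\}$ and hence $\Opt(\psi)=\{(0,\bsizes(\beta))\}$. In the regular case I split $U$ into $U_{hub}$ on $[0,s_c(F)]$ and $U_{clique}$ on $[s_c(F),\infty)$, each with unique maximizer $s_{hub}^\star(\beta),s_{clique}^\star(\beta)$ by hypothesis, and set
\[
G(\beta):=U_{clique}(\beta,s_{clique}^\star(\beta))-U_{hub}(\beta,s_{hub}^\star(\beta)).
\]
Both terms are convex in $\beta$ as suprema of affine functions, and by the envelope theorem their derivatives are $f(1+s_{clique}^\star(\beta))\ge f(1+s_c(F))\ge f(1+s_{hub}^\star(\beta))$, so $G$ is continuous and nondecreasing. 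Moreover $G(0)<0$, since $U_{hub}(0,0)=0$ while $U_{clique}(0,\cdot)\le -\tfrac12 s_c(F)^{2/\verts(F)}<0$; and for any fixed $s>s_c(F)$, using $U_{hub}(\beta,s_{hub}^\star(\beta))\le \beta f(1+s_c(F))$,
\[
G(\beta)\ \ge\ \beta\bigl[f(1+s)-f(1+s_c(F))\bigr]-\tfrac12 s^{2/\verts(F)}\ \to\ +\infty \quad\text{as }\beta\to\infty.
\]
Defining $\beta_c(F,f)$ as the sign-change point of $G$, we conclude $\Opt(\psi)=\{(0,\bsizes(\beta))\}$ for $\beta<\beta_c$ and $\{(\csizes(\beta),0)\}$ for $\beta>\beta_c$ (a short argument rules out $s_{hub}^\star(\beta)=s_c$ for $\beta<\beta_c$ and $s_{clique}^\star(\beta)=s_c$ for $\beta>\beta_c$, so that the corresponding $\Opt(\phi;\cdot)$ is a singleton in each case).

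The main obstacle is the analysis of the regular case, which requires two subtle steps: reducing $\Opt(\phi;s)$ to a single axis point via the strict convexity of $T_F$ along the level lines of the objective (using $\verts(F)\ge 3$ and the precise form of the BGLZ optimum), and establishing the threshold $\beta_c(F,f)$ via the monotonicity and sign-change properties of $G$. The remainder is a routine application of \Cref{thm:struct.ERGM} and \Cref{prop:opt}.
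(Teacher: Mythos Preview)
Your overall strategy---reduce to \Cref{thm:struct.ERGM} via \Cref{prop:opt}(c)--(d), use the BGLZ formula for $\phi_F$ to pin $\Opt(\phi;s)$ to an axis, then analyze $S^\star(\beta)$---is exactly what the paper does. The execution for the regular case is genuinely different: the paper argues via monotonicity of the \emph{maximizers} $s^\star_{hub},s^\star_{clique}$ in $\beta$ (its Lemma~\ref{lem:monotone}) and defines $\beta_c=\sup\{\beta:S^\star(\beta)\cap[0,s_c)\ne\emptyset\}$, whereas you work with the \emph{value-function difference} $G(\beta)$ and the envelope theorem. Your route is in some ways cleaner: your proofs that $G(0)<0$ and $G(\beta)\to+\infty$ are direct, and your ``short argument'' for $s^\star_{hub}(\beta)\ne s_c$ when $G(\beta)<0$ (namely $M_h(\beta)>M_c(\beta)\ge U_{clique}(\beta,s_c)=U_{hub}(\beta,s_c)$) is simpler than the paper's Lemma~\ref{lem:deriv} argument for the same purpose.

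There is, however, a real gap. You claim $G$ has a well-defined ``sign-change point,'' but your envelope computation only gives $G'(\beta)=f(1+s^\star_{clique}(\beta))-f(1+s^\star_{hub}(\beta))\ge 0$, with equality precisely when $s^\star_{clique}(\beta)=s^\star_{hub}(\beta)=s_c$. Nothing you have written rules out an interval on which $G\equiv 0$; on such an interval both maximizers sit at $s_c$, $S^\star(\beta)=\{s_c\}$, and $\Opt(\phi;s_c)$ has \emph{two} points, so $\Opt(\psi)$ is not a singleton and the proposition's conclusion fails for those $\beta$. Your ``short argument'' only applies once $G(\beta)$ is strictly negative or strictly positive, so it cannot be used to exclude the flat region itself.

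The paper closes this gap with Lemma~\ref{lem:deriv}: a direct computation shows $\lim_{s\uparrow s_c}\phi_F'(s)>\lim_{s\downarrow s_c}\phi_F'(s)$ (a downward kink of $\phi_F$ at $s_c$), so $U(\beta,\cdot)$ has an upward kink at $s_c$ and $s_c$ can never be a local maximum of $U(\beta,\cdot)$---in particular it cannot simultaneously maximize $U_{hub}$ and $U_{clique}$. Plugging this into your framework yields $G'(\beta)>0$ everywhere, hence a unique zero. You should either supply this derivative comparison (the key inequality is $P_F'(\tfrac12 s_c^{2/\verts(F)})<\verts(F)\,s_c^{1-2/\verts(F)}$, which follows from bounding the higher-order terms of $P_F$ using the independence-number bound $\edges(F)/\Delta=\verts(F)/2$), or cite it as a separate lemma.
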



We note in passing 
that $s^\star(\beta)>0$ if and only if
\begin{equation}
\beta>\beta_o :=\inf_{s>0} \{\phi_F(s)/(f(1+s)-f(1))\} \,.
\end{equation}

\begin{remark}[Absence of degeneracy]
\label{rmk:degeneracy}
Recall the degeneracy phenomenon established in \cite{ChDi} for the version \eqref{edgeK3.intro} of the edge-triangle model,
wherein there exists $\betas=\betas(\alpha)\ge0$ such that typical samples transition from almost-empty to almost-full as $\beta$ increases through $\betas$.
The sparse setting of Proposition \ref{prop:edgeF} corresponds to the limiting case $\alpha\to-\infty$, and without \eqref{assu-h.growth} we have such a degeneracy transition at $\betas=0$.
Indeed, for $\Delta$-regular $F$, $\phi_F(s)$ grows like $s^{2/\verts(F)}=s^{\Delta/\edges(F)}$ when $s$ is large. Thus, taking $f$ to be linear as in \eqref{edgeK3.intro}, we have that the optimizing value of $s$ for \eqref{def:Ubs} is zero for $\beta=0$ and $+\infty$ as soon as $\beta>0$.
Thus, the growth condition \eqref{assu-h.growth} is crucial for eliminating the degeneracy phenomenon. 
\end{remark}

\begin{remark}
To find $\beta_c(F,f)$ one computes the unique positive solution $s_c(F)$ of \eqref{def:sc-F} and then
the maximizers $s_{hub}^\star(\beta) \le s_c(F) \le s_{clique}^\star(\beta)$ 
of the functions $U_{hub}(\beta,\cdot)$ and $U_{clique}(\beta,\cdot)$, of \eqref{eq:Uhub} and
\eqref{eq:Uclique}, respectively. Now, $\beta_c(F,f)$ is the largest $\beta$ such that  
$U_{hub}(\beta,s_{hub}^\star(\beta)) > U_{clique}(\beta,s_{clique}^\star(\beta))$.
\end{remark}

The following result gives a more explicit version of Proposition \ref{prop:edgeF} for the case of $F=C_3$ and a specific choice of $f$.
The computations are given in Section \ref{sec:edgeF}.

\begin{cor}[The edge-triangle model]
\label{cor:edge-triangle}
With hypotheses as in Proposition \ref{prop:edgeF}, take $F=C_3$ and 
\[
f(x)= (x-1)_+^{\gamma/3}\,,
\]
for some $\gamma \in (0,2)$. Let
\[
\beta_c=\frac{1}{\gamma} \Big( \frac{6-2\gamma}{6-3\gamma} \Big)^{(2-\gamma)(3-\gamma)/\gamma} \,.
\]
Assume $n^{-1/3}\ll p\ll1$. For any fixed $\beta,\xi>0$ there exists $c(\beta,\xi)>0$ 
such that if $\beta<\beta_c$, then for all $n$ sufficiently large,
\[
\P\Big(\, \bG_{n,p,\beta}\in \cG_n^1\big(0, 
\frac{1}{3} (\gamma \beta)^{\frac{3}{3-\gamma}}, \xi\big)\,\Big) \ge 1-\exp(-c\ratenp)
\]
while if $\beta>\beta_c$, then for all $n$ sufficiently large,
\[
\P\big(\, \bG_{n,p,\beta}\in \cG_n^1((\gamma\beta)^{\frac{2}{2-\gamma} }, 0, \xi)\,\big) \ge 1-\exp(-c\ratenp)\,.
\]
The same conclusions hold with $\cG_n^2$ in place of $\cG_n^1$ when $n^{-1/4}\log^{1/4} n\ll p\ll1$. 
\end{cor}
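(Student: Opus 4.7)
The plan is to specialize Proposition \ref{prop:edgeF} to $F=C_3$ and $f(x)=(x-1)_+^{\gamma/3}$, verifying its hypotheses and explicitly computing the relevant quantities. First I would check the standing requirements on $f$: it is continuous on $\R_{\ge 0}$, strictly increasing and differentiable on $(1,\infty)$, and satisfies the growth condition $f(x)=o(x^{2/3})=o(x^{\Delta/\edges(F)})$ precisely when $\gamma/3<2/3$, i.e.\ $\gamma<2$, which is assumed. Since $C_3$ is $2$-regular, the regular case of Proposition \ref{prop:edgeF} applies, and moreover $F^\star=C_3$ with independence polynomial $P_{C_3}(x)=1+3x$, so $P_{C_3}^{-1}(1+s)=s/3$. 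Combined with $\verts(C_3)=3$ this gives $\phi_{C_3}(s)=\min\{s/3,\tfrac12 s^{2/3}\}$, and \eqref{def:sc-F} reduces to $s/3=\tfrac12 s^{2/3}$, yielding $s_c(C_3)=27/8$.

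Next I would compute the unique maximizers. With $f(1+s)=s^{\gamma/3}$, the relevant objectives become
\begin{equation*}
U_{hub}(\beta,s)=\beta s^{\gamma/3}-s/3, \qquad U_{clique}(\beta,s)=\beta s^{\gamma/3}-\tfrac12 s^{2/3}.
\end{equation*}
Both $U_{hub}(\beta,\cdot)$ and $U_{clique}(\beta,\cdot)$ are strictly concave on $(0,\infty)$ (since $\gamma/3<2/3<1$), so interior critical points are unique maxima. Setting $\partial_s U_{hub}=0$ yields $s_{hub}^\star(\beta)=(\gamma\beta)^{3/(3-\gamma)}$, whence $\bsizes(\beta)=P_{C_3}^{-1}(1+s_{hub}^\star(\beta))=\tfrac13 (\gamma\beta)^{3/(3-\gamma)}$. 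Similarly $\partial_s U_{clique}=0$ gives $s_{clique}^\star(\beta)=(\gamma\beta)^{3/(2-\gamma)}$ and $\csizes(\beta)=s_{clique}^\star(\beta)^{2/3}=(\gamma\beta)^{2/(2-\gamma)}$, matching the formulas in the statement.

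The critical threshold $\beta_c$ is then determined by solving $U_{hub}(\beta,s_{hub}^\star(\beta))=U_{clique}(\beta,s_{clique}^\star(\beta))$. Substituting the optima and using the relations $\beta\gamma=(s_{hub}^\star)^{(3-\gamma)/3}=(s_{clique}^\star)^{(2-\gamma)/3}$, the two optimal values simplify cleanly to
\begin{equation*}
U_{hub}(\beta,s_{hub}^\star(\beta))=\frac{3-\gamma}{3\gamma}(\gamma\beta)^{3/(3-\gamma)},\qquad U_{clique}(\beta,s_{clique}^\star(\beta))=\frac{2-\gamma}{2\gamma}(\gamma\beta)^{2/(2-\gamma)}.
\end{equation*}
Equating these and collecting the resulting power of $\gamma\beta$, the exponent $\tfrac{3}{3-\gamma}-\tfrac{2}{2-\gamma}=-\tfrac{\gamma}{(3-\gamma)(2-\gamma)}$ emerges, yielding $(\gamma\beta_c)^{\gamma/((3-\gamma)(2-\gamma))}=\tfrac{6-2\gamma}{6-3\gamma}$ and hence the formula for $\beta_c$ in the statement. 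Strict concavity also gives $s_{hub}^\star(\beta)\le s_c(C_3)\le s_{clique}^\star(\beta)$ exactly at $\beta=\beta_c$, confirming that $s_{hub}^\star(\beta)$ lies in the correct subdomain for $\beta<\beta_c$ and $s_{clique}^\star(\beta)$ for $\beta>\beta_c$. Finally, $\Delta_\star(C_3)=2$, so the condition $n^{-1}\log n\ll p^{2\Delta_\star}$ translates to $p\gg n^{-1/4}\log^{1/4} n$, giving the stated range for the $\cG_n^2$ conclusion. The only nontrivial step is verifying uniqueness and the algebraic simplification for $\beta_c$; both are routine given strict concavity of the $U$-functionals.
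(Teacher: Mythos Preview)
Your approach is essentially the same as the paper's: specialize Proposition \ref{prop:edgeF} to $F=C_3$, compute $P_{C_3}$, $s_c$, the maximizers of $U_{hub}$ and $U_{clique}$, and equate the optimal values to identify $\beta_c$. One small slip: $U_{clique}(\beta,s)=\beta s^{\gamma/3}-\tfrac12 s^{2/3}$ is \emph{not} strictly concave on $(0,\infty)$ (the term $-\tfrac12 s^{2/3}$ is convex), so your stated justification for uniqueness is off; however, $\partial_s U_{clique}=\tfrac13 s^{-1/3}(\gamma\beta s^{(\gamma-2)/3}-1)$ changes sign exactly once, which gives the unique maximizer directly and suffices for the hypothesis of Proposition \ref{prop:edgeF} on $[s_c,\infty)$.
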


\subsection{Stability form of Finner's inequality}

Our proof of Theorem \ref{thm:struct.ER} builds on analysis in \cite{BGLZ} of the entropic optimization problem \eqref{def:probUTnmf}.
That work makes use of a Brascamp--Lieb-type inequality for product measure spaces due to Finner \cite{Finner}, restated below for the case of product probability spaces. 

We note that the stability of other special cases of the Brascamp--Lieb inequalities, such as the Riesz--Sobolev inequality, as well as ``reverse'' Brascamp--Lieb inequalities such as the Brunn--Minkowski inequality, have been a subject of recent interest -- see \cite{Christ-RS,ChON,FiJe,Figalli:ICM} and references therein.

In the following we consider a finite set $V$ and a set system $\cA$ over $V$ -- that is, a finite collection of subsets $A\subseteq V$, allowing repetitions (thus $\cA$ is in general a multiset).
We assume $\emptyset\notin\cA$. 
Say two elements $u,v\in V$ are equivalent if for every $A\in\cA$, either $\{u,v\}\subseteq A$ or $\{u,v\}\subseteq V\setminus A$, and let $\cB$ denote the partition of $V$ into equivalence classes. Thus, $\cB$ is the smallest partition of $V$ so that every element of $\cA$ can be expressed as a union of elements of $\cB$. 
To each $v\in V$ we associate a probability space $(\Omega_v,\mu_v)$.
For nonempty $A\subseteq V$ we write $\Omega_A:=\prod_{v\in A} \Omega_v$, $\mu_A:=\bigotimes_{v\in A} \mu_v$, and let $\pi_A:\Omega_V\to\Omega_A$ denote the associate coordinate projection mapping.
We abbreviate $(\Omega,\mu)=(\Omega_V,\mu_V)$. 

\begin{theorem}[Finner's inequality]
\label{thm:finner}
In the above setting, let $\Lambda=(\lam_A)_{A\in\cA}$ be a collection of positive weights such that $\sum_{A\ni v}\lam_A\le1$ for each $v\in V$.
Suppose $(f_A)_{A\in\cA}$ is a collection of functions $f_A:\Omega_A\to\R_{\ge0}$ such that $\int f_A d\mu_A \le1$ for all $A\in\cA$.
Then
\begin{equation}	\label{bd:finner}
\int_{\Omega} \prod_{A\in\cA} f_A^{\lam_A}\circ\pi_A \,d\mu \le 1. 
\end{equation}
\end{theorem}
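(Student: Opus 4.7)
The plan is to prove Finner's inequality by induction on $|V|$, integrating out one variable at a time and using generalized H\"older's inequality at each step. The hypothesis $\sum_{A \ni v} \lambda_A \le 1$ for each $v$ is exactly what makes H\"older's inequality applicable when we integrate over $\Omega_v$, since this guarantees that the reciprocals of the H\"older exponents $p_A = 1/\lambda_A$ (for those $A$ containing $v$) sum to at most $1$.

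For the base case $|V|=1$, every $A\in \cA$ is the singleton $\{v\}$, the product $\prod_{A\in \cA} f_A^{\lambda_A}$ depends only on $\omega_v$, and generalized H\"older yields
\begin{equation*}
\int_{\Omega_v} \prod_{A\in \cA} f_A^{\lambda_A} \,d\mu_v \;\le\; \prod_{A\in \cA} \Bigl(\int_{\Omega_v} f_A \,d\mu_v\Bigr)^{\lambda_A} \;\le\; 1,
\end{equation*}
where in the last step we used $\int f_A\,d\mu_v \le 1$ and $\lambda_A \ge 0$. For the inductive step, fix any $v\in V$ and partition $\cA = \cA_0 \sqcup \cA_1$ according to whether $v\notin A$ or $v\in A$. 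Applying Fubini and then generalized H\"older in the inner integral (legitimate since $\sum_{A\in \cA_1} \lambda_A \le 1$),
\begin{equation*}
\int_{\Omega_v} \prod_{A\in \cA_1} f_A^{\lambda_A}(\omega_v, \omega_{A\setminus\{v\}}) \,d\mu_v(\omega_v) \;\le\; \prod_{A\in \cA_1}\Bigl(\int_{\Omega_v} f_A(\omega_v,\omega_{A\setminus\{v\}})\,d\mu_v\Bigr)^{\lambda_A}.
\end{equation*}

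Set $\tilde f_{A\setminus\{v\}}(\omega_{A\setminus\{v\}}) := \int_{\Omega_v} f_A(\omega_v,\omega_{A\setminus\{v\}})\,d\mu_v$ for each $A\in \cA_1$; by Fubini, $\int \tilde f_{A\setminus\{v\}}\,d\mu_{A\setminus\{v\}} = \int f_A \,d\mu_A \le 1$. Plugging this bound back yields
\begin{equation*}
\int_\Omega \prod_{A\in \cA} f_A^{\lambda_A}\circ \pi_A \,d\mu \;\le\; \int_{\Omega_{V\setminus\{v\}}} \prod_{A\in \cA_0} f_A^{\lambda_A}\circ \pi_A \cdot \prod_{A\in \cA_1} \tilde f_{A\setminus\{v\}}^{\lambda_A}\circ\pi_{A\setminus\{v\}} \,d\mu_{V\setminus\{v\}}.
\end{equation*}
The right-hand side now fits the Finner setup on the smaller ground set $V' = V\setminus\{v\}$, with set system $\cA' = \cA_0 \cup \{A\setminus\{v\}: A\in \cA_1,\, A\ne\{v\}\}$: each $u\in V'$ still satisfies $\sum_{A'\in \cA',\,A'\ni u} \lambda_{A'} \le \sum_{A\in\cA,\,A\ni u} \lambda_A \le 1$, and all new functions are normalized in $L^1$.

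The only technicality is handling $A=\{v\}\in \cA_1$, for which $A\setminus\{v\}=\emptyset$ is excluded from the set system. Such terms contribute constants $\tilde f_\emptyset^{\lambda_A} = (\int f_{\{v\}}\,d\mu_v)^{\lambda_A} \le 1$, which can simply be pulled out and bounded by $1$. After this bookkeeping, the inductive hypothesis applied to $(V',\cA')$ closes the argument. The main subtlety, rather than any deep obstacle, is simply tracking how the weight condition $\sum_{A\ni u}\lambda_A\le 1$ is preserved under the reduction; this is automatic because removing $v$ from some $A$'s does not create new sets containing any other vertex $u$.
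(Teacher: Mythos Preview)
Your proof is correct and is essentially Finner's original induction argument: integrate out one coordinate at a time, applying the generalized H\"older inequality at each step thanks to the covering condition $\sum_{A\ni v}\lambda_A\le 1$. Note that the paper does not supply its own proof of this theorem; it is stated as a known result from \cite{Finner}, so there is no in-paper proof to compare against. (The paper's Appendix~A does prove the \emph{stability} version, Theorem~\ref{thm:finner.stab}, by an induction of the same shape as yours, which confirms that your approach is the intended one.)
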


The case that $\cA$ consists of two copies of $V$ is H\"older's inequality. 
The case that $\cA$ consists of all subsets of $V$ of a fixed size was obtained earlier by 
Calder\'on \cite{Calderon76}.

In \cite{Finner} Finner also shows that equality holds in \eqref{bd:finner} if and only if there are functions $h_{A,B}:\Omega_B\to\R_{\ge0}$ for each $A\in \cA$ and $\cB\ni B\subseteq A$, such that $f_A=\bigotimes_{B\subseteq A} h_{A,B}$, with $h_{A,B}$ and $h_{A',B}$ $\mu_B$-almost-surely equal up to a constant multiple $K_{A,A',B}>0$ whenever $B\subseteq A\cap A'$. 

For the proofs of Theorems \ref{thm:struct.ERGM} and \ref{thm:struct.ER} we make use of the following stability version of Theorem \ref{thm:finner}, which is a robust statement of the case for equality.
The proof is given in \Cref{sec:finner}.

\begin{theorem}[Stability version of Finner's inequality]
\label{thm:finner.stab}
With hypotheses as in Theorem \ref{thm:finner}, suppose 
\begin{equation}	\label{finner.stab:LB}
1-\eps\le \int_\Omega\prod_{A\in\cA} f_A^{\lam_A} \circ \pi_A \,d\mu
\end{equation}
for some $\eps\ge0$.
Then there is a collection of functions $(h_B)_{B\in\cB}$ with $h_B:\Omega_B\to\R_{\ge0}$ and $\int h_Bd\mu_B=1$ for each $B\in\cB$, such that for every $A\in \cA$,
\begin{equation}	\label{finner.stab:compare}
\Big\| f_A- \bigotimes_{B\subseteq A} h_B \Big\|_{L_1(\Omega_B)} \ls \eps^{c}
\end{equation}
where $c>0$ and the implicit constant depend only on $|V|,\cA$ and $\Lambda$. 
\end{theorem}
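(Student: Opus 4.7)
The plan is to induct on $|\cB|$, peeling off one block at a time via Hölder's inequality and invoking Hölder stability at each step. The \emph{base case} $|\cB|=1$ forces every $A\in\cA$ to equal $V$, so \eqref{finner.stab:LB} reduces to multilinear Hölder, $\int_\Omega \prod_A f_A^{\lam_A}\,d\mu \ge 1-\eps$ with $\int f_A\,d\mu \le 1$ and $\sum_A \lam_A \le 1$. The stability assertion --- that a single density $h_V$ with $\int h_V\,d\mu = 1$ and $\|f_A - h_V\|_{L^1} \lesssim \eps^{c_0}$ for all $A$ exists --- follows from uniform convexity of $L^p$ (applied pairwise, using a spectator weight to absorb any slack $1-\sum_A\lam_A \ge 0$).

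For the \emph{inductive step} ($|\cB|\ge 2$), pick any block $B_0\in\cB$ and split $\cA = \cA_{\mathrm{in}} \sqcup \cA_{\mathrm{out}}$, where $\cA_{\mathrm{in}} = \{A : B_0 \subseteq A\}$ and $\cA_{\mathrm{out}} = \{A : A\cap B_0 = \emptyset\}$; this exhausts $\cA$ because $B_0$ is a block, and $\sum_{A\in\cA_{\mathrm{in}}} \lam_A \le 1$ by the hypothesis applied at any $v \in B_0$. For each fixed $y \in \Omega_{V\setminus B_0}$, multilinear Hölder on $\Omega_{B_0}$ followed by Fubini gives
\[
\int_\Omega \prod_{A\in\cA} f_A^{\lam_A}\,d\mu \;\le\; \int_{\Omega_{V\setminus B_0}} \prod_{A\in\tilde\cA} \tilde f_A^{\lam_A}\,d\mu_{V\setminus B_0} \;\le\; 1,
\]
where $\tilde f_A(y) := \int f_A(x,y)\,d\mu_{B_0}(x)$ for $A\in\cA_{\mathrm{in}}$, $\tilde f_A := f_A$ for $A\in\cA_{\mathrm{out}}$, and $\tilde\cA := \{A\setminus B_0 : A\in\cA_{\mathrm{in}}\} \cup \cA_{\mathrm{out}}$. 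A direct check shows the equivalence partition of $\tilde\cA$ on $V\setminus B_0$ is exactly $\cB\setminus\{B_0\}$, since vertices outside $B_0$ are distinguished by the same sets in $\tilde\cA$ as in $\cA$. Denoting the middle quantity by $1-\eps'$, one has $\eps' \le \eps$ and the fiberwise Hölder deficit $\delta(y)\ge 0$ satisfies $\int \delta(y)\,d\mu_{V\setminus B_0}(y) = \eps-\eps' \le \eps$.

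The inductive hypothesis applied to the outer inequality yields densities $(h_B)_{B\in\cB\setminus\{B_0\}}$ with $\|\tilde f_A - \bigotimes_{B\subseteq A,\,B\ne B_0} h_B\|_{L^1} \lesssim \eps^{c_1}$ for every $A\in\tilde\cA$. For the inner Hölder, multi-function Hölder stability on each fiber $y$ produces a density $h_{B_0,y}$ such that $\|f_A(\cdot,y)/\tilde f_A(y) - h_{B_0,y}\|_{L^1(\mu_{B_0})} \lesssim \delta(y)^{c_0}$ for every $A\in\cA_{\mathrm{in}}$. Fixing a reference $A_0 \in \cA_{\mathrm{in}}$ and defining $h_{B_0}(x) := \int f_{A_0}(x,y)/\tilde f_{A_0}(y)\,d\mu_{A_0\setminus B_0}(y)$ (suitably interpreted on $\{\tilde f_{A_0} = 0\}$), one averages the fiberwise bound via Jensen's inequality together with $\int\delta(y)\,d\mu \le \eps$ to obtain $\|f_A - h_{B_0}\otimes\tilde f_A\|_{L^1} \lesssim \eps^{c_2}$ for every $A\in\cA_{\mathrm{in}}$. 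The triangle inequality then gives $\|f_A - \bigotimes_{B\subseteq A} h_B\|_{L^1} \lesssim \eps^{\min(c_1,c_2)}$, closing the induction; the cases $A\in\cA_{\mathrm{out}}$ are handled directly by the inductive bound.

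The \emph{main obstacle} is the coherence step: producing a \emph{single} $h_{B_0}$, independent of $y$ and of the specific $A\in\cA_{\mathrm{in}}$, that captures the approximate tensor structure along $B_0$. This is resolved by combining multi-function Hölder stability on each fiber (which yields a common fiberwise approximant of all the normalized ratios) with careful averaging to convert the fiberwise Hölder defect $\delta(y)^{c_0}$ into an integrated $L^1$ bound of the form $\eps^{c_2}$. A secondary bookkeeping point is that the exponent $c$ in \eqref{finner.stab:compare} degrades by a bounded factor at each of the at most $|\cB|\le|V|$ inductive steps, so it remains positive and depends only on $|V|,\cA,\Lambda$ as required.
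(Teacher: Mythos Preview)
Your overall architecture --- induct on $|\cB|$, peel off one block $B_0$, apply H\"older fiberwise over $\Omega_{B_0}$, then invoke the inductive hypothesis on $V\setminus B_0$ --- is a reasonable alternative to the paper's induction on $|V|$ (after reducing to singleton blocks). Both rest on the same H\"older stability lemma. However, your coherence step has two concrete problems.

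First, a computational error: the identity $\int \delta(y)\,d\mu_{V\setminus B_0} = \eps-\eps'$ is false. What the two-step decomposition actually gives is the \emph{weighted} bound
\[
\int_{\Omega_{V\setminus B_0}} \Big(\prod_{A\in\cA_{\mathrm{out}}} f_A^{\lam_A}\Big)\,\delta(y)\,d\mu_{V\setminus B_0}(y) \;=\; \eps-\eps',
\]
and the weight $\prod_{\cA_{\mathrm{out}}} f_A^{\lam_A}$ is not bounded below. Moreover, the fiberwise H\"older deficit for the \emph{normalized} functions $f_A(\cdot,y)/\tilde f_A(y)$ is $\delta(y)/\prod_{\cA_{\mathrm{in}}}\tilde f_A(y)^{\lam_A}$, not $\delta(y)$, introducing a second uncontrolled denominator.

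Second, and more seriously, the passage from fiberwise densities $h_{B_0,y}$ to a single $h_{B_0}$ is not justified by ``averaging via Jensen.'' Fiberwise H\"older stability tells you that at each fixed $y$ the ratios $f_A(\cdot,y)/\tilde f_A(y)$ for varying $A\in\cA_{\mathrm{in}}$ are close \emph{to one another}; it says nothing about comparing fibers at different $y$. To conclude that $h_{B_0,y}$ is approximately $y$-independent you must exploit that distinct $A,A'\in\cA_{\mathrm{in}}$ have $A\setminus B_0 \ne A'\setminus B_0$ (so $g_A(\cdot,y)$ depends on different $y$-coordinates than $g_{A'}(\cdot,y)$), and chain through these to force constancy --- an argument you do not supply, and which must also handle the degenerate cases where $\cA_{\mathrm{in}}$ is a singleton or all its elements coincide with $B_0$. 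The paper sidesteps this entirely: it applies the inductive hypothesis once for \emph{each} vertex $v$ removed, obtaining families $(h_u^{(v)})_{u\ne v}$, and then proves the cross-consistency estimate $\|h_{A\setminus\{u\}}^{(u)} - h_{A\setminus\{u\}}^{(v)}\|_1 \lesssim \eps^{c}$ via Minkowski and the inductive bounds, which lets it assemble a single coherent family by an explicit choice.
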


\begin{remark}
\label{rmk:hB1}
Note it follows from the theorem statement that $\|h_B-1\|_{L_1(\Omega_B)}\ls \eps^c$ for any $B\in \cB$ such that $\sum_{A\supseteq B}\lam_A<1$. Indeed, if such a set $B$ exists, then we can add a copy of $B$ to $\cA$, taking $f_B=1$ and $\lam_B=1-\sum_{A\supseteq B}\lam_A$.
\end{remark}

\begin{remark}
It would be interesting to determine the optimal exponent $c$ in \eqref{finner.stab:compare} depending on the structure of the set system $\cA$. It is not hard to see that $c=1/2$ is optimal for H\"older's inequality, as well as the generalized H\"older inequality (see Remark \ref{rmk:holder.c}). Inspection of the proof shows that we can also take $c=1/2$ in our application to Theorem \ref{thm:struct.ER}, where $\cA$ is the edge set of a simple graph. However, in general the proof gives a smaller value of $c$.
We mention that the work \cite{EFKY} obtains a similar stability result for the special case of the uniform cover inequality, which concerns the case that the sets in $\cA$ are all distinct and $|\{A\in\cA:v\in A\}|\equiv d$ for some $d$, and where the $f_A$ are taken to be indicators of bodies (open sets with compact closure). (The uniform cover inequality is a generalization of the Loomis--Whitney inequality.) Notably, in that setting they obtain an approximation as in \eqref{finner.stab:compare} with $c=1$, which is optimal under their hypotheses.
\end{remark}

\subsection{Discussion and future directions}

Theorem \ref{thm:struct.ERGM} shows that sparse \abbr{ergm}s can exhibit non-trivial structure different from \ER graphs, provided the Hamiltonian satisfies a growth condition. In particular, the clique and hub structures of Theorem \ref{thm:struct.ERGM} introduce some amount of transitivity, and cure the most severe forms of the degeneracy phenomenon studied in \cite{ChDi}.
However, the class of clique-hub graphs is still unlikely to be rich enough to provide useful models for social networks, which has been the main motivation for these models. 

On the other hand, Theorem \ref{thm:struct.ERGM} at least demonstrates a clique phase for \abbr{ergm}s. The appearance of cliques is a common feature of social networks that is not exhibited by other models for real-world networks, such as the preferential attachment model, which are locally tree-like \cite{ZNature}. 
Based on recent works on large deviations for random graphs, it seems likely that extensions of the models considered here incorporating degree constraints would exhibit more involved dense structures with multiple cliques \cite{BhDe, Gunby}. 
Furthermore, richer structures may result from considering Hamiltonians that depend on \emph{induced} homomorphism densities\footnote{An induced graph homomorphism from $F$ to $G$ is a mapping of the vertices of $F$ into the vertices of $G$ such that edges of $F$ are mapped onto edges of $G$ \emph{and} non-edges of $F$ are mapped onto non-edges of $G$.}, as suggested by recent work on the upper tail for induced $4$-cycle counts in the \ER graph \cite{CohenAntonir}.
The \abbr{nmf} approximation was established for the upper tail of induced subgraph counts in the \ER hypergraph in \cite{CDP} in a certain range of $p$, but apart from the aforementioned work \cite{CohenAntonir} on $4$-cycles, the structure of optimizers for the \abbr{nmf} variational problem remains open in general.

In this work we consider the ferromagnetic regime with $\hamlet$ in \eqref{hamilton} nondecreasing, or alternatively, taking $\beta_k>0$ in \eqref{Ham.intro}. The \abbr{nmf} approximation could be extended to the case of decreasing $\hamlet$ using results from \cite{CDP} on joint lower tails for homomorphism counts in $\Gnp$; see also \cite{KoSa:LT} for a result for the case $m=1$ in an optimal sparsity range. 
For the more delicate structural result of Theorem \ref{thm:struct.ERGM}, the first step of providing an asymptotic solution to the lower-tail large deviations variational problem -- analogous to the result of \cite{BGLZ} for the upper tail -- remains open. 

Finally, we mention that the large deviations results we import from \cite{CDP} were developed in the more general setting of $r$-uniform hypergraphs, and would permit extensions of Proposition \ref{prop:nmf} to exponential random \emph{hypergraph} models, which have been proposed in \cite{SSR14-hypergraphs, Battison} to model multiway interactions in social networks. As with the problem of allowing $\hamlet$ to be decreasing, the extension of the structure result of Theorem \ref{thm:struct.ERGM} requires an analysis of the \abbr{nmf} optimization problem, which has been done for hypergraphs in only a few cases \cite{LuZh12,MuBh,LiZh}.

\subsection{Notational conventions}
\label{sec:notation}

We generally write $\R_+$ for $(0,\infty)$ and $\R_{\ge0}$ for $[0,\infty)$. 
For $J\subset[n]$ we write $J^c$ for $[n]\setminus J$. 

We use the following standard asymptotic notation.
For a nonnegative real quantity $g$ and a parameter (or vector of parameters) $q$ we write $O_q(g)$ to denote a real quantity $f$ such that $|f|\le C(q)g$ for some finite constant $C(q)$ depending only on $q$. 
We also write $f\ls_q g$, $g\gs_q f$ and $g=\Omega_q(f)$ to say that $f=O_q(g)$, and $f\asymp_qg$ to mean $f\ls_qg\ls_qf$. 
When there is no subscript it means the implied constant is universal, unless noted otherwise. 
We use $C,c,c_0$, etc.\ to denote positive, finite constants whose value may change from line to line, assumed to be universal unless dependence on parameters is indicated.

For $g$ depending on an asymptotic parameter $\eps$ we write $o_{\eps\to \eps_0}(g)$ to denote a real quantity $f$ depending on $\eps$ such that $\lim_{\eps\to\eps_0} f/g=0$. In most of the article the asymptotic parameter is $n$, tending to $\infty$ through the nonnegative integers, and we suppress the limit from the subscript in this case.
The exception is Section \ref{sec:graphon} where the asymptotic parameter is generally $p\in (0,1)$ tending to zero, and we similarly suppress the subscript $p\to0$ there ($n$ does not appear in that section). We often write $f\ll g$, $g\gg f$ to mean $f=o(g)$, and $f\sim g$ to mean $f/g\to 1$. 
We also use the standard notation $\omega(a_n)$ for a positive quantity $b_n$ such that $b_n/a_n\to +\infty$ as $n\to \infty$ (this notation is only used in \Cref{sec:ideas}).
Asymptotic notation applied to a vector $\uup\in \R^m$ refers to its norm, e.g. $\uup = o(1)$ means $\|\uup\|_2=o(1)$. (We only use this notation when $m$ is fixed independent of the asymptotic parameter.)

All graphs in the article are simple -- that is, with undirected edges and no self loops. 
For a graph $F$ we use $\Verts(F), \Edges(F)$ to denote its sets of vertices and edges, respectively, and write $\verts(F)=|\Verts(F)|$, $\edges(F)=|\Edges(F)|$. By abuse of notation we extend the edge-counting function to $\cG_n$ (see \eqref{def:Gn}) as $\edges(G):= \sum_{1\le i<j\le n} G_{i,j}$. 
With $\eye_p$ as in \eqref{def:eyepq} we often use the shorthand notations
\begin{equation}	\label{def:IpE}
\eye_p(Q):= \sum_{i<j} \eye_p(Q_{i,j})\quad\text{for }\, Q\in\cQ_n\,,\quad\text{and} \qquad
\eye_p(\cE):=\inf_{Q\in\cE} \eye_p(Q) \quad\text{for }\, \cE\subseteq\cQ_n.
\end{equation}

\subsection{Organization of the paper}

We begin in  \Cref{sec:ideas} with an overview of the main ideas for the proof of \Cref{thm:struct.ER} on the conditional structure of \ER graphs, which is carried out in Sections \ref{sec:graphon} and \ref{sec:struct.ER}.
The core 
of the argument is the stability analysis for an entropic optimization problem on 
graphon space, 
done in Section \ref{sec:graphon} (see Proposition \ref{prop:graphon.stab}), to which end
Theorem \ref{thm:finner.stab} plays a key role.
The proof of \Cref{prop:graphon.stab} is illustrated in \Cref{sec:ideas} for the case of the joint upper tail for counts of $K_{1,2}, C_3$ and $C_4$.
In Section \ref{sec:struct.ER} we combine Proposition \ref{prop:graphon.stab} with the
quantitative large deviation results of Propositions \ref{prop:LD.spec} and \ref{prop:LD.Bstar} due to
\cite{CoDe,CDP} to establish Theorem \ref{thm:struct.ER}.
In Section \ref{sec:ERGMs} we establish our main results on \abbr{ergm}s, namely Proposition \ref{prop:nmf}, Theorem \ref{thm:nmf2} and Theorem \ref{thm:struct.ERGM} using corresponding large deviation results for \ER graphs
(where the non-asymptotic upper-tail estimate of Proposition \ref{prop:rough-ubd} allows for 
an a-priori truncation to a compact set).
Proposition \ref{prop:edgeF} and Corollary \ref{cor:edge-triangle} on edge-$F$ models are proved in Section \ref{sec:edgeF}.
The appendices contain the proofs of the stability version of Finner's inequality (Theorem \ref{thm:finner.stab}), Proposition \ref{prop:opt}, and a non-asymptotic upper-tail estimate of the correct shape for homomorphism densities (Proposition \ref{prop:rough-ubd}).

\section{Proof ideas for the conditional structure of \ER graphs}
\label{sec:ideas}

In this section we overview the proof of \Cref{thm:struct.ER}, which occupies Sections \ref{sec:graphon} and \ref{sec:struct.ER}. 
We write $\bG=\Gnp$ for the \ER graph. 
We view the $m$-tuple $\uF=(F_1,\dots, F_m)$ as fixed and suppress it from the notation, thus writing $\cU_p(\uup),\Phi_{n,p}(\uup),\phi(\uup)$ etc.
We sometimes further drop $\uup$ to lighten notation. 

\subsection{Tail asymptotics from covering and continuity}

To establish the structure of $\bG$ conditioned to lie in $\cU_p(\uup)$ we make use of tools developed in  \cite{CoDe,CDP,BhDe} to show 
\begin{equation}	\label{UT.asymp2}
\log\P(\bG\in\cU_p(\uup)) \sim- \phi(\uup) \ratenp\,.
\end{equation}
We begin by recalling these tools and how they are used to establish \eqref{UT.asymp2}.
Roughly speaking, \cite{CoDe,CDP} show that for  certain  norms $\|\cdot\|_\star$ over the space of all $\R$-weighted graphs, and an exceptional set $\cE$ in the ${n\choose2}$-dimensional cube $\cQ_n$ of $[0,1]$-weighted graphs over $[n]$, we have
\begin{enumerate}
\item[(a)] $\P(\bG\in\cE)=\exp(-\omega(\rate_{n,p}))$,
\item[(b)] $\cQ_n\setminus \cE$ can be covered by $\exp(o(\rate_{n,p}))$ $\eps$-balls, and
\item[(c)] the rescaled homomorphism counts $X\mapsto t(F,X/p)$ are $O_F(1)$-Lipschitz on $\cQ_n\setminus\cE$. 
\end{enumerate}
(See  \Cref{sec:LDPs} for full statements, which involve interdependent parameters controlling the measure of $\cE$, the size of the covering and Lipschitz constants. Here we make informal use of asymptotic notation to glide over these technical issues.)
Approximating superlevel sets
$
\cU_p(\uup) 
$
from within and without by unions of $\eps$-balls, one can then deduce that
\begin{align}
\sup_{B\subseteq \cU_p(\uup)\setminus \cE} \P(\bG\in B)\le 
\P(\bG\in\cU_p(\uup)) &\le \P(\bG\in\cE) + \sum_B \P(\bG\in B)	\label{ideas.covering}\\
&\le e^{-\omega(\rate_{n,p})} + e^{o(\rate_{n,p})}\max_B\P(\bG\in B)	\notag
\end{align}
where the sum and maximum in the last two expressions are taken over $\eps$-balls in the covering of $\cU_p(\uup)$. 
For each ball in the covering we can use the convexity of $B$ to show 
\[
\log\P(\bG\in B)\sim- \eye_p(B)
\]
(recall the notation \eqref{def:IpE}). Indeed, the right hand side is always an upper bound for the left hand side by the minimax theorem, and a near-matching lower bound under some generic assumptions on $B$ can be established by a standard tilting argument.
Then using the continuity of $t(F,\cdot/p)$ we can show the union of balls covering $\cU_p(\uup)$ is contained in $\cU_p(\uup-C\eps \1)$ for some constant $C=C(\uF)>0$. 
Arguing similarly for the lower bound we obtain 
\begin{equation}	\label{ideas.approx}
\log \P(\bG\in\cU_p(\uup)) \sim -\min\big\{ \omega(r_{n,p})\,,\; \Phi_{n,p}(\uup+O(\eps)) + o(r_{n,p}) \big\}.
\end{equation}
From the works \cite{BGLZ,BhDe} we know
\begin{equation}	\label{ideas.phis}
\Phi_{n,p}(\uup) \sim \phi(\uup)\rate_{n,p}
\end{equation}
which combines with \eqref{ideas.approx} to give \eqref{UT.asymp2}.

\subsection{Reduction to a stability problem over $\cQ_n$}

Now we describe how the above covering argument is adapted in the present work to reduce the proof of the refined result of \Cref{thm:struct.ER} to a stability analysis of the optimization problem defining $\Phi_{n,p}(\uup)$.
Recalling the weighted clique-hub graphs from \eqref{def:QIJ}--\eqref{def:Qab}, we let $\cO_\xi=\cO_\xi(\uup)\subset\cQ_n$ denote the $\xi$-neighborhood under the norm $\|\cdot\|_\star$ of the set 
\begin{equation}	\label{opt0}
\cO_0=\cO_0(\uup):= \bigcup_{(a,b)\in \Opt(\phi;\uup)} \cQ_n(a,b).
\end{equation}
Since $\eye_p(\cU_p)=\Phi_{n,p}(\uup)\sim \phi(\uup)\ratenp$, 
it suffices to show 
\begin{align}
\log \P(\bG\in \cU_p\setminus\cO_\xi) &\le - \eye_p(\cU_p) - (\eta(\xi)+o(1))\rate_{n,p}
\label{ideas.goal1} 
\end{align}
where $\xi\mapsto\eta(\xi)>0$ is a continuous, strictly increasing function (depending on $\uF$ and $\uup$ but independent of $n$ and $p$), with $\eta(0)=0$.
(In fact we only take $\cO_\xi$ to be a $\|\cdot\|_\star$-neighborhood as above in the proof of \Cref{thm:struct.ER}(b); for (a) we need to argue slightly differently, but we do not discuss this technical issue here.) We in fact show this with $\eta(\xi)= \xi^{O_{\uF,\uup}(1)}$. 

Following \eqref{ideas.covering}, the left hand side in \eqref{ideas.goal1} can be bounded roughly like
\begin{equation}
\log \P(\bG\in \cU_p\setminus\cO_\xi) \le - 
\eye_p(\cU_p\setminus\cO_{\xi/2})
\end{equation}
(along with some errors that we omit here). 
To obtain \eqref{ideas.goal1} it thus suffices to show 
\begin{equation}
\label{ideas.goal2}
\eye_p(\cU_p\setminus\cO_{\xi/2})\ge \eye_p(\cU_p) + (\eta(\xi)+o(1))\rate_{n,p}\,.
\end{equation}
In particular, the weighted clique-hub graphs are the \emph{only} near-optimizers of $\eye_p$ over $\cU_p$,
but \eqref{ideas.goal2} importantly expresses a stronger stability property: any weighted graph in $\cU_p$ that is a distance $\Omega(1)$ from the class \eqref{opt0} has entropy $\Omega(\ratenp)$ above the minimal value. 


\subsection{A stability problem on graphon space}
\label{sec:ideas.graphon}

To establish \eqref{ideas.goal2} we need to look into the proof in \cite{BGLZ,BhDe} of the asymptotic \eqref{ideas.phis} reducing the optimization problem over $\cQ_n$ to an optimization problem over the plane. 
Following \cite{LuZh14,BGLZ}, for this task it is convenient to suppress the asymptotic parameter $n$ by passing to an optimization problem over the \emph{infinite}-dimensional space of graphons. (The benefit of working with graphons is essentially notational;  these arguments can also be carried out over $\cQ_n$, but working with graphons eliminates consideration of rounding errors.) 

Let $\cW$ be the set of all symmetric (Lebesgue) measurable function $g:[0,1]^2\to[0,1]$, which are called \emph{graphons}. Any $Q\in\cQ_n$ is associated to an element $g_Q\in\cW$ with $g_Q(x,y):= Q_{\lf xn\rf, \lf yn\rf}$. 
The homomorphism density functionals extend to symmetric measurable $f:[0,1]^2\to\R_+$ in the natural way:
\begin{equation}	\label{def:tF.graphon}
t(F, f):=\int_{[0,1]^{\Verts(F)}} \prod_{e\in\Edges(F)} f(x_e)dx
\end{equation}
where $x_e:=(x_u,x_v)$ for $\{u,v\}\in \Edges(F)$. One easily verifies {that} for any $Q\in\cQ_n$, 
\begin{equation}	\label{tF.Qg}
t(F,Q)= t(F, g_Q)\,, \qquad \eye_p(Q) = n^2\eye_p(g_Q) \,,
\end{equation}
where we abbreviate $\eye_p(g):=\frac12\int_{[0,1]^2}\eye_p(g(x,y))dxdy$. 
The graphon optimization problem is 
\begin{equation}	\label{def:probUTg.ideas}
\UTg_p(\uup):= \eye_p(\cV_p(\uup)),\qquad \cV_p(\uup):= \bigcap_{k\in[m]}\{ g\in\cW: t(F_k, g/p) \ge 1+\up_k \big\}
\end{equation}
where $\eye_p(\cV):=\inf\{ \eye_p(g):g\in \cV\}$. 
From the embedding $Q\mapsto g_Q$ of $\cQ_n$ in $\cW$ and the identities \eqref{tF.Qg} we see that $n^2\UTg_p(\uup)\le \Phi_{n,p}(\uup)$ for every $n$. 

For $a,b>0$ we let
\begin{equation}\label{def:Wab}
\cW(a,b) := \big\{\, g_{S,T}: |S|= (a p^\Delta)^{1/2},\, |T|= b p^\Delta \,\big\},
\end{equation}
analogous to the set $\cQ_n(a,b)$ from \eqref{def:Qab}, 
where for disjoint $S$ and $T$ we denote the graphon
\begin{equation}	\label{def:gST}
g_{S,T}= p + (1-p)\big[ \chi_{S\times S} + \chi_{T\times T^c} + \chi_{T^c\times T} \big]
\end{equation}
with $\chi_{S \times T}$ the indicator function for $S\times T\subset[0,1]^2$. 
For any $g_{S,T}\in\cW(a,b)$ we have
\[
\eye_p(g_{S,T}) = (\frac12|S|^2 + |T|(1-|T|))\log(1/p) \sim (\frac12a + b)p^\Delta\log(1/p) \,,
\]
where the asymptotic is in the limit $p\to0$. 
Furthermore, recalling the functions $T_F:\R_+^2\to\R$ from \eqref{def:TF}, it is not hard to show (see \cite{BGLZ}), that
\begin{equation}
t(F, g/p) \sim T_F(a,b) \quad\forall g\in \cW(a,b).
\end{equation}
By restricting the infimum in \eqref{def:probUTg.ideas} to $\cW(a,b)$ and optimizing over $a,b\ge0$, we thus see that
\begin{equation}
\UTg_p(\uup) \le 
(\phi(\uup) +o(1))p^\Delta \log(1/p). 
\end{equation}
The main step in the proof of \eqref{ideas.phis} in \cite{BGLZ,BhDe} is to  establish a matching lower bound, i.e. to show the following:
\begin{equation}	\label{ideas.INF}
g\in\cV(\uup) \quad\Longrightarrow\quad
\eye_p(g) \ge (\phi(\uup) - o(1)) p^\Delta \log(1/p)\qquad \text{{\bf (INF)}}.
\end{equation}

While the optimum $\UTg_p(\uup)\sim\phi(\uup) p^\Delta \log(1/p)$ is nearly attained by the elements of $\cW(\csizes,\bsizes)$ for any $(\csizes,\bsizes)\in\R_+^2$ attaining the infimum for the planar optimization problem $\phi(\uup)$, it does not rule out the existence of near-optimizers of a different nature. 
The main technical result of the present work is \Cref{prop:graphon.stab} showing that the clique-hub graphons are the only near-optimizers in \eqref{def:probUTg.ideas}, and moreover we have the following stability property:\\

\noindent{\bf (STAB):} \emph{
For any $\eta\in(0,1)$,}
\begin{align}
&g\in \cV_p(\uup-\eta \1) 	\label{ideas.hyp1}\\
\text{ and }\quad&\eye_p(g) \le (\phi(\uup)+\eta)p^\Delta\log(1/p)		\label{ideas.hyp2}\\
\Longrightarrow\quad& 
\exists\, (\csizes,\bsizes)\in \Opt(\phi;\uup)\,,\;g_{S,T}\in \cW(\csizes,\bsizes)\;:\; \|g-g_{S,T}\|_2 =O(\eta^cp^{\Delta/2})	\label{ideas.concl}
\end{align}
\emph{where the implicit constant and $c$ depend only on $\uF$ and $\uup$.} \\

It is not hard to see that (STAB)$\Rightarrow$(INF) -- see \Cref{rmk:graphon.stab}. 

From (STAB) it is straightforward to deduce \eqref{ideas.goal2} and conclude the proof of \Cref{thm:struct.ER}. (In fact we need a slight variant of \eqref{ideas.concl} in order to pass back to a weighted graph $Q$, but we do not discuss this technical issue here.)

\subsection{Sketch of the proof of (STAB) for the joint upper tail of $(K_{1,2},C_3,C_4)$-counts}

The basic plan of the proof of (STAB) is to go through the proof of (INF) from \cite{BGLZ,BhDe}, and use the additional hypothesis \eqref{ideas.hyp2} to deduce that various bounds from that argument in fact hold with near-equality. We then establish stability versions of these bounds (such as \Cref{thm:finner.stab} for Finner's inequality), that let us deduce structural information on $g$. 

Here we overview the main steps 
for the case $\uF=(F_1,F_2,F_3):=(K_{1,2}, C_3,C_4)$, where we recall that $K_{1,2}$ is the complete bipartite graph with one vertex on one side and two on the other (or the path on three vertices), and $C_\ell$ is the cycle on $\ell$ vertices.
Illustrations of the planar optimization problem defining $\phi(\uup)$ for this example were shown in \Cref{fig:ab1}.
In this case we have that $\Delta=2$. We	
assume throughout that $p=o(1)$.\\


\noindent{\bf (INF) Step 1: Approximation of $t(F_k,g/p)$ via degree thresholding}.
The first step for proving \eqref{ideas.INF} is to expand the homomorphism densities $t(F_k,g/p)$ in terms of $f:=g-p$ and identify the dominant terms. 
For example, for $C_3$ we start with the identity
\[
t(C_3, g/p) = 1+ 3t(K_{1,2}, f/p) + 3 t(K_{1,1}, f/p) + t(C_3, f/p). 
\]
One can show (see \cite[Lemma 4.2]{BGLZ}), that if $\jay_p(f) \ls p^2$ (which we have from \eqref{ideas.hyp2a}), then for the third term we have $t(K_{1,1},f/p) = p^{-1}\int f=o(1)$, so 
\[
t(C_3, g/p) = 1+ 3t(K_{1,2}, f/p) + t(C_3, f/p) + o(1). 
\]
The right hand side is further refined based on a decomposition of the support of $f$ 
according to a partition of $[0,1]$ into points of ``high degree'' and ``low degree'' for $f$.
Specifically, with a cutoff parameter $d=d(p)=o(1)$ to be chosen later, we let 
\[
D=D_d(g)=\Big\{ x\in[0,1]: \int_0^1 f(x,y)dy \ge d\Big\}.
\]
Then set
\begin{align*}
\tf:= f\chi_{D\times D^c}\,,\quad
\hf:= f\chi_{D^c\times D^c} \,,\quad \chf:= f(\chi_{D\times D^c} + \chi_{D^c\times D}).
\end{align*}
Thus $\chf(x,y)= \tf(x,y) + \tf(y,x)$. (Note that $\hf,\chf$ are graphons, while $\tf$ is asymmetric and hence not a graphon.)
For an asymmetric function $h:[0,1]^2\to \R$ we can define a bipartite analogue of the homomorphism densities: let 
\begin{equation}
\t t(K_{1,2}, h):= \int_{[0,1]^3} h(x,y)h(x,z) dxdydz\,.
\end{equation}
That is, the side of $K_{1,2}$ with one vertex is mapped to the first coordinate of $h$. 
(An extension for embeddings of general bipartite graphs is given in \eqref{def:t.asymmetric}.)
Thus, 
\[
\t t(K_{1,2},\tf/p) = p^{-2}\int_{[0,1]^3} \tf (x,y)\tf(x,z) dxdydz = p^{-2}\int_D\int_{D^c}\int_{D^c} f(x,y)f(x,z)dzdydx
\]
gives the density of embeddings of $K_{1,2}$ with the vertex of degree 2 mapped to the high-degree part of $f$ and the others mapped to the low-degree part. 
A highly non-trivial result from \cite{BGLZ} that we essentially take as a black box shows that there exists a choice of $d$ depending on $g$ and satisfying $\sqrt{p}\ll d\ll 1$ such that
\begin{align}
t(K_{1,2},g/p)
&=1+3 \t t(K_{1,2}, \tf/p) + o(1)	\label{ideas.tK12}\\
t(C_3,g/p)
&=1 + 3\t t(K_{1,2},\tf/p)+t(C_3,\hf/p)+o(1)	\label{ideas.tC3}\\
t(C_4, g/p)
&= 1+ 4 \t t(K_{1,2}, \tf/p) +t(C_4, \hf/p) + t(C_4, \chf/p) + o(1). \label{ideas.tC4}
\end{align}
See \Cref{lem:terms} for the general statement. \\

\noindent{\bf (INF) Step 2: Finner's inequality}.
Applying \Cref{thm:finner}, we can bound
\begin{equation}	\label{finner.K12}
\t t(K_{1,2}, \tf/p) \le \|\tf/p\|_2^2
\end{equation}
and for each $\ell\ge3$,
\begin{align}	\label{finner.Ck}
t(C_\ell,\hf/p) &\le \|\hf/p\|_2^\ell\,,\qquad
t(C_\ell,\chf/p) \le \|\chf/p\|_2^\ell = 2\|\tf/p\|_2^\ell \,.
\end{align}
Setting 
\begin{equation}
a'_g:= \|\hf/p\|_2^2\,,\qquad b'_g:= \|\tf/p\|_2^2
\end{equation}
and combining these bounds with \eqref{ideas.tK12}--\eqref{ideas.tC4}, we obtain
\begin{align}
t(K_{1,2},g/p)&\le 1+ 3b_g' +o(1) 
\label{tK12b}\\
t(C_3,g/p)&\le 1+ 3b_g' + (a_g')^{3/2}+o(1) 
\label{tC3b} \\
t(C_4,g/p)&\le 1+ 4b_g' + 2(b_g')^2 + (a_g')^2 +o(1).
\label{tC4b}
\end{align}
\quad\\

\noindent{\bf (INF) Step 3: $L_2$-entropy inequality.}
The final step for \eqref{ideas.INF} is to connect bounds \eqref{tK12b}--\eqref{tC4b} with the entropy $\eye_p(g)$. 
It is convenient to work with the shifted and scaled function
\begin{equation}	\label{def:jay}
\jay_p(x) = \frac{\eye_p(p+x)}{\log(1/p)},\quad x\in [-p,1-p].
\end{equation}
As with $\eye_p$ we will abbreviate $\jay_p(g):= \frac12\int \jay_p(g(x,y))dxdy$.
For (INF) our aim is to show
\begin{equation}	\label{ideas.INFgoal}
\jay_p(f) \ge (\phi(\uup)-o(1)) p^2. 
\end{equation}
A key estimate from \cite{BGLZ} is the pointwise bound
\begin{equation}	\label{jp.LB}
\jay_p(x)\ge x^2\,,\quad  x\in [-p,1-p].
\end{equation}
Setting
\begin{align*}
a_g:= p^{-2} \jay_p(\hf) \,,
\qquad b_g:= p^{-2} \jay_p(\chf)
\end{align*}
then integrating \eqref{jp.LB} immediately yields
\begin{equation}	\label{ag'bg'}
a_g' \le a_g\,,\qquad b_g'\le b_g. 
\end{equation}
\quad\\

\noindent{\bf (INF) Step 4: The planar problem.}
Recall the functions $T_F$ from \eqref{def:TF}. 
Writing $T_k:=T_{F_k}$, one easily shows
\begin{align}	\label{ideas.Tab}
T_1(a,b) &=1+b\,,\quad
T_2(a,b) =1+3b + a^{3/2}\,,\quad
T_3(a,b) =1+4b + 2b^2 + a^2.
\end{align}
We denote the feasible region for the optimization problem defining $\phi(\uup)$ by $V(\uup)$, thus:
\begin{equation}
V(\uup):=\bigcap_{k=1}^3V_k(s_k)
\,,\quad
V_k(s):=\{ (a,b)\in\R_+^2: T_k(a,b)\ge 1+s\}\,.
\end{equation}
See \Cref{fig:ab1} for plots of the region $V(\uup)$ for various values of $(s_1,s_2,s_3)$. 

Note that the right hand sides in \eqref{tK12b}--\eqref{tC4b} are $T_k(a_g',b_g')+o(1)$ for $k=1,2,3$, respectively. 
Since the functions $T_k$ are monotone, by combining \eqref{ag'bg'} with \eqref{tK12b}--\eqref{tC4b} we see that 
\begin{equation}	\label{cVtoV}
g\in \cV(\uup) \quad\Longrightarrow \quad (a_g,b_g) \in V(\uup-o(1)). 
\end{equation}
On the other hand,
\begin{equation}	\label{ideas.ip-ab}
\jay_p(g) =\jay_p(\hf) + \jay_p(\chf) +\jay_p(f\chi_{D\times D})\sim (\frac12a_g+b_g) p^2
\end{equation}
(the contribution from the small set $D\times D$ is negligible). 
Since $\phi(\uup)=\inf\{ \frac12a+b: (a,b)\in V(\uup)\}$, from \eqref{cVtoV} and \eqref{ideas.ip-ab} we obtain \eqref{ideas.INFgoal} to conclude the proof of \eqref{ideas.INF}.\\

\noindent{\bf Proof of (STAB)}.
Now suppose 
\eqref{ideas.hyp1} and \eqref{ideas.hyp2} hold.
We continue to denote $f:=g-p$.
In this sketch we further assume 
$f\ge0$ a.e. (the alternative case has to be dealt with but it is a minor technical point). 
We want to locate a point $(\csizes, \bsizes)\in \Opt(\phi;\uup)$ and $g_{S,T}\in\cW(\csizes,\bsizes)$ such that $g\approx g_{S,T}$ in $L_2$. 
In fact we will show this holds with $T=D$. 

\begin{figure}
\includegraphics[width=7cm]{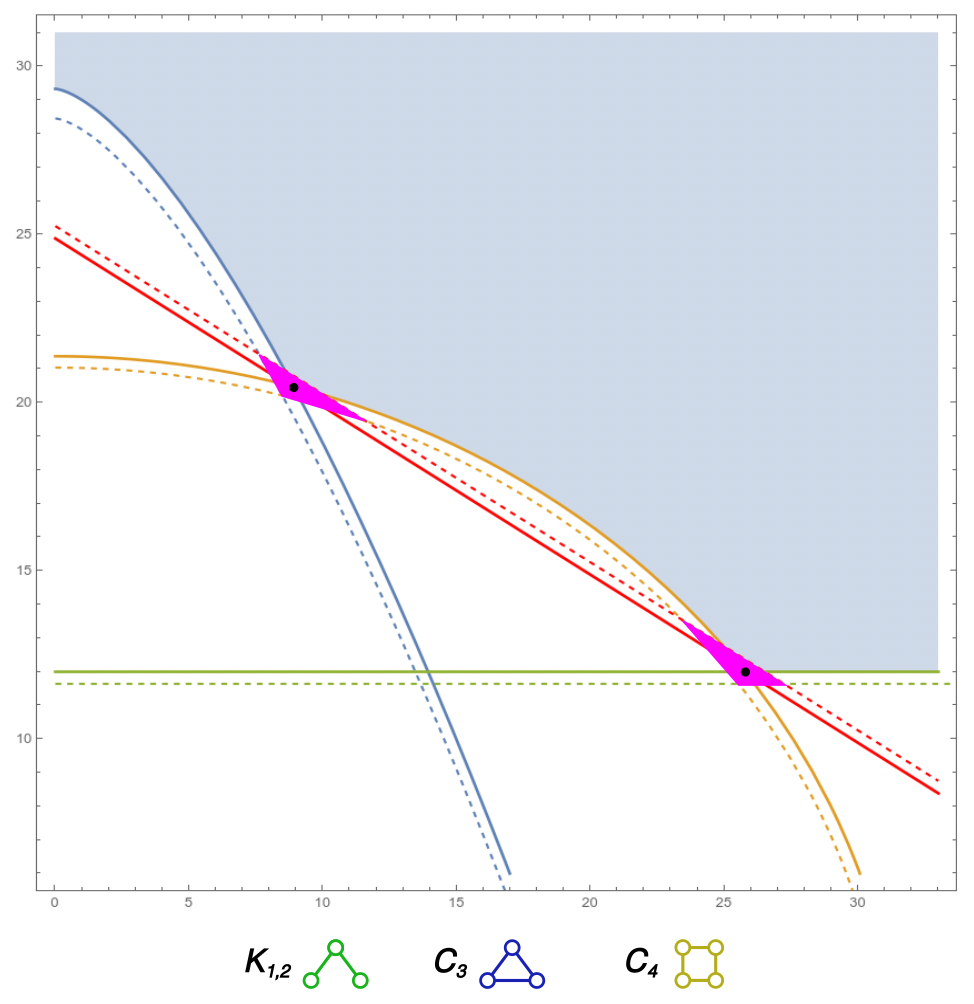}
\caption{Stability for the planar optimization problem: (With setup as in \Cref{fig:ab1}, again with $a$ on the horizontal axis and $b$ on the vertical axis.) For $(s_1,s_2,s_3) = (12,88,1000)$, the set $\Opt(\phi;\uup)$ of minimizers of $\frac12a+b$ over $V(\uup)$ contains two points (black). $V(\uup)$ is in light blue. 
For $\eta$ sufficiently small,  $V(\uup-2\eta\1)\cap \{ (a,b): \tfrac12a+b\le \phi(\uup) + \eta\}$ (magenta, bounded by dashed lines) is contained in a $O_{\uF,\uup}(\eta)$-neighborhood of $\Opt(\phi;\uup)$.  
\label{fig:ab2}
}
\end{figure}

\begin{itemize}
\item \emph{Stability for the planar problem}.\;
The entropy bound \eqref{ideas.hyp2} implies
\begin{equation}	\label{ideas.hyp2a}
\frac12a_g + b_g \le p^{-2}\jay_p(f) \le \phi(\uup)+\eta.
\end{equation}
On the other hand, the hypothesis $g\in \cV(\uup-\eta \1)$ and \eqref{cVtoV} give 
\begin{equation}	\label{agbg.in}
(a_g,b_g)\in V(\uup-\eta\1-o(1))\subset V(\uup-2\eta\1). 
\end{equation}
Hence,
$
(a_g,b_g)\in V(s-2\eta)\cap \{ (a,b): \tfrac12a+b\le \phi(\uup) + \eta\}. 
$
If $\eta$ is sufficiently small, this region is a union of small neighborhoods of the points in $\Opt(\phi;\uup)$ -- see \Cref{fig:ab2}. 
This fact can be seen as a stability statement for the planar optimization problem defining $\phi(\uup)$.
The general statement is given in \Cref{lem:planar.stab}.

We hence deduce that $(a_g,b_g)$ is within distance $O_{\uF,\uup}(\eta)$ of some point $(\csizes,\bsizes)\in \Opt(\phi;\uup)$. It is clear that at any point in $\Opt(\phi;\uup)$, two of the four constraints $a\ge0$ and $T_k(a,b)\ge 1+s_k$, $k=1,2,3$ hold with equality (see \Cref{fig:ab2}, where $\Opt(\phi;\uup)$ consists of two such intersection points; any of the 6 pairings of constraints can yield a point in $\Opt(\phi;\uup)$ for some choice of $\uup$). 
For concreteness let us suppose $(\csizes,\bsizes)$ is the point such that $T_k(\csizes,\bsizes)=1+s_k$ for $k=2,3$ (in \Cref{fig:ab2} this is where the solid blue and yellow lines intersect). Since the functions $T_k$ are continuous (indeed they are locally Lipschitz), we deduce that $T_k(a_g,b_g)\le 1+s_k+O_{\uF,\uup}(\eta)$. 
But we already showed in \eqref{agbg.in} that $T_k(a_g,b_g)\ge 1+s_k-2\eta$. Thus, every bound leading to \eqref{agbg.in} -- namely,  \eqref{finner.K12}, \eqref{finner.Ck} and \eqref{ag'bg'} -- actually hold with equality up to an additive error $O_{\uF,\uup}(\eta)$. \\

\item \emph{Stability for Finner's inequality}.\;
From the near-equality in the applications of Finner's inequality in \eqref{finner.K12}, \eqref{finner.Ck} we obtain from \Cref{thm:finner.stab} that 
\begin{equation}	\label{ideas.tensor}
\tf\approx h_1\otimes h_2\quad\text{ and } \quad \hf\approx h_3\otimes h_3
\end{equation}
in $L_2$, for some $h_1,h_2,h_3:[0,1]\to [0,1]$ supported on $D, D^c$ and $D^c$, respectively. \\

\item \emph{Stability for the $L_2$-entropy inequality}. \;
The bound \eqref{ag'bg'} came from integrating the pointwise bound \eqref{jp.LB}.
One can show that this bound is only a near-equality near $x=0$ and $x=1-p$. 
In \Cref{lem:Jp} we give a quantitative version of this, which allows us to show that near-equality in \eqref{ag'bg'} implies that $\tf$ and $\hf$ are close to indicator functions in $L_2$. 
With \eqref{ideas.tensor} and a bit of work one can deduce that $\tf\approx \chi_{T\times T^c}$ and $\hf \approx \chi_{S\times S}$ for some $T\subseteq D$, $S\subset D^c$
(see \Cref{claim:indicator}), and putting these together we conclude that $g\approx g_{S,T}$,
as desired.
\end{itemize}

\section{Stability for the upper-tail entropic optimization problem}
\label{sec:graphon}

Throughout this section the asymptotic notation $o(1)$ and $\sim$ is with respect to the limit $p\to 0$ unless indicated otherwise
(the asymptotic parameter $n$ makes no appearance here).  
We denote the Lebesgue measure on $[0,1]^d$ by $|\cdot|$. All integrals are understood to be with respect to Lebesgue measure unless otherwise indicated. For the Lebesgue spaces $L_q([0,1]^d)$ with $d=1,2$ we write $\|g\|_q$ for the
$L_q$-norm for $q \ge 1$.
For a set $E\subset[0,1]^d$ we take $E^c$ to mean $[0,1]^d\setminus E$. 

As was shown in \cite{BGLZ,BhDe}, the infimum in the upper-tails \abbr{nmf} optimization problem \eqref{NMF.tails} over the ${n\choose2}$-dimensional domain $\cQ_n$ is asymptotically attained by matrices having off-diagonal entries in $\{p,1\}$, taking value 1 on the edge sets of a clique and complete bipartite graph of appropriate sizes, effectively reducing \eqref{NMF.tails} to the two-dimensional problem \eqref{def:phi}.
In this section we prove Proposition \ref{prop:graphon.stab} below, showing that \emph{near}-optimizers for \eqref{NMF.tails} are close to such ``clique-hub" matrices, a key step towards proving Theorems  
\ref{thm:struct.ERGM} and \ref{thm:struct.ER}.
Following \cite{BGLZ,BhDe}, we establish this stability in the broader setting of an infinite-dimensional optimization problem over the space of graphons, whose definition we now recall.

A \emph{graphon} is a symmetric measurable function $g:[0,1]^2\to [0,1]$, and an \emph{asymmetric graphon} is a measurable function $g:[0,1]^2\to [0,1]$ with no symmetry constraint. 
We denote the space of graphons by $\cW$.
Given a partition $\cP$ of $[0,1]$ into finitely many measurable sets, we let $\cW_\cP$ denote the subspace of graphons that are a.e.\ constant on sets $S\times T$ with $S,T\in\cP$.
The $F$-homomorphism density functional on symmetric measurable $f:[0,1]^2\to\R_+$ was recalled in \eqref{def:tF.graphon}.
For $F$ bipartite with ordered bipartition 
$(A,B)$
the definition extends unambiguously to asymmetric functions, with $x_e:= (x_u,x_w)$ for $u\in A,w\in {B}$. We write $t(F,f; A)$ to indicate which part of the vertex bipartition we take to map to the first argument of $f$, thus:
\begin{equation}	\label{def:t.asymmetric}
t(F,f;A):= \int_{[0,1]^{A}} \int_{[0,1]^{B}}\prod_{(u,v)\in \Edges(F)} f(x_u,y_w) dydx\,.
\end{equation}
(In \Cref{sec:ideas} we wrote $\t t(K_{1,2},f)$ for $t(K_{1,2},f; A)$ with $A$ the single vertex of degree 2.)

For $\uup\in\R_+^m$, $p\in (0,1)$, and $\uF=(F_1,\dots,F_m)$ a fixed sequence of distinct, connected graphs of maximum degree $\Delta\ge2$,
the graphon upper-tail entropic optimization problem -- referred to hereafter as the \emph{graphon problem} -- is defined 
\begin{equation}	\label{def:probUTg}
\UTg_p(\uF,\uup):= \eye_p(\cV_p(\uF,\uup)),\qquad \cV_p(\uF,\uup):= \bigcap_{k\in[m]}\{ g\in\cW: t(F_k, g/p) \ge 1+\up_k 
\big\}
\end{equation}
where we recall the notation $\eye_p(g):=\frac12 \int \eye_p\circ g$ and $\eye_p(\cV):=\inf_{g\in\cV}\{ \eye_p(g) \}$ from \Cref{sec:ideas}.

The following result, extracted from an argument in \cite{BhDe} (which builds upon \cite{BGLZ}) shows that in the sparse limit $p\to0$, the infinite-dimensional graphon problem \eqref{def:probUTg} reduces to the 2-dimensional problem \eqref{def:phi}.

\begin{theorem}[Solution of the graphon problem]
\label{thm:BhDe2}
For fixed $\uF,\uup$ we have
\begin{equation}	\label{BhDe.asymp}
\UTg_p(\uF,\uup)\sim \UTab_{\uF}(\uup)p^\Delta\log(1/p).
\end{equation}
\end{theorem}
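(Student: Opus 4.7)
The asymptotic \eqref{BhDe.asymp} requires matching upper and lower bounds. The upper bound is constructive: for any $(a,b)$ in the feasible region $V_{\uF}(\uup)$ of the planar problem \eqref{def:phi} and any $p$ small enough that $(ap^\Delta)^{1/2}+bp^\Delta\le 1$, I take disjoint $S,T\subset[0,1]$ with $|S|=(ap^\Delta)^{1/2}$ and $|T|=bp^\Delta$, and use the clique-hub graphon $g_{S,T}$ from \eqref{def:gST}. A direct computation, as sketched in \Cref{sec:ideas.graphon}, gives $\eye_p(g_{S,T})\sim(\tfrac12 a+b)p^\Delta\log(1/p)$, while a term-by-term expansion of $t(F_k,g_{S,T}/p)$ yields $t(F_k,g_{S,T}/p)\to T_{F_k}(a,b)$ as $p\to 0$. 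A small perturbation of $(a,b)$ into the interior of $V_{\uF}(\uup)$ ensures $g_{S,T}\in\cV_p(\uF,\uup)$ for all small $p$, and taking $(a,b)$ to approach an element of $\Opt(\phi;\uup)$ (nonempty by Proposition~\ref{prop:opt}(b)) produces $\UTg_p(\uF,\uup)\le(\phi_{\uF}(\uup)+o(1))p^\Delta\log(1/p)$.

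The heart of the matter is the matching lower bound. The plan is to attach, to every $g\in\cV_p(\uF,\uup)$, a pair $(a_g,b_g)\in\R_{\ge 0}^2$ such that simultaneously $\eye_p(g)\ge(\tfrac12 a_g+b_g-o(1))p^\Delta\log(1/p)$ and $(a_g,b_g)\in V_{\uF}(\uup-o(1))$; the definition of $\UTab_{\uF}$ and the continuity in Proposition~\ref{prop:opt}(a) then close the loop. Following \cite{BGLZ,BhDe}, I would write $f=g-p$ and invoke the degree-threshold construction of \cite{BGLZ} to select $d=d(g,p)$ with $\sqrt p\ll d\ll 1$ and $D=\{x:\int f(x,y)\,dy\ge d\}$, and decompose $f=\tf+\tf^{\top}+\hf+f\chi_{D\times D}$ where $\tf=f\chi_{D\times D^c}$, $\hf=f\chi_{D^c\times D^c}$, $\chf=\tf+\tf^{\top}$. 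Setting $a_g:=p^{-2}\jay_p(\hf)$ and $b_g:=p^{-2}\jay_p(\chf)$ and using additivity of $\jay_p(f)$ over this decomposition, the $D\times D$ contribution to $\eye_p(g)$ is negligible since $|D|\ls d^{-1}\int f\ll p^{1-o(1)}$, so one obtains $\eye_p(g)\ge(\tfrac12 a_g+b_g-o(1))p^\Delta\log(1/p)$ as required.

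The second leg is to show $(a_g,b_g)\in V_{\uF}(\uup-o(1))$. Here one expands each $t(F_k,g/p)$ by distributing the product $\prod_{e}(p+f(x_e))/p$ across the four pieces of $f$ and identifies the dominant terms. Following the degree-thresholding lemma of \cite[Lemma 4.2]{BGLZ}, only three families survive at the leading scale: the constant term $1$; star-like terms in which edges incident to a single ``apex'' vertex (mapped into $D$) use $\tf$ and the residual low-degree legs are integrated out, generating $\ipoly_{F_k^\star}(b_g')$-type contributions; and, when $F_k$ is regular, a single ``fully low-low'' term $t(F_k,\hf/p)$. Finner's inequality \eqref{bd:finner} bounds each of these by $\|\hf/p\|_2^{\verts(F_k)}$ and an appropriate product of $\|\tf/p\|_2^{c}$ factors, and the pointwise inequality $\jay_p(x)\ge x^2$ yields $\|\hf/p\|_2^2\le a_g$ and $\|\chf/p\|_2^2\le b_g$. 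Combining, $t(F_k,g/p)\le T_{F_k}(a_g,b_g)+o(1)$ for each $k$, so monotonicity of $T_{F_k}$ and the defining hypothesis $t(F_k,g/p)\ge 1+s_k$ place $(a_g,b_g)\in V_{\uF}(\uup-o(1))$.

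The main obstacle is the bookkeeping in the multilinear expansion of $t(F_k,g/p)$: one must show that all cross-terms mixing $\tf$ and $\hf$ at distinct vertices of $F_k$, as well as terms involving two or more ``apex'' vertices, are $o(1)$. This relies delicately on the calibration of the threshold $d$, and on controlling bipartite homomorphism densities like \eqref{def:t.asymmetric} applied to the asymmetric pair $(\tf,\tf^\top)$ via H\"older. I would treat the threshold-selection and classification of surviving terms as a black box from \cite{BGLZ} (as is done in the sketch in \Cref{sec:ideas}), then assemble the three ingredients --- the entropy identity for $\jay_p$, Finner's inequality, and the planar problem --- to complete the lower bound.
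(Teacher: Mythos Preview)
Your approach is correct and coincides with the argument from \cite{BGLZ,BhDe} that the paper invokes (and sketches in \Cref{sec:ideas}); the paper does not give an independent proof of this theorem, though \Cref{rmk:graphon.stab} notes it also follows from the stability result \Cref{prop:graphon.stab}. Two corrections: your normalizations $a_g=p^{-2}\jay_p(\hf)$, $b_g=p^{-2}\jay_p(\chf)$ and the Finner bound $\|\hf/p\|_2^{\verts(F_k)}$ are the $\Delta=2$ specialization from the paper's illustrative sketch --- for general $\Delta$ one needs $p^{-\Delta}$ and $t(F_k,\hf/p)\le\|\hf/p\|_\Delta^{\edges(F_k)}$, combined with $\|\hf\|_\Delta^\Delta\le\|\hf\|_2^2$ (cf.\ \eqref{tp.finner1}--\eqref{applyJp}). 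More importantly, terms with two or more ``apex'' vertices are \emph{not} all $o(1)$: those whose apices form an independent set $A\in\cI(F_k^\star)$ survive and are exactly what generates the higher-degree monomials in $P_{F_k^\star}$ (see \Cref{lem:terms}); only cross-terms with \emph{adjacent} high-degree apices, or mixing $\hf$ with $\tf$, vanish.
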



Recall 
our notation $\Opt(\phi;\uup)$ for the set of optimizers for $\phi_{\uF}(\uup)$.
As we reviewed in \Cref{sec:ideas}, 
the optimum 
\begin{equation}\label{eq:gst-opt}
\eye_p(g_{S,T}) \sim \UTab_{\uF}(\uup)p^\Delta \log(1/p),
\end{equation}
is attained on 
a class $\cW(\csizes,\bsizes)$ of \emph{clique-hub} graphons $g_{S,T}$ for $(\csizes,\bsizes)\in \Opt(\phi;\uup)$.
The following stability result shows that these are the only optimizers:
any 
near-minimizer for $\eye_p$ over $\cV_p(\uF,\uup)$ must be close to an
element of $\cW(\csizes,\bsizes)$ for some $(\csizes,\bsizes)\in\Opt(\phi;\uup)$.

\begin{prop}[Stability for the graphon problem]
\label{prop:graphon.stab}
Let $\uF=(F_1,\dots,F_m)$ be a sequence of graphs as above
and let $\uup\in \R_+^m$ and $\eta>0$.
There exist $c_0(\uF)>0$
and $p_0(\uF,\uup,\eta)>0$ 
such that the following holds for all $0<p\le p_0$. 
For any graphon $g$ satisfying
\begin{equation}
t(F_k,g/p) \ge 1+\up_k -\eta \qquad\forall \; 1\le k\le m
\end{equation}
and
\begin{equation}	\label{eyep.assume}
\eye_p(g)\le (\UTab_{\uF}(\uup)+\eta ) p^\Delta\log(1/p)\,,
\end{equation}
there exist $(\csizes,\bsizes)\in \Opt(\phi;\uup)$ and 
$g_{S,T} \in \cW(\csizes,\bsizes)$ such that 
\begin{equation}	\label{approx:gST}
\|g-g_{S,T}\|_2 \ls_{\uF,\uup} \eta^{c_0(\uF)} p^{\Delta/2}\,.
\end{equation}
Moreover, if $g\in \cW_\cP$ for some finite partition $\cP$ of $[0,1]$ then we may take 
$g_{S,T}\in \cW_\cP\cap \cW(\csizesp, \bsizesp)$ for some $\csizesp,\bsizesp$ such that $|\csizesp-\csizes|, | \bsizesp-\bsizes|
\ls_{\uF,\uup} \eta^{c_0(\uF)}$.
\end{prop}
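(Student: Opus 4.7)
\section*{Proof plan for Proposition \ref{prop:graphon.stab}}

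The plan is to follow the strategy sketched in Section \ref{sec:ideas.graphon} for the $(K_{1,2},C_3,C_4)$ example, and adapt every step to the general collection $\uF=(F_1,\dots,F_m)$. Throughout I write $f:=g-p$ and, after a preliminary truncation argument (handling the event that $f$ has substantial negative part using the entropy bound \eqref{eyep.assume} and the pointwise bound $\jay_p(x)\ge x^2$), I focus on the main case where $f\ge 0$ a.e., at the cost of a negligible additive error.

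\emph{Step 1: Degree thresholding and expansion.} Following the construction extracted from \cite{BGLZ}, I select a degree cutoff $d=d(g)\in(\sqrt p,1)$ producing a partition $[0,1]=D\sqcup D^c$, and decompose $f=\hf+\chf+f\chi_{D\times D}$ with $\hf=f\chi_{D^c\times D^c}$ and $\chf=f(\chi_{D\times D^c}+\chi_{D^c\times D})$. Expanding each $t(F_k,g/p)=t(F_k,(p+f)/p)$ and applying the lemma of \cite{BGLZ} (used here as a black box; cf.\ Lemma~\ref{lem:terms}) yields an asymptotic identity
\begin{equation}
t(F_k,g/p)=T_{F_k}(a'_g,b'_g)+o(1)+\mathcal{R}_k,
\end{equation}
where $a'_g:=\|\hf/p\|_2^2$, $b'_g:=\|\tf/p\|_2^2$ (with $\tf:=f\chi_{D\times D^c}$), and $\mathcal{R}_k$ collects homomorphism densities of asymmetric or lower-order pieces.

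\emph{Step 2: Upper bounds via Finner and the planar feasibility.} Finner's inequality (Theorem~\ref{thm:finner}) bounds $\mathcal{R}_k$ and the leading terms, yielding $t(F_k,g/p)\le T_{F_k}(a'_g,b'_g)+o(1)$, where the clique contribution $a^{\verts(F_k)/2}$ arises only when $F_k$ is regular (it comes from $t(F_k,\hf/p)$), and the hub contribution $P_{F_k^\star}(b)$ arises from bipartite embeddings into $\chf/p$ factoring through the two sides of $D,D^c$. The pointwise bound $\jay_p(x)\ge x^2$ (equation \eqref{jp.LB}) then gives $a'_g\le a_g$ and $b'_g\le b_g$ where $a_g:=p^{-2}\jay_p(\hf)$, $b_g:=p^{-2}\jay_p(\chf)$, and $\tfrac12 a_g+b_g\le p^{-2}\jay_p(g)\le\phi_\uF(\uup)+\eta$ by \eqref{eyep.assume}. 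Combined with monotonicity of $T_{F_k}$ and the hypothesis $t(F_k,g/p)\ge 1+s_k-\eta$, this places $(a_g,b_g)$ in $V_\uF(\uup-\eta\1-o(1))\cap\{\tfrac12 a+b\le\phi_\uF(\uup)+\eta\}$.

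\emph{Step 3: Planar stability.} I establish a general lemma (extending Figure~\ref{fig:ab2}) stating that for $\eta>0$ sufficiently small, this region is contained in a $O_{\uF,\uup}(\eta^{c_1})$-neighborhood of $\Opt(\phi;\uup)$. The proof uses that the boundary curves $T_{F_k}(a,b)=1+s_k$ are analytic with nonvanishing gradient at any point of $\Opt(\phi;\uup)$ and that $\Opt(\phi;\uup)$ is a finite set of points (Proposition \ref{prop:opt}(b)), so one can apply a \L ojasiewicz-type estimate. This fixes $(\csizes,\bsizes)\in\Opt(\phi;\uup)$ with $|a_g-\csizes|+|b_g-\bsizes|\ls\eta^{c_1}$, and therefore forces near-equality in every bound used in Step 2.

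\emph{Step 4: Stability of Finner and of the entropy inequality.} Near-equality in the Finner bounds, combined with Theorem~\ref{thm:finner.stab}, yields $L_1$ (hence $L_2$, after truncation since $\tf,\hf\in[0,1]$) approximations of the form $\tf\approx h_1\otimes h_2$ with $\mathrm{supp}(h_1)\subseteq D$, $\mathrm{supp}(h_2)\subseteq D^c$, and $\hf\approx h_3\otimes h_3$ (the symmetric case handled as in Remark \ref{rmk:hB1}). Meanwhile, near-equality in $\jay_p(x)\ge x^2$ is only possible when $x$ concentrates near $0$ or $1-p$; a quantitative strengthening (which I state as Lemma~\ref{lem:Jp} in the paper, cf.\ \Cref{sec:ideas.graphon}) then forces each $h_i$ to be $L_2$-close to an indicator $\chi_{S_i}$. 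Tensor-structure plus symmetry forces $h_1\otimes h_2\approx \chi_{T\times T^c}$ and $h_3\otimes h_3\approx\chi_{S\times S}$ for some measurable $S\subseteq D^c$, $T\subseteq D$ with $|S|^2\sim\csizes p^\Delta$ and $|T|\sim\bsizes p^\Delta$, giving $g\approx g_{S,T}\in\cW(\csizes,\bsizes)$ in $L_2$ with the claimed error.

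\emph{Step 5: Partition-adapted case.} If $g\in\cW_\cP$, then $f$, and therefore each of $\tf,\hf,\chf$ (after choosing $d$ to be a value not taken on atoms, which is possible since $\cP$ is finite), is $\cW_\cP$-measurable. Consequently the sets $S,T$ produced above can be replaced by unions $S^+,T^+$ of atoms of $\cP$ by rounding, changing $|S|,|T|$ by at most $O(\eta^{c_0}p^{\Delta/2})$ and thus $\csizes,\bsizes$ by at most $O_{\uF,\uup}(\eta^{c_0})$.

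The main obstacle I anticipate is Step 3: the planar stability lemma must be uniform over a range of $\uup$ and must handle all possible structures of $\Opt(\phi;\uup)$ (points on the $a$-axis, the $b$-axis, the interior, and the possibility of multiple minimizers, cf.\ Figure~\ref{fig:ab1}C--D), together with tight control of the exponent $c_1$, since it propagates into the final $c_0(\uF)$. A secondary technical point is combining the separate approximations $\tf\approx \chi_{T\times T^c}$ and $\hf\approx\chi_{S\times S}$ consistently (e.g.\ showing $S\cap T=\emptyset$ up to negligible mass), which requires using that both approximations are controlled on the common support $D^c\times D^c$ or $D\times D^c$.
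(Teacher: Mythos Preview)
Your overall approach matches the paper's proof closely, but there are two points worth correcting.

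\textbf{Planar stability is linear, not \L ojasiewicz.} What you flag as the main obstacle in Step~3 is in fact elementary: the paper's Lemma~\ref{lem:planar.stab} shows $\Region(\uup,\eta)\subset \Opt(\phi;\uup)+B(O_{\uF,\uup}(\eta))$ with a \emph{linear} error. The reason is that at each $q_\star\in\Opt(\phi;\uup)$, at least two of the constraint curves $\{T_k=1+s_k\}$ (augmented by the axes $a=0$, $b=0$) meet transversally with slopes strictly separated from the slope $-\tfrac12$ of the objective; a mean-value argument then traps the region in a triangle of diameter $O(\eta)$. No \L ojasiewicz estimate is needed, and you get $c_1=1$, so this step does not degrade the final exponent.

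\textbf{The order of stability arguments in Step~4 is inverted.} Entropy stability (Lemma~\ref{lem:Jp}) acts on the two-variable functions $\hf,\tf$, not on the one-variable factors $h_i$: it yields $\hf\approx(1-p)\chi_{\wh E}$ and $\tf\approx(1-p)\chi_{\wt E}$ for some measurable sets $\wh E,\wt E\subset[0,1]^2$. Separately, Finner stability gives $\hf^\Delta\approx \|\hf\|_\Delta^\Delta\,(\wh h\otimes\wh h)^\Delta$ and $\tf^\Delta\approx\|\tf\|_\Delta^\Delta\,(\wt h^\Delta\otimes 1)$. Combining these gives $\chi_{\wh E}\approx h\otimes h$ and $\chi_{\wt E}\approx h_1\otimes 1$; one then needs a short additional lemma (Lemma~\ref{lem:product.set} in the paper) to conclude $\wh E\approx S\times S$ from $\chi_{\wh E}\approx h\otimes h$. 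Your phrasing ``forces each $h_i$ to be $L_2$-close to an indicator'' skips this and does not follow directly.

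Minor corrections: for general $\Delta$ the Finner bound yields $t(F_k,\hf/p)\le\|\hf/p\|_\Delta^{\edges(F_k)}$ and one uses $\|\hf/p\|_\Delta^\Delta\le p^{-\Delta}\|\hf\|_2^2\le a_g$, so your $a'_g,b'_g$ should be $L_\Delta$-based and $a_g,b_g$ should carry $p^{-\Delta}$, not $p^{-2}$. For the partition case, no rounding is needed: when $g\in\cW_\cP$, the degree function $x\mapsto\int f(x,y)\,dy$ is $\cP$-measurable, so $D$ is a union of atoms for any $d$, and the sets $S,T$ produced by thresholding $\cP$-measurable functions are already $\cP$-measurable. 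Your secondary concern about $S\cap T$ is vacuous since $S\subset D^c$ and $T\subset D$ by construction. Finally, the reduction to $f\ge0$ (the paper's Step~5, done at the end rather than the beginning) uses the sharper bound $\jay_p(x)\ge x^2/(2p\log(1/p))$ on $[-p,0]$, not $\jay_p(x)\ge x^2$.
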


\begin{remark}
The point here is that \eqref{approx:gST} improves over the trivial bound
\begin{equation}
\label{gST.triv}
\|g-g_{S,T}\|_2 \ls_{\uF,\uup} p^{\Delta/2}\,.
\end{equation}
To see that \eqref{gST.triv} always holds, set $f_{S,T}:= g_{S,T}-p$, so that 
\[
\|g-g_{S,T}\|_2= \|g-p-f_{S,T}\|_2\le \|g-p\|_2+\|f_{S,T}\|_2.
\]
From the definition \eqref{def:gST}--\eqref{def:Wab} we clearly have $\|f_{S,T}\|_2^2\le (\csizes+ \bsizes)p^\Delta\ls_{\uF,\uup}p^\Delta$. On the other hand, in \Cref{lem:Jp} below we see $\eye_p(p+y)/\log(1/p)\ge y^2$ for $y\in[-p,1-p]$, giving
\begin{equation}
\int \eye_p\circ g \le Kp^\Delta\log(1/p) \quad\Longrightarrow\quad \|g-p\|_2^2\le Kp^\Delta
\end{equation}
and so we deduce \eqref{gST.triv} from the assumption \eqref{eyep.assume}.
\end{remark}

\begin{remark}
The conclusion for the case $g\in \cW_\cP$ is needed for the proof of Theorem \ref{thm:struct.ER}, where we take $\cP$ to be the partition of $[0,1]$ into intervals of length $1/n$ in order to pass from graphons to weighted graphs over $[n]$.
\end{remark}

\begin{remark}
\label{rmk:graphon.stab}
By \eqref{eq:gst-opt}, \eqref{eyep.assume} and \eqref{approx:gST} we have
$\int\eye_p\circ g_{S,T} - \eye_p\circ g \ls_{\uF,\uup}  (\eta^{c_0 \wedge 1} + o(1))p^\Delta\log(1/p)$,
so that \Cref{prop:graphon.stab} is 
a stability-type strengthening of Theorem \ref{thm:BhDe2}.
Indeed, setting $E=\{g_{S,T}=1\}$, since 
$\eye_p(p) \le \eye_p(\cdot)$
it suffices to show that for any $g \in \cW$,
\begin{equation*}
\int_E  (1-g)^2  \le \eps^2 p^\Delta \quad \Longrightarrow \quad
\frac{1}{I_p(1)} \int_E ( \eye_p(1) -\eye_p \circ g ) \ls_{\uF,\uup} (\eps + o(1))p^\Delta 
\end{equation*}
which, as $\eye_p(x)/\eye_p(1) \ge x - o(1)$ 
and 
$|E|=O_{\uF,\uup}(p^\Delta)$, 
follows by Cauchy--Schwarz.
\end{remark}

We prove \Cref{prop:graphon.stab} in Subsection \ref{sec:graphon.stab.pf} after gathering some lemmas in Subsections \ref{sec:graphon.dominant}--\ref{sec:graphon.prelim}. Subsection \ref{sec:graphon.dominant} extracts a key estimate from highly nontrivial arguments in \cite{BGLZ}, while the lemmas in Subsection \ref{sec:graphon.prelim} are of an elementary nature. 

\subsection{Dominant terms in the expansion for $t(F_k,g/p)$}
\label{sec:graphon.dominant}


As we noted in \Cref{rmk:graphon.stab}, \Cref{prop:graphon.stab} is a stability-type strengthening of the solution to the graphon variational problem established in \Cref{thm:BhDe2}, and our proof of the former relies heavily on some highly nontrivial results established in \cite{BGLZ} towards the latter. The next lemma distills a key estimate that follows from the arguments in \cite{BGLZ}.

Letting $f=g-p$, for a graph $H$ we can expand
\begin{equation}	\label{tp.full}
%
t(H,g/p) = 1+\sum_{F\subseteq H}N(F,H) t(F,f/p)
\end{equation}
where the sum runs over nonempty subgraphs 
$F$ of $H$ (up to isomorphism), and $N(F,H)$ is the number of subgraphs of $H$ isomorphic to $F$.
It is shown in \cite[Corollary 6.2]{BGLZ} that for $g\ge p$ satisfying
\begin{equation}	\label{Jp.bound}
\eye_p(g) \le K p^\Delta \log(1/p)
\end{equation}
for some $K=O(1)$,
the only non-negligible terms in \eqref{tp.full} are for 
$F=H$, as well as 
$F=H^{A}$ for some ${A}\in \cI(H^\star)$
(recall our notation from \eqref{def:indep.poly}--\eqref{def:TF}), where $H^{A}$ denotes the 
bipartite subgraph of $H$ induced between ${A}$ and its vertex neighborhood $\cN_{H}({A})$ 
in $H$.
The expansion is further refined based on a decomposition of $f$ that we now recall.
For 
$d>0$ we denote
\begin{equation}\label{dfn:Dd}
D_{d}(g):= \bigg\{ x\in [0,1]: \int_0^1 \max(g(x,y)-p,0) dy \ge d\bigg\}\,.
\end{equation}
(
Note this differs slightly from the definition in \Cref{sec:ideas} and \cite{BGLZ}: the integrand is $f$ if $g\ge p$, but we do not assume this in general.)
We abbreviate
\begin{equation}\label{dfn:tf}
\hf:= f\chi_{D_d(g)^c\times D_d(g)^c} \,, \qquad 
\tf:= f\chi_{D_d(g)\times D_d(g)^c}
\,.
\end{equation}
Note that $\tf$ is an asymmetric graphon.
We denote by
\begin{equation}	\label{def:chf}
\chf(x,y) := \tf(x,y) + \tf(y,x)
\end{equation}
the symmetrization of $\tf$.
Recalling the notation \eqref{def:t.asymmetric},
 we note that if $F$ is bipartite with vertex bipartition $(A,B)$, then
\begin{equation}	\label{tF.chf}
t(F,\chf) = t(F, \tf; A) + t(F, \tf; B).
\end{equation}

Our next lemma combines \cite[Cor.\ 6.2 and Prop.\ 6.5]{BGLZ}, making 
explicit certain quantitative bounds which are extracted from their proofs.

\begin{lemma}
\label{lem:terms}
Let $p\in(0,1)$ and let $F_1,\dots, F_m$ be connected graphs of maximum degree $\Delta\ge2$. 
For any graphon $g=p+f$ with $f\ge0$ satisfying \eqref{Jp.bound} and any 
$0<\eps<\frac12$ there 
exists $\kappa_0(\eps)=\kappa_0(\uF,K,\eps)
>0
$ and $d=d(\uF,K,p,g,\eps)\in [p^{1/3},p^{\kappa_0(\eps)}]$ such that for each $1\le k\le m$ we have
\begin{align}	
t(F_k,g/p) &\le 1+ \eps + p^{\kappa_0(\eps)}+ \sum_{\emptyset\ne {A}\in \cI(F_k^\star)} t(F_k^{A}, \tf/p \,;A)
+\ind(\text{$F_k$ regular}) \,t(F_k, \hf/p)  \label{dom.terms}
\end{align}
(recall the notation \eqref{def:t.asymmetric}).
\end{lemma}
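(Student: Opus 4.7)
My plan is to start from the binomial expansion \eqref{tp.full} and further decompose each $t(F, f/p)$ for $F \subseteq F_k$ according to the partition $[0,1] = D_d \sqcup D_d^c$. For an embedding $\phi:\Verts(F)\to[0,1]$, label each vertex $H$ if $\phi(u)\in D_d$ and $L$ otherwise, yielding
\begin{equation*}
t(F, f/p) = \sum_{\sigma:\Verts(F)\to\{H,L\}} t(F, f/p; \sigma).
\end{equation*}
The goal is to identify which pairs $(F,\sigma)$ contribute dominantly and show the rest contribute at most $\eps + p^{\kappa_0(\eps)}$ after choosing $d$ appropriately.

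The first step is to pick $d$ via a multi-scale pigeonhole. I would fix a geometric sequence $p^{1/3}=d_0>d_1>\cdots>d_L=p^{\kappa_0(\eps)}$ of $L=O(1/\eps)$ scales and select one $d=d_\ell$ for which the mass of $\tf$ and $\hf$ associated with the ``transition layer'' $D_{d_{\ell-1}}\setminus D_{d_\ell}$ is at most $\eps/L$. Summed over the $O_\uF(1)$ pairs $(F,\sigma)$, this layer contributes at most $\eps$ to the final bound. The range $[p^{1/3},p^{\kappa_0(\eps)}]$ arises from requiring both endpoints to be small yet to admit enough distinct scales to absorb the $\eps$ error; this is the scale-selection device of \cite[Sec.~6]{BGLZ}.

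With $d$ fixed, I would case-split on $(F,\sigma)$. If $\sigma$ labels two adjacent vertices of $F$ by $H$, the integral is confined to $D_d\times D_d$; the entropy bound \eqref{Jp.bound} forces $|D_d|\ls p^\Delta/d$, which after H\"older's inequality yields $O(p^{\kappa_0(\eps)})$. If $\sigma\equiv L$, then $t(F, f/p; \sigma)$ contributes to $t(F, \hf/p)$; by Finner's inequality (Theorem \ref{thm:finner}) this is bounded by $\|\hf/p\|_2^{\edges(F)}$, which combined with $\|\hf/p\|_2^2\ls p^{\Delta-2}$ (from \eqref{Jp.bound}) is negligible unless $F=F_k$ and $F_k$ is $\Delta$-regular---the case that produces the $\ind(F_k\text{ regular})\,t(F_k,\hf/p)$ term. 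Otherwise $\sigma^{-1}(H)$ is a nonempty independent set $A\subseteq\Verts(F)$, and every edge of $F$ incident to $A$ crosses between $A$ and its complement. The vertex-degree constraint defining $D_d$, combined with Finner's inequality on the bipartite integral with exponents dictated by the edge structure, forces $F$ to coincide with the induced bipartite subgraph $F_k^A$ and $A$ to be independent in $F_k^\star$, giving the claimed $t(F_k^A,\tf/p;A)$ term.

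The main obstacle is the last case analysis: showing that if $A$ fails to be an independent set of $F_k^\star$, or if $F$ strictly contains $F_k^A$, then the Finner exponents strictly undersaturate and a power of $p$ is gained. This requires carefully pairing the pointwise bound $\int_{[0,1]} f(x,\cdot)\le d$ for $x\in D_d^c$ with the global $L^2$-control and with the maximality of $\Delta_\star$ along edges of $F_k^\star$. Essentially all of this work is done in \cite[Prop.~6.5]{BGLZ}; the new content here is modest---extracting the explicit quantitative dependence of $\kappa_0$ on $\eps$ by bookkeeping the scale count in the pigeonhole step and the slack in each Finner application.
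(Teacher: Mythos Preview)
Your overall plan—extract quantitative bounds from the arguments of \cite{BGLZ}—is exactly what the paper does, and you correctly identify that the heavy lifting is in \cite[Prop.~6.5]{BGLZ}. But your sketch of the case analysis has two concrete gaps.

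First, the all-$L$ case: the bound $t(F,\hf/p)\le\|\hf/p\|_2^{\edges(F)}$ combined with $\|\hf/p\|_2^2\ls p^{\Delta-2}$ gives only $O(1)$ when $\Delta=2$, so it does not show negligibility for strict subgraphs $F\subsetneq F_k$. (Take $F_k=C_3$ and $F=K_{1,2}$: your bound is $\ls 1$.) The paper avoids this by following the two-stage structure of \cite{BGLZ}: first \cite[Cor.~6.2]{BGLZ} eliminates all subgraphs $F$ with $\tau(F)\ne\edges(F)/\Delta$ via an entropy argument (their Lemma~6.1) that does not use the degree threshold at all, with error $O(p^{\kappa_1})$; only then is $D_d$ introduced, and \cite[Prop.~6.5(a)--(c)]{BGLZ} handles each surviving term by combining the threshold with $L_2$ control in ways more delicate than a single Finner application. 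Folding both stages into one $(F,\sigma)$ decomposition is possible in principle but requires the full BGLZ machinery, not just Finner plus a crude degree bound.

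Second, your pigeonhole over $O(1/\eps)$ geometric scales is too optimistic for the regular bipartite case. The scale selection in \cite[Prop.~6.5(c)]{BGLZ} is an iterated pigeonhole, with the number of iterations depending on $\verts(F_k)$; this is why the paper records $\kappa_0(\eps)=\exp(-O_K(\eps^{-O_{\uF}(1)}))$ rather than anything polynomial in $1/\eps$. Finally, the paper's emphasis is less on making $\kappa_0$ explicit and more on verifying that a \emph{single} choice of $d$ works simultaneously for all $F_1,\ldots,F_m$; this is achieved by taking $\alpha=\min\{\tfrac13,\min_k\kappa_2(F_k)\}$ and $L=\max_k\verts(F_k)$ in the scale-selection step, a uniformity issue your proposal does not address.
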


\begin{remark} 
The proof gives the dependence $\kappa_0(\uF,K,\eps)= \exp(-O_{K}(\eps^{-O_{\uF}(1)}))$. By
optimizing 
$\eps$ we can replace $\eps+p^{\kappa_0(\eps)}$ in \eqref{dom.terms} with 
$(\log\log\frac1p)^{-c}$ for a sufficiently small $c=c(\uF,K)>0$.
As can be seen from the proof, this can be improved to $O_{\uF,K}(p^{c'})$ for a reasonable constant $c'=c'(\uF)>0$ when none of the $F_k$ are regular and bipartite.
\end{remark}

\begin{remark}
The differences between \Cref{lem:terms} and the results of \cite{BGLZ} are that the latter are stated with qualitative errors and only address the case of a single graph ($m=1$). For us the crucial point to verify is that the parameters $\kappa_0$ and $d$ can be chosen uniformly for a collection of graphs $F_1,\dots, F_m$.
We do this in the proof below, by outlining the arguments in \cite{BGLZ}, pointing out the nature of the parameter dependencies and making explicit certain error terms. We encourage the interested reader to look at the full argument in \cite{BGLZ}, which involves many beautiful ideas that are skipped over in the summary that follows; we emphasize that the proof below involves no new ideas over the arguments in \cite{BGLZ}.
\end{remark}

\begin{proof}
Fix $g$ and $\eps$ as in the statement of the lemma.
Since $g\le 1$ we have the trivial bound $t(F_k, g/p)\le (1/p)^{|\Edges(F_k)|}$ so we may assume $p$ is bounded away from 1. Then by replacing $\eps$ with $\eps/C$ for a large constant $C=C(\uF,K)$ and shrinking $\kappa_0$ it suffices to prove \eqref{dom.terms} with error $O_{\uF,K}(\eps+p^{\kappa_0(\eps)})$ in place of $\eps + p^{\kappa_0(\eps)}$.

We consider first the problem of bounding $t(H,g/p)$ for an arbitrary connected graph $H$ of maximum degree $\Delta\ge2$, and will later show how $\kappa_0$ and $d$ can be chosen uniformly for $H$ ranging over a finite collection $\{F_1,\dots,F_m\}$. 
Fixing such a graph $H$, 
let $\cF_H$ be the class of (isomorphism classes of) graphs $F$ such that $H$ contains a subgraph isomorphic to $F$, and for which $\tau(F)=|\Edges(F)|/\Delta$, where $\tau(F)$ is the size of a minimal vertex cover for $F$ (i.e.\ a minimal subset $A\subseteq\Verts(F)$ such that every edge of $F$ contains some element of $A$).
Then from \cite[Cor.\ 6.2]{BGLZ} and its proof we have the following refinement of \eqref{tp.full}:
\begin{align}
\label{tp.1st}
t(H,g/p) = 1 &+  t(H,f/p)\ind(\text{$H$ regular, non-bipartite}) \\
&+\sum_{F\in\cF_{H}} N(F,H) t(F,f/p) + O_{H,K}(p^{\kappa_1}), \notag
\end{align}
for some $\kappa_1=\kappa_1(H)>0$ depending only on $H$. (Note that when $H$ is regular and bipartite then $t(H,f/p)$ arises as a term in the sum over $F$.) The error term in \eqref{tp.1st} is only stated as $o(1)$ in \cite[Cor.\ 6.2]{BGLZ}, but is easily seen to be of the above form from inspection of the short argument deducing \cite[Cor.\ 6.2]{BGLZ} from \cite[Lem.\ 6.1]{BGLZ}.

As noted above \cite[Cor.\ 6.2]{BGLZ}, each element $F\in\cF_H$ is bipartite with a vertex bipartition $(A,\Verts(F)\setminus A)$ such that $A$ is a minimal vertex cover of $F$ (thus $|A|=\tau(F)$) and every element of $A$ has degree $\Delta$ in $F$. Since $H$ has maximum degree $\Delta$ one sees that each $F\in\cF_H$ in fact has a unique minimal vertex cover $A$ except when $H$ is $\Delta$-regular and bipartite and $F=H$, in which case the two sides of $H$ are the two minimal vertex covers. 
Moreover, every element of $\cF_H$ arises as $H^A$ for an independent set $A$ in $H^\star$, and each $A\in\cI(H^\star)$ gives rise to an element of $\cF_H$ in this way. 
(Recall that $H^A$ is bipartite subgraph of $H$ induced between ${A}$ and its vertex neighborhood, and $H^\star$ is the induced subgraph of $H$ on its vertices of degree $\Delta$.)
We hence see that the sum over $F$ in \eqref{tp.1st} may be re-expressed as
\begin{align}
\label{tp.2nd}
t(H,g/p) = 1&+   t(H,f/p)\ind(\text{$H$ regular}) \\
&+\sum_{A\in\cI'(H^\star)}  t(H^A,f/p) + O_{H,K}(p^{\kappa_1}),\notag
\end{align}
where $\cI'(H^\star):= \{H^A: \emptyset\ne A\in \cI(H^\star), |A|<\tfrac12|\Verts(H)|\}$.
Indeed, $\{H^A: A\in \cI'(H^\star)\}$ is exactly $\cF_H$, except when $H$ is bipartite and regular, in which case it misses only the element $H\in\cF_H$. 

Consider an arbitrary fixed $A\in\cI'(H^\star)$ and let $H^{A_1},\dots, H^{A_k}$ be the connected components of $H^A$ (so $A_1,\dots, A_k$ is a partition of $A$), so that
\begin{equation}
\label{tF.factorize}
t(H^A, f/p) = \prod_{i=1}^k t(H^{A_i}, f/p). 
\end{equation}
Note that all of the $H^{A_i}$ are necessarily irregular; indeed, if one of them were regular then it would be a connected component of $H$, which we assumed is connected, and hence $H^A=H$, but then we would have $|A|=\frac12|\Verts(H)|$, contradicting that $A\in\cI'(H^\star)$. Since the $H^{A_i}$ are irregular, bipartite and connected, from \cite[Prop.\ 6.5(a)]{BGLZ} we have that for any $p^{1/3}\le d\ll 1$, 
\[
t(H^{A_i}, f/p) = t(H^{A_i}, \tf/p ; A_i) + O_{H,K}(p^{1/3}+ d^{1/6})
\]
for each $i$, where the error term is easily verified by inspection of the proof in \cite{BGLZ}. 
(Note that here and below we compute $t(H^{A_i},\tf/p; A_i)$ as in \eqref{def:t.asymmetric} under the ordered bipartition $(A_i,\cN_H(A_i))$, and similarly with $A$ in place of $A_i$.) 
Furthermore, from \Cref{thm:finner} and \cite[Lemma 4.2]{BGLZ} we have $t(H^{A_i},\tf/p;A_i)\le t(H^{A_i}, f/p) =O_{H,K}(1)$. Combined with the factorization \eqref{tF.factorize} we get
\begin{equation}	\label{tF.FA}
t(H^A, f/p) = t(H^A,\tf/p;A) + O_{H,K}(p^{1/3}+ d^{1/6})\;\; \forall A\in \cI'(H^\star),\, 
p^{1/3}\le d\ll 1.
\end{equation}
In case of $H$ regular and non-bipartite, from \cite[Prop.\ 6.5(b)]{BGLZ} there exists $\kappa_2=\kappa_2(H)>0$ such that 
\begin{equation}	\label{tF.reg-nonbip}
t(H, f/p) = t(H, \hf/p) + O_{H,K}( p^{\kappa_2}) \qquad\forall\;p^{\kappa_2}\le d\le 1,
\end{equation}
where the error term is readily verified from inspection of the proof. 
Finally, when $H$ is regular and bipartite, 
the proof of \cite[Prop.\ 6.5(c)]{BGLZ} shows that for any $\alpha\in(0,\frac13]$ and any integer $L\ge3$ there exists $d\in [p^{\alpha},p^{\alpha \eps'}]$ for $\eps'= \exp( - O_{K,L}(\eps^{-L}))$ such that (recalling the notation \eqref{def:chf}),
\begin{equation}	\label{tF.reg-bip}
t(H,f/p) = t(H, \hf/p) + t(H, \chf/p) + O_{H,K}(\eps + d^{1/2}),
\end{equation}
for any graph $H$ with $|\Verts(H)|\le L$ (for the proof in \cite{BGLZ} the point is that $L$ bounds the length of any cycle in a covering of $\Verts(H)$ by disjoint cycles). 

Now given $F_1,\dots, F_m$, take $L=L(\uF)$ and $\alpha=\alpha(\uF)$ with
\[
L(\uF):=\max_k|\Verts(F_k)|\,,\qquad 
\alpha(\uF):=\min\big\{ \tfrac13,\,\min_k \kappa_2(F_k)\big\}
\]
with $\kappa_2$ as in \eqref{tF.reg-nonbip}.
Then there exists $d\in [p^{\alpha}, p^{\alpha\eps'}]$ with $\eps' = \exp( - O_K(\eps^{-O_{\uF}(1)}))$ such that the estimates \eqref{tF.FA}, \eqref{tF.reg-nonbip}, \eqref{tF.reg-bip} all hold for $H=F_k$ for each $1\le k\le m$. 
Substituting these estimates into \eqref{tp.2nd} we get that for each $1\le k\le m$,
\begin{align}
t(F_k,g/p) = 1&+   t(F_k,\hf/p)\ind(\text{$F_k$ regular}) 
+ t(F_k, \chf/p)\ind(\text{$F_k$ regular, bipartite})\notag \\
&+\sum_{A\in\cI'(F_k^\star)}  t(F_k^A,\tf/p\,;A) + O_{\uF,K}(\eps+ p^{\min\{ \kappa_1\,,\,\alpha\eps'/6\}}).\label{tp.3rd}
\end{align}
Finally, we note that by the identity \eqref{tF.chf}, when $F_k$ is regular and bipartite the term  $t(F_k, \chf/p)$ can be absorbed into the sum over $A$ by extending the sum to all nonempty $A\in\cI(F_k^\star)$ (with $F_k^\star=F_k$ in this case).
\end{proof}

\subsection{Further preliminary lemmas}
\label{sec:graphon.prelim}

In addition to \Cref{lem:terms}, for the proof of Proposition \ref{prop:graphon.stab} we 
need
a few elementary lemmas.
The first is a stability result for the set of optimizers of the 2-dimensional problem \eqref{def:phi}.
See \Cref{fig:ab2} for an illustration.
Let
\begin{equation}	\label{def:Qpad}
\Region(\uup,\eta) = \big\{ (a,b) \in \R_{\ge 0}^2: \frac12a+b \le 
\UTab_{\uF}(\uup)+\eta
\big\}
\cap
V_\uF(\uup-\eta \1)\,,\quad \uup\in \R_+^m\,,\;\eta>0.
\end{equation}
We write $B_q(r)$ for the open ball in $\R^2$ of radius $r$ centered at $q$, and use sumset notation $R+S=\{r+s:r\in R, s\in S\}$.
In the sequel we abbreviate $T_k:=T_{F_k}$.

\begin{lemma}[Stability for the planar problem]
\label{lem:planar.stab}
For each $\uup\in\R_+^m$ and $\eta>0$ we have  that 
\[
\Opt(\phi;\uup) \subset \Region(\uup,\eta) \subset 
\Opt(\phi;\uup)+B_{(0,0)} (\eps_\eta) \,,
\]
for some $\eps_\eta = O_{\uF,\uup}(\eta)$.
\end{lemma}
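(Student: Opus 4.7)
The first inclusion $\Opt(\phi;\uup)\subseteq\Region(\uup,\eta)$ is immediate: any optimizer $(a,b)$ satisfies $\tfrac12 a+b=\phi_\uF(\uup)$ and $T_{F_k}(a,b)\ge 1+s_k$ for all $k$, so both constraints defining $\Region(\uup,\eta)$ hold (with slack at least $\eta$ in the upper-tail constraints).

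For the reverse inclusion, my plan is to combine a soft compactness argument with a local linearization at each element of $\Opt(\phi;\uup)$. First, for every $\eta\le 1$ the set $\Region(\uup,\eta)$ lies in the fixed compact box $K:=\{(a,b)\in\R_{\ge0}^2:\tfrac12 a+b\le \phi_\uF(\uup)+1\}$, and by Proposition~\ref{prop:opt}(b), $\Opt(\phi;\uup)=\{q_1^\star,\ldots,q_N^\star\}$ is a finite subset of the segment $\{\tfrac12 a+b=\phi_\uF(\uup)\}\cap\R_{\ge0}^2$. A contradiction/compactness argument using continuity of the polynomials $T_{F_k}$ then yields the qualitative version: for every $\delta>0$ there is $\eta_0(\delta,\uF,\uup)>0$ such that $\Region(\uup,\eta)\subseteq\bigcup_{i=1}^N B_{q_i^\star}(\delta)$ whenever $\eta\le\eta_0$. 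Indeed, otherwise a subsequential limit of points $(a_n,b_n)\in\Region(\uup,\eta_n)$ with $\eta_n\downarrow 0$ and $d((a_n,b_n),\Opt(\phi;\uup))\ge\delta$ would be feasible for $V_\uF(\uup)$ with objective value at most $\phi_\uF(\uup)$, hence lie in $\Opt(\phi;\uup)$, contradicting the distance bound.

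To sharpen this into the linear rate $\eps_\eta=O_{\uF,\uup}(\eta)$, I would work separately in each ball $B_{q_i^\star}(\delta)$ for a sufficiently small $\delta$. The functions $T_{F_k}$ are polynomials, hence smooth with bounded derivatives on $K$; at the minimizer $q_i^\star$ of the linear objective $\tfrac12 a+b$ over $V_\uF(\uup)$, the KKT conditions force $(\tfrac12,1)$ to lie in the convex cone generated by the inward normals of the constraints active at $q_i^\star$ (drawn from $\{T_{F_k}\ge 1+s_k\}_k$ together with $a\ge 0$ and $b\ge 0$). In the generic ``corner'' case where two active constraints have linearly independent gradients at $q_i^\star$, relaxing each constraint by $\eta$ moves its level set by $O(\eta)$ in the normal direction, and linearizing the two-inequality system plus a Hoffman-type estimate for linear inequalities forces $\Region(\uup,\eta)\cap B_{q_i^\star}(\delta)\subseteq B_{q_i^\star}(C(\uF,\uup)\eta)$.

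The main obstacle is the degenerate tangency case, in which only one $T_{F_k}$-constraint is active at $q_i^\star$ and its level curve is tangent to the objective line $\{\tfrac12 a+b=\phi_\uF(\uup)\}$. A second-order Taylor expansion in the tangent direction suggests the relaxed band near such a $q_i^\star$ has diameter $O(\eta^{1/2})$ rather than $O(\eta)$, so to recover the stated linear rate one must rule this case out. A plausible route is to inspect the explicit form $T_{F_k}(a,b)=P_{F_k^\star}(b)+a^{\verts(F_k)/2}\ind(F_k\text{ regular})$ and use the structure of $\Opt(\phi;\uup)$ (a finite subset of a line segment, by Proposition~\ref{prop:opt}(b)) to argue that at any tangent optimizer an additional constraint --- $a=0$, $b=0$, or a second $T_{F_{k'}}$-constraint --- must also be active, reducing to the corner case; failing that, one only gets $\eps_\eta=O_{\uF,\uup}(\eta^{1/2})$, which is already enough for the qualitative use of this lemma in Section~\ref{sec:graphon}.
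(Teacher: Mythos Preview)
Your overall architecture matches the paper's proof: first inclusion trivial, then compactness for a qualitative $\eps_\eta\to0$, then a local linearization near each $q^\star\in\Opt(\phi;\uup)$ to get the linear rate. The difference is that you leave the tangency case unresolved (hedging with an $\eta^{1/2}$ fallback), whereas the paper rules it out cleanly, and this is the key point.

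The paper's argument is geometric rather than via KKT/Hoffman. Each level curve $\{T_k=1+s_k\}$ is the graph of a \emph{strictly concave} function $b=g_k(a)$ (for regular $F_k$ this follows from $\verts(F_k)\ge3$; for irregular $F_k$ the curve is horizontal). Consequently the linear objective $\tfrac12 a+b$ restricted to any such arc has at most one interior critical point, and it is a strict local \emph{maximum}. Hence the minimum of $\tfrac12 a+b$ along each boundary arc of $V_\uF(\uup)$ is attained at an endpoint, forcing every optimizer $q^\star$ to be a corner where at least two constraints (from $\{T_k\ge1+s_k\}_k$, $a\ge0$, $b\ge0$) are active. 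Moreover, none of the active level curves can have slope exactly $-\tfrac12$ at $q^\star$: if one did, moving along that curve into the adjacent boundary arc would strictly decrease $\tfrac12 a+b$, contradicting optimality. So one can always select two active constraints whose level-curve slopes at $q^\star$ satisfy $m_{k_L}<-\tfrac12<m_{k_R}$ strictly, i.e.\ the corner is transverse.

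With transversality secured, the paper linearizes the two active constraints (mean value theorem, gradients bounded below) and intersects with $\{\tfrac12 a+b\le\phi_\uF(\uup)+\eta\}$ to get a triangle containing $q^\star$ at distance $O_{\uF,\uup}(\eta)$ from each side, hence of diameter $O_{\uF,\uup}(\eta)$. This is the same endpoint as your corner-case analysis, but the strict-concavity observation is what guarantees you are always in that case; your ``plausible route'' of looking for an additional active constraint is not the mechanism---the point is that a single tangent constraint already cannot support a minimizer.
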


\begin{proof}  With $\eta \mapsto \Region(\uup,\eta)$ non-decreasing and 
$\Region(\uup,0)=\Opt(\phi;\uup)$, the first containment is obvious. 
Further, by the compactness of $\Region(\uup,\eta)$ and continuity of all 
functions of  $(\csize,\bsize)$ in its definition,
any collection $q_\eta \in  \Region(\uup,\eta)$ must have a limit point $q_0 \in \Region(\uup,0)$,
implying the second containment for some $\eps_\eta \to 0$. With $\Opt(\phi;\uup)$ a finite set
(see Proposition \ref{prop:opt}(b)), it remains only to show that for 
fixed $q=(\csize,\bsize) \in \Opt(\phi;\uup)$ and small $\eps>0$ the set
$\Region(\uup,\eta) \cap B_{q}(\eps)$ has diameter $O_{\uF,\uup}(\eta)$. To this end, fixing 
such $q$, it is argued in the proof of Proposition \ref{prop:opt}(b) (see Appendix \ref{sec:opt}) that $q$ must be a point
of non-smoothness on the boundary of $\bigcap_k\{T_k \ge 1+ s_k\}$ where
the linear function $T_0(\csize,\bsize):=\frac{1}{2} \csize + \bsize$ of slope $m_0=-\frac{1}{2}$
is minimized (incorporating hereafter the constraint of being in $\R^2_{\ge 0}$ via
$s_{m+1}=s_{m+2}=-1$, $T_{m+1}=\csize$ and $T_{m+2}=\bsize$). As such,
at least two constraints, $k_L$ and $k_R$ in $[m+2]$, must hold with equality 
at $q$, where the corresponding curves intersect transversely with slopes  
$-\infty\le m_{k_L} <m_0 <m_{k_R} \le 0$ (and the strict inequalities here 
are due  to the strict convexity of $T_k, k \le m$). Setting 
\[
S(\eta,\eps) := B_q(\eps) \cap \{T_{k_L}\ge T_{k_L}(q) -\eta\}\cap \{T_{k_R}\ge T_{k_R}(q) -\eta\} 
\cap \{T_0 \le T_0(q) +\eta\} \,,
\]
clearly $\Region(\uup,\eta) \cap B_{q}(\eps)  \subseteq S(\eta,\eps)$. Further,
when $k_L \in [m]$ or $k_R  \in  [m]$, applying the mean-value theorem for 
the corresponding function of smooth gradient of norm $\gs_{\uF,\uup,q} 1$,
yields that $S(\eta,\eps) \subset \wt S(2\eta,\eps)$ for all $\eps \le \eps_0(\uF,\uup,q)$, 
where $\wt S(\eta,\eps)$ is defined as $S(\eta,\eps)$ except for replacing  
$T_{k_L}$ and $T_{k_R}$ by the corresponding linearizations around $q$ of slopes
\[
m_{k_L} \vee O_{\uF,\uup,q}(\eps^{-1})+O_{\uF,\uup,q}(\eps)  < m_0  < m_{k_R} - O_{\uF,\uup,q}(\eps) \,.
\]
In particular, $\wt S (2\eta,\eps)$ is contained within a closed triangle, 
whose interior point $q$ is of distance $O_{\uF,\uup,q}(\eta)$ from all 
three sides, thereby having a diameter $O_{\uF,\uup,q}(\eta)$, as claimed. 
\end{proof}

Next, we need a stability version of a quadratic approximation used in \cite{LuZh14,BGLZ} for the function $\eye_p:[0,1]\to\R_{\ge 0}$.
Recall the function $\jay_p$ from \eqref{def:jay}.

\begin{lemma}[Estimates on $\jay_p$]
\label{lem:Jp}
For any $p\in (0,1)$ and $x\in [-p,0]$,
\begin{equation}	\label{Jp.LB-left}
\jay_p(x) \ge \frac{x^2}{2p\log(1/p)}.
\end{equation}
Moreover,
there exists a constant $c>0$ such that for any $0<p\le c$ and $x\in [0,1-p]$,  
\begin{equation}	\label{Jp.LB-right}
\jay_p(x) - x^2 \gs \min( x^2, (1-p-x)^2). 
\end{equation}
\end{lemma}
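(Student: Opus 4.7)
The plan is to derive both bounds from the Taylor expansion of $\eye_p$ around its minimum at $u=p$, using that $\eye_p(p) = \eye_p'(p) = 0$ and $\eye_p''(u) = 1/(u(1-u))$. This yields the integral representation
\begin{equation*}
\eye_p(p+x) = \int_p^{p+x} \frac{p+x-t}{t(1-t)} \, dt,
\end{equation*}
with integration orientation reversed for $x < 0$. For Part 1, when $x \in [-p, 0]$ the interval of integration lies in $[0, p]$, so $1/(t(1-t)) \ge 1/t \ge 1/p$; direct integration gives $\eye_p(p+x) \ge x^2/(2p)$, which after dividing by $\log(1/p)$ proves \eqref{Jp.LB-left}.

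For Part 2, I will combine a global identity with two localized refinements near the endpoints of $[0,1-p]$. Expanding the logarithms in $\eye_p(p+x)$ one obtains
\begin{equation*}
\jay_p(x) = (p+x) + \frac{h(p+x) - (1-p-x)\log(1-p)}{\log(1/p)},\qquad h(u):= u\log u + (1-u)\log(1-u),
\end{equation*}
and since $|h|\le \log 2$ on $[0,1]$ and $|\log(1-p)|\le 2p$ for $p\le 1/2$, this gives
\begin{equation*}
\jay_p(x) - x^2 \ge p + x(1-x) - C/\log(1/p)
\end{equation*}
for an absolute constant $C$. Using $x(1-x) \ge x(1-p-x) \ge \min(x^2,(1-p-x)^2)$, this is already enough to establish \eqref{Jp.LB-right} whenever $\min(x, 1-p-x) \gs 1/\sqrt{\log(1/p)}$, so the task reduces to handling two small regimes near the endpoints where the $C/\log(1/p)$ slack may exceed the target.

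Near $x=0$, I will reuse the integral representation with $1/(t(1-t)) \ge 1/(p+x)$ on $[p, p+x]$ to obtain $\eye_p(p+x) \ge x^2/(2(p+x))$, hence $\jay_p(x) \ge x^2/(2(p+x)\log(1/p))$; this dominates $2x^2$ as soon as $(p+x)\log(1/p) \le 1/4$, which covers the missing regime near $0$, since $\min(x^2, (1-p-x)^2) = x^2$ there. Near $x = 1-p$, I will set $y := 1-p-x$ and use the identity above to write
\begin{equation*}
\jay_p(1-p-y) - (1-p-y)^2 = 2p + y(1-2p) - y^2 - p^2 + \frac{h(1-y) - y\log(1-p)}{\log(1/p)},
\end{equation*}
and check that the error is dominated by $|y\log y|/\log(1/p)$, which is $O(y)$ for $y\ge p$ (and hence absorbed by the $y(1-2p)$ term) and is negligible for $y<p$ (where the $2p$ term alone dominates $y^2 \le p^2$). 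The main obstacle I anticipate is simply choosing the cutoffs so that the three regimes overlap cleanly and the $O(1/\log(1/p))$ slack is always absorbed into the relevant $x^2$ or $y^2$ scale; this forces $p$ to be taken sufficiently small, consistent with the existence of the universal constant $c$ in the statement.
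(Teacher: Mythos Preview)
Your Part 1 argument is correct and essentially the same as the paper's: both amount to $\eye_p''(t)=1/(t(1-t))\ge 1/p$ on $[0,p]$.

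For Part 2 your plan has the right ingredients and is close in spirit to the paper's, but the bounds you state are too weak for your three regimes to overlap, and this is not merely a matter of tuning cutoffs.

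\emph{Near $x=0$.} Your integral estimate $\jay_p(x)\ge x^2/\bigl(2(p+x)\log(1/p)\bigr)$ yields $\jay_p(x)\ge (1+c)x^2$ only when $(p+x)\log(1/p)<1/(2(1+c))$, i.e.\ $x\lesssim 1/(2\log(1/p))$. Your global identity actually gives the sharper threshold $x\gtrsim \log 2/\log(1/p)$ for the middle regime (since $x(1-x)\approx x\gg x^2$ there, not merely $x(1-x)\ge x^2$ as you used), but even with that improvement there remains an uncovered interval around $x\sim 1/\log(1/p)$: at $x=\tfrac12/\log(1/p)$ your integral bound gives only $\jay_p(x)\ge x^2$ (no surplus), while the global bound gives $\jay_p(x)-x^2\ge x-\log2/\log(1/p)<0$. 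To close this you need a sharper near-zero estimate, e.g.\ using $1/(t(1-t))\ge 1/t$ in your integral to get $\jay_p(x)\ge\bigl[(p+x)\log((p+x)/p)-x\bigr]/\log(1/p)\sim x$ for $p\ll x\ll1$. The paper instead invokes the convexity-type bound $\jay_p(x)\ge (x/x_0)^2\jay_p(x_0)$ from \cite{BGLZ}, applied with $x_0$ a small constant.

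\emph{Near $y=1-p-x=0$.} Your claim that the error $|y\log y|/\log(1/p)$ is ``$O(y)$ for $y\ge p$ and hence absorbed by $y(1-2p)$'' is the crux, but the implicit constant is $\log(1/y)/\log(1/p)$, which tends to $1$ as $y\downarrow p$; the terms then \emph{cancel} rather than leave a positive remainder, and the surviving $2p$ term does not dominate $y^2$ once $y\gtrsim\sqrt{p}$. The paper fixes this by splitting at $y=p^{3/4}$: for $y\ge p^{3/4}$ one has $\log(1/y)\le\tfrac34\log(1/p)$, so a fraction $\ge y/4$ survives and dominates $y^2$; for $p\le y\le p^{3/4}$ a separate argument shows $K_p(y)\gtrsim y/\log(1/p)$, which exceeds $y^2$ in that range. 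Your sketch needs the same refinement.
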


The key point is that we only have $\jay_p(x)\approx x^2$ near 0 and $1-p$, and \eqref{Jp.LB-right} can hence be viewed as a stability version of the inequality $\jay_p(x)\ge x^2$ used in \cite{LuZh14,BGLZ}. This will allow us to deduce that near-optimizers of the graphon problem \eqref{def:probUTg} are well approximated by functions taking values in $\{p,1\}$. 


\begin{proof}
For \eqref{Jp.LB-left}, letting $L_p(x) = \eye_p(x) - (x-p)^2/2p$, we have $L_p(p)=L_p'(p)=0$ and $L_p''(x) = \frac1x+\frac1{1-x}-\frac1p>0$ for $x\in[0,p]$. Thus, $L_p>0$ on $[0,p]$, which yields \eqref{Jp.LB-left}.

Turning the bound \eqref{Jp.LB-right}, let $c_0\in (0,1/2)$ be a constant to be taken sufficiently small.
From \cite[Lemma 4.3]{BGLZ} we have that for $p\ll x\le 1-p$, 
\[
\jay_p(x) \sim x\frac{\log(x/p)}{\log(1/p)}.
\]
In particular,  for $x\in [c_0,1-c_0]$,
\begin{equation}	\label{Jp.macro}
\jay_p(x) \ge x-o(1)
\end{equation}
which easily yields the claim in this case.
From \cite[Lemma 4.4]{BGLZ} we have 
\[
\jay_p(x) \ge (x/x_0)^2 \jay_p(x_0)
\]
for any $0\le x\le x_0\le 1/2$, provided $p$ is at most a sufficiently small constant. 
Applying this with $x_0=c_0$, combined with \eqref{Jp.macro} at $x=c_0$, yields $\jay_p(x)\gs x^2/c_0$, which gives the claim for the range $x\in(0,c_0]$ assuming $c_0$ is sufficiently small. 

Now for the range $y:=1-p-x\in [0,c_0]$, setting $K_p(y):=\jay_p(1-p-y)-(1-p-y)^2$, it suffices to show
\begin{equation}	\label{Kp.goal}
K_p(y) \gs y^2
\end{equation}
for $0\le y\le c_0$. Since
\[
\jay_p(1-p-y) = 1-y + \frac1{\eye_p(y)} \bigg( \eye(1-y) + y\log\frac1{1-p}\bigg)
\ge 1-y -\frac{1}{\eye_p(1)}\bigg( y\log\frac1y + \log\frac1{1-y}\bigg)
\]
we have
\begin{equation}	\label{Kp.LB1}
K_p(y) \ge 2p+y - \frac{y}{\eye_p(1)}\bigg( \log\frac1y+O(1)\bigg) - O((p+y)^2).
\end{equation}
For $p^{3/4}\le y\le c_0$ this yields $K_p(y)\gs y$, giving \eqref{Kp.goal}.

Now set $y=tp$ for $t\le p^{-1/4}$. 
From \eqref{Kp.LB1} we get
\[
K_p(y) \ge (2-o(1))p + \frac{y}{\eye_p(1)} (\log t - O(1))
\]
giving $K_p(y)\gs y/\eye_p(1)$ for $t\ge C$ for a sufficiently large constant $C>0$. Since $\eye_p(1) \le (1/p)^{1/10}\le y^{-4/30}$, say, this yields \eqref{Kp.goal} for $Cp\le y\le p^{3/4}$. If $1\le t\ls 1$ then the \abbr{RHS} above is at least $(2-o(1))p \gs y$, so we have established \eqref{Kp.goal} for the range $p\le y\le c_0$.

For $y\le p$, i.e.\ $t\le 1$, we have
\[
K_p(y) \ge (2-o(1)) p - y \frac{\log (1/t)}{\eye_p(1)}.
\]
For $p^2\le y\le p$ the \abbr{RHS} above is at least $(1-o(1))p\gs y$, whereas for $y\le p^2$ we have $\log(1/t)\le \log(1/y)\le (1/y)^{1/10}$, say, giving a lower bound of $(2-o(1))p-y^{9/10}=(2-o(1))p \gs y^{1/2}$.
\end{proof}

Finally, we need the following elementary fact, which we state for general product probability spaces (still abbreviating $\|\cdot\|_q:=\|\cdot\|_{L_q(\mu^{\otimes 2})}$ as we do for the Lebesgue spaces).

\begin{lemma}	
\label{lem:product.set}
Let $(\Omega, \cF,\mu)$ be a probability space and let
$E$ be a subset of 
$\Omega^2$, measurable under the product $\sigma$-algebra, that is  asymmetric under interchanging of the coordinates.
Suppose there is a measurable function $h:[0,1]\to \R_{\ge0}$ such that $\|\chi_E-h\otimes h\|_q <\epsilon\mu(E)^{1/q}$ for some $q\ge1$.
Then there is a measurable set $S\subset[0,1]$ such that $\|\chi_E-\chi_{S \times S}\|_q\ls_q \eps^{1/2}
\mu(E)^{1/q}$. 
\end{lemma}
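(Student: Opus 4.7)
The plan is to take $S$ to be a suitable super-level set of $h$, and then show that $h\otimes h$ is close to $\chi_{S\times S}$, from which the conclusion follows by the triangle inequality combined with the hypothesis.

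Specifically, I would set $S := \{x \in \Omega : h(x) \ge c\}$ for $c = 1/\sqrt{2}$, so that $h(x)h(y) \ge 1/2$ on $S\times S$. Applying Markov's inequality to the hypothesis, the ``bad'' set $N := \{(x,y) : |h(x)h(y) - \chi_E(x,y)| \ge 1/4\}$ satisfies $\mu^{\otimes 2}(N) \le 4^q \eps^q \mu(E)$, and outside $N$ the product $h\otimes h$ is quantized near $\chi_E \in \{0,1\}$. A pointwise case analysis then yields $(S\times S)\setminus E \subseteq N$ directly, since on that set $h\otimes h \ge 1/2$ while $\chi_E = 0$, forcing $|h\otimes h - \chi_E| \ge 1/2$. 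The reverse containment $E\setminus (S\times S) \subseteq N$ holds except on the ``large-$h$'' set, i.e.\ where one coordinate has an atypically large value of $h$ that compensates for a small value at the other coordinate.

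To control the large-$h$ exceptional set, I would use that the hypothesis and triangle inequality give $\|h\|_q^2 = \|h\otimes h\|_q \le (1+\eps)\mu(E)^{1/q}$, whence Markov's inequality yields $\mu(\{h \ge M\}) \lesssim \mu(E)^{1/2}/M^q$ for any $M \ge 1$. The measure of pairs with $\max(h(x),h(y)) \ge M$ is therefore at most $2\mu(\{h \ge M\})$. Combining this with the bound on $\mu^{\otimes 2}(N)$ and optimizing over $M$ controls the symmetric difference as $\mu^{\otimes 2}(E\triangle (S\times S)) \lesssim_q \eps^q\mu(E)$, and taking $q$-th roots gives $\|\chi_E - \chi_{S\times S}\|_q \lesssim_q \eps\, \mu(E)^{1/q}$, slightly sharper than the stated $\eps^{1/2}$-bound.

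The main obstacle is precisely the unboundedness of $h$: outside of $N$, a pair $(x,y)$ with $x\notin S$ but $h(y)$ moderately large can have $h(x)h(y)$ near $1$, so the quantization alone cannot place such pairs in $N$. The $L_q$-control on $h$ derived from the hypothesis via the identity $\|h\otimes h\|_q = \|h\|_q^2$ supplies the missing bound, which is where the slack factor $\eps^{1/2}$ (versus the tighter $\eps$) becomes comfortable for the intended application in the proof of \Cref{prop:graphon.stab}.
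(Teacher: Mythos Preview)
Your proposal has a genuine gap in the control of the ``large-$h$'' exceptional set. From Markov's inequality on $\|h\|_q^q \lesssim \mu(E)^{1/2}$ you only obtain $\mu(\{h\ge M\})\lesssim \mu(E)^{1/2}/M^q$, and the threshold $M$ is \emph{not} a free parameter: the analysis forces $M=(1-\alpha)/c\approx 1$ once $c$ and the Markov cutoff $\alpha$ are fixed. At $M\approx 1$ this yields only $\mu(\{h\ge M\})\lesssim \mu(E)^{1/2}$, and hence $\mu^{\otimes 2}\big(E\setminus(S\times S)\big)\lesssim \eps^q\mu(E)+\mu(E)^{1/2}$. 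In the intended application $\mu(E)\sim p^\Delta\to 0$, so $\mu(E)^{1/2}\gg \eps^{q/2}\mu(E)$ and the bound is useless. There is no optimization over $M$ that repairs this: taking $M$ large shrinks $\{h\ge M\}$ but no longer covers the exceptional pairs, while the constraint $c^2\ge\alpha$ (needed for $(S\times S)\setminus E\subseteq N$) prevents taking $c$ small. Your claimed sharper exponent $\eps$ in place of $\eps^{1/2}$ is a further warning sign.

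The missing idea is to exploit the \emph{product} hypothesis rather than the marginal $L_q$-bound on $h$: if $h\ge M$ on a set $T$ with $M^2>1+\alpha$, then $h\otimes h\ge M^2$ on $T\times T$ forces $T\times T\subseteq N$, giving $\mu(T)^2\lesssim \eps^q\mu(E)$, i.e.\ $\mu(T)\lesssim \eps^{q/2}\mu(E)^{1/2}$ --- the crucial $\eps^{q/2}$ factor your Markov argument misses. Even this alone is not quite enough (it gives $\eps^{q/2}\mu(E)^{1/2}$ rather than $\eps^{q/2}\mu(E)$ for the bad set); the paper uses a dyadic decomposition $S_k=\{2^{k-1}\le h<2^k\}$ to pair scales: the diagonal blocks $S_k\times S_k$ (for $k\ge 2$) supply $\mu(S_k)\lesssim\eps^{q/2}2^{-kq}\mu(E)^{1/2}$, and then summing $\sum_{k\ge 2}\mu(S_k)\mu(S_{i-k})$ against $\sum_\ell 2^{\ell q}\mu(S_\ell)\asymp\|h\|_q^q\asymp\mu(E)^{1/2}$ recovers the second factor of $\mu(E)^{1/2}$. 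The set $S$ is then $S_{-1}\cup S_0\cup S_1$.
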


\begin{proof}
We may assume $\eps$ is sufficiently small depending on $q$ (for otherwise we take $S=\emptyset$).
For $k\in \Z$ let $h_k=h\chi_{S_k}$ with $S_k=\{ 2^{k-1}\le h<2^k\}$. 
We take $S:=S_{-1}\cup S_0\cup S_1$. Our aim is to show
\begin{equation}	\label{prod.goal1}
\mu(E\Delta S^2) \ls_q \eps^{q/2} \mu(E)
\end{equation}
First, by our assumption\,,
\[
\eps^q\mu(E)> \|\chi_E-h\otimes h\|_q^q\ge \int_{S^2\setminus E} h^q\otimes h^q d\mu^{\otimes 2}\gs_q \mu(S^2\setminus E)\,.
\]
It remains to show that $\mu(E\setminus S^2)\ls_q \eps^{q/2 }\mu(E)$. 
On any $S_k\times S_\ell$ with $k+\ell>2$ we have $h_k\otimes h_\ell -\chi_E \ge 1$, and hence $h_k\otimes h_\ell-\chi_E\in [2^{k+\ell-3},2^{k+\ell})$. Thus
\begin{align*}
\eps^q \mu(E) &> \sum_{k+\ell>2} \int_{S_k\times S_\ell} (h\otimes h-\chi_E)^q d\mu^{\otimes 2}\\
&\qquad\qquad\ge \sum_{k+\ell>2} 2^{(k+\ell-3)q} \mu(S_k)\mu(S_\ell)
\ge
\mu\bigg( E\cap \bigcup_{k+\ell>2} S_k\times S_\ell\bigg).
\end{align*}
From the second inequality above we moreover have
\begin{equation}	\label{Sk2+}
\mu(S_k) \ls_q \eps^{q/2} 2^{-kq} \mu(E)^{1/2}
\end{equation}
for all $k\ge 2$.
Now for $k+\ell<0$, $\chi_E-h\otimes h\in [\frac12,1]$ on $E\cap (S_k\times S_\ell)$, so 
\[
\eps^q \mu(E)> \sum_{k+\ell<0} \int_{E\cap (S_k\times S_\ell)} (\chi_E-h\otimes h)^q  d\mu^{\otimes 2}\gs_q \mu\bigg( E\cap \bigcup_{k+\ell<0} S_k\times S_\ell\bigg).
\]
By symmetry, to establish \eqref{prod.goal1} it now suffices to show
\begin{equation}	\label{prod.goal2}
\sum_{k\ge 2, k+\ell=i} \mu(S_k\times S_\ell) \ls_q \eps^{q/2} \mu(E)
\end{equation}
for each $i\in \{0,1,2\}$. 
Fixing such an $i$, from \eqref{Sk2+} we have that 
\[
\sum_{k\ge 2, k+\ell=i} \mu(S_k\times S_\ell)
\ls_q \eps^{q/2} \mu(E)^{1/2} \sum_{k\ge 2} 2^{-kq}\mu(S_{i-k}). 
\]
Now since 
\[
\mu(E)^{1/2}\asymp_q \|h\|_q^q \asymp_q \sum_{k\in\Z} 2^{kq}\mu(S_k) \asymp_q \sum_{k\in \Z} 2^{-kq} \mu(S_{i-k})
\]
for any $i=O(1)$
we obtain \eqref{prod.goal2} and hence the claim.
\end{proof}

\subsection{Proof of Proposition \ref{prop:graphon.stab}}
\label{sec:graphon.stab.pf}

Let $\uF,\uup,\eta$ and $g$ be as in the statement of the proposition.
From the trivial bound \eqref{gST.triv} we may assume $\eta$ is sufficiently small depending on $\uF$ and $\uup$; we will also assume $p_0(\uF,\uup,\eta)$ is sufficiently small without comment.
Setting $f=g-p$, we have
\begin{equation}	\label{Jp.bounds}
\frac12\int \jay_p\circ f \le 
(\UTab_\uF(\uup)+\eta) p^\Delta \ls_{\uF,\uup} p^\Delta.
\end{equation}
In particular \eqref{Jp.bound} holds with $K=O_{\uF,\uup}(1)$. 
Letting $d\in(0,1)$ to be chosen below depending on $g,p,\eta,\uF$ and $\uup$, with $\hf,\tf$ as in \eqref{dfn:tf} we set
\begin{equation}	\label{def:agbg}
a_g:= p^{-\Delta} \int \jay_p\circ \hf\,,\qquad b_g:= p^{-\Delta} \int \jay_p\circ \tf.
\end{equation}
Since $\hf$ and $\tf$ have disjoint supports, from \eqref{Jp.bounds} we have
\begin{equation}	\label{linear-constraint}
\frac12a_g+ b_g \le \UTab_\uF(\uup)+\eta.
\end{equation}

For brevity, we encapsulate the two cases that $g\in\cW_\cP$ or $g\in \cW$ is a general graphon by using the following convention: by \emph{measurable} we will mean Lebesgue measurable in the general case, whereas for the case that $g\in\cW_\cP$ we take ``measurable'' to mean measurable under the finite $\sigma$-algebra generated by $\cP$, or the product $\sigma$-algebra generated by $\cP\times \cP$ as the case may be. Thus, our goal is to locate measurable sets $S,T$ of appropriate size such that $g$ is well approximated by $g_{S,T}$.

The remainder of the proof is divided into the following five steps.
Steps 1--4 establish the proposition under the extra assumption that 
\begin{equation}	\label{assu.nonnegative}
g\ge p\qquad a.e.
\end{equation}
i.e.\ $f\ge0$ $a.e.$

\begin{itemize}
\item
{\bf Step 1:}
Show that $\tmax_k(a_g,b_g) \ge 1+\up_k - O(\eta)$ for each $k$, which, together with \eqref{linear-constraint}, gives that $(a_g,b_g)$ is an approximate extremizer of \eqref{def:phi}
(this step 
is a restatement of the arguments from \cite{BGLZ,BhDe}).

\item
{\bf Step 2:}
Deduce that the bounds 
in Step 1 
 actually hold with approximate equality.

\item
{\bf Step 3:}
Using the approximate equality in the applications of Finner's inequality (in \eqref{tp.finner1} and \eqref{tp.finner2}), apply Theorem \ref{thm:finner.stab} to deduce that $\hf$ and $\tf$ are well approximated by tensor products of univariate functions.

\item
{\bf Step 4:}
From approximate equality in the passage from the relative entropy functional $\jay_p$ to $L_\Delta$-norms (see \eqref{applyJp}), together with Lemmas \ref{lem:Jp} and \ref{lem:product.set}, deduce that $\hf$ and $\tf$ are well approximated by indicators of product sets ($\chi_{S\times S}$ and $\chi_{T\times T^c}$, respectively), concluding the proof under the additional assumption \eqref{assu.nonnegative}.

\item
{\bf Step 5:}
Remove the assumption \eqref{assu.nonnegative}.\\
\end{itemize}

\noindent{\bf Step 1.}
Let $k\in [m]$ be arbitrary.
Applying Lemma \ref{lem:terms} with $\eps=\eta/2$ we have that
for 
all $p$ 
sufficiently small depending on $\eta$, 
\begin{align}
1+\up_k-\eta
&\le 1+\eta + \sum_{\substack{\emptyset\ne {A}\in \cI(F_k^\star) \\ |{A}|<\verts(F_k)/2}} t(F_k^{A},\tf/p \,;A)	\notag\\
&\qquad\qquad+ \ind(F_k\text{ regular}) \cdot \bigg( t(F_k,\hf/p) + \ind(F_k\text{ bipartite}) \cdot t(F_k,
\chf/p) \bigg)\,.	\label{terms}
\end{align}
Next, from Theorem \ref{thm:finner} we have
\begin{equation}	\label{tp.finner1}
t(F_k,\hf/p) \le \|\hf/p\|_\Delta^{\edges(F_k)}\,,\qquad 
t(F_k,\chf/p)  \le \|\chf/p\|_\Delta^{\edges(F_k)}\,,\qquad 
\end{equation}
and
\begin{equation}	\label{tp.finner2}
t(F_k^{A},\tf/p\,;A) \le \|\tf/p\|_\Delta^{\Delta|{A}|}\qquad \forall\;{A}\in\cI(F_k^\star).
\end{equation}
Substituting these bounds in \eqref{terms}, we get
\begin{align}
1+\up_k 
&\le 2 \eta + P_{F_k^\star}(\|\tf/p\|_\Delta^\Delta) + \ind( F_k \text{ regular}) \|\hf/p\|_\Delta^{\edges(F_k)}	\notag\\
&= 2 \eta+ \tmax_k(\|\hf/p\|_\Delta^\Delta, \|\tf/p\|_\Delta^\Delta) \,,\label{finner}
\end{align}
where the second line follows from the definition of $\tmax_k$ and noting that $\edges(F_k)=\Delta\verts(F_k)/2$ when $F_k$ is regular. 
Finally, from Lemma \ref{lem:Jp},
\begin{equation}	\label{applyJp}
\|\hf/p\|_\Delta^\Delta \le p^{-\Delta}\|\hf\|_2^2\le a_g
\qquad \text{and}\qquad
\|\tf/p\|_\Delta^\Delta \le p^{-\Delta}\|\tf\|_2^2\le b_g
\end{equation}
(using only that the \abbr{RHS} of \eqref{Jp.LB-right} is nonnegative).
Combining with \eqref{finner} and by the monotonicity of $\tmax_k$ we get
\begin{equation}	\label{Tkab.LB}
\tmax_k(a_g,b_g) \ge 1+\up_k -2\eta\,.
\end{equation}
In the notation of \eqref{def:Qpad}, the bounds \eqref{linear-constraint} and \eqref{Tkab.LB} say that
\begin{equation}
(a_g,b_g)\in \Region(\uup,2\eta)\,.
\end{equation}
From Lemma \ref{lem:planar.stab}, assuming $\eta\le \eta_0(\uup)$ we have
\begin{equation}	\label{agbg.localized}
(a_g,b_g)\in B_{\qs}\big(
O_{\uF,\uup}(\eta)
\big)
\end{equation}
for some $\qs=(\csizes,\bsizes)\in \Opt(\phi;\uup)$, and moreover that $(a_g,b_g)$ is separated by 
distance $\gs_{\uF,\uup}1$ from all other
elements of the finite set $\Opt(\phi;\uup)$ (see Proposition \ref{prop:opt}(b)).
From \eqref{linear-constraint}, 
\eqref{finner}--\eqref{applyJp}
and the monotonicity of $\tmax_k$ we similarly conclude that
\begin{equation}	\label{Lr.localized}
p^{-\Delta}(\|\hf\|_r^r, \|\tf\|_r^r) \in B_{\qs}\big(O_{\uF,\uup}(\eta)\big)\,, \qquad r=2,\,\Delta\,.
\end{equation}
\,\\
\noindent{\bf Step 2.}
Since $\qs\in\Opt(\phi;\uup)$ it follows that $T_k(\qs)=1+\up_k$ for at least one value of $k\in [m]$. Let $\Ks\subset[m]$ denote the set of such $k$.
Since $T_k$ is locally Lipschitz
it follows that for $k\in \Ks$, 
\begin{equation}	\label{Tkab.UB}
T_k(a_g,b_g) \le 1+\up_k + O_{\uF,\uup}(\eta)\,.
\end{equation}
Hence, for $k\in \Ks$, all of the bounds \eqref{terms}--\eqref{applyJp} hold with equality up to an additive error of 
$O_{\uF,\uup}(\eta)$.
In particular,
\begin{equation}	\label{finner2.sharp}
\|\tf/p\|_\Delta^{\Delta|{A}|}-t(F_k^{A}, \tf/p\,;A) 
\ls_{\uF,\uup} \eta
\qquad \forall\, k\in \Ks\quad\forall\;{A}\in\cI(F_k^\star)
\end{equation}
and
\begin{equation}	\label{bg.tight}
b_gp^\Delta - \|\tf\|_2^2
\ls_{\uF,\uup}\eta p^\Delta\,,
\end{equation}
and if $F_k$ is regular for some $k\in \Ks$, then
\begin{equation}	\label{finner1.sharp}
\|\hf/p\|_\Delta^{\edges(F_k)} -t(F_k,\hf/p) 
\ls_{\uF,\uup}\eta
\end{equation}
and
\begin{equation}	\label{ag.tight}
a_gp^\Delta- \|\hf\|_2^2 
\ls_{\uF,\uup} \eta
p^\Delta.
\end{equation}
If $F_k$ is irregular for every $k\in \Ks$, it follows that $\csizes=0$ and we have from \eqref{agbg.localized}, \eqref{applyJp} that
\begin{equation}	\label{ag.small}
\|\hf\|_\Delta^\Delta\le\|\hf\|_2^2\le a_gp^\Delta =O_{\uF,\uup}(\eta p^\Delta).
\end{equation}
In the remainder of the proof we combine the above estimates with lemmas from Section \ref{sec:graphon.prelim} and Theorem \ref{thm:finner.stab} to locate the measurable sets $S$ and $T$. \\

\noindent{\bf Step 3.}
Using Theorem \ref{thm:finner.stab} we obtain the following consequence of \eqref{finner2.sharp} and \eqref{finner1.sharp}.
Here and in the remainder of this section, for functions $f_1,f_2:[0,1]^d\to\R_{\ge0}$ ($d=1$ or $2$), we write 
\[
f_1\approx_r f_2
\]
to mean $\|f_1-f_2\|_r^r\ls_{\uF,\uup}\eta^{c} p^\Delta$ for some $c=c(\uF)>0$ depending only on $\uF$.

\begin{claim}
\label{claim:tensor}
There exist measurable $\wh h,\wt h:[0,1]\to \R_{\ge0}$ supported on $D_d(g)^c$ and $D_d(g)$, respectively, with $\|\wh h\|_\Delta = \|\wt h\|_\Delta=1$, such that
\begin{equation}	\label{hf.approx}
\hf^\Delta \approx_1 \|\hf\|_\Delta^\Delta (\wh h\otimes \wh h)^\Delta
\end{equation}
and 
\begin{equation}	\label{tf.approx}
\tf^\Delta \approx_1 \|\tf\|_\Delta^\Delta (\wt h^\Delta\otimes \chi_{[0,1]})\,.
\end{equation}
\end{claim}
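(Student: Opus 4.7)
The plan is to apply Theorem \ref{thm:finner.stab} (the stability version of Finner's inequality) to the near-equalities \eqref{finner2.sharp} and \eqref{finner1.sharp} extracted in Step 2, and read off the desired tensor structure for $\hf$ and $\tf$. Fix any $k \in \Ks$, which exists because $\qs$ saturates at least one constraint $T_k(q) \ge 1+\up_k$.

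For \eqref{tf.approx}: choose any vertex $v \in \Verts(F_k^\star)$ (nonempty since $F_k$ has a vertex of degree $\Delta \ge 2$) and apply the stability result to the singleton-index case $A = \{v\}$ in \eqref{finner2.sharp}. Here $F_k^{\{v\}}$ is a star with center $v$ on the $A$-side and leaves $w_1,\dots, w_\Delta = \cN_{F_k}(v)$ on the $B$-side; taking edge weights $\lam_e = 1/\Delta$, the Finner bound $t(F_k^{\{v\}}, \tf/p; A) \le \|\tf/p\|_\Delta^\Delta$ is tight up to $O_{\uF,\uup}(\eta)$, and every equivalence class is a singleton. Theorem \ref{thm:finner.stab} then produces unit-$L_1$-norm functions $h_v, h_{w_1},\dots, h_{w_\Delta}: [0,1] \to \R_{\ge 0}$ with
\[
\big\|(\tf/p)^\Delta / \|\tf/p\|_\Delta^\Delta - h_v \otimes h_{w_i}\big\|_1 \ls_{\uF,\uup} \eta^{c_0}
\]
for each $i$ and some $c_0 = c_0(\uF)>0$. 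Since each leaf satisfies $\sum_{e \ni w_i} \lam_e = 1/\Delta < 1$, Remark \ref{rmk:hB1} gives $\|h_{w_i} - 1\|_1 \ls_{\uF} \eta^{c_0}$. The triangle inequality combined with $\|\tf\|_\Delta^\Delta \le \|\tf\|_2^2 = O_{\uF,\uup}(p^\Delta)$ then yields $\tf^\Delta \approx_1 \|\tf\|_\Delta^\Delta(h_v \otimes \chi_{[0,1]})$. Integrating this $L_1$ estimate in the second variable and using that $\tf$ vanishes on $D_d(g)^c\times[0,1]$ shows $h_v$ is essentially supported on $D_d(g)$, so after restricting $h_v$ to $D_d(g)$ and renormalizing we obtain the required $\wt h$.

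For \eqref{hf.approx}: if some $k \in \Ks$ has $F_k$ regular, apply Theorem \ref{thm:finner.stab} to $F_k$ itself with weights $1/\Delta$. Since $F_k$ is connected, $\Delta$-regular, and has at least three vertices (as $\Delta \ge 2$), every vertex has degree $\ge 2$, so every Finner equivalence class is a singleton. Stability returns unit-$L_1$-norm functions $\{h_u\}_{u \in \Verts(F_k)}$ satisfying $\hf^\Delta / \|\hf\|_\Delta^\Delta \approx_1 h_u \otimes h_w$ for each edge $\{u,w\}$. Symmetry of $\hf$ gives $h_u \otimes h_w \approx_1 h_w \otimes h_u$; integrating one coordinate yields $\|h_u - h_w\|_1 \ls_\uF \eta^{c_0}$ along every edge, and connectedness of $F_k$ propagates this to all pairs at a cost of a factor $\verts(F_k) = O_\uF(1)$. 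Taking $\wh h^\Delta$ to be any such $h_u$ (restricted to $D_d(g)^c$ and renormalized) delivers \eqref{hf.approx}. If instead no $k \in \Ks$ has $F_k$ regular, then \eqref{ag.small} combined with $|\hf|\le 1$ and $\Delta\ge 2$ forces $\|\hf\|_\Delta^\Delta \le \|\hf\|_2^2 \ls_{\uF,\uup} \eta p^\Delta$, so $\hf^\Delta \approx_1 0$ and any $\wh h$ supported on $D_d(g)^c$ with $\|\wh h\|_\Delta = 1$ (e.g.\ $\wh h = \chi_{D_d(g)^c}/|D_d(g)^c|^{1/\Delta}$, noting $|D_d(g)^c|\ge 1-o(1)$ by the entropy bound) trivially verifies \eqref{hf.approx}.

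The main technical wrinkles are the propagation of pairwise $L_1$ closeness of the $h_u$'s along chains of edges in the regular case, and the truncation of $h_v$ and $\wh h^\Delta$ to the required supports $D_d(g)$ and $D_d(g)^c$ without inflating the $L_1$ error beyond $\eta^{c_0} p^\Delta$. Both rely only on the triangle inequality together with $\|h_u\|_1 = 1$, so the final exponent $c_0$ depends only on $\uF$, as required by the claim.
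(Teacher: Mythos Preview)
Your overall strategy coincides with the paper's: apply the stability form of Finner's inequality (Theorem~\ref{thm:finner.stab}) to the near-equalities \eqref{finner2.sharp} and \eqref{finner1.sharp}, using the star $F_k^{\{v\}}$ for $\tf$ and the full regular $F_k$ for $\hf$, then invoke Remark~\ref{rmk:hB1} at the leaves. The verification that the Finner equivalence classes are singletons, the propagation of $h_u\approx h_w$ along edges of $F_k$, and the support truncation are all fine.

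There is, however, a genuine gap at the normalization step. From \eqref{finner2.sharp} with $|A|=1$ you have
\[
\|\tf/p\|_\Delta^\Delta - t\big(F_k^{\{v\}},\tf/p;\{v\}\big) \ls_{\uF,\uup}\eta,
\]
but to feed this into Theorem~\ref{thm:finner.stab} with $\eps=O_{\uF,\uup}(\eta)$ you must divide through by $\|\tf/p\|_\Delta^\Delta$, which requires a lower bound $\|\tf/p\|_\Delta^\Delta\gs_{\uF,\uup}1$. You never establish this, and without it the displayed $L_1$-bound on $(\tf/p)^\Delta/\|\tf/p\|_\Delta^\Delta - h_v\otimes h_{w_i}$ is unjustified. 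The paper closes this gap by a dichotomy on $\bsizes$: if $\bsizes=0$ then \eqref{Lr.localized} gives $\|\tf\|_\Delta^\Delta=O_{\uF,\uup}(\eta\, p^\Delta)$ and \eqref{tf.approx} is trivial for any $\wt h$; if $\bsizes>0$ then \eqref{Lr.localized} gives the needed lower bound $\|\tf/p\|_\Delta^\Delta\gs_{\uF,\uup}1$ and the stability theorem applies as you intend.

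The same issue affects your treatment of $\hf$, and your case split does not repair it. You split on whether some $k\in\Ks$ has $F_k$ regular, but this is \emph{not} equivalent to $\csizes>0$: one can have $F_k$ regular for some $k\in\Ks$ while $\csizes=0$ (e.g.\ $m=1$, $F_1=C_3$, and $\up_1$ small enough that the optimizer lies on the $b$-axis). In that situation \eqref{Lr.localized} gives $\|\hf/p\|_\Delta^{\edges(F_k)}=O_{\uF,\uup}(\eta^{\verts(F_k)/2})$ and your normalized stability application yields nothing. The correct split, as in the paper, is on $\csizes$: if $\csizes=0$ then \eqref{hf.approx} is trivial by \eqref{Lr.localized}; if $\csizes>0$ then necessarily some $k\in\Ks$ has $F_k$ regular \emph{and} \eqref{Lr.localized} supplies $\|\hf/p\|_\Delta^\Delta\gs_{\uF,\uup}1$, after which your argument goes through.
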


\begin{proof}
Since $\up_k>0$ for all $k$ we have that $\UTab_\uF(\uup)>0$ and so $(\csizes,\bsizes)\ne (0,0)$. 
If $\csizes=0$ then \eqref{hf.approx} holds trivially by \eqref{ag.small} and the triangle inequality (for arbitrary $\wh h$). The same reasoning gives $\|\tf\|_\Delta^\Delta =O_{\uF,\uup}(\eta p^\Delta)$
and hence \eqref{tf.approx} in the case that $\bsizes=0$.

If $\bsizes>0 $, then from \eqref{Lr.localized} it follows that for any $k\in \Ks$ we have (assuming $\eta$ is sufficiently small) that $\|\tf\|_\Delta \gs_{\uF,\uup} p$.
Then 
for any nonempty ${A}\in \cI(F_k^\star)$,
from dividing through by $\|\tf/p\|_\Delta^{\Delta|{A}|}\gs_{\uF,\uup}1$ in \eqref{finner2.sharp} 
we get
\[
t(F_k^{A}, \tf/\|\tf\|_\Delta\,;A) \ge 1 - 
O_{\uF,\uup}(\eta )\,.
\]
Taking ${A}$ to be any singleton $\{u\}$, we apply Theorem \ref{thm:finner.stab} with $V=\Verts(F_k^{\{u\}})$, $\cA=\Edges(F_k^{\{u\}})$, $f_{\{u,v\}}(x_u,x_v)= \tf(x_u,x_v)/\|\tf\|_\Delta$ for each $\{u,v\}\in \cA$, and weights $\lam_A\equiv 1/\Delta$, to obtain $\wt h:[0,1]\to \R_{\ge0}$ supported on $D_d(g)$ with $\|\wt h\|_\Delta=1$, such that \eqref{tf.approx} holds.
Here we have used Remark \ref{rmk:hB1} and the fact that in $F^{\{v\}}_k$, the sum of weights on any vertex other than $v$ is $1/\Delta<1$, to take the second factor of the tensor product to be $\chi_{[0,1]}$. 

If $\csizes>0$ then we must have that $F_k$ is regular for some $k\in \Ks$, and by similar lines as above we obtain $\wh h:[0,1]\to \R_{\ge0}$ supported on $D_d(g)^c$ such that \eqref{hf.approx} holds.
(Theorem \ref{thm:finner.stab} initially provides an approximation with some $\wh h_1\otimes \wh h_2$ in place of $\wh h\otimes \wh h$, but from the symmetry of $\hf$ and the triangle inequality it quickly follows that $\wh h_1$ and $\wh h_2$ are themselves close in $L_1$, so that we may take a single function $\wh h$.)
\end{proof}

\noindent{\bf Step 4.}
We now combine Claim \ref{claim:tensor} with \eqref{bg.tight}, \eqref{ag.tight}, \eqref{ag.small} to deduce the following, which immediately yields the claimed approximation \eqref{approx:gST} and concludes the proof of Proposition \ref{prop:graphon.stab} under the added assumption \eqref{assu.nonnegative}. 

\begin{claim}
\label{claim:indicator}
There are measurable sets $T\subset D_d(g)$ and $S\subset D_d(g)^c$ such that
\begin{equation}	\label{indic.goal}
\tf \approx_2 (1-p) \chi_{T\times[0,1]} 
\qquad \text{and}\qquad
\hf \approx_2 (1-p)\chi_{S\times S} \,.
\end{equation}
If $F_k$ is irregular for each $k\in \Ks$ then we can take $S=\emptyset$. 
\end{claim}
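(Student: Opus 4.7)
The plan is to translate the near-equalities established in Steps 1--3 into structural information via a two-stage localization. First, a \emph{thresholding} stage uses the quantitative form of Lemma~\ref{lem:Jp} to produce measurable sets $E, E'$ with $\hf \approx (1-p)\chi_E$ and $\tf \approx (1-p)\chi_{E'}$ in $L_2$. Second, a \emph{product-structure} stage combines Claim~\ref{claim:tensor} with Lemma~\ref{lem:product.set} to identify $E$ with $S\times S$ and $E'$ with $T\times[0,1]$; a triangle inequality then delivers \eqref{indic.goal}. The degenerate case, in which every $F_k$, $k\in\Ks$, is irregular, is handled separately via \eqref{ag.small}.

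For the thresholding stage, both $\hf, \tf$ take values in $[0, 1-p]$ under \eqref{assu.nonnegative}, so Lemma~\ref{lem:Jp} gives the pointwise bound $\jay_p(x) - x^2 \gs \min(x^2, (1-p-x)^2)$. Combining \eqref{def:agbg} with \eqref{ag.tight} and \eqref{bg.tight} yields $\int(\jay_p\circ\hf - \hf^2) \ls_{\uF,\uup}\eta p^\Delta$ (likewise for $\tf$), so integration gives $\int\min(\hf^2, (1-p-\hf)^2)\ls\eta p^\Delta$ and its analogue. Defining $E := \{\hf > (1-p)/2\}$ and $E' := \{\tf > (1-p)/2\}$, these bounds convert to $\|\hf - (1-p)\chi_E\|_2^2, \|\tf - (1-p)\chi_{E'}\|_2^2\ls_{\uF,\uup}\eta p^\Delta$; the supports of $\hf, \tf$ force $E\subset D_d(g)^c\times D_d(g)^c$ and $E'\subset D_d(g)\times D_d(g)^c$. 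In the non-degenerate subcase ($\csizes, \bsizes>0$), a matching lower bound on $\|\hf\|_2^2$ via $(1-p)^2|E|$ also gives $|E|,|E'|\asymp_{\uF,\uup}p^\Delta$, which is needed below.

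For the product-structure stage, the estimate $\hf \le 1-p$ together with $|E|\ls p^\Delta$ and Cauchy--Schwarz upgrades the $L_2$ bound to $\|\hf^\Delta - (1-p)^\Delta\chi_E\|_1 \ls \eta^{1/2}p^\Delta$, and analogously for $\tf$. Inserting into \eqref{hf.approx}, \eqref{tf.approx} and normalizing by $(1-p)^\Delta$ produces, in the non-degenerate subcase,
\[
\|\chi_E - H\otimes H\|_1 \ls_{\uF,\uup} \eta^{c_1}|E|, \qquad \|\chi_{E'} - H_1\otimes\chi_{[0,1]}\|_1 \ls_{\uF,\uup} \eta^{c_1}|E'|,
\]
for suitable univariate $H, H_1$ supported on $D_d(g)^c$ and $D_d(g)$ respectively. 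Lemma~\ref{lem:product.set} applied to the first yields a measurable $S\subset D_d(g)^c$ with $\|\chi_E - \chi_{S\times S}\|_1 \ls \eta^{c_2}|E|$. For the second, since $\chi_{[0,1]}$ is constant in $y$, setting $T := \{x : |E'_x| \ge 1/2\}$ minimizes the $L_1$ distance to any $\{0,1\}$-valued rank-one function supported on $D_d(g)\times[0,1]$, giving $|E'\triangle(T\times[0,1])|\ls\eta^{c_1}|E'|$ with $T\subset D_d(g)$. A final triangle inequality converts these to the $L_2$ bounds of \eqref{indic.goal}; the spurious mass of $(1-p)\chi_{T\times T}$ on $D_d(g)^2$ contributes at most $(1-p)^2|T|^2 \ls p^{2\Delta}$, absorbed by choosing $p_0(\uF,\uup,\eta)$ small enough.

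The degenerate subcase where $\csizes = 0$ (forced by all $F_k$, $k\in\Ks$, being irregular) follows directly from \eqref{ag.small}: $\|\hf\|_2^2 \ls\eta p^\Delta$, allowing $S = \emptyset$; similarly $T = \emptyset$ if $\bsizes = 0$. The main technical obstacle lies in the product-structure stage, where Lemma~\ref{lem:product.set} demands an $L_1$ defect of order $\eta^{c_1}|E|$ rather than the weaker $\eta^{c_1}p^\Delta$ that emerges naively from Claim~\ref{claim:tensor}; this compels a preliminary two-sided bound $|E|\asymp p^\Delta$ that relies on the non-degeneracy $\csizes > 0$ extracted from Lemma~\ref{lem:planar.stab}, together with parallel bookkeeping ensuring that $\|\hf\|_\Delta^\Delta/(1-p)^\Delta$ remains comparable to $|E|$ so that $H\otimes H$ is a genuine rank-one approximant rather than a vanishing one.
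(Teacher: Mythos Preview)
Your proposal is correct and follows essentially the same two-stage strategy as the paper: first threshold via Lemma~\ref{lem:Jp} to replace $\hf,\tf$ by indicators of sets $E,E'$, then combine with Claim~\ref{claim:tensor} and Lemma~\ref{lem:product.set} to upgrade $E,E'$ to product sets. The differences are cosmetic: the paper works in $L_2$ throughout and passes from $\tf$ to $\tf^\Delta$ by the pointwise bound $\|\tf^\Delta-(1-p)^\Delta\chi_{\wt E}\|_2^2\ls_\Delta\|\tf-(1-p)\chi_{\wt E}\|_2^2$, whereas you route through $L_1$ and Cauchy--Schwarz; and for the identification of $T$, the paper gives the explicit inequality $\|\chi_{\wt E}-h_1\otimes\chi_{[0,1]}\|_2^2\ge\tfrac12\|h_1-\chi_T\|_2^2$ rather than appealing to minimality. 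One minor point: your remark about ``spurious mass of $(1-p)\chi_{T\times T}$'' is not needed (and should in any case refer to $T\times D_d(g)$, not $T\times T$), since the bound $|E'\triangle(T\times[0,1])|\ls\eta^{c_1}|E'|$ already absorbs that region.
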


Indeed, 
we get the claimed approximation \eqref{approx:gST} by taking $S,T$ as in the above claim, noting that they have the claimed measure by \eqref{Lr.localized}
(note also that modifications of $g$ on $T\times T$ have negligible impact since $|T|^2=O_{\uF,\uup}(p^{2\Delta})$).

\begin{proof} 
We begin with the first approximation in \eqref{indic.goal}.
Considering 
$k \in \Ks$, we get from \eqref{bg.tight} and a straightforward application of Lemma \ref{lem:Jp}
the existence of a measurable set 
$\wt E\subset D_d(g)\times D_d(g)^c$ 
 such that 
\begin{equation}	\label{fromJpsharp}
\tf \approx_2 (1-p) \chi_{\wt E} \,.
\end{equation}
Next, note that 
\begin{align*}
\| \tf^\Delta - (1-p)^\Delta\chi_{\wt E} \|_2^2
&= \int_{\wt E^c} \tf^\Delta + \int_{\wt E} \big( (1-p)^\Delta- \tf^\Delta \big)^2\\
&\le \int_{\wt E^c} \tf^2 + \Delta^2 \int_{\wt E} \big( 1-p-\tf\big)^2 
\ls_\Delta \| \tf - (1-p)\chi_{\wt E}\|_2^2 \,,
\end{align*}
so from \eqref{fromJpsharp} we have 
\begin{equation}	\label{tfDelta.approx}
\tf^\Delta\approx_2 (1-p)^\Delta \chi_{\wt E}\,.
\end{equation}
Denote 
\[
\tf_1:[0,1]\to [0,1-p]\,,\qquad \tf_1(x) = \| \tf(x,\cdot)\|_\Delta,
\]
which is supported on $D_d(g)$. 
From \eqref{tf.approx} it easily follows that
\begin{equation*}
\tf^\Delta \approx_1 \tf_1^\Delta \otimes \chi_{[0,1]}.
\end{equation*}
Since both sides are bounded by 1 we have $\tf^\Delta\approx_2\tf_1^\Delta\otimes\chi_{[0,1]}$. 
Together with \eqref{tfDelta.approx} and the triangle inequality we get that
\begin{equation}	\label{tf.above}
\chi_{\wt E} \approx_2 h_1\otimes \chi_{[0,1]}
\end{equation}
where $h_1:= \tf_1^\Delta/(1-p)^\Delta$. Now, by
\eqref{fromJpsharp} and the triangle inequality, it suffices
to show that $\chi_{\wt E} \approx_2 \chi_{T\times [0,1]}$
for $T= \{ x\in [0,1]: |\wt E_x|>1/2\}$, 
where $\wt E_x:=\{y\in[0,1]:(x,y)\in \wt E\}$.
By Fubini's theorem and \eqref{tf.above}, this in turn will follow from showing
\begin{equation}	\label{claim.tf1}
h_1\approx_2 \chi_T. 
\end{equation}
To show this, note that
\begin{align*}
\|\chi_{\wt E} - h_1\otimes \chi_{[0,1]}\|_2^2
&= \int_{\wt E^c} h_1^2\otimes \chi_{[0,1]} + \int_{\wt E} (1- h_1\otimes \chi_{[0,1]})^2\\
&= \int_0^1(1-|\wt E_x|) h_1(x)^2  + \int_0^1 |\wt E_x| (1-h_1(x))^2 \\
&\ge \frac12 \int_{T^c} h_1(x)^2 + \frac12\int_{T} (1-h_1(x))^2 
= \frac12 \|h_1-\chi_T\|_2^2 \,.
\end{align*}
Combining the above with \eqref{tf.above} we obtain \eqref{claim.tf1}, hence 
that $\tf\approx_2(1-p)\chi_{T\times [0,1]}$, as stated.

Turning to the second approximation in \eqref{indic.goal}, in view of \eqref{ag.small} 
we may assume henceforth the existence of some regular $F_k$, $k\in \Ks$. In that case,
we get from \eqref{ag.tight} and 
Lemma \ref{lem:Jp} that there exists a measurable set $\wh E\subset D_d(g)^c\times D_d(g)^c$ such that
\[
\hf \approx_2 (1-p)\chi_{\wh E}  \,.
\]
Consequently, by the triangle inequality we only need to show $\chi_{\wh E} \approx_2 \chi_{S\times S}$
for some $S \subseteq D(g)^c$. Reasoning as we did for $\wt E$,
we deduce from \eqref{hf.approx} in Claim \ref{claim:tensor} that
\[
\chi_{\wh{E}} \approx_2 h\otimes h
\]
where $h=\hf_1^\Delta /((1-p)\|\hf\|_\Delta)^{\Delta/2}$, with $\hf_1(x) :=\|\hf(x,\cdot)\|_\Delta$. 
Finally,  our claim that $\chi_{\wh E} \approx_2 \chi_{S\times S}$
 now follows from Lemma \ref{lem:product.set}.
\end{proof}

\noindent{\bf Step 5.}
It only remains to remove the extra assumption \eqref{assu.nonnegative}.
For general $g$ as in the statement of the proposition, let $f=g-p$, $g_+=\max(g,p)$ and $f_+=g_+-p=\max(g-p,0)$. Note that $D_d(g)=D_d(g_+)$, so that $\tf$ and $\tf_+$ are both supported on $D_d(g)^c\times D_d(g)^c$, and $\hf,\hf_+$ are supported on $D_d(g)\times D_d(g)^c$.
By the proof for the case $g\ge p$ we have $(a_{g_+},b_{g_+})\in B_{\qs}( 
O_{\uF,\uup}(\eta))$ for some $\qs=(\csizes,\bsizes)\in \Opt(\phi;\uup)$ as in \eqref{agbg.localized}, and that \eqref{approx:gST} holds with $g_+$ in place of $g$.
By the monotonicity of $\eye_p$ on $[0,p]$ we have
\begin{equation*}
a_{g_+}\le a_g\,,\qquad b_{g_+}\le b_g
\end{equation*}
and
\begin{equation*}
1+\up_k -\eta \le t(F_k,g/p)\le t(F_k,g_+/p) \le \tmax_k(a_{g_+}, b_{g_+}) + O_{\uF,\uup}(\eta)
\end{equation*}
for every $k\in [m]$. From this it follows that $(a_{g},b_{g})\in B_{\qs}( O_{\uF,\uup}(\eta))$.
Hence,
\begin{equation}	\label{agbg.squeeze}
0\le a_g-a_{g_+} 
\ls_{\uF,\uup} \eta
\,,\qquad 0\le b_g-b_{g_+} 
\ls_{\uF,\uup} \eta.
\end{equation}
Applying \eqref{Jp.LB-left}, we get that 
\[
\|g-g_+\|_2^2 \ls_{\uF,\uup} \eta
p^{\Delta+1}\eye_p(1) 
\]
and \eqref{approx:gST} holds for $g$ by the same bound for $g_+$ and the triangle inequality.
\qed

\section{The conditional structure of \ER graphs}
\label{sec:struct.ER}

In this section we prove Theorem \ref{thm:struct.ER} 
on the conditional structure of \ER graphs on joint upper-tail events for homomorphism densities, with \Cref{prop:graphon.stab} being a key input.

\subsection{Quantitative LDPs and counting lemmas}
\label{sec:LDPs}

Recall that an $\R$-weighted graph is a symmetric function $X:[n]^2\to \R$ that is zero on the diagonal, and that $\cG_n$ and $\cQ_n$ denote the sets of $\{0,1\}$-weighted graphs and $[0,1]$-weighted graphs, respectively.
Recall also from \eqref{def:ergm} that $\nu_{n,p}^0$ is the Erd\H{o}s--R\'enyi($p$) measure on $\cG_n$.
In this section we write $\bG$ for a sample $\bG_{n,p}$ from $\nu_{n,p}^0$. 

\begin{defn}[Upper-\abbr{LDP}]
\label{def:upperLDP}
Given a collection $\mathbb{K}=(\cK_G)_{G\in\cG_n}$ of closed convex sets $\cK_G\subseteq\cQ_n$ with $G\in\cK_G$ for each $G$, 
for a set $\cE\subseteq\cG_n$ we define the $\mathbb{K}$-neighborhood of $\cE$ as
\[
(\cE)_{\mathbb{K}}:=\bigcup_{G\in \cE} \cK_G \subseteq\cQ_n.
\]
(Note that while $\cE$ is a subset of the discrete cube $\cG_n$, its $\mathbb{K}$-neighborhood is a subset of the solid cube $\cQ_n$.)
For a lower semi-continuous function $J:\cQ_n\to [0,\infty]$ and $\rate_\star,\rate_{\ME}>0$, we say that a probability measure $\mu$ on $\cG_n$ satisfies a quantitative upper-\abbr{LDP} under $\mathbb{K}$, with \emph{rate function} $J$, \emph{cutoff rate} $\rate_\star$ and \emph{metric entropy rate} $\rate_{\ME}$, if for any $\cE\subseteq\cG_n$,
\begin{equation}
\log\mu(\cE) \le  - \min\Big( \rate_\star\,,\; \inf\{J(Q): Q\in (\cE)_{\mathbb{K}}\} - \rate_{\ME} \Big) \,.
\end{equation}
\end{defn}

Our quantitative large deviation are formulated in terms of two different norms on the space of $\R$-weighted graphs.
The first is the spectral norm \eqref{def:specnorm},
for which we have the following result from \cite{CoDe}.

\begin{prop}[Large deviations: spectral norm] \quad\\
\label{prop:LD.spec}
\begin{enumerate}
\item[(a)] (Upper-\abbr{ldp}). Let $K_0,K_1\ge1$ and $\delta\in(n^{-100},1]$, and assume
\begin{equation}	\label{LD.spec.pbound}
np^\Delta \ge \frac{K_0\log n}{\delta^2}. 
\end{equation}
For each $G\in\cG_n$  let $\cK_G^{2\to2}(\delta)=\{Q\in\cQ_n: \|Q-G\|_{2\to2}\le \delta np^{\Delta/2}\}$. 
Then $\nu_{n,p}^0$ satisfies an upper-\abbr{LDP} under $\mathbb{K}_{2\to2}(\delta)=(\cK_G^{2\to2}(\delta))_{G\in\cG_n}$, with rate function $\eye_p$, and
\begin{equation}	\label{rate.bounds}
\rate_\star\gs K_1\rate_{n,p} \,,\qquad \rate_{\ME}\ls \frac{K_1}{K_0}\rate_{n,p}\,.
\end{equation}

\item[(b)] (Counting lemma). Let $p\in(0,1)$, $L\ge1$ and $\eps>0$ be arbitrary.
Suppose $\cK\subseteq \cQ_n$ is a convex set of diameter at most $\eps np^{\Delta_\star}$ in the spectral norm, and that for every induced strict subgraph $F'\prec F$ there exists $Q\in \cK$ such that
\begin{equation}
t(F',Q/p) \le L.
\end{equation}
Then for every $F'\preccurlyeq F$ and $Q_1,Q_2\in\cK$,
\begin{equation}
|t(F',Q_1/p) - t(F',Q_2/p)| \ls_F L\eps.
\end{equation}
\end{enumerate}
\end{prop}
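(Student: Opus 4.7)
The plan for part (a) is to follow the quantitative nonlinear large deviations framework via covering, as developed in the paper of Cook--Dembo being cited. I would: (i) construct a covering of $\cQ_n$ by convex pieces $\{B_\alpha\}$, each of spectral-norm diameter at most $\delta np^{\Delta/2}$, such that the number of pieces meeting any subset $\cE$ is at most $\exp(r_{ME})$ with $r_{ME} \ls (K_1/K_0)\rate_{n,p}$; (ii) absorb the graphs $G$ of very large $\eye_p$ into an exceptional event of probability at most $\exp(-r_\star)$; (iii) on each remaining piece $B_\alpha$, apply the standard exponential tilt / Chernoff bound $\nu_{n,p}^0(B_\alpha\cap\cG_n) \le \exp(-\inf_{Q \in B_\alpha} \eye_p(Q))$, which follows from convexity of $\eye_p$ and the product structure of $\nu_{n,p}^0$; (iv) union bound over $\alpha$. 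The main obstacle is (i): constructing a covering of $\cQ_n$ by spectral-norm balls of radius $\delta np^{\Delta/2}$ with the correct metric-entropy rate. This is the heart of the argument and is typically done by a Frieze--Kannan-type regularity decomposition, where the lower bound \eqref{LD.spec.pbound} on $np^\Delta$ is exactly what ensures that a typical realization of $\bG_{n,p}$ is spectrally pseudorandom at the scale $\sqrt{np^\Delta}$, so that the complexity of the cover is controlled.

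For part (b), the plan is an edge-by-edge telescope together with spectral-norm contractions. Let $F'\preccurlyeq F$ have edges $e_1,\dots,e_m$, write $R:=(Q_2-Q_1)/p$, and telescope
\begin{equation*}
t(F',Q_2/p) - t(F',Q_1/p) = \sum_{i=1}^m T_i,\qquad T_i := \frac{1}{n^{\verts(F')}}\sum_\phi R_{\phi(e_i)}\prod_{j<i}(Q_2/p)_{\phi(e_j)}\prod_{j>i}(Q_1/p)_{\phi(e_j)}.
\end{equation*}
Fixing $i$ and $e_i=\{u,v\}$, isolate the variables $a=\phi(u)$, $b=\phi(v)$ and write $T_i = n^{-\verts(F')}\sum_{a,b}R_{a,b}\,M_{a,b}$, where $M_{a,b}$ is the contraction over the remaining vertices with the other edge-weights. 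The strategy is to produce vectors $x,y:[n]\to\R_{\ge 0}$ with $M_{a,b} \le x_a y_b$ (or, failing that, a short sum of such products), allowing the bound $|T_i| \le n^{-\verts(F')}\|R\|_{2\to 2}\|x\|_2\|y\|_2$.

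The construction of $x,y$ proceeds by summing out the vertices in $V(F')\setminus\{u,v\}$ and grouping them by whether they are joined to $u$, to $v$, or to both. Vertices joined only to $u$ contribute to $x_a$; vertices joined only to $v$ contribute to $y_b$; vertices joined to both require a Cauchy--Schwarz split across $a$ and $b$, which is exactly where the factor $\tfrac12$ in the definition of $\Delta_\star = \tfrac12\max_{\{u,v\}}(\deg u+\deg v)$ enters. The norms $\|x\|_2$, $\|y\|_2$ are then controlled by subgraph densities of $F'$-minus-edge in $Q_1$ and $Q_2$, each bounded by the induced-subgraph hypothesis via a uniform $O_F(L)$ factor. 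Tracking the $p$-powers, one sees that the $\deg(u)+\deg(v)-2$ remaining edges incident to $\{u,v\}$ contribute a factor $p^{\deg(u)+\deg(v)-2}$ to $\|x\|_2\|y\|_2$, while $\|R\|_{2\to 2} \le \eps n p^{\Delta_\star - 1}$; combining gives $|T_i| \ls_F L\eps$, and summing the $m = O_F(1)$ telescope terms yields the claim. The main technical obstacle is the case when $F'-e_i$ remains connected and cannot be factored cleanly along $u,v$; this is precisely handled by the auxiliary Cauchy--Schwarz step that introduces the factor $\tfrac12$ in $\Delta_\star$.
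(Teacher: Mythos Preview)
Your high-level plan is correct and matches the approach of the cited reference \cite{CoDe}, which is all the paper actually does here: the paper's proof of this proposition consists entirely of pointers to \cite[Prop.~3.4]{CoDe} for part (a) and \cite[Prop.~3.7]{CoDe} for part (b), together with a two-line parameter check showing $\log N \ls K_1\delta^{-2}n\log n\log(1/p) = O(K_1K_0^{-1}\rate_{n,p})$ under \eqref{LD.spec.pbound}. So you are not so much reproving the proposition as sketching the content of those cited results.

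Two small corrections to your sketch. For part (a), the covering in \cite{CoDe} for the spectral norm is not a Frieze--Kannan regularity decomposition but a low-rank (top-$k$ eigenspace) approximation followed by discretization; the paper's choice $k = K_1c^{-2}\delta^{-2}\log(1/p)$ reflects this, and the net cardinality estimate $\log N \ls kn\log n$ is what yields $\rate_{\ME}$. For part (b), your $p$-power bookkeeping is off: the $\deg(u)+\deg(v)-2$ edges incident to $\{u,v\}$ carry factors of $1/p$, not $p$, and after the Cauchy--Schwarz split the correct scaling is $\|x\|_2\|y\|_2 \ls_F L\, n^{\verts(F')-1} p^{-(\deg u+\deg v-2)/2}$, which combines with $\|R\|_{2\to2}\le \eps np^{\Delta_\star-1}$ to give $|T_i|\ls_F L\eps$ precisely because $\Delta_\star \ge \tfrac12(\deg u+\deg v)$. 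The $L$-bound on proper induced subgraph densities is what controls $\|x\|_2,\|y\|_2$ (via, e.g., $\sum_{i,j}(Q_{ij}/p)^2 \le p^{-1}\sum_{i,j}Q_{ij}/p \le Ln^2/p$ from the edge-density bound), and an induction on $\edges(F')$ propagates the $L$-bound from a single $Q\in\cK$ to all of $\cK$. With these fixes your outline is essentially the argument in \cite{CoDe}.
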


\begin{proof}
Part (b) follows from \cite[Prop.\ 3.7]{CoDe}.
For part (a) we apply \cite[Prop.\ 3.4]{CoDe} with $K=K_1$, $\delta_o=c\delta p^{\Delta/2}$ for a sufficiently small constant $c>0$, and taking there $k= K_1 c^{-2}\delta^{-2}\log(1/p)$. With $N$ the cardinality of the relevant net in \cite[Prop.\ 3.4]{CoDe}, we have as in \cite[(3.11)]{CoDe}
that $\log N \ls K_1 \delta^{-2} n \log n \log(1/p) = O(K_1K_0^{-1} \rate_{n,p})$ 
(with the latter equality due to our assumption \eqref{LD.spec.pbound}), yielding, as claimed, the 
\abbr{rhs} of \eqref{rate.bounds}.
\end{proof}

Our second quantitative large deviation result is stated in terms of
a variant of the cut norm introduced in \cite{CDP} (a special case of a 
general class of norms for the study of $p$-sparse tensors). Specifically, for an $\R$-weighted graph $Z$ we define
\begin{equation}	\label{def:Bstar}
\|Z\|_\Base^\star = \sup_{T\in \cT} \frac{|\langle T,Z\rangle|}{\|T\|_\Base}\,,
\end{equation}
where $\cT=\{\1_{S_1}\otimes \1_{S_2}:S_1,S_2\subseteq [n] \}$ (a special instance of the notion of \emph{test tensor} from \cite{CDP}), and
\begin{equation}
\|T\|_\Base:= (|S_1|\vee np^{\Delta-1})(|S_2|\vee np^{\Delta-1})\,,\qquad T=\1_{S_1}\otimes\1_{S_2}\,, \quad S_1,S_2\subseteq[n]\,.
\end{equation}
We  have following results from \cite{CDP}.

\begin{prop}[Large deviations: $\Base^\star$-norm]\label{prop:LD.Bstar}\;$~$\\
\begin{enumerate}
\item[(a)] (Upper-\abbr{ldp}). For a sufficiently large absolute constant $C_0>0$, let $K_0\ge C_0$, $K_1\ge1$, and $\delta\in(0,1]$, and assume 
\begin{equation}\label{K0-cond}
np^{\Delta+1} \ge \frac{K_0\log n}{\delta^2(1\vee \log(1/p))}.
\end{equation}
For each $G\in\cG_n$ let $\cK_G^{\star}(\delta)$ be the convex hull of $\{G'\in\cG_n:\|G-G'\|_\Base^\star \le\delta p\}$. 
Then $\nu_{n,p}^0$ satisfies an upper-\abbr{LDP} with respect to $\mathbb{K}_{\star}(\delta)=(\cK_G^{\star}(\delta))_{G\in\cG_n}$ with rate function $\eye_p$ and $\rate_\star,\rate_{\ME}$ satisfying \eqref{rate.bounds}.
\item[(b)] (Counting lemma). Let $p\in(0,1)$, $L\ge1$ and $\eps>0$ be arbitrary.
Suppose $\cE\subseteq \cG_n$ has diameter at most $\eps p$ in the $\Base^\star$-norm, and that there exists $G_0\in\cE$ such that
\begin{equation}\label{LAo-cond}
t(F',G_0/p) \le L
\end{equation}
for every proper subgraph $F'\subsetneq F$. 
Then for every $Q_1,Q_2$ in the convex hull of $\cE$,
\begin{equation}\label{eq1:count}
|t(F,Q_1/p) - t(F,Q_2/p)| \ls_F L\eps.
\end{equation}
\end{enumerate}
\end{prop}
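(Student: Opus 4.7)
Both parts of the proposition are specializations of the general quantitative large-deviation framework for $p$-sparse tensors developed in \cite{CDP}. That framework is parametrized by a family of \emph{test tensors}; specializing it to the family $\cT=\{\1_{S_1}\otimes \1_{S_2}:S_1,S_2\subseteq[n]\}$ that defines $\|\cdot\|_\Base^\star$ via \eqref{def:Bstar} should yield the present proposition, so my plan is to verify that the parameter calibration in \eqref{K0-cond} and the hypothesis \eqref{LAo-cond} fit the running hypotheses of the \cite{CDP} setup, and to flag the two key inputs (a covering in $\|\cdot\|_\Base^\star$ and a rank-one decomposition of contraction kernels).

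\textbf{Plan for (a).} I would follow the three-step nonlinear-large-deviation scheme. First, construct a finite net $\cN\subseteq \cG_n$ whose $\mathbb{K}_\star(\delta/2)$-neighborhoods cover $\cG_n$ up to an exceptional event of $\nu_{n,p}^0$-probability $\exp(-K_1 r_{n,p})$, with $\log|\cN|\ls K_1 K_0^{-1} r_{n,p}$; the sparsity hypothesis \eqref{K0-cond} is precisely what makes such a net exist at scale $\delta p$. Second, for each $G\in\cN$ use that $\cK_G^\star(\delta)$ is convex and that $\eye_p$ is the Cram\'er rate function of product Bernoulli$(p)$ on $\{0,1\}^{\binom{n}{2}}$: a standard exponential-tilting/minimax argument gives $\log\nu_{n,p}^0(\cK_G^\star(\delta))\le -\inf_{Q\in \cK_G^\star(\delta)}\eye_p(Q)$. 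Third, union-bound: for any $\cE\subseteq\cG_n$, each $G\in\cE$ lies in a $\mathbb{K}_\star(\delta/2)$-ball around some $G_0\in \cN$, giving $\cK_{G_0}^\star(\delta/2)\subseteq (\cE)_{\mathbb{K}_\star(\delta)}$ and hence an infimum of $\eye_p$ over $(\cE)_{\mathbb{K}_\star(\delta)}$ that dominates the per-ball bound, with $\log|\cN|$ absorbed into $r_{\ME}$.

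\textbf{Plan for (b).} I would telescope edges of $F$. Order $\Edges(F)=\{e_1,\dots,e_{\edges(F)}\}$ and write
\begin{equation*}
t(F,Q_1/p)-t(F,Q_2/p)=\sum_{j=1}^{\edges(F)} p^{-\edges(F)}\,\langle Q_1-Q_2,\, M_j\rangle,
\end{equation*}
where $M_j:[n]^2\to\R$ is obtained by contracting $\prod_{i<j}Q_1(x_{e_i})\prod_{i>j}Q_2(x_{e_i})$ over the vertices in $\Verts(F)\setminus e_j$. The plan is then to expand $M_j$ as a convex combination of rank-one $\{0,1\}$-tensors $\1_{S_1}\otimes\1_{S_2}$ in the family $\cT$, whose total $\|\cdot\|_\Base$-mass equals a sum of homomorphism densities of proper subgraphs $F'\subsetneq F$ evaluated on mixtures of $Q_1,Q_2$. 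Such quantities are controlled on the convex hull of $\cE$ by lifting the hypothesis \eqref{LAo-cond} by convexity (and continuity of the relevant multilinear forms). Pairing with $Q_1-Q_2$ via $\|Q_1-Q_2\|_\Base^\star\le \eps p$ then yields the $\ls_F L\eps$ bound of \eqref{eq1:count}.

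\textbf{Main obstacle.} The delicate step is the net construction in part (a). Because the weight $(|S_1|\vee np^{\Delta-1})(|S_2|\vee np^{\Delta-1})$ in $\|\cdot\|_\Base$ treats ``dense'' blocks ($|S_i|\gg np^{\Delta-1}$) and ``sparse'' blocks ($|S_i|\le np^{\Delta-1}$) differently, achieving the target cardinality $\log|\cN|\ls K_1K_0^{-1}r_{n,p}$ requires a two-regime discretization of $Q\in\cQ_n$ -- coarse on the dense blocks and fine on the sparse ones. This is the technical engine of \cite{CDP} and the source of the sparsity threshold \eqref{K0-cond}; I expect to import it as a black box rather than reprove it.
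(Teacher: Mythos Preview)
Your proposal is correct and matches the paper's approach: the paper's proof simply cites \cite[Theorem~3.1(a)]{CDP} for part~(a) (specialized to graphs with growth parameter $w_{n,p}(\Base)=np^{\Delta+1}$ and $\kappa=K_1p^\Delta$) and \cite[Theorem~2.10]{CDP} for part~(b), and your sketch accurately outlines the internal structure of those results, correctly flagging the net construction as the black box to import.
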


\begin{proof}
Part (a) is the specialization of \cite[Theorem 3.1(a)]{CDP} for the case of graphs (2-uniform hypergraphs) with the 
$\Base^\star$-norm \eqref{def:Bstar} and growth parameter
$w_{n,p}(\Base)= np^{\Delta+1}$, see \cite[Exmp. 2.6 \& (2.16)]{CDP}), and taking $\kappa=K_1p^\Delta$.
Part (b) is \cite[Theorem 2.10]{CDP}.
\end{proof}
\subsection{The continuity of graph collections in terms of planted clique-hub sizes}

We proceed to establish the asymptotic (as $n \to \infty$), continuity with respect to the
parameters $(\csize,\bsize)$, of the sets $\cG_n^1(\csize,\bsize,\xi)$ of \eqref{def:Gn'}
and $\cG_n^2(\csize,\bsize,\xi)$ of \eqref{def:Gn''}.
\begin{lemma}\label{lem:contain}
For $u=1,2$, any $\xi>0$ and $K$ finite
there exist $\eps'>0$ and $n_0 < \infty$, so that 
\begin{equation}\label{eq:cont-Ani}
\bigcup_{| \csize-\csize'|  \le \eps', |\bsize-\bsize'| \le \eps'}
\cG_n^u(\csize,\bsize,\xi) \subset \cG_n^u(\csize',\bsize',2\xi) \,, \qquad 
\forall n \ge n_0, \,  \|(\csize',\bsize')\|_\infty \le K \,.
\end{equation}
\end{lemma}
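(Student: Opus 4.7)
The plan is, given $G\in \cG_n^u(a,b,\xi)$, to fix a witnessing disjoint pair $(I,J)\subseteq[n]$ with $|I|=\lfloor(ap^\Delta)^{1/2}n\rfloor$, $|J|=\lfloor bp^\Delta n\rfloor$, and to construct a nearby disjoint pair $(I',J')$ of the target sizes $|I'|=\lfloor(a'p^\Delta)^{1/2}n\rfloor$, $|J'|=\lfloor b'p^\Delta n\rfloor$ that serves as a witness for $G\in\cG_n^u(a',b',2\xi)$. The construction will be greedy: I obtain $I'$ from $I$ by either removing vertices (when $|I'|\le|I|$) or adjoining fresh vertices from $[n]\setminus(I\cup J)$, and analogously produce $J'$ from $J$ via $[n]\setminus(I'\cup J)$. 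There is ample room to do this since $|I|+|J|\le(K^{1/2}p^{\Delta/2}+Kp^\Delta)n$ is a small fraction of $n$ in the regime of interest (for $p$ below a threshold depending on $K$; outside this regime the sets $\cG_n^u(a,b,\xi)$ are either empty or the inclusion is trivial).

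Write $\delta_I,\delta_J\ge 0$ for the unsigned differences between $|I|,|I'|$ and $|J|,|J'|$; the elementary bound $|\sqrt{x}-\sqrt{y}|\le\sqrt{|x-y|}$ gives
\begin{align*}
\delta_I\le \sqrt{\eps'}\,p^{\Delta/2}n+1\,,\qquad \delta_J\le \eps'\,p^\Delta n+1\,.
\end{align*}
For $u=1$, both changes $\sum_{i,j\in I}G_{i,j}-\sum_{i,j\in I'}G_{i,j}$ and $|I|^2-|I'|^2$ are of order $O(|I|\delta_I+\delta_I^2)=O(\sqrt{\eps'K}\,p^\Delta n^2)$, while the hub-side changes $\sum_{i\in J,j\in J^c}G_{i,j}-\sum_{i\in J',j\in(J')^c}G_{i,j}$ and $|J|(n-|J|)-|J'|(n-|J'|)$ are $O(n\delta_J)=O(\eps'\,p^\Delta n^2)$. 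For $\eps'=\eps'(\xi,K)$ small enough, each such error is at most $\xi n^2p^\Delta$, so the conditions \eqref{almost-CH} for $(I',J')$ with threshold $2\xi$ follow from those for $(I,J)$ with threshold $\xi$. For $u=2$, the triangle inequality reduces the task to showing $\|Q^{I,J}-Q^{I',J'}\|_{2\to 2}\le\xi np^{\Delta/2}$. Applying $\1_{J^c}=\1_{[n]}-\1_J$ together with the rank-one identity $\1_I\1_I^\top-\1_{I'}\1_{I'}^\top=(\1_I-\1_{I'})\1_I^\top+\1_{I'}(\1_I-\1_{I'})^\top$ (and its analogue for $(J,J')$), then invoking Cauchy--Schwarz term-by-term, yields
\begin{align*}
\|Q^{I,J}-Q^{I',J'}\|_{2\to 2}\ls \sqrt{|I|\,\delta_I}+\sqrt{n\,\delta_J}\ls K^{1/4}(\eps')^{1/4}p^{\Delta/2}n+\sqrt{n}\,,
\end{align*}
which is at most $\xi np^{\Delta/2}$ once $\eps'\le c(\xi,K)$ and $n\ge n_0(\xi,K)$.

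The argument is essentially perturbative bookkeeping. The main (mild) subtlety is that the clique size $|I|$ depends on $a$ through $\sqrt{a}$, so $\delta_I$ scales only like $\sqrt{\eps'}$, which further propagates to a H\"older-$1/4$ dependence $(\eps')^{1/4}$ in the spectral-norm estimate for $u=2$. This is harmless for the statement: only some $\eps'=\eps'(\xi,K)>0$ is required, and in fact the proof gives $\eps'\gs \xi^4/K$.
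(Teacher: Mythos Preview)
Your proof is correct and takes essentially the same approach as the paper: build nested $(I',J')$ from the witness $(I,J)$ and verify the perturbed conditions directly for $u=1$, and via the triangle inequality plus a spectral-norm bound on $Q^{I,J}-Q^{I',J'}$ for $u=2$. The only notable difference is that the paper obtains $K$-independent thresholds ($\eps'\asymp\xi$ for $u=1$ and $\eps'\asymp\xi^2$ for $u=2$) by using $\big||I|^2-|I'|^2\big|\approx|a-a'|p^\Delta n^2$ directly rather than bounding $|I|$ and $\delta_I$ separately, which is what costs you the extra H\"older-$1/4$ loss.
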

\begin{proof}
For both $u=1$ and $u=2$, we can and shall assume \abbr{wlog} that
$I=[1,|I|]$ and $J=[n-|J|,n]$ are disjoint intervals, setting the corresponding object 
to have $I'=[1,|I'|]$ and $J'=[n-|J'|,n]$. 

In case $u=2$, our claim \eqref{eq:cont-Ani} 
follows in view of \eqref{def:Gn''} and the triangle inequality, from 
\begin{equation}\label{eq:cont-Yn}
\sup_{
\|(\csize-\csize',\bsize-\bsize')\|_\infty 
\le \eps'}\; \sup_{Q \in \cQ_n(\csize,\bsize)} \; 
\inf_{Q' \in \cQ_n(\csize',\bsize')} \; \|Q-Q'\|_{2 \to 2} < \xi n p^{\Delta/2} \,.
\end{equation}
Indeed, with $|Q_{ij}-Q'_{ij}| \le 1$ for all $ij$, $Q=Q^{I,J}$ and $Q'=Q^{I',J'}$, clearly $\|Q-Q'\|_{2 \to 2} \le 
\|Z\|_{2 \to 2}$, where $Z_{ij} = 1_{Q_{ij} \ne Q'_{ij}}$. It is easy to check 
that $Z$ consists here of a reversed-$L$ shape of dimensions $\max(|I|,|I'|) \times | \,|I|-|I'| \, |$
and a disjoint cross-with-hole shape of dimensions $|\, |J| - |J'| \, | \times n$. With the $2 \to 2$ 
operator norm of each part of $Z$ thus bounded by the geometric mean of its two dimensions, we arrive at
\[
\|Q-Q'\|_{2 \to 2} \le (\sqrt{|\csize-\csize'|} + \sqrt{|\bsize-\bsize'|} ) n p^{\Delta/2} \le
2 \sqrt{\eps'} n p^{\Delta/2} \,, 
\]
from which we get \eqref{eq:cont-Yn} when  
$\eps' < \xi^2/4$.

When $u=1$ it suffices for \eqref{eq:cont-Ani} to show that 
$\cG_n^{I,J}(\xi) \subset \cG_n^{I',J'}(2\xi)$. That is, having at least 
$|I|^2/2-\xi n^2p^\Delta$ edges in $G[I]$ and at least $|J|(n-|J|) - \xi n^2p^\Delta$ edges
in $G[J,J^c]$ yields at least 
$|I'|^2/2-2 \xi n^2p^\Delta$ edges in $G[I']$ and at least $|J'|(n-|J'|) - 2 \xi n^2p^\Delta$ edges
in $G[J',J'^c]$. This holds universally for any graph $G$, provided that 
\begin{align*}
|I|^2 - (|I|^2-|I'|^2)_+ &\ge |I'|^2 - \xi n^2 p^\Delta \,. \\
|J| (n-|J|) - (|J|-|J'|)_+(n-|J|) - (|J'|-|J|)_+ |J| & \ge |J'| (n-|J'|) - \xi n^2 p^\Delta \,.
\end{align*}
With both $|J|, |J'|$ being $o(n)$ (since $p \ll 1$), we arrive at the simpler sufficient conditions 
\begin{align*}
\xi > (\csize'-\csize)_+, \qquad 
\xi > (\bsize'-\bsize)_+ \,.
\end{align*}
That is, any $\eps' < \xi$ will do to complete the proof.
\end{proof}

\subsection{Proof of Theorem \ref{thm:struct.ER}}  
First take $\uup=\underline{0}$.
Since $\Opt(\phi;\underline{0})=\{(0,0)\}$ and $\cG_n^1(0,0,\xi)=\cG_n$ for any $\xi>0$, 
part (a) then trivially holds. For part (b), note that $\cQ_n(0,0)=\{p\}$, where we abusively write $p$ for the element of $\cQ_n$ that is identically $p$ off the diagonal. Hence for any 
$\delta<\xi/2$,
\[
 ( \cG_n^2(0,0,\xi)^c)_{\mathbb{K}_{2\to2}(\delta)} \subset \big\{ Q \in \cQ_{n} : 
  \|Q-p\|_{2\to2} \ge \frac12 \xi n p^{\Delta/2} \big\} \,.
\]
For any element $Q$ of the \abbr{rhs}  
we have
\begin{equation}\label{eq:quad-diff}
\xi^2n^2p^\Delta \ls \|Q-p\|_{2\to2}^2\le \|Q-p\|_{\HS}^2 = 2\sum_{i<j} (Q_{ij}-p)^2
\end{equation}
where we write $\|X\|_\HS= (\sum_{i,j=1}^n X_{i,j}^2)^{1/2}$ for the Hilbert--Schmidt norm on $\R$-weighted graphs (viewed as kernels of operators on $\ell^2([n])$). 
Further, the \abbr{rhs} of \eqref{eq:quad-diff} is $\ls \eye_p(Q) /\log(1/p)$ 
(as $J_p(x) \ge x^2$, see Lemma \ref{lem:Jp} or \cite{LuZh14}).
Consequently,
\begin{equation}\label{eq:0-rate-op}
\eye_p\Big( ( \cG_n^2(0,0,\xi)^c)_{\mathbb{K}_{2\to2}(\delta)} \Big) \gs  \xi^2 \rate_{n,p} \;.
\end{equation}
Since $n p^{2 \Delta_\star} \gg \log n$, for any fixed $\eps_1\in (0,1)$ we have from
Proposition \ref{prop:LD.spec}(a) applied with $\delta:=\eps_1p^{\Delta_\star-\Delta/2}$, $K_1$ a sufficiently large constant, and $K_0$ sufficiently large depending only on $K_1$
and $\eps_1$, 
that for all $n$ sufficiently large,
\begin{equation}\label{eq:bd-cA-one}
\log\P(\bG\in  \cG_n^2(0,0,\xi)^c) \le -\min \Big\{  \rate_{n,p}\,, \;\eye_p\Big( \big( \cG_n^2(0,0,\xi)^c\big)_{\mathbb{K}_{2\to2}(\delta)}\Big) 
- \eps_1 \rate_{n,p} \Big\} \,.
\end{equation} 
Taking $\eps_1=c\xi^2$ for a sufficiently small constant $c>0$, from \eqref{eq:0-rate-op} and \eqref{eq:bd-cA-one} we deduce that 
\[
\log\P(\bG\in  \cG_n^2(0,0,\xi)^c) \le -\min(1, c'\xi^2)\ratenp
\]
for all $n$ sufficiently large and some absolute constant $c'>0$. 
Since $\P(\bG\in \cU_p(\uF,0))\gs_{\uF} 1$, the claim follows.

Now take
$\uup\ne0$.
We may assume \abbr{wlog} that $s_k>0$ for each $k$. Indeed, otherwise consider
the restriction $(\uFs,\uups)$ of $\uF,\uup$ to the indices with $s_k>0$, denoting by $\uF_0$
the complementary part of $\uF$. Then, setting
\begin{equation}	\label{def:Eu}
\cE_u:=\bigcup_{(\csize,\bsize)\in\Opt(\phi;\uup)}\cG_n^u(\csize,\bsize,\xi)\,,\qquad u=1,2
\end{equation}
we have that 
\[
\P(\bG\notin \cE_u |\bG\in \cU_p(\uF,\uup)) =\frac{\P(\bG\in \cE_u^c \cap \cU_p(\uF,\uup))}{\P(\bG \in  
\cU_p(\uF,\uup))}
\le \frac{\P(\bG\in \cE_u^c\cap\cU_p(\uFs,\uups))}
{\P(\bG\in \cU_p(\uFs,\uups))\P(\bG\in\cU_p(\uF_0,\underline{0}))}
\]
where in the numerator we used that $\cU_p(\uF,\uup)\subseteq\cU_p(\uFs,\uups)$, 
and in the denominator we applied the \abbr{FKG} inequality.
Clearly
$\P(\bG\in\cU_p(\uF_0,\underline{0}))\gs_{\uF} 1$, so as claimed, it suffices to fix
hereafter $\uF=\uFs$ and $\uup$ with $s_k>0$ for all $k$. 

Now, from \cite[Lemma 7.2]{CDP}
and the union bound, there exists $L=L(\uF,\uup)$ finite such that for all $n$ large enough,
\begin{equation}	\label{bd:goodL}
\P(\bG\in\cL_{\subseteq}(\uF,L)) \ge 1- e^{- (\UTab_\uF (\uup)  +1) r_{n,p}}  \,,
\end{equation}
where
\[
\cL_{\subseteq}(\uF,L) := \bigcap_{k\in[m]}\bigcap_{F\subseteq F_k}  \big\{Q\in\cQ_n: t(F,Q/p)\le L\big\}\,.
\]
Fixing $\xi >0$ and such $L$, for $u=1,2$ consider the sets
\begin{equation}
\cE_u'=\cE_u'(\uup,\xi) = \cG_n\cap \cL_{\subseteq}(\uF,L)\cap \cU_p(\uF,\uup) \cap \cE_u^c
\end{equation}
with $\cE_u$ as in \eqref{def:Eu}.
In view of \eqref{bd:goodL}, it suffices to show that for $u=1,2$, 
\begin{equation}	\label{structb.goal1}
\limsup_{n \to \infty} r_{n,p}^{-1} \log\P(\bG\in \cE_u') < - \UTab_\uF (\uup) \,.
\end{equation}

We first establish \eqref{structb.goal1} for the case $u=2$, which gives part (b) of the theorem. 
Setting $\delta:=\eps_1p^{\Delta_\star-\Delta/2}$ as before, the main step is to show
\begin{equation}\label{structb.goal2}
\eye_p\big( (\cE_2')_{\mathbb{K}_{2\to2}(\delta)} \big)
> (\phi_\uF(\uup) + \eta) \ratenp
\end{equation}
for $\eps_1,\eta>0$ sufficiently small depending on $\uF,\uup,\xi$. Indeed, granted \eqref{structb.goal2}, we have from 
Proposition \ref{prop:LD.spec}(a) applied with $K_1=C(1+\phi_\uF(\uup))$ 
and $K_0=CK_1/\eta^2$ for a sufficiently large constant $C<\infty$,
that for all $n$ sufficiently large,
\begin{align*}
\log\P(\bG\in  \cE_2') \le -\min \Big\{ (\phi_\uF(\uup)+1) \rate_{n,p}\,, \;\eye_p\big( (\cE_2')_{\mathbb{K}_{2\to2}(\delta)} \big)
- \tfrac12\eta \rate_{n,p} \Big\} 
\le -(\phi_\uF(\uup)+ \tfrac12\eta)\ratenp
\end{align*}
and \eqref{structb.goal1} follows. 

Turning to the proof of \eqref{structb.goal2}, let $\cP$ be the partition of $[0,1]$ into intervals of length $1/n$, such that $\cQ_n$ embeds in $\cW_\cP$ via 
\begin{equation}	\label{QtoW}
Q\mapsto g_Q\,,\qquad g_Q(x, y) = Q_{\lf xn\rf, \lf yn\rf}. 
\end{equation}
For $\eps>0$ 
let
\begin{equation}
\cQ_n'(\uup,\eps) = \bigcup_{(a,b)\in\Opt(\phi;\uup)} \bigcup_{\|(\csize',\bsize')-(\csize,\bsize)\| _\infty \le \eps} \cQ_n(a',b')
\end{equation}
and let $\cW'(\uup,\eps)\subset\cW_\cP$ denote the corresponding set of graphons via the identification \eqref{QtoW}.
Taking hereafter $\eps_1<\xi/2$, since $\Delta_\star \ge \Delta/2$, for any
$Q \in (\cE_2')_{\mathbb{K}_{2\to2}(\delta)}$ there exists $G \in \cE_2'$ with 
\[
\|Q-G\|_{2\to2}\le \eps_1 n p^{\Delta_\star} \le \frac{1}{2} \xi n p^{\Delta/2} \,.
\]
As $G$ does not lie in the set $\cE_u$ from \eqref{def:Eu},
it then follows by the triangle inequality for the spectral norm that
\begin{equation*}
\|Q-\CH^{I,J}\|_{2\to2} \ge \frac12\xi n p^{\Delta/2} \qquad \quad \forall \; \CH^{I,J}\in  \cQ_n'(\uup,0)
\end{equation*}
and consequently, that for any $Q \in  (\cE_2')_{\mathbb{K}_{2\to2}(\delta)}$
\begin{equation}\label{QCHdist}
\|g_Q-g_{\CH^{I,J}}\|_2 = \frac1n\|Q-\CH^{I,J}\|_{\HS} \ge \frac1n\|Q-\CH^{I,J}\|_{2\to2}
\ge \frac12 \xi p^{\Delta/2} \qquad \forall \CH^{I,J}\in \cQ_n'(\uup,0) \,.
\end{equation}
Further, 
as seen in the proof of \eqref{eq:cont-Yn} below, for $c=\frac{1}{32}$ and
any $g \in \cW'(\uup,c\xi^2)$ there exists $\CH^{I,J}\in\cQ_n'(\uup,0)$ such that
\[
\|g-g_{Q^{I,J}}\|_2
\le \frac1{4}\xi p^{\Delta/2} \,.
\]
Hence, by the triangle inequality, for any $Q$ satisfying \eqref{QCHdist},
\begin{equation}	\label{structb.goal3}
\|g_Q-g\|_2 \ge \frac14\xi p^{\Delta/2} \qquad \forall g \in \cW'(\uup,c\xi^2) \,.
\end{equation}
Next, for $Q \in (\cE_2')_{\mathbb{K}_{2\to2}(\delta)}$ and the corresponding $G \in \cE_2'$
we have from Proposition \ref{prop:LD.spec}(b), that for some finite $C=C(\uF,\uup) \ge 2$
\begin{equation}\label{eq:bd-tkQ}
t(F_k, Q/p) \ge t(F_k, G/p) - O_{\uF}(L\eps_1)  \ge 1+\up_k - C \eps_1 \,, \qquad \forall k \in [m]\,.
\end{equation}
Choosing $\eps_1 < \eta/C$ and $\eta>0$ small enough so that 
$\eta^{c_0} < 
c_1 \min(c \xi^2, \xi/4)$,  
for the positive $c_0=c_0(\uF)$ of Proposition \ref{prop:graphon.stab}
and 
some $c_1=c_1(\uF,\uup)>0$ sufficiently small,
it follows from  \eqref{structb.goal3}, \eqref{eq:bd-tkQ}
and Proposition \ref{prop:graphon.stab} (in the contrapositive), that
\[
\frac1{n^2}\eye_p(Q) = 
\eye_p(g_Q) > ( \phi_\uF(\uup)+\eta) p^\Delta \log (1/p) 
\]
for all $Q \in  (\cE_2')_{\mathbb{K}_{2\to2}(\delta)}$. Having
thus obtained \eqref{structb.goal2} and thereby \eqref{structb.goal1} for $u=2$,
this concludes the proof of part (b) of Theorem \ref{thm:struct.ER}. 

Now turn to prove part (a), namely \eqref{structb.goal1} for $u=1$.
Since $\cG_n^1(a,b,\xi)=\cG_n$ if $\max(\frac{\csize}{2},\bsize)\le\xi$, 
we may assume that $\max(\frac{\csize}{2},\bsize)>\xi$ for every $(\csize,\bsize) \in \Opt(\phi;\uup)$.
The main step is to show
\begin{equation}\label{structa.goal2}
\eye_p\big( (\cE_1')_{\mathbb{K}_{\star}(\delta_1)} \big)
> (\phi_\uF(\uup) + \eta) \ratenp
\end{equation}
for some $\delta_1,\eta>0$ sufficiently small depending on $\uF,\uup,\xi$. 
Indeed, since $n p^{\Delta+1} \log (1/p) \gg \log n$, we can then apply
Proposition \ref{prop:LD.Bstar}(a) to conclude exactly as we did for part (b). 

Now for any $Q\in\cK_G^{\star}(\delta_1)$ and $G\in\cE_1'$ we get by Proposition \ref{prop:LD.Bstar}(b)
that \eqref{eq:bd-tkQ} holds with $\delta_1$ replacing $\eps_1$. Thus,
following the preceding proof of part (b), we will arrive at \eqref{structa.goal2} 
upon showing that analogously with \eqref{QCHdist} we have that 
\begin{equation}
\label{structa.goal3}
\|Q-\CH^{I,J}\|_\HS^2 \ge \frac{\xi}{2} n^2 p^\Delta \qquad  \forall\CH^{I,J}\in \cQ_n'(\uup,0) \,.
\end{equation}
Fixing $\CH^{I,J} \in \cQ_n'(\uup,0)$, note that 
$\|Q-G\|_\Base^\star\le \delta p$ for some $G \in \cE_1'$, so in particular 
\[
|\sum_{I\times I} (Q_{ij}-G_{ij}) | \le \delta p \max(|I|, np^{\Delta-1})^2\le (\csize \vee 1) \delta n^2p^{\Delta+1} 
\le \xi n^2 p^{\Delta} 
\]
for all $n$ large enough (as $\csize \le 2 \UTab_\uF (\uup)$, $\delta \le 1$ and $p \ll 1$).
As $G \notin \cG_n^{I,J}(\xi)$, it thus follows that  
\[
2 \xi n^2 p^\Delta \le \sum_{I\times I} (1-G_{ij}) \le \xi n^2p^{\Delta} +
\sum_{I\times I} (1-Q_{ij}) \,.
\]
Consequently, by Cauchy--Schwarz,
\[
(\xi n^2 p^{\Delta})^2 \le \big[\sum_{I\times I} (1-Q_{ij}) \big]^2 
\le |I|^2 \big[ \sum_{I\times I} (1-Q_{ij})^2\big]
 \le \csize n^2 p^{\Delta} \Big[ \sum_{I\times I} (1-Q_{ij})^2 \big] \,,
\]
yielding the lower bound 
\begin{equation}\label{eq:lbd-clique}
\|Q-\CH^{I,J}\|_\HS^2 \ge \sum_{I\times I} (1-Q_{ij})^2 \ge \frac{\xi^2}{\csize} n^2 p^{\Delta} \,.
\end{equation}
Arguing as above on $J\times[n]\setminus J$, we similarly get for any $\delta \le \xi/4$ and all $n$ large 
enough
\[
\big|\sum_{J\times [n]\setminus J} (Q_{ij}-G_{ij}) \big| 
\le \delta p n (|J|+np^{\Delta-1}) \le \frac{\xi}{2} n^2 p^\Delta
\]
(as $\bsize \le \UTab_\uF (\uup)$ and $p \ll 1$). Since
$G \notin \cG_n^{I,J}(\xi)$ it then follows that 
\[
\big(\frac{\xi}{2} n^2 p^{\Delta}\big)^2 \le \big[\sum_{J\times [n] \setminus J} (1-Q_{ij}) \big]^2 
\le n |J| \big[ \sum_{J\times [n] \setminus J} (1-Q_{ij})^2\big]
 \le \frac{\bsize}{2} n^2 p^{\Delta} \|Q-\CH^{I,J}\|_\HS^2 \,.
\]
Combining this with \eqref{eq:lbd-clique} we find that 
\[
\|Q-\CH^{I,J}\|_\HS^2 \ge \frac{\xi^2}{2\bsize \vee \csize} n^2 p^\Delta \ge 
\frac{\xi}{2} n^2 p^\Delta 
\]
(by our assumption that $\frac{\csize}{2} \vee \bsize > \xi$). With \eqref{structa.goal3} established,
this concludes the proof.
\qed

\section{Proofs of results for ERGMs}
\label{sec:ERGMs}

In this section we establish Proposition \ref{prop:nmf}, Theorem \ref{thm:nmf2} and Theorem \ref{thm:struct.ERGM}. For the last result we rely on Theorem \ref{thm:struct.ER}, which we already proved in Section \ref{sec:struct.ER}.

We may assume \abbr{wlog} that $h(\1)=0$. Hereafter, $r=r_{n,p}$ of \eqref{def:rate}. Recalling 
\eqref{def:TF},  as in Section \ref{sec:graphon} we abbreviate $T_k:= T_{F_k}$, $k \le m$, and
also re-index these $m$ graphs so that $F_k$ is regular if and only if $k\le m'$ for some $m'\in[m]$.

\subsection{Proof of Proposition \ref{prop:nmf}}

We begin with a lemma showing
that the limited growth of $h(\cdot)$ allows for truncating the tails of $\Ham(\Gnp/p)$.

\begin{lemma}
\label{lem:truncate.tail}
Assume $p \gg n^{-1/(\Delta+1)}$.
Then, for $\hamlet(\cdot)$ satisfying \eqref{assu-h.growth}, 
and $\Ham(\cdot)$ of \eqref{hamilton},
\begin{equation}\label{eq:exptight}
\limsup_{L \to \infty} \limsup_{n \to \infty} \rate^{-1} \log 
\E \Big [e^{\rate\Ham(\Gnp/p)} \sum_{k=1}^m \ind( t(F_k,\Gnp/p) \ge L)\Big] = -\infty \,.
\end{equation}
\end{lemma}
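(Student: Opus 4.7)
My plan is to combine the sub-polynomial growth of $\hamlet$ given by \eqref{assu-h.growth} with the non-asymptotic upper-tail estimate of Proposition \ref{prop:rough-ubd}, via a dyadic decomposition of the tail event. Writing $\alpha_k := \Delta/\edges(F_k)$ and $t_k := t(F_k,\Gnp/p)$, the growth condition provides for any prescribed $\delta > 0$ some $M_\delta < \infty$ with $\hamlet(\underline{x}) \le \delta \sum_k x_k^{\alpha_k}$ whenever $\|\underline{x}\|_\infty \ge M_\delta$. Since $\sum_k \ind(t_k \ge L) \le m\, \ind(E_L)$ for $E_L := \{\max_k t_k \ge L\}$, it suffices to bound $\E[e^{\rate \Ham(\Gnp/p)}\ind(E_L)]$, and for $L \ge M_\delta$ the pointwise bound $\Ham(\Gnp/p) \le \delta \sum_k t_k^{\alpha_k}$ holds on $E_L$.

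The key step is the dyadic decomposition. For $\underline{j} \in (\{-\infty\}\cup\Z_{\ge 0})^m$, I would set $A_{\underline{j}} := \bigcap_k B_{j_k,k}$ with $B_{-\infty,k} = \{t_k < 1\}$ and $B_{j,k} = \{2^j \le t_k < 2^{j+1}\}$ for $j \ge 0$. On $E_L$ at least one $j_{k^*} \ge \lceil \log_2 L\rceil$, and Proposition \ref{prop:rough-ubd} applied to the single graph $F_{k^*}$ yields $\P(A_{\underline{j}}) \le \P(t_{k^*} \ge 2^{j_{k^*}}) \le \exp(-c\,\rate\, 2^{j_{k^*}\alpha_{k^*}})$ for some $c=c(\uF)>0$ and all $L$ sufficiently large. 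Applying $\max\ge\text{mean}$ to the coordinates with $j_k \ge 0$ turns this into $\P(A_{\underline{j}}) \le \exp(-\tfrac{c}{m}\rate\sum_{k:\,j_k\ge 0}2^{j_k\alpha_k})$. An elementary computation on $A_{\underline{j}}\cap E_L$ further gives $\sum_k t_k^{\alpha_k} \le C_\uF \sum_{k:\,j_k\ge 0}2^{j_k\alpha_k}$, absorbing the $O(1)$ contribution from coordinates with $t_k<1$ into the $k^*$-term using $2^{j_{k^*}\alpha_{k^*}}\ge 1$. Choosing $\delta$ so small that $\delta C_\uF < c/(2m)$, these estimates combine to give
\[
\E\bigl[e^{\rate \Ham(\Gnp/p)}\ind(A_{\underline{j}})\bigr] \le \exp\Bigl(-\tfrac{c}{2m}\rate\sum_{k:\,j_k\ge 0}2^{j_k\alpha_k}\Bigr).
\]

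Finally, I would union-bound over the identity of some $k^*$ with $j_{k^*}\ge\lceil\log_2 L\rceil$ and sum the resulting factorized series in each $j_k$. Each inner sum is bounded by a constant depending only on $\uF$ (once $\rate$ is large, which holds since $p \gg n^{-1/(\Delta+1)}$ implies $\rate \gtrsim n^{(\Delta+2)/(\Delta+1)}\log(1/p)\to\infty$), while the outer sum over $j_{k^*}\ge\lceil\log_2 L\rceil$ is dominated by its first term, contributing $\exp(-\tfrac{c}{2m}\rate L^{\alpha_{k^*}})$. Setting $\alpha_{\min}:=\min_k\alpha_k$, one concludes
\[
\E\bigl[e^{\rate\Ham(\Gnp/p)}\ind(E_L)\bigr] \le C(\uF)\exp\Bigl(-\tfrac{c}{2m}\rate L^{\alpha_{\min}}\Bigr),
\]
from which the lemma follows upon taking $\rate^{-1}\log$, then $\limsup_{n\to\infty}$, then $L\to\infty$. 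The main technical point is to align the exponents: \eqref{assu-h.growth} fixes the \emph{critical} growth rate of $\hamlet$ along each coordinate, while Proposition \ref{prop:rough-ubd} delivers the matching stretched-exponential tail decay of $t(F_k,\Gnp/p)$; the dyadic scheme is what converts this coordinate-wise matching into a uniform tail bound over the multi-index $\underline{j}$, and a less careful union bound would not absorb the contribution of the $k\ne k^*$ coordinates whose $\alpha_k$ may differ from $\alpha_{k^*}$.
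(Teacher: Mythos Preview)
Your approach is correct and reaches the same conclusion, but the route differs from the paper's. The paper argues more abstractly: after disposing of the case $p\gs1$ (where $t_k\le p^{-\edges(F_k)}$ is bounded so the indicator vanishes for large $L$), it invokes the growth condition together with H\"older's inequality and \cite[Lemma~4.3.8]{DeZe-book} to reduce \eqref{eq:exptight} to the \emph{single-coordinate} finiteness
\[
\limsup_{n\to\infty}\rate^{-1}\log\E\bigl[\exp(\eta\,\rate\,t(F_k,\Gnp/p)^{\alpha_k})\bigr]<\infty
\]
for some small $\eta>0$ and each $k$, which then follows directly from Proposition~\ref{prop:rough-ubd}. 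In contrast, you handle all coordinates jointly via an explicit dyadic decomposition and a direct summation. Both arguments rest on the same two ingredients---the sub-critical growth of $\hamlet$ at rate $\alpha_k=\Delta/\edges(F_k)$ and the matching stretched-exponential tail from Proposition~\ref{prop:rough-ubd}---so they are equivalent in spirit; the paper's version is shorter and more modular (the H\"older step cleanly decouples the coordinates), while yours is self-contained and avoids the textbook citation.

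Two small points to clean up. First, Proposition~\ref{prop:rough-ubd} is stated for $p\le 1/2$, so you should dispose of $p>1/2$ separately (trivially, since then $t_k$ is uniformly bounded). Second, your ``$\max\ge\text{mean}$'' step is correct but the phrasing conflates two different choices of $k^*$: the coordinate with $j_{k^*}\ge\lceil\log_2 L\rceil$ need not be the one maximizing $2^{j_k\alpha_k}$. What you actually use is that $\P(A_{\underline j})\le\min_{k:\,j_k\ge1}\P(t_k\ge 2^{j_k})$, and then pick the maximizing $k$; the $j_k=0$ contributions to the sum are $O(1)$ and absorbed by the $k$ with large $j_k$. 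With these clarifications the argument is complete.
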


\begin{proof}  Since $t(F_k,\Gnp/p) \le p^{-\edges(F_k)}$ are uniformly bounded when
$p\gs1$ we may assume hereafter \abbr{wlog} that $p \ll 1$. Further, since $r=r_{n,p}\to\infty$ for
the range of $p(n)$ we consider, 
in view of \eqref{hamilton} and \eqref{assu-h.growth} it suffices for \eqref{eq:exptight}
to show that for some 
$\eta>0$ and any $k \le m$, 
\[
\limsup_{n \to \infty} \rate^{-1} \log \E \big[ \exp(\eta \rate t(F_k,\Gnp/p)^{\Delta/\edges(F_k)} ) \big]< \infty 
\]
(combine \cite[Lemma 4.3.8]{DeZe-book} with H\"older's inequality). This in turn follows from the 
uniform large deviations upper bound 
\begin{equation}\label{eq:unif-rough-tail}
\limsup_{u,n \to \infty} \frac{1}{u^{\Delta/\edges(F_k)} \rate_{n,p}} \log \P \big( t(F_k,\Gnp/p) \ge  1+ u\big) < 0 
\end{equation}
which we derive in Proposition \ref{prop:rough-ubd} for 
$1 \gg p \gg n^{-1/(\Delta+1)}$.
\end{proof}

We turn to the proof of Proposition \ref{prop:nmf}, starting with the lower bound $\logmgf\ge\nmf-o(\ratenp)$. 
To this end, fixing positive integers $K,\zeta^{-1}$,
we consider the finite $\zeta$-mesh 
$J_\zeta=(\{\zeta,2\zeta,\ldots,K\})^m$ of $[0,K]^m$. 
By the monotonicity of $\underline{x} \mapsto \hamlet(\underline{x})$, we have that for any 
$\uup \ge \zeta \1$ 
\begin{equation}\label{eq:pre-lbd}
\logmgf \ge \rate  \cdot \hamlet((1-\zeta)\1+\uup) +
\log \P (\Gnp \in \cU_p(\uF,\uup-\zeta \1) ) \,.
\end{equation}
Next, recall the entropic optimization problem for joint upper tails of homomorphism counts
\begin{align}\label{def:probUTnmf}
\UTnmf_{n,p}(\uF,\uup) &:= \eye_p\big( \cU_p(\uF,\uup) \big)\,,  
\end{align}
in terms of
$\cU_p(\uF,\uup)$ of \eqref{def:Up} (here we use the notation \eqref{def:IpE}). 
From \cite[Prop. 9.1]{CDP} we have that for all $n$ large enough,
\begin{equation}\label{eq:lbd}
\inf_{\uup' \in J_\zeta} \Big\{ \log \P (\Gnp \in \cU_p(\uF,\uup'-\zeta \1) ) + (1+\zeta) \UTnmf_{n,p}(\uF,\uup') \Big\} 
\ge - o(r_{n,p})\,.
\end{equation}
Since $\hamlet(\cdot)$ is uniformly continuous on $[0,K]^m$, for any $\eta>0$ and
$\zeta=\zeta(\eta)$ small enough,
\begin{equation}\label{eq:h-cont}
\hamlet(\1+\uup) \ge (1+\zeta) \hamlet ((1+2\zeta) \1 + \uup)  - \eta \,, \qquad \forall \uup \in [0,K]^m \,.
\end{equation}
Setting for $K$ finite, the non-negative
\begin{align}
\probFEnmf^{(K)}_{n,p} :=
\sup_{\uup \in \R_{\ge 0}^m, \|\uup\| \le K} 
\Big\{ \rate_{n,p} \hamlet(\1+\uup) - \UTnmf_{n,p}(\uF,\uup) \Big\}  \,,
\label{eq:Phi-PsiK}
\end{align}
we get by \eqref{eq:pre-lbd}--\eqref{eq:h-cont} and the monotonicity  
of $\UTnmf_{n,p}(\uF,\cdot)$ that for any $\eta>0$ and such $\zeta=\zeta(\eta)$,
\begin{align}\label{eq:sup-K}
\logmgf &\ge 
\sup_{\uup' \in J_\zeta, \up_k \in [\up'_k,K]} \{ \rate_{n,p} \hamlet((1-\zeta)\1+\uup') -
(1+\zeta) \UTnmf_{n,p}(\uF,\uup) \} - o(r_{n,p}) \nonumber \\
& \ge (1+\zeta) \sup_{\uup \in \R_{\ge 0}^m, \|\uup\|_\infty \le K} \{ \rate_{n,p}
\sup_{\uup' \in J_\zeta, \up'_k \le \up_k} \{ \hamlet((1+\zeta) \1+\uup') \} -
\UTnmf_{n,p}(\uF,\uup) \}	\nonumber\\
&\qquad -  \eta \, \rate_{n,p} - o(r_{n,p}) \nonumber \\
&\ge \probFEnmf^{(K)}_{n,p}-\eta \, \rate_{n,p} - o(r_{n,p}) \,.
\end{align}
Hereafter, let $(\uup)_+$ denote the projection of $\uup \in \R^m$ onto $\R_{\ge 0}^m$.
With $\hamlet(\cdot)$ non-decreasing in each argument and 
$\UTnmf_{n,p}(\uF,\uup)=\UTnmf_{n,p}(\uF,(\uup)_+)$, 
decomposing the supremum in \eqref{NMF} according to $\up_k:=t(F_k,Q/p)-1$, yields that 
\begin{equation}\label{eq:Phi-Psi}
\nmf = \sup_{\uup \in \R_{\ge 0}^m} 
\Big\{ \rate_{n,p} \hamlet(\1+\uup) - \UTnmf_{n,p}(\uF,\uup) 
\Big\} \,.
\end{equation}
By definition, see \eqref{def:probUTnmf},
\[
\UTnmf_{n,p}(\uF,\uup) \ge \UTnmf_{n,p}(F_k,\up_k) \,, \qquad \forall k \in [m] \,.
\]
Recalling \eqref{assu-h.growth}, we have from 
\cite[Lemma 7.2]{CDP}, that for some $\eta=\eta(\uF)>0$,
\begin{equation}\label{Phi-lbd}
\UTnmf_{n,p}(F_k,\up) \ge \eta \, m \, r_{n,p} \, (\up^{\Delta/\edges(F_k)} -1) \,, \qquad \forall \up \ge 0,   \, 
k \in [m] \,.
\end{equation}
Consequently, for any $\uup \in \R_{\ge 0}^m$, 
\[
\UTnmf_{n,p}(\uF,\uup) \ge \eta \, r_{n,p} \bigg( \sum_{k=1}^m  \up_k^{\Delta/\edges(F_k)} -m\bigg)\,.
\]
In view of the growth condition \eqref{assu-h.growth} for $h(\cdot)$, the 
above bound implies
that the supremum in \eqref{eq:Phi-Psi} is attained at $\uup_n$ uniformly bounded in $n$,
thereby matching (for some fixed $K$ and all $n$ large), the expression 
$\probFEnmf^{(K)}_{n,p} $ of \eqref{eq:Phi-PsiK}. Having $\eta$ in \eqref{eq:sup-K}  
arbitrarily small 
thus yields the lower bound $\logmgf\ge\nmf-o(\ratenp)$.

Turning to prove the upper bound $\logmgf\le \nmf+o(\ratenp)$, in view of Lemma \ref{lem:truncate.tail}
it suffices to show that for $r=r_{n,p}$, any $K \in \N$, $\eta>0$
and all $n$ sufficiently large (depending on $\uF$, $\ube$, $K$ and $\eta$),
\begin{equation}	\label{Z:goal1}
\log \E \big[ e^{\rate\Ham(\Gnp/p)} \ind(\max_{k \le m} \{t(F_k,\Gnp/p)\} \le K)
\big] \le \nmf + 2 \eta \rate \,.
\end{equation}
Fixing $K,\zeta^{-1} \in \N$ let $J^\star_\zeta$ denote the finite $\zeta$-mesh 
analogous to $J_\zeta$, except for including now also $-\zeta$ and $0$ as possible 
values for each coordinate $\up_k$. We apply the upper bound of \cite[Thm. 1.1]{CDP}
at any $\uup \in J^\star_\zeta$ for the restriction 
$(\uFs,\uups)$ of $(\uF,\uup)$ to those $k \in [m]$ where $\up_k > 0$. 
Doing so, the monotonicity of $\hamlet(\cdot)$
yields the following bound on the \abbr{lhs} of \eqref{Z:goal1}:
\begin{align}\label{eq:bd-mgf}
 \log \Big[ \sum_{\uup \in J^\star_\zeta} & e^{r \hamlet( (1+\zeta) \1+\uup)} 
\P (\Gnp \in \cU_p(\uFs,\uups) ) \Big]
\nonumber \\
&\le \log |J^\star_\zeta| + \max_{\uup \in J^\star_\zeta} \big\{ r \hamlet( (1+\zeta) \1+\uup) 
+ \log \P (\Gnp \in \cU_p(\uFs,\uups) ) \big\}
\nonumber \\
&\le o(r) + \max_{\uup \in J^\star_\zeta} \big\{ r \hamlet( (1+\zeta) \1+\uup) - (1+\zeta)^{-1} 
\UTnmf_{n,p}(\uFs,\uups-\zeta \1)
\big\}  \,.
\end{align}
For $\uup \in J^\star_\zeta$, the vector $\uups - \zeta \1$ consists
of all the non-negative coordinates of $\uup - \zeta \1$, from which it follows that
$\UTnmf_{n,p}(\uFs,\uups-\zeta \1)=\UTnmf_{n,p}(\uF,(\uup-\zeta \1)_+)$. 
Plugging this in \eqref{eq:bd-mgf}
and considering $\zeta$ small enough that \eqref{eq:h-cont} holds,
yields \eqref{Z:goal1} and thereby completes the proof.
\qed

\subsection{Proof of Theorem \ref{thm:nmf2}} 

From Proposition \ref{prop:nmf} it suffices to show that 
\begin{equation}
\frac1{\ratenp}\nmf\to \psi_{\uF,\hamlet}.
\end{equation}
Recall from \cite[Prop.\ 1.10]{BhDe} that for any $n^{-1/\Delta}\ll p\ll 1$,  
\begin{equation}\label{eq:probUTab}
\lim_{n \to \infty} \frac{\UTnmf_{n,p}(\uF,\uup) }{\rate_{n,p}} = \UTab_\uF (\uup) \,,
\qquad \forall \uup \in \R_{\ge 0}^m \,.
\end{equation}
Considering \eqref{eq:Phi-Psi}, \eqref{psi-phi} 
and \eqref{eq:probUTab}, it thus remains only to show that 
\begin{equation}\label{eq:Psi-psi}
\limsup_{n \to \infty} 
\sup_{\uup \in \R_{\ge 0}^m} \Big\{ \hamlet(\1+\uup) - \frac{\UTnmf_{n,p}(\uF,\uup)}{\rate_{n,p}}
\Big\} \le \psi_{\uF,\hamlet} = \sup_{\uup \in \R_{\ge 0}^m} 
\Big\{ \hamlet(\1+\uup) - \UTab_\uF (\uup) \big\} \,.
\end{equation}
In the course of proving Proposition \ref{prop:nmf} we saw that the supremum on the \abbr{lhs} 
is attained at some $\uup_n$ which are uniformly bounded in $n$. Hence, by the continuity of $h$ we
obtain \eqref{eq:Psi-psi} and thereby complete the proof, upon showing that for any sequence
$\uup_n \to \uup_\infty$
\[
\liminf_{n \to \infty} \Big\{ \frac{\UTnmf_{n,p}(\uF,\uup_n)}{\rate_{n,p}} \Big\} 
\ge \UTab_\uF (\uup_\infty) \,.
\]
Now, by the monotonicity of $\uup \mapsto \UTnmf_{n,p}(\uF,\uup)$ we can replace
$\uup_n$
with $(1-\eps) \uup_\infty$.
Having done so, we use \eqref{eq:probUTab} and conclude upon taking $\eps \to 0$
(relying on the continuity of $\UTab_\uF (\cdot)$).
\qed

\subsection{Proof of Theorem \ref{thm:struct.ERGM}}

We proceed to prove  Theorem \ref{thm:struct.ERGM} using Theorem \ref{thm:struct.ER}, by 
combining the argument used when proving Proposition \ref{prop:nmf} with the 
containment property of \Cref{lem:contain}. Specifically, by
Theorem \ref{thm:nmf2}, parts (a) and (b) of Theorem \ref{thm:struct.ERGM} amount to having for
$u=1,2$, respectively, and for any fixed $\xi>0$,
\begin{equation}\label{eq:cond-ERGM}
\limsup_{n \to \infty} \rate ^{-1} \log \E\Big[e^{\rate\Ham(\bG_{n,p}/p)} \ind(\Gamma^u_{n}(\xi)) \Big]
<  \psi_{\uF,\hamlet}  \,, 
\end{equation}
 where
\begin{equation}\label{def:Gamma-eps}
\Gamma^u_n(\xi) :=
\bigcap_{(\csize',\bsize')\in \Opt(\psi)} (\cG^u_n(\csize',\bsize',2\xi))^c \,, \quad u=1,2 \,.
\end{equation}
Proceeding as in the proof of \eqref{Z:goal1}, while intersecting all the events there
with $\Gamma^u_{n}(\xi)$, we find analogously to \eqref{eq:bd-mgf}, that for any fixed $\xi>0$ 
and $\eta>0$, the \abbr{LHS} of \eqref{eq:cond-ERGM} is at most 
\[
2 \eta + \max_{\uup\in J^\star_\zeta} \big\{ \hamlet(\1+(\uup)_+) - \UTab_{\uFs} (\uups,\xi) \big\}  \,,
\]
for some $\zeta=\zeta(\eta)>0$, where 
\begin{equation}\label{def:phi-xi}
\UTab_\uF (\uup,\xi) := 
- \limsup_{n \to \infty} \rate^{-1} \log \P( \Gnp \in \cU_p(\uF,\uup) \cap \Gamma^u_n(\xi) ) \,. 
\end{equation}
Fixing hereafter $\xi>0$, upon taking $\eta \to 0$ we get \eqref{eq:cond-ERGM} provided 
that the function 
\[
f_\xi (\uup) := \psi_{\uF,\hamlet} + \UTab_{\uFs} (\uups,\xi)- \hamlet(\1+\uup)
\]
is bounded away from zero over $\uup \in \R^m_{\ge 0}$. To this end, 
consider the continuous mapping $\bT:\R^2_{\ge 0} \mapsto \R^m_{\ge 0}$  given in terms of 
$\tmax_k=\tmax_{F_k}$ of \eqref{def:TF}, by
\[
\bT (\csize,\bsize) := \{ \tmax_1(\csize,\bsize) -1 , \tmax_2(\csize,\bsize) -1 , \ldots, 
\tmax_m(\csize,\bsize) - 1 \} \,.
\]
With $h$ non-decreasing in each argument, recall from \eqref{def:phi} that 
for any $(\csize,\bsize) \in \Opt(\phi;\uup)$, 
\begin{equation}\label{eq:f0-ab}
f_0(\uup)=\psi_{\uF,\hamlet} + \frac{1}{2} \csize + \bsize - h(\1+\uup)
\ge \psi_{\uF,\hamlet} + \frac{1}{2} \csize + \bsize - \hamlet(\1 + \bT(\csize,\bsize)) \,.
\end{equation}
As seen while proving Proposition \ref{prop:nmf}, thanks to the growth condition \eqref{assu-h.growth}, 
the non-negative continuous function on the \abbr{rhs} of \eqref{eq:f0-ab} diverges when 
$\|(\csize,\bsize)\| \to \infty$, and it is therefore bounded away from zero on 
the complement of any small neighborhood of its 
bounded
set $\Opt(\psi)$ of global minimizers.
Consequently, for any $\eps>0$, the function $f_0(\cdot)$ is bounded away from zero on the 
complement of  
\begin{equation}\label{def:B-xi}
\B_{\eps} := \big\{ \uup \in \R^m_{\ge 0} : 
\max_{(\csize,\bsize) \in \Opt(\phi;\uup)} \min_{(\csize',\bsize') \in \Opt(\psi)} \{| \csize-\csize'|  
+ |\bsize-\bsize'| \} \le \eps \big\} \,.
\end{equation}
It follows from \eqref{def:phi} that 
$\UTab_{\uF} (\uup) = \UTab_{\uFs} (\uups)$ for any $\uup \in \R^m_{\ge 0}$,
with the same set of optimal $(\csize,\bsize)$ in both variational problems. Hence,
$f_\xi(\cdot) \ge f_0(\cdot)$ and we shall complete the proof upon showing that 
\begin{equation}\label{eq:f-inf}
\lim_{\eps\downarrow 0} \inf_{\uup \in \B_{\eps}} \{ f_\xi (\uup) \} > 0 \,.
\end{equation}
Turning to this task, combining Theorem \ref{thm:struct.ER}(a) with the upper bound of \cite[Thm. 1.1]{CDP} 
and \eqref{eq:probUTab}, we have some $\eta(\uup) >0$, such that for $u=1$,
\begin{equation}\label{eq:cond-er}
\limsup_{n \to \infty} \rate^{-1} \log \P\big(  \Gnp \in \Gamma^u_n(\uFs,\uups,\xi) \big) \le 
- \UTab_{\uFs} (\uups) - 2 \eta(\uup) \,, \qquad \forall \uup \in \B_1 \,,
\end{equation}
where 
\begin{equation}\label{def:Gamma-delta}
\Gamma^u_n(\uF,\uup,\xi) :=
\cU_p(\uF,\uup) \cap
\bigcap_{(\csize,\bsize)\in \Opt(\phi;\uup)} (\cG_n^u(\csize,\bsize,\xi))^c \,, \quad u=1,2\,.
\end{equation}
We similarly get in the setting of Theorem \ref{thm:struct.ER}(b), that \eqref{eq:cond-er} holds with $u=2$.
Now, parsing the definitions \eqref{def:Gamma-eps}, \eqref{def:B-xi} and \eqref{def:Gamma-delta}, 
we deduce from Lemma \ref{lem:contain} that for some $\eps=\eps(\xi)>0$ 
small enough, if $\uup \in \B_\eps$ then for all $n$ large enough,
\[
\cU_p(\uFs,\uups) \cap \Gamma^u_n(\xi) \subset \Gamma^u_n(\uFs,\uups,\xi) \,.
\]
Comparing \eqref{def:phi-xi} with \eqref{eq:cond-er}, the preceding containment
implies that throughout $\B_\eps$, 
\[
f_\xi (\uup) \ge \psi_{\uF,\hamlet} + \UTab_{\uF} (\uup) + 2 \eta(\uup) - h(\1+\uup) \,,
\]
yielding by the continuity of $\UTab_\uF(\cdot)-\hamlet(\1+\cdot)$, that for 
some $\zeta(\uup)>0$ and any $\uup \in \B_\eps$,
\begin{equation}\label{eq:phi-cont}
\inf_{\|\uup' - \uup \|_\infty < \zeta(\uup)} \{ f_\xi (\uup') \} \ge \eta(\uup)  > 0 \,.
\end{equation}
Since  
$\bT(\csize,\bsize) \ge \uup$ coordinate-wise for any $(\csize,\bsize) \in \Opt(\phi;\uup)$, 
we deduce from the boundedness $\Opt(\psi)$ that $\B_\eps$ is a bounded, hence pre-compact
subset of $\R^m_{\ge 0}$. Apply \eqref{eq:phi-cont} for a finite cover of 
$\B_\eps$ by $\|\cdot\|_\infty$-balls of centers $\uup_i$ and radii $\zeta(\uup_i)$, 
to arrive at \eqref{eq:f-inf}, thus completing the proof. 
\qed

\section{Edge-$F$ models: Proofs of Proposition \ref{prop:edgeF} and Corollary \ref{cor:edge-triangle}}
\label{sec:edgeF}

To lighten notation, we suppress the dependence of $\phi_F,\verts(F), \edges(F),s_c(F)$ on $F$ throughout 
this section and write $\psi(\beta):=\psi_{F,\beta f}$. Thus,
\begin{equation}
\Opt(\psi) = \{ (a,b): \beta f(T_F(a,b))-\frac12a-b=\psi(\beta)\} .
\end{equation}

From Theorem \ref{thm:struct.ERGM}, to obtain Proposition \ref{prop:edgeF} it suffices to prove the following.

\begin{prop}
\label{prop:edgeF-new}
With hypotheses as in Proposition \ref{prop:edgeF}, 
if $F$ is irregular, then
$
\Opt(\psi) = \{(0,\bsizes(\beta))\}.
$
If $F$ is regular, then there exists $\beta_c>0$ (depending only of $F,f$) such that for $\beta<\beta_c$ we have $\Opt(\psi) = \{(0,\bsizes(\beta))\}$, while if $\beta>\beta_c$, then $\Opt(\psi) = \{ (\csizes(\beta),0)\}$. 
\end{prop}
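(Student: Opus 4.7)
The plan is to leverage the duality in Proposition~\ref{prop:opt}(c)--(d): with $h=\beta f$,
\[
\psi(\beta) \;=\; \sup_{s\ge 0}\,U(\beta,s)\,,\qquad
\Opt(\psi) \;=\; \bigcup_{s\in S^\star(\beta)}\Opt(\phi_F;s)\,,
\]
where $S^\star(\beta)\subset\R_{\ge 0}$ denotes the set of $s$-maximizers of $U(\beta,\cdot)$. The task thus splits into (i) identifying $S^\star(\beta)$ via the one-dimensional optimization $\sup_s U(\beta,s)$, and (ii) reading off the planar minimizers $\Opt(\phi_F;s)$ for the relevant $s$ from the formulas for $\phi_F$ recalled just above the statement.

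For $F$ irregular, $T_F(a,b)=P_{F^\star}(b)$ depends only on $b$, so the feasible region of \eqref{def:phi} is $\{(a,b)\in\R_{\ge 0}^2 : b\ge P_{F^\star}^{-1}(1+s)\}$, on which $\tfrac12 a+b$ has the unique minimizer $(0,P_{F^\star}^{-1}(1+s))$. Combined with the hypothesis $S^\star(\beta)=\{s^\star(\beta)\}$ this immediately yields $\Opt(\psi)=\{(0,\bsizes(\beta))\}$. For $F$ regular, $\phi_F(s)=\min\{\tfrac12 s^{2/\verts(F)},P_F^{-1}(1+s)\}$ with the two branches crossing only at $s_c$, so $\Opt(\phi_F;s)=\{(0,P_F^{-1}(1+s))\}$ for $s<s_c$, $\{(s^{2/\verts(F)},0)\}$ for $s>s_c$, and is the pair of these two points at $s=s_c$. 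Correspondingly $U(\beta,\cdot)$ agrees with $U_{hub}(\beta,\cdot)$ on $[0,s_c]$ and with $U_{clique}(\beta,\cdot)$ on $[s_c,\infty)$, so $\psi(\beta)=\max\{M_{hub}(\beta),M_{clique}(\beta)\}$ with $M_\sigma(\beta):=U_\sigma(\beta,s_\sigma^\star(\beta))$. I then set
\[
\beta_c \;:=\; \inf\{\beta\ge 0 : M_{clique}(\beta) > M_{hub}(\beta)\}\,,
\]
and verify $\beta_c\in(0,\infty)$: at $\beta=0$ one has $M_{hub}(0)=0>-\tfrac12 s_c^{2/\verts(F)}=M_{clique}(0)$, while for any fixed $s>s_c$, $M_{clique}(\beta)/\beta \ge f(1+s)-\tfrac{1}{2\beta}s^{2/\verts(F)}$, which for large $\beta$ exceeds $f(1+s_c)\ge M_{hub}(\beta)/\beta$ by strict monotonicity of $f$ on $(1,\infty)$. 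Granted this $\beta_c$, for $\beta<\beta_c$ one must have $s_{hub}^\star(\beta)<s_c$ strictly---otherwise $M_{clique}(\beta)\ge U_{clique}(\beta,s_c)=U_{hub}(\beta,s_c)=M_{hub}(\beta)$ contradicts the defining property of $\beta_c$---whence $S^\star(\beta)=\{s_{hub}^\star(\beta)\}\subset[0,s_c)$ and $\Opt(\psi)=\{(0,\bsizes(\beta))\}$; the symmetric argument handles $\beta>\beta_c$.

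The genuinely delicate step is ensuring that the transition at $\beta_c$ is sharp, i.e.\ that $M_{clique}-M_{hub}$ does not vanish on a nondegenerate $\beta$-interval. Here the uniqueness hypothesis activates the envelope theorem: $M_\sigma'(\beta)=f(1+s_\sigma^\star(\beta))$, and since $s_{hub}^\star\le s_c\le s_{clique}^\star$ with $f$ strictly increasing on $(1,\infty)$, the difference $M_{clique}-M_{hub}$ is nondecreasing in $\beta$. A plateau would force $s_{hub}^\star(\beta)=s_{clique}^\star(\beta)=s_c$ throughout an open $\beta$-interval, which is incompatible with the uniqueness of the two maximizers on each piece (the corresponding first-order conditions pin down $s_c$ as the optimum on at most one value of $\beta$). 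This rules out ambiguity at every $\beta\ne\beta_c$, which is all the statement requires.
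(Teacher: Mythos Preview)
Your overall strategy is sound and close in spirit to the paper's: reduce via Proposition~\ref{prop:opt}(d) to locating $S^\star(\beta)$, then compare the hub and clique branches of $U(\beta,\cdot)$. The treatment of the irregular case and the verification that $\beta_c\in(0,\infty)$ are fine, and for $\beta>\beta_c$ your argument is clean (strict inequality $M_{clique}>M_{hub}$ immediately forces $s^\star_{clique}(\beta)>s_c$). The envelope-theorem route to monotonicity of $M_{clique}-M_{hub}$ is a nice alternative to the paper's Lemma~\ref{lem:monotone}, which argues directly that maximizers of $\beta g-h$ are monotone in $\beta$.

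However, there is a genuine gap in the final step for $\beta<\beta_c$. Your plateau argument correctly reduces to ruling out $s^\star_{hub}(\beta)=s^\star_{clique}(\beta)=s_c$ on an interval, but the claimed contradiction --- that ``first-order conditions pin down $s_c$ as the optimum on at most one value of $\beta$'' --- is not valid. The point $s_c$ is the \emph{endpoint} of both domains $[0,s_c]$ and $[s_c,\infty)$, so the relevant first-order conditions are one-sided inequalities, not equalities: $s^\star_{hub}(\beta)=s_c$ requires $\beta f'(1+s_c)\ge (P_{F}^{-1})'(1+s_c)$, while $s^\star_{clique}(\beta)=s_c$ requires $\beta f'(1+s_c)\le \tfrac{1}{\verts(F)}s_c^{2/\verts(F)-1}$. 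Together these carve out an interval of $\beta$, not a single point, and nothing in the uniqueness hypothesis forces that interval to be degenerate.

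What is actually needed is the content of the paper's Lemma~\ref{lem:deriv}: the one-sided derivatives of $\phi_F$ at $s_c$ satisfy
\[
(P_F^{-1})'(1+s_c) \;>\; \tfrac{1}{\verts(F)}\,s_c^{2/\verts(F)-1},
\]
so the two inequalities above are incompatible and $s_c\notin S^\star(\beta)$ for every $\beta$. This is a nontrivial computation with the independence polynomial (bounding $P_F'(b)$ via the maximum independent set size $\verts(F)/2$), not a consequence of uniqueness or first-order conditions alone. Once you have this, your argument goes through; without it, Case~2 (equality $M_{clique}(\beta)=M_{hub}(\beta)$ for some $\beta<\beta_c$) is not excluded.
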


For the proof we need two lemmas. 

\begin{lemma}
\label{lem:monotone}
Let $g,h$ be real-valued functions on a closed (possibly infinite) interval $I$, and assume $g$ is strictly increasing. 
For $\beta\in \R$ and $s\in I$ let 
\[
U(\beta,s) = \beta g(s) - h(s). 
\]
Then for any $\beta_1<\beta_2$ and any maximizers $s_1,s_2\in I$ for $U(\beta_1,\cdot)$ and $U(\beta_2,\cdot)$ respectively, we have $s_2\ge s_1$.
\end{lemma}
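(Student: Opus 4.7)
The plan is to give the classical monotone comparative statics argument, which hinges on combining the two optimality inequalities to cancel the $h$-terms and isolate a sign condition on $g(s_2)-g(s_1)$.

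First, I would record the two inequalities coming from optimality: since $s_i$ maximizes $U(\beta_i,\cdot)$ on $I$, we have
\begin{align*}
\beta_1 g(s_1) - h(s_1) &\ge \beta_1 g(s_2) - h(s_2), \\
\beta_2 g(s_2) - h(s_2) &\ge \beta_2 g(s_1) - h(s_1).
\end{align*}
Then I would add the two displays. The $h(s_1)$ and $h(s_2)$ terms cancel, leaving
\[
(\beta_2-\beta_1)\bigl(g(s_2)-g(s_1)\bigr) \ge 0.
\]
Since $\beta_2 - \beta_1 > 0$ by hypothesis, this forces $g(s_2) \ge g(s_1)$, and strict monotonicity of $g$ then yields $s_2 \ge s_1$, as claimed.

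There is essentially no obstacle here: no regularity of $h$ or $U$ is needed, the interval $I$ can be infinite, and we do not need uniqueness of the maximizer (the conclusion holds for \emph{any} choice of maximizers $s_1,s_2$, which is exactly the form used later when comparing $s^\star_{hub}(\beta)$ and $s^\star_{clique}(\beta)$ across the threshold $\beta_c$). The only tiny thing to notice is that we do not actually need $h$ to be defined on all of $I$ in any particular way — it simply cancels from the sum — so the statement applies verbatim to the two restrictions $U_{hub}(\beta,\cdot)$ and $U_{clique}(\beta,\cdot)$ from \eqref{eq:Uhub}--\eqref{eq:Uclique}, with $g(s)=f(1+s)$ (strictly increasing by assumption on $f$).
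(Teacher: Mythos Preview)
Your proof is correct and uses essentially the same idea as the paper: both combine the two optimality inequalities so that the $h$-terms cancel and strict monotonicity of $g$ forces $s_2\ge s_1$. The paper phrases it as a short contradiction chain (assuming $s_2<s_1$ and deriving $U(\beta_2,s_2)<U(\beta_2,s_2)$), whereas you add the two inequalities directly; this is a purely cosmetic difference.
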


\begin{proof}
Suppose toward a contradiction that $s_2<s_1$. 
Then
\begin{align*}
U(\beta_2,s_2) 
& = \beta_1 g(s_2) -h(s_2) + (\beta_2-\beta_1) g(s_2)\\
&\le \beta_1 g(s_1) - h(s_1) + (\beta_2-\beta_1)g(s_2)\\
&< \beta_1 g(s_1) - h(s_1) + (\beta_2-\beta_1)g(s_1)\\
&=\beta_2 g(s_1) - h(s_1) \le U(\beta_2,s_2),
\end{align*}
a contradiction. 
\end{proof}

For the function $U(\beta,s)$ of \eqref{def:Ubs}, we let $S^\star(\beta)$ denote the set of maximizers for $U(\beta,\cdot)$ in $\R_{\ge0}$. From Proposition \ref{prop:opt}(c) we have that $S^\star(\beta)\ne\emptyset$.

\begin{lemma}
\label{lem:deriv}
Assume $F$ is $\Delta$-regular. 
Then for all $\beta\ge0$ we have that $s_c\notin S^\star(\beta)$.
\end{lemma}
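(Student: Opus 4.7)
The plan is to show that $\phi_F$ has a concave kink at $s_c$, with strictly larger left derivative than right derivative. This is incompatible with $s_c$ being a local maximum of $U(\beta,\cdot)=\beta f(1+s)-\phi_F(s)$, and hence with $s_c\in S^\star(\beta)$.

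First, since $F$ is $\Delta$-regular, the formulas for $\phi_F$ recalled before Proposition~\ref{prop:edgeF} give $\phi_F(s)=\ipoly_F^{-1}(1+s)$ on $[0,s_c]$ and $\phi_F(s)=\tfrac12 s^{2/\verts(F)}$ on $[s_c,\infty)$, with equality at $s_c$. Set $b_c:=\ipoly_F^{-1}(1+s_c)>0$, so that $s_c=\ipoly_F(b_c)-1$ and $s_c^{2/\verts(F)}=2b_c$; both branches are smooth with positive derivatives, and implicit differentiation gives the one-sided derivatives
\begin{equation*}
\phi_F'(s_c^-)=\frac{1}{\ipoly_F'(b_c)}, \qquad \phi_F'(s_c^+)=\frac{s_c^{2/\verts(F)-1}}{\verts(F)}=\frac{2b_c}{\verts(F)\, s_c}.
\end{equation*}

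The crux is the strict inequality $\phi_F'(s_c^-)>\phi_F'(s_c^+)$, which is equivalent to
\begin{equation*}
2b_c\, \ipoly_F'(b_c) < \verts(F)\,\bigl(\ipoly_F(b_c)-1\bigr).
\end{equation*}
Expanding via \eqref{def:indep.poly}, this reads $2\sum_{\emptyset\ne U\in\cI(F)} |U|\, b_c^{|U|} < \verts(F)\sum_{\emptyset\ne U\in\cI(F)} b_c^{|U|}$, and, since $b_c>0$, follows from the term-by-term inequality $2|U|<\verts(F)$ for every nonempty $U\in\cI(F)$. This in turn holds because: (a) a standard double-counting of edges between an independent set $U$ and $\Verts(F)\setminus U$, using that $F$ is connected and $\Delta$-regular, gives $|U|\le\alpha(F)\le\verts(F)/2$; (b) since $\Delta\ge 2$ forces $\verts(F)\ge 3$, the singleton sets (always present in $\cI(F)$) satisfy $2\cdot 1<\verts(F)$ strictly, ensuring strict inequality in the aggregated sum.

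Finally, if $s_c\in S^\star(\beta)$ for some $\beta\ge 0$, then since $s_c>0$ is an interior point of $\R_{\ge 0}$, the one-sided first-order conditions for a local maximum of $U(\beta,\cdot)$ give $\beta f'(1+s_c)\ge \phi_F'(s_c^-)$ and $\beta f'(1+s_c)\le \phi_F'(s_c^+)$, forcing $\phi_F'(s_c^-)\le \phi_F'(s_c^+)$ and contradicting the strict inequality above. The main obstacle is establishing the strict derivative gap $\phi_F'(s_c^-)>\phi_F'(s_c^+)$; once in hand, the conclusion follows immediately from first-order optimality at a convex kink of $U(\beta,\cdot)$.
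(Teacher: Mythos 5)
Your proof is correct and takes essentially the same approach as the paper: both establish the strict one-sided derivative gap $\phi_F'(s_c^-)>\phi_F'(s_c^+)$ from the bound $|U|\le \verts(F)/2$ on independent sets in a $\Delta$-regular graph together with $\verts(F)>2$, and then invoke one-sided first-order optimality to rule out $s_c$ as a local maximizer. One small wording slip worth flagging: the stated ``term-by-term inequality $2|U|<\verts(F)$'' is not true for every nonempty $U\in\cI(F)$ --- it fails when $|U|=\verts(F)/2$ (for instance, either side of a $\Delta$-regular bipartite $F$); what your items (a) and (b) actually establish is $2|U|\le\verts(F)$ in general with strictness for the singletons, which, since every coefficient $b_c^{|U|}$ is positive, still yields the required strict aggregate inequality.
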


\begin{proof}
Since $\beta f(1+\cdot)$ is continuous and differentable on $\R_+$ with $\phi(s)$
the minimum of two differentable functions that are equal only at 
$s=0$ and at $s=s_c>0$, it suffices to verify that 
\[
\lim_{s\uparrow s_c} \phi'(s) > \lim_{s\downarrow s_c} \phi'(s).
\]
Indeed, it then follows that $s_c$ cannot be a local maximum for $\beta f(1+\cdot) - \phi(\cdot)$. 
The above amounts to showing that
\begin{equation}	\label{deriv.goal}
\frac1{\verts} s_c^{2/\verts-1} < b_0'(s_c) = \frac1{P_F'(\frac12 s_c^{2/\verts})} \,,
\end{equation}
where we denote $b_0(s):= P_F^{-1}(1+s)$. 
Writing $P_F(b) = 1+ \verts b+R(b)$, where $R(b)$ collects all terms of degree at least 2, we have for $b>0$ that
\[
P_F'(b) = \verts + R'(b) = \verts + \sum_{U\in \cI: |U|\ge2} |U|b^{|U|-1} \le \verts \Big( 1+ \frac{R(b)}{2b} \Big) \,,
\]
where we applied the bound $\edges/\Delta=\verts/2$ on the size of an independent set in $F$.
Now since
\[
1+s_c = P_F(b_0(s_c)) = 1 +\verts b_0(s_c) + R(b_0(s_c)) \,,
\]
when combining with the previous display, we have that
\[
P_F'(b_0(s_c)) \le \verts \big( 1+ s_c^{1-2/\verts} - \frac\verts2 \big)< \verts s_c^{1-2/\verts} 
\]
(using here that $\verts>2$ since $\Delta\ge2$). This rearranges to give \eqref{deriv.goal} and the claim follows. 
\end{proof}

\begin{proof}[Proof of Proposition \ref{prop:edgeF-new}]
Write
\begin{equation}
a_0(s) = s^{2/\verts}\,,\qquad
b_0(s) = P_{F^\star}^{-1}(1+s)\,.
\end{equation}
It is shown in  \cite{BGLZ} that for $F$ irregular and any $s\ge0$, or $F$ regular and $s\in[0,s_c)$, we have
\begin{equation}	\label{BGLZb}
\Opt(\phi;s) = \{ (0, b_0(s))\}
\end{equation}
while if $F$ is regular and $s>s_c$, then
\begin{equation}	\label{BGLZa}
\Opt(\phi;s) = \{ (a_0(s), 0)\}\,.
\end{equation}
In the case that $F$ is irregular the proposition then immediately follows from Proposition \ref{prop:opt}(d), with $\bsizes(\beta) = b_0(s_\star(\beta))$. 

Assume now that $F$ is regular. Recalling the notation \eqref{def:Ubs},
from Proposition \ref{prop:opt}(d) and \eqref{BGLZb}--\eqref{BGLZa} is suffices to show there exists $\beta_c=\beta_c>0$ such that
\begin{equation}	\label{goal.hub}
\beta\in [0,\beta_c)\;\Rightarrow S^\star(\beta) = \{s_{hub}^\star(\beta)\}\subset(0,s_c)
\end{equation}
and
\begin{equation}	\label{goal.clique}
\beta\in (\beta_c,\infty)\;\Rightarrow S^\star(\beta) = \{s_{clique}^\star(\beta)\}\subset(s_c,\infty)\,.
\end{equation}

From Lemma \ref{lem:monotone} applied to $U_{hub}$ (taking $f(1+\cdot)$ for $g$ and $P_{F^\star}^{-1}(1+\cdot)$ for $h$), along with the assumption that $s^\star_{hub}(\beta)$ is the unique maximizer of $U_{hub}(\beta,\cdot)$ for all $\beta\ge0$, it follows that $s^\star_{hub}:\R_{\ge0}\to [0,s_c]$ is continuous and non-decreasing. Indeed, the monotonicity is a direct consequence of the lemma, and from this it follows that any point $\beta_0$ of discontinuity of $s^\star_{hub}$ must be a jump discontinuity. However, from the joint continuity of $U_{hub}(\cdot,\cdot)$ we would then have that the left and right limits $\lim_{\beta\uparrow\beta_0}s^\star_{hub}(\beta)$ and $\lim_{\beta\downarrow\beta_0}s^\star_{hub}(\beta)$ would both be maximizers for $U_{hub}(\beta_0,\cdot)$, which contradicts the uniqueness assumption. 
By the same reasoning we get that $s^\star_{clique}:\R_{\ge0}\to [s_c,\infty)$ is continuous and non-decreasing. 

Clearly $S^\star(0) = \{0\} = \{s^\star_{hub}(0)\}$. 
Moreover, since the global maximum of $U(\beta,\cdot)$ is at least the maximum over $[0,s_c]$ and $[s_c,\infty)$ respectively, we have
\[
S^\star(\beta)\cap [0,s_c)\ne \emptyset \;\Rightarrow \; S^\star(\beta)\cap [0,s_c) = \{ s^\star_{hub}(\beta)\}
\]
and similarly
\[
S^\star(\beta)\cap (s_c,\infty)\ne \emptyset \;\Rightarrow \; S^\star(\beta)\cap (s_c,\infty) = \{ s^\star_{clique}(\beta)\}
\]
We set
\[
\beta_c := \sup\{ \beta\ge0: S^\star(\beta)\cap [0,s_c)\ne \emptyset\}.
\]
Note that since $\phi$ and its derivative are bounded on $[0,s_c]$, there exists $B<\infty$ such that $s^\star_{hub}(\beta) = s_c$ for all $\beta\ge B$. From Lemma \ref{lem:deriv} this implies $\beta_c<\infty$.
Since $S^\star(\beta)\ne\emptyset$ for all $\beta\ge0$ we conclude that $S^\star(\beta) = \{ s^\star_{clique}(\beta)\}$ for all $\beta>\beta_c$, and from Lemma \ref{lem:deriv} we have $s^\star_{clique}(\beta)\in (s_c,\infty)$ for all such $\beta$, giving \eqref{goal.clique}.

To argue that $S^\star(\beta)=\{ s^\star_{hub}(\beta)\}$ for all $\beta<\beta_c$, suppose towards a contradiction that $S^\star(\beta)\cap (s_c,\infty)\ne\emptyset$ for some $\beta<\beta_c$. By definition this means that $S^\star(\beta)$ has nonempty intersection with both $[0,s_c)$ and $(s_c,\infty)$. But from Lemma \ref{lem:monotone} it follows that $S^\star(\beta') \subset (s_c,\infty)$ for all $\beta'>\beta$, since the minimal element of $S^\star(\beta')$ bounds the maximal element of $S^\star(\beta)$. We thus obtain a contradiction, so $S^\star(\beta)=\{ s^\star_{hub}(\beta)\}$ for all $\beta<\beta_c$. 
It only remains to note that from Lemma \ref{lem:deriv} if follows that $s^\star_{hub}(\beta)\in (0,s_c)$ for such $\beta$, which gives \eqref{goal.hub} and completes the proof.
\end{proof}

\begin{proof}[Proof of Corollary \ref{cor:edge-triangle}]
For $F=C_3$ we have $\verts(F) = 3$ and $P_F(x) = 1+ 3x$, and hence 
\[
\phi(s) = \min\{ \frac12s^{2/3}, \frac13s\} = \begin{cases} \frac13s & s\in [0, \frac{27}8]\\ \frac12 s^{2/3} & s\in [\frac{27}8,\infty).\end{cases}
\]
From Proposition \ref{prop:edgeF}, it suffices to verify that 
\begin{enumerate}
\item[(a)] $\csizes(\beta) =s^\star_{clique}(\beta)^{2/3} = (\gamma\beta)^{\frac{2}{2-\gamma}}$; 
\item[(b)] $\bsizes(\beta) = P_{C_3}^{-1}(1+ s^\star_{hub}(\beta)) = \frac13
(\gamma \beta)^{\frac{3}{3-\gamma}}$; and
\item[(c)] $s\mapsto U(\beta, s) = \beta s^{\gamma/3} - \phi(s)$ achieves its global maximum in 
$[0,\frac{27}8)$ when $0 \le \beta < \beta_c $, and in $(\frac{27}8,\infty)$ when $\beta>
\beta_c $, for  
\[
\gamma \beta_c= \Big( \frac{6-2\gamma}{6-3\gamma} \Big)^{(2-\gamma)(3-\gamma)/\gamma}\,.
\]
\end{enumerate}
For (a), one merely verifies that $U_{clique}(\beta, s) = \beta s^{\gamma /3} - \frac12s^{2/3}$ is maximized at $s^\star_{clique}(\beta) = (\gamma\beta)^{\frac{3}{2-\gamma}}$.
Similarly, for (b) one verifies that $U_{hub}(\beta,s) = \beta s^{\gamma /3}-\frac13s$ is maximized at $s^\star_{hub}(\beta) = (\gamma\beta)^{\frac{3}{3-\gamma}}$. 
Finally, for (c) one verifies that $U_{hub}(\beta,(\gamma\beta)^{\frac{3}{3-\gamma}})>U_{clique}(\beta,(\gamma\beta)^{\frac{3}{2-\gamma}})$ 
if and only if $\beta<\beta_c $, and that further
%
$$
\big[s^\star_{hub}(\beta_c)\big]^{\gamma/3} = 
\Big( \frac{6-2\gamma}{6-3\gamma} \Big)^{2-\gamma}
< \Big(\frac{3}{2}\Big)^{\gamma} < 
\Big( \frac{6-2\gamma}{6-3\gamma} \Big)^{3-\gamma}
= \big[s^\star_{clique}(\beta_c)\big]^{\gamma/3}
$$
(as upon taking the logarithm, the inequalities are $I_{\gamma/3}(\gamma/2)>0$ 
and $I_{\gamma/2}(\gamma/3)>0$).
\end{proof}

\begin{appendix}

\section{Stability of Finner's inequality}
\label{sec:finner}

In this appendix we prove Theorem \ref{thm:finner.stab}.
In what follows we abuse notation by writing e.g. $\prod_A f_A$ instead of $\prod_A f_A\circ \pi_A$ with $\pi_A: \Omega\to \Omega_A$ the coordinate projection mapping. 
We further use $\|\cdot\|_{q}$ for the $L_q$ norms, whenever the underlying space is clear from the context.

\subsection{Stability of H\"older's inequality}

We shall prove Theorem \ref{thm:finner.stab} by induction on $n$ (following Finner's argument for the case $\eps=0$, i.e.\  characterizing the case for equality), relying on the following stability property of H\"older's inequality.
\begin{lemma}
\label{lem:holder.stab}
For any $\lambda \in (0,1)$, $\eps \in [0,1]$ and
$g : \Omega \to \R_{\ge0}$ on a probability space $(\Omega,\nu)$,
\begin{equation}\label{eq:basic-stab}
\int g d\nu \le 1, \qquad 
1-\eps \le \int g^\lambda d\nu \quad \Longrightarrow \quad \|g-1\|_1 \le 2 {\bar C}_\lam \, \eps^{1/2}
\end{equation}
where ${\bar C}_\lam :=\sqrt{\frac{2}{\lambda(1-\lambda)}}$.
\end{lemma}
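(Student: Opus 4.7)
The plan is to reduce the problem to a quadratic Taylor estimate for $x \mapsto x^\lambda$ on $[0,1]$, integrate it against $\nu$, and then spend the budget $\int g \, d\nu \le 1$ to control the $L^1$-mass of $g$ above $1$.

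First I would expand $\phi(x) = x^\lambda$ around $x=1$ using Taylor's theorem with integral remainder: for $g \le 1$,
\[
g^\lambda = 1 + \lambda(g-1) - \lambda(1-\lambda)\int_g^1 t^{\lambda - 2}(t - g)\, dt \le 1 + \lambda(g-1) - \frac{\lambda(1-\lambda)}{2}(1-g)^2,
\]
where the last inequality uses $t^{\lambda-2}\ge 1$ on $[g,1]\subseteq[0,1]$. On $\{g>1\}$ the concavity bound $g^\lambda \le 1 + \lambda(g-1)$ will suffice. Integrating against $\nu$ and combining these two pointwise bounds gives
\[
\int g^\lambda \, d\nu \;\le\; 1 + \lambda \int (g-1)\, d\nu \;-\; \frac{\lambda(1-\lambda)}{2}\int_{\{g\le 1\}}(1-g)^2 \, d\nu.
\]

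Next I would use the two hypotheses. From $\int g\, d\nu \le 1$ we get $\int(g-1)\, d\nu \le 0$, so the linear term only helps. Combining with the lower bound $\int g^\lambda d\nu \ge 1-\eps$ yields the key quadratic estimate
\[
\frac{\lambda(1-\lambda)}{2}\int_{\{g\le 1\}}(1-g)^2 \, d\nu \;\le\; \eps.
\]
Cauchy--Schwarz then gives $\int (1-g)^{+}\, d\nu \le \sqrt{2\eps/(\lambda(1-\lambda))} = \bar C_\lam \sqrt{\eps}$.

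Finally, to pass from this one-sided control to the full $L^1$ bound, I would observe the simple identity
\[
\int (g-1)^{+}\, d\nu - \int (1-g)^{+}\, d\nu \;=\; \int(g-1)\, d\nu \;\le\; 0,
\]
so the positive part is dominated by the negative part. Hence $\|g-1\|_1 \le 2\int(1-g)^{+}\, d\nu \le 2\bar C_\lam \sqrt{\eps}$, which is exactly \eqref{eq:basic-stab}. There is no real obstacle: the only subtlety is that one cannot hope for a useful quadratic bound on $\{g>1\}$ (where $\phi''$ is small), so the large-$g$ region must be absorbed via the one-sided budget on $\int g\, d\nu$ rather than via the Taylor expansion.
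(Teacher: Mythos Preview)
Your proof is correct and is essentially the same argument as the paper's: both establish the pointwise bound $g^\lambda \le 1 + \lambda(g-1) - \tfrac{\lambda(1-\lambda)}{2}(1-g)^2\mathbf{1}_{\{g\le 1\}}$ (the paper packages this as nonnegativity of an auxiliary function $\varphi$ of $\hat g := g-1$), integrate it to control $\int_{\{g\le1\}}(1-g)^2\,d\nu$ by $\eps$, and then use $\int(g-1)\,d\nu\le 0$ to pass from the one-sided bound to the full $L^1$ norm. The only difference is presentational.
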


\begin{remark} 
\label{rmk:holder.c}
The case of $g = 1 \pm \bar C_\lambda \eps^{1/2}$ and $\nu$ the Bernoulli($1/2)$ measure shows that  the bound \eqref{eq:basic-stab} is optimal up to a factor 2 for small $\eps$. 
\end{remark}

\begin{proof} Note that $\varphi(x):=1-(1+x)^\lam+\lam x - \frac{\lam (1-\lam)}{2} x^2 {\bf 1}_{\{x<0\}}$ 
is non-negative on $[-1,\infty)$ 
(indeed, $\varphi(0)=\varphi'(0)=0$ and $\varphi''(x) \ge 0$ both on $[-1,0)$ and on $\R_{\ge0}$). In particular, 
$\int \varphi(\wh{g}) d\nu \ge 0$ for $\wh{g}:=g-1 : \Omega \to [-1,\infty)$. Our assumptions that 
$\int \wh{g} d\nu \le 0$ and 
\[ 
\eps \ge \int  (1-(1+\wh{g})^\lam) d\nu = \int \varphi(\wh{g}) d\nu -\lam \int \wh{g} d\nu 
+\frac{\lam(1-\lam)}{2} \int_{\{\wh{g}<0\}} \wh{g}^2 d\nu \,,
\]
thus yield that
\[
\bar C_\lam^2 \, \eps \ge  \int_{\{\wh{g}<0\}} \wh{g}^2 d\nu \ge 
\Big(\int_{\{\wh{g}<0\}} |\wh{g}| d\nu\Big)^2
\ge \frac{1}{4} \|\wh{g}\|_1^2 
\]
as claimed (for the last inequality, note that
$\int |\wh{g}| d\nu = 2 \int_{\{\wh{g}<0\}} |\wh{g}| d\nu +\int \wh{g} d\nu$).
\end{proof}

Using Lemma \ref{lem:holder.stab} we deduce the following stability of the (classical) generalized H\"older inequality.

\begin{prop}[Stability of the generalized H\"older inequality]
\label{prop:genhold.stab}
Suppose $f_i \ge 0$ on a probability space $(\Omega,\mu)$ are such that $\int f_i d\mu \le 1$.
Then, for any $m \ge 2$, $\lam_i > 0$ such that $\sum_i \lam_i \le 1$ and $\eps \in [0,1]$, 
\begin{equation}\label{eq:gen-stab}
1-\eps\le \int \prod_{i=1}^m f_i^{\lam_i} \d\mu \quad \Longrightarrow \quad 
\|f_k-f_\ell\|_1 \le C(\lam_k,\lam_\ell) \, \eps^{1/2} \, \quad \forall k,\ell \in [m] \,,
\end{equation}
where $C(\lam,\lam')=(\lam+\lam')^{-1/2} C_{\lam/(\lam+\lam')}$ and 
$C_\lam = 2 \bar C_\lam + 1/(\lam \wedge (1-\lam))$.
\end{prop}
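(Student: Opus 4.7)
Fix $k\ne \ell$, set $\lam:=\lam_k+\lam_\ell$ and $\alpha:=\lam_k/\lam\in(0,1)$, and write $F:=f_k^\alpha f_\ell^{1-\alpha}$. The plan is to first absorb the factors $f_i$ with $i\ne k,\ell$ via one application of the classical generalized H\"older inequality, thereby reducing to the two-function case, and then to compare $f_k$ and $f_\ell$ by running \Cref{lem:holder.stab} on a tilted probability space. For the reduction step, writing $\prod_i f_i^{\lam_i}=F^\lam\prod_{i\ne k,\ell}f_i^{\lam_i}$ and using $\int f_i\,d\mu\le 1$ together with $\sum_i\lam_i\le 1$, H\"older gives $\int\prod_i f_i^{\lam_i}\,d\mu\le\bigl(\int F\,d\mu\bigr)^\lam$, so $\int F\,d\mu\ge(1-\eps)^{1/\lam}\ge 1-\eps/\lam$. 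A further application of H\"older to $F$ itself then yields $c_k^\alpha c_\ell^{1-\alpha}\ge 1-\eps/\lam$ for the masses $c_j:=\int f_j\,d\mu\in(0,1]$, from which I would extract the elementary bounds $1-c_j\le\eps/\lam_j$ and $|c_k-c_\ell|\le(\eps/\lam)/(\alpha\wedge(1-\alpha))$.

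The core step is then an application of \Cref{lem:holder.stab}. Normalize $\tilde f_j:=f_j/c_j$ (a density with respect to $\mu$) and consider the probability space $(\Omega,\nu)$ with $\nu:=\tilde f_\ell\,d\mu$. The function $g:=\tilde f_k/\tilde f_\ell$, extended by $0$ on $\{\tilde f_\ell=0\}$, satisfies $\int g\,d\nu\le 1$ and
\[
\int g^\alpha\,d\nu=\int\tilde f_k^\alpha\tilde f_\ell^{1-\alpha}\,d\mu=c_k^{-\alpha}c_\ell^{-(1-\alpha)}\int F\,d\mu\ge 1-\eps/\lam
\]
(the last inequality using $c_k,c_\ell\le 1$). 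Applying \Cref{lem:holder.stab} with exponent $\alpha$ then gives $\int_{\{\tilde f_\ell>0\}}|\tilde f_k-\tilde f_\ell|\,d\mu=\|g-1\|_{L_1(\nu)}\le 2\bar C_\alpha(\eps/\lam)^{1/2}$, while the leftover mass $\int_{\{\tilde f_\ell=0\}}\tilde f_k\,d\mu$ is bounded by a separate H\"older argument on that set as $1-(1-\eps/\lam)^{1/\alpha}\le\eps/\lam_k$.

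Combining via $\|f_k-f_\ell\|_1\le c_k\|\tilde f_k-\tilde f_\ell\|_1+|c_k-c_\ell|$ together with $c_k\le 1$, one concludes with
\[
\|f_k-f_\ell\|_1\le 2\bar C_\alpha(\eps/\lam)^{1/2}+\eps/\lam_k+(\eps/\lam)/(\alpha\wedge(1-\alpha)).
\]
When $\eps\le\lam$, the two linear-in-$\eps$ terms are each dominated by $(\eps/\lam)^{1/2}/(\alpha\wedge(1-\alpha))$ (using $\eps\le\sqrt{\eps\lam}$ and $\lam/(\lam_k\wedge\lam_\ell)=1/(\alpha\wedge(1-\alpha))$), yielding the claim with a constant of the stated form; in the complementary range $\eps>\lam$, the trivial bound $\|f_k-f_\ell\|_1\le 2$ is already dominated by $C_\alpha\sqrt{\eps/\lam}$ since $C_\alpha\ge 1/(\alpha\wedge(1-\alpha))\ge 2$. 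The main obstacle will be the bookkeeping required to handle the possibility that $\tilde f_\ell$ vanishes on a set of positive $\mu$-measure and to match the exact numerical constants in $C_\lam=2\bar C_\lam+1/(\lam\wedge(1-\lam))$; conceptually, however, the argument is a clean reduction to the two-function setting of \Cref{lem:holder.stab} via the tilted-measure trick.
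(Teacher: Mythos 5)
Your proposal is correct and rests on the same conceptual core as the paper's proof: absorb the spectator factors $f_i, i\ne k,\ell$ by one application of the generalized H\"older inequality to reduce to the two-function case, then apply \Cref{lem:holder.stab} on a tilted probability measure, and handle the set $\{f_\ell=0\}$ separately by a Jensen/H\"older argument. Where you diverge is in the bookkeeping: you normalize both $f_k$ and $f_\ell$ to probability densities $\tilde f_j=f_j/c_j$ and tilt by $\tilde f_\ell\,d\mu$, which makes the hypothesis $\int g\,d\nu\le 1$ automatic but forces you to separately bound the renormalization cost $|c_k-c_\ell|$. The paper instead tilts by the \emph{unnormalized} $f_\ell/\|f_\ell\|_1\,d\mu$ and takes $g=(f_k/f_\ell)\1_{\{f_\ell>0\}}$; the condition $\int g\,d\nu\le 1$ then fails unless $\|f_k\|_1\le\|f_\ell\|_1$, so the paper splits into two cases (with $\lam\leftrightarrow 1-\lam$), but avoids the $|c_k-c_\ell|$ term entirely. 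As you correctly flag, this is why your final constant comes out as $2\bar C_\alpha+1/\alpha+1/(\alpha\wedge(1-\alpha))$ rather than the stated $C_\alpha=2\bar C_\alpha+1/(\alpha\wedge(1-\alpha))$ (with $\alpha=\lam_k/(\lam_k+\lam_\ell)$); the paper's asymmetric case split is precisely what saves the extra $1/\alpha$. Your argument is sound and proves the claimed inequality up to a larger (but same-shaped) constant; if you want to match the paper exactly, replace the symmetric normalization with the asymmetric tilting and case split.
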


\begin{proof} Set the strictly positive $p_i=1/\lam_i$ for $i \ne k,\ell$, with $q=1/(\lam_k+\lam_\ell) \ge 1$ 
and $r=1/(\sum_i \lam_i) \ge 1$, so that $1/r = 1/q + \sum_{i \ne k,\ell} 1/p_i$. It then follows
from the \abbr{lhs} of \eqref{eq:gen-stab} and the generalized H\"older inequality, that for 
$\lam:=\lam_k q$,
\[
1- q \eps \le (1-\eps)^q \le \big(\| \prod_{i=1}^m f_i^{\lam_i} \|_1\big)^q 
\le \big(\| \prod_{i=1}^m f_i^{\lam_i} \|_r\big)^q \le \big(\int f_\ell^{1-\lam} f_k^{\lam} d\mu\big)
\big(\prod_{i \ne k,\ell} \| f_i^{\lam_i} \|_{p_i}\big)^q \,.
\]
Since by assumption $\|f_i^{\lam_i}\|_{p_i}=(\int f_i d\mu)^{1/p_i} \le 1$, we have thus reduced the \abbr{lhs}
of \eqref{eq:gen-stab} to 
\begin{equation}\label{eq:eps-ass}
1-\eps' \le \int f^{1-\lam}  h^{\lam} d\mu = \|f\|_1 \int g^{\lam} d\nu \le  \int g^{\lam} d\nu \,,
\end{equation}
for $\eps'=q \eps \wedge 1$, $f=f_\ell$, $h=f_k$, $g=(h/f) {\bf 1}_{\{f>0\}}$ and the probability measure
$\nu=\frac{f}{\|f\|_1} \mu$. Further, 
\begin{equation}\label{eq:g-nu}
\|f\|_1 \int g d\nu =\|h\|_1 - \int_{\{f=0\}} h d\mu \le \|h\|_1 \,,
\end{equation}
so in case $\|f_k\|_1 \le \|f_\ell\|_1$, we deduce from \eqref{eq:basic-stab} that 
\begin{equation}\label{eq:f-pos}
\int_{\{f>0\}} |h-f| d\mu = \|f\|_1 \int |g-1| d\nu \le 2 \bar C_{\lam} \, \sqrt{\eps'} \,.
\end{equation}
In addition, combining \eqref{eq:eps-ass}, \eqref{eq:g-nu} and Jensen's inequality, we arrive at
\[
1-\eps' \le \|f\|_1 \int g^{\lam} d\nu \le \|f\|_1^{1-\lam} \big(\|f\|_1 \int g d\nu\big)^{\lam} 
=\|f\|_1^{1-\lam} \big(\|h\|_1 - \int_{\{f=0\}} h d\mu\big)^{\lam} \,.
\]
Consequently, as $\lam,\eps' \le 1$, 
\begin{equation}\label{eq:f-zero}
\int_{\{f=0\}} |h-f| d \mu = \int_{\{f=0\}} h d \mu \le 1-(1-\eps')^{1/\lam} \le \frac{\eps'}{\lam} \le 
\frac{\sqrt{\eps'}}{\lam} \,,
\end{equation}
which, together with \eqref{eq:f-pos}, results with $\|f_k-f_\ell\|_1 \le C_{\lam} \sqrt{\eps'}$.
The same applies when $\|f_\ell\|_1 < \|f_k\|_1$, except for exchanging the roles
of $f$ and $h$ by mapping $\lam \mapsto 1-\lam$.  
\end{proof}

\begin{remark} From the proof of Proposition \ref{prop:genhold.stab}, 
when $\int f_i d\mu$ is constant in $i$, the result improves to $C_\lam=2 (\bar C_\lam +1)$ 
and consequently, to $C(\lam,\lam') \le 6(\lam \wedge \lam')^{-1/2}$, 
by choosing the roles of $f$ and $h$ according to whether $\lambda > 1/2$ or not.
 
Proposition \ref{prop:genhold.stab} at $\lam_i=r/p_i$, where $\sum_i 1/p_i = 1/r$, amounts to
\begin{equation}\label{eq:holder-stab}
\|g_i\|_{p_i} = 1, \quad 
1-\eps \le \Big\| \prod_i g_i \Big\|^r_r \qquad \Longrightarrow \qquad 
\int \Big|  |g_k|^{p_k}- |g_\ell|^{p_\ell}\Big| 
d\mu \le C(\lam_k,\lam_\ell) \, \sqrt{\eps} \,.
\end{equation}
\end{remark}

\subsection{Proof of Theorem \ref{thm:finner.stab}}

Clearly we may assume that $\cA$ covers $V$.
Without loss of generality we may further assume $\cB=\{\{v\}\}_{v\in V}$.
We may also assume 
\begin{equation}	\label{wlog:cover}
\sum_{A\ni v} \lam_A =1\qquad \forall\;v\in V.
\end{equation}
Indeed, for any $v$ for which this does not hold, we can add $\{v\}$ to the set system $\cA$, setting $\lam_{\{v\}}=1-\sum_{A\ni v}\lam_A$ and $f_{\{v\}}=1$. 

We proceed by induction on $n:=|V|$, showing that for any set $V$ of size $n$ and $\cA,(f_A)_{A\in\cA}$ and $\Lambda$ as in the theorem statement, there exist functions $h_v:\Omega_v\to\R_{\ge0}$ with $\int h_v d\mu_v =1$ for each $v\in V$ such that
\begin{equation}	\label{finner.stab:induction}
\|f_A-h_A \|_{L_1(\Omega_V)} \le C_n\eps^{c_n}
\end{equation}
for constants $C_n(\cA,\Lambda)>0$ and $c_n(\cA,\Lambda)\in (0,\frac12]$ to be determined, where $h_A:=\bigotimes_{v\in A}h_v$. 

For the base case $n=1$, the $f_A$ are functions over a common space $(\Omega,\mu)$, so we can apply Proposition \ref{prop:genhold.stab} to conclude $\|f_A-f_{A'}\|_1\ls_{\Lambda} \eps^{1/2}$ for all $A,A'\in\cA$.
Now from our hypothesis \eqref{finner.stab:LB} and Theorem \ref{thm:finner} we have 
\[
1-\eps\le \int \prod_A f_A^{\lam_A}d\mu \le \prod_{A} \bigg( \int f_A\,d\mu\bigg)^{\lam_A} \le \bigg( \int f_{A_0} d\mu \bigg)^{\lam_{A_0}}
\]
for each $A_0\in \cA$. Fixing an arbitrary $A_0$, by Lemma \ref{lem:holder.stab} one has for $h \equiv 1$ 
that $\|h-f_{A_0}\|_1\ls_{\Lambda} \eps^{1/2}$, and \eqref{finner.stab:induction} follows by the triangle inequality, with $c_1=1/2$ and some $C_1$ sufficiently large depending on $\Lambda$.

Consider now the case that $|V|=n\ge2$ and that the theorem statement holds for any $V$ with $|V|<n$. 
Denote $\cA_n=\{A\in\cA: |A|=n\}$.
The case $\cA=\cA_n$ is handled exactly as in the case $n=1$ so we assume $\cA\ne\cA_n$.

Consider first the case that $\cA_n=\emptyset$.
For $v\in V$ we denote the contracted set system
\begin{equation}\label{Av.contract}
\cA^{(v)}=\big\{ A\setminus\{v\}: A\in\cA\big\}
\end{equation}
over $V\setminus\{v\}$ (retaining repeats). 
Define functions
\begin{equation}
g_{A,(v)} = \int f_Ad\mu_v : \Omega_{A\setminus\{v\}} \to\R_{\ge0}\,, \qquad v\in A.
\end{equation}
By \eqref{finner.stab:LB} and the generalized H\"older inequality,
\[	
1-\eps \le \int \prod_{A\notni v} f_A^{\lam_A} \bigg( \int \prod_{A\ni v} f_A^{\lam_A} \,d\mu_{v}
\bigg) \,d\mu_{V\setminus\{v\}}
\le \int \prod_{A\notni v} f_A^{\lam_A} \prod_{A\ni v} g_{A,(v)}^{\lam_A} \,d\mu_{V\setminus \{v\}}.
\]
Since $\int g_{A,(v)} d \mu_{A \setminus \{v\}}= \int f_A d\mu_A \le 1$ for all $A\ni v$, we can apply the induction hypothesis, with set system $\cA^{(v)}$ over $V\setminus\{v\}$, to obtain functions $h_u^{(v)}:\Omega_u\to \R_{\ge0}$ for each $u\in V\setminus\{v\}$ with $\int h_u^{(v)} d \mu_u =1$, such that
\begin{align}	\label{fA.compare}
\| f_A- h_A^{(v)}\|_{L_1(\Omega_A)}  &\le C(v) \eps^{c(v)} \,, \qquad \forall A \notni v \,, \\
\label{gA.compare}
 \|g_{A,(v)}- h_{A\setminus\{v\}}^{(v)} \|_{L_1(\Omega_{A\setminus\{v\}})} &\le C(v)  \eps^{c(v)} \,, 
 \qquad \forall A \ni v \,,
\end{align}
where 
$h_A^{(v)}:= \bigotimes_{u\in A} h_u^{(v)}$
and $C(v):=C_{n-1}(\cA^{(v)}, \Lambda^{(v)})$, $c(v):=c_{n-1}(\cA^{(v)},\Lambda^{(v)})$. (Here $\Lambda^{(v)}$ is the collection of weights for $\cA^{(v)}$ inherited from $\Lambda$ under the contraction \eqref{Av.contract}.)

Having obtained the family of functions $\{h_u^{(v)}: u,v\in V, u\ne v\}$ satisfying \eqref{fA.compare}--\eqref{gA.compare}, we now
fix arbitrary distinct $w,z\in V$ and take 
\begin{equation}		
h_u :=
\begin{cases} h_u^{(w)} & u\ne w\\ h_w^{(z)} & u=w.\end{cases}
\end{equation}
It only remains to verify \eqref{finner.stab:induction}, i.e.\, that
\begin{equation}		\label{finner.stab:remains}
\big\|f_A - h_w^{(z)} \otimes h_{A\setminus\{w\}}^{(w)} \big\|_{L_1(\Omega_A)} \le C_n \eps^{c_n}
\end{equation}
for appropriate $C_n,c_n$ and each $A\in \cA$ containing $w$ (for all other $A$ the claim is immediate from \eqref{fA.compare}, taking $C_n\ge C(w)$ and $c_n\le c(w)$).

We first claim that for any $A\in \cA, u\in A$ and $v\notin A$,
\begin{equation}	\label{hAuv}
\|h_{A\setminus \{u\}}^{(u)} - h_{A\setminus\{u\}}^{(v)}\|_{L_1(\Omega_{A\setminus\{u\}})} \le 
C(u) \eps^{c(u)} +  
C(v)  \eps^{c(v)}.
\end{equation}
Indeed, from the triangle inequality and \eqref{gA.compare} the \abbr{lhs} above is bounded by
\begin{align*}
\| g_{A,(u)} - h_{A\setminus\{u\}}^{(v)}\|_{L_1(\Omega_{A\setminus \{u\}})} + C(u)  \eps^{c(u)}
\end{align*}
and we can express the first term above as
\[
\Big\| \int \big(f_A  - h_A^{(v)}\big) d\mu_u\Big\|_{L_1(\Omega_{A\setminus\{u\}})} 
\le \| f_A-h_A^{(v)}\|_{L_1(\Omega_A)}
\le C(v)  \eps^{c(v)}
\]
where we applied Minkowski's inequality and \eqref{fA.compare}.

We now establish \eqref{finner.stab:remains}.
For the case that $w\in A$ and $z\notin A$ this follows for any $C_n\ge 2 C(z)+C(w)
$, $c_n\le c(z) \wedge c(w)$ from the triangle inequality, 
\eqref{fA.compare} and \eqref{hAuv} with $v=z$ and $u=w$. 
Now assume $\{w,z\}\subseteq A$. Under our assumption that $\cA_n=\emptyset$ we can select an arbitrary $v\in V\setminus A$ and
bound the \abbr{lhs} of \eqref{finner.stab:remains} by
\begin{align*}
&\|f_A-h_A^{(v)}\|_{L_1(\Omega_A)} 
+ \| h_A^{(v)} - h_w^{(v)}\otimes h_{A\setminus\{w\}}^{(w)} \|_{L_1(\Omega_A)} + \|(h_w^{(z)}-h_{w}^{(v)})h_{A\setminus\{w\}}^{(w)} \|_{L_1(\Omega_A)}\\
&\qquad \le C(v)  \eps^{c(v)} + \|h_{A\setminus\{w\}}^{(v)} - h_{A\setminus\{w\}}^{(w)} \|_{L_1(\Omega_{A\setminus\{w\}})} 
+ \|h_w^{(z)} - h_w^{(v)}\|_{L_1(\Omega_w)}.
\end{align*}
The second term on the \abbr{rhs} is  at most $C(z) \eps^{c(z)} + C(w) \eps^{c(w)}$
from \eqref{hAuv}.
For the third term,
\begin{align*}
\|h_w^{(z)} - h_w^{(v)}\|_{L_1(\Omega_w)}
&= \Big\| \int \big(h_{A\setminus \{z\}}^{(z)} - h_{A\setminus\{z\}}^{(v)} \big) d\mu_{A\setminus\{w,z\}} \Big\|_{L_1(\Omega_w)}\nonumber\\
&\le \|h_{A\setminus\{z\}}^{(z)} - h_{A\setminus\{z\}}^{(v)} \|_{L_1(\Omega_{A\setminus \{z\}})}
\end{align*}
which is at most $C(v)  \eps^{c(v)} + C(z) \eps^{c(z)}$ by \eqref{hAuv} 
(if $A=\{w,z\}$ then $A \setminus \{z\} = \{w\}$, trivially yielding the same bound).
Altogether we conclude in case $\cA_n=\emptyset$, that \eqref{finner.stab:induction} holds for any 
\begin{equation}
C_n\ge C_n'(\cA,\Lambda):=5\max\{C(u) :u\in V\}\;,\qquad c_n\le c_n'(\cA,\Lambda):=\min\{c(u):u\in V\}.
\end{equation}

Finally, suppose $\emptyset\ne \cA_n\ne \cA$. 
Put $\lam_\star:= \sum_{A\in\cA_n}\lam_A <1$ and $F:= \prod_{\{A:|A|<n\}} f_A^{\lam_A'}$ with $\lam_A':= \lam_A/(1-\lam_\star)$. Then, from  \eqref{finner.stab:LB} and the generalized H\"older inequality,
\begin{equation}	\label{fin.F}
1-\eps \le \int F^{1-\lam_\star} \prod_{\{A: |A|=n\}} f_A^{\lam_A} d\mu_V \le \Big(\int Fd\mu_V\Big)^{1-\lam_\star}.
\end{equation}
Thus,
\[
1-\frac{\eps}{1-\lam_\star} \le (1-\eps)^{1/(1-\lam_\star)} \le\int F\,d\mu_V= \int\prod_{\{A: |A|<n\}} f_A^{\lam_A'}d\mu_V.
\]
Applying the result for the case that $\cA_n=\emptyset$, with $\cA':=\cA\setminus\cA_n$ in place of $\cA$ and $\Lambda'=(\lam_A')_{A\in\cA'}$ in place of $\Lambda$, we obtain $(h_v)_{v\in V}$ such that 
\begin{equation}	\label{fAhA1}
\|f_A-h_A\|_{L_1(\Omega_A)}\le C_n'(\cA',\Lambda') \bigg(\frac{\eps}{1-\lam_\star}\bigg)^{c_n'(\cA',\Lambda')}\qquad\forall\;A\in \cA'.
\end{equation}
Assuming $C_n\ge C_n'(\cA',\Lambda')(1-\lam_\star)^{-c_n'(\cA',\Lambda')}$ and $c_n\le c_n'(\cA',\Lambda')$, it only remains to establish \eqref{finner.stab:induction} for $A\in \cA_n$. 
Now from Proposition \ref{prop:genhold.stab} and the first inequality in \eqref{fin.F} it follows that
\begin{equation}
\|f_A-F\|_{L_1(\Omega_V)} \ls_\Lambda \eps^{1/2}	\quad \forall \;A\in\cA: \,|A|=n\,,
\end{equation}
so by the triangle inequality and taking $C_n$ larger, if necessary, it suffices to show 
\begin{equation}
\|F-h_V\|_{L_1(\Omega_V)} \le C_n \eps^{c_n}
\end{equation}
for possibly adjusted values of $C_n,c_n$.
We obtain this by expanding the difference as a telescoping sum over $A\in\cA'$ and applying \eqref{fAhA1}. 
Enumerating the elements of $\cA'$ as $A_j, 1\le j\le m$, we have
\[
F-h_V = \prod_{\{A: |A|<n\}} f_A^{\lam_A'} - \prod_{\{A: |A|<n\}} h_A^{\lam_A'}
= \sum_{i=1}^m (-1)^{i-1} (f_{A_i}^{\lam_{A_i}'}-h_{A_i}^{\lam_{A_i}'})\prod_{j<i} f_{A_j}^{\lam_{A_j}'} \prod_{j>i} h_{A_j}^{\lam_{A_j}'}
\]
(note we used \eqref{wlog:cover} in the first equality).  
Taking $L_1$-norms on both sides and applying the triangle inequality and Finner's inequality
(for $\cA'$ and $\Lambda'$), we obtain
\begin{align*}
\|F-h_V\|_{L_1(\Omega_V)} 
&\le \sum_{i=1}^m \int \Big| f_{A_i}^{\lam_{A_i}'}-h_{A_i}^{\lam_{A_i}'} \Big| 
\prod_{j<i} f_{A_j}^{\lam_{A_j}'} \prod_{j>i} h_{A_j}^{\lam_{A_j}'}
d\mu_V\\
&\le  \sum_{i=1}^m \Big( \int \Big| f_{A_i}^{\lam_{A_i}'}-h_{A_i}^{\lam_{A_i}'} \Big|^{1/\lam_{A_i}'} d\mu_{A_i} \Big)^{\lam_{A_i}'}\\
&\le \sum_{i=1}^m \Big( \int \Big| f_{A_i}-h_{A_i} \Big|d\mu_{A_i} \Big)^{\lam_{A_i}'}
\end{align*}
(using the elementary bound $|x-y|^p  \le |x^p-y^p|$ for $x,y \ge 0$, $p \ge 1$, in the last step).
The claim now follows by substituting the bounds \eqref{fAhA1} and taking 
\[
C_n\ge \sum_{i=1}^m \big[C_n'(\cA',\Lambda')(1-\lam_\star)^{-c_n'(\cA',\Lambda')}\big]^{\lam'_{A_i}}
\,,\qquad
c_n\le c_n'(\cA',\Lambda')\cdot \min_i\{\lam'_{A_i}\}\, .
\]

\section{Proof of Proposition \ref{prop:opt}}
\label{sec:opt}

\begin{proof}[Proof of Part (a)]

All but the last claim \eqref{phi.LB} are immediate from the fact that for each $k$,
$\tmax_k$ is continuous, non-decreasing and unbounded, with 
$\tmax_k(0,0)=0$.
Now for \eqref{phi.LB}, since $\phi_{\uF}(\uup)\ge \phi_{F_k}(s_k)$ for each $k\in[m]$, it suffices to establish the case $m=1$.
We claim that for all $a,b \ge0$ with $a+b\ge1$,
\begin{equation}	\label{TF.bound}
T_F(a,b)\ls_F (a+b)^{\edges(F)/\Delta}.
\end{equation}
Indeed, $\edges(F)/\Delta$ is an upper bound for the size of any independent set in $F^\star$, and hence for the degree of $P_{F^\star}$, and in the case that $F$ is regular we have $\verts(F)/2= \edges(F)/\Delta$. 
Now let $C=C(F)>0$ to be taken sufficiently large. Then for arbitrary  $s\ge C$, if $a,b\ge0$ are such that $T_F(a,b)\ge 1+s$, then taking $C$ sufficiently large it follows that $a+b\ge1$, and from \eqref{TF.bound} we get that $\frac12a+b \gs_F s^{\Delta/\edges(F)}$. The claim follows. 
\end{proof}

\begin{proof}[Proof of Part (b)]
For the case that $\uup=\underline{0}$ we have that $\Opt(\phi;\underline{0})$ is the singleton set 
$\{(0,0)\}$ and the claim follows. 

Assume now that $\uup\ne\underline{0}$. By throwing out redundant constraints $T_k \ge 1$ we may assume \abbr{wlog} that $s_k>0$ for each $k$, while re-indexing $F_k$ (as in Section \ref{sec:ERGMs}),
so that $F_k$ is regular if and only if $k\le m'$ for some $m'\in[m]$.
Setting $a_k^\star(\up_k)=\up_k^{2/\verts(F_k)}$, $b_k^\star(\up_k)=P_{F_k^\star}^{-1}(1+\up_k)$, note that for any $k\le m'$ 
the curve $\Gamma_k(\up_k)=\{(a,b):T_k(a,b)=1+\up_k\}\cap\R_{\ge0}^2$ is a smooth arc 
of positive curvature with endpoints $(a_k^\star,0)$ and $(0,b_k^\star)$.
For $k>m'$ we have that $\Gamma_k(\up_k)=\{(a,b_k^\star(\up_k)): a\in \R_{\ge0}\}$ is a horizontal ray with endpoint on the $b$-axis. 
The infimum of the increasing linear function $(a,b)\mapsto\frac12a+b$ over $\Region:=\cap_k\{T_k 
(a,b)
\ge1+\up_k\}$ 
must be attained at some finite point on the boundary of $\Region$. The boundary consists of a finite connected union of smooth curves overlapping only at their endpoints: 
the ray $\Gamma_{vert}':=\{(0,b):b\ge b_0\}$ with $b_0=\max_k \{ b_k^\star(\up_k) \}$, a connected infinite subset $\Gamma_{horiz}'$ of the ray $\{(a,b^\star):a\ge0\}$ with 
$b^\star=0\vee\max_{k>m'}\{b_k^\star(\up_k)\}  \le b_0$, 
and a (possibly empty) finite union of sub-arcs $\Gamma_\alpha'$ of the bounded curves 
$\Gamma_k(\up_k), k\le m'$. (In particular, if $m'=0$ then $b^\star=b_0$ and $\Region$ is 
the axis-aligned quadrant $\{(a,b): a\ge0,b\ge b_0\}$, so the infimum is attained at the single point $(0,b_0)$.)
One easily sees that for each $\alpha$, $\inf\{\frac12a+b: (a,b)\in \Gamma'_\alpha\}$ cannot be achieved on the interior of $\Gamma'_\alpha$. Indeed, $(a,b)\mapsto\frac12a+b$ is strictly monotone on $\Gamma'_{vert},\Gamma'_{horiz}$, and since it is linear it can only have a local maximum on the interior of any of the $\Gamma_\alpha'$ (being a subset of a level curve of one of the strictly convex functions $T_k, k\le m'$). 
Thus, the infimum can only be achieved at one of the finitely-many intersection points of the curves $\Gamma'$.
\end{proof}

\begin{proof}[Proof of Part (c)]

We abbreviate $\psi:=\psi_{\uF,h}$ and denote the \abbr{rhs} of \eqref{psi-phi} by $\psi'$. 
For $a,b\ge0$ we hereafter denote
\[
\uup(a,b):=(T_1(a,b)-1,\dots, T_m(a,b)-1).
\]
For $\uup,\uup'\in \R_{\ge0}^m$ we understand $\uup\ge\uup'$ to mean $s_k\ge s_k'$ for each $k\in[m]$.

We first argue $S^\star$ is nonempty and bounded.
Indeed, this follows from the continuity of $h$, part (a) and the assumption \eqref{assu-h.growth}. 

Now to show $\psi'\le \psi$, since $S^\star$ is nonempty we may fix an arbitrary $\uup'\in \R_{\ge0}^m$ such that $\psi'=h(\1+\uup') - \phi_{\uF}(\uup')$. 
We have
\begin{align*}
\psi' = h(\1+\uup') - \phi_{\uF}(\uup')
&= \sup_{a,b\ge0} \{ h(\1+\uup') - \frac12a-b: \uup(a,b)\ge \uup'\}\\
&\le \sup_{a,b\ge0} \{ h(1+\uup(a,b)) - \frac12a-b\} = \psi
\end{align*}
where for the inequality we used the assumption that $h$ is monotone. 

To see that $\psi\le\psi'$ (which in fact holds under no assumptions on $h$),
letting $a,b\ge0$ be arbitrary, we have
\[
h(\1+\uup(a,b))-\frac12a-b \le h(\1+\uup(a,b)) -\phi_\uF(\uup(a,b)) \le \psi'
\]
and the claim follows upon taking the supremum over $a,b$ on the \abbr{lhs}.
\end{proof}

\begin{proof}[Proof of Part (d)]
For the containment $\supseteq$, from parts (b) and (c) we may fix arbitrary $\uup\in S^\star$ and $(a,b)\in \Opt(\phi;\uup)$. 
Thus, $\phi_{\uF}(\uup)=\frac12a+b$, and $\uup(a,b)\ge \uup$. 
Then we have
\begin{align*}
h(\1+\uup(a,b)) -\frac12a-b &= h(\1+\uup(a,b)) - \phi_{\uF}(\uup) \\
&\ge h(\1+\uup) - \phi_\uF(\uup) 
= \psi \ge h(\1+\uup(a,b)) - \frac12a-b
\end{align*}
where in the first inequality we used the monotonicity assumption, and in the last we used the formula \eqref{psi-phi} established in (c). 
Thus, the inequalities in fact hold with equality, and hence $(a,b)\in \Opt(\psi)$. 

For the containment $\subseteq$, note that from (b), (c) and the containment $\supseteq$ just established, it follows that $\Opt(\psi)$ is nonempty (this can also be seen directly following similar reasoning as in the proof of (a)). 
Thus, we may fix an arbitrary element $(a,b)\in \Opt(\psi)$. 
We claim that $\uup(a,b)\in S^\star$ and $(a,b)\in \Opt(\phi; \uup(a,b))$, from which the result follows.
Indeed, 
\[
\psi = h(\1+\uup(a,b)) -\frac12a-b \le h(\1+\uup(a,b))-\phi_\uF(\uup(a,b)) \le \psi
\]
where in the final bound we used the relation \eqref{psi-phi} established in (c). 
Thus, equality holds throughout, and from equality in the first bound we get that $(a,b)\in \Opt(\phi;\uup(a,b))$, while equality in the second bound implies $\uup(a,b)\in S^\star$. By (c) the set
$S^\star$  is pre-compact, so $\phi_\uF(\cdot)$ is bounded on $S^\star$. Hence, by (b) and 
the preceding containment, $\Opt(\psi)$ is also bounded.
\end{proof}

\section{Order of the upper tail}
\label{sec:rough-ubd}

In this appendix we establish the following proposition used in the proof of \Cref{lem:truncate.tail}.

\begin{prop}\label{prop:rough-ubd}
For any graph $H$ of max degree $\Delta$, there exist finite $C(H)$ and positive $c(H)$ such that
if $p\in (0,1/2]$ and $np^{\Delta+1} \ge C(H)$, then
for any $\up \ge 2$, 
\begin{equation}	\label{goal}
\log \P(t(H,\Gnp/p) \ge s) \le
\begin{cases}
 -c(H) \up^{\Delta/\edges(H)} p^\Delta n^2 \log (1/p) \,,& \Delta\ge 2,\\
- c(H) \up^{\Delta/\edges(H)} p^\Delta n^2 \log \up      \,, & \Delta =1.
\end{cases}
\end{equation}
\end{prop}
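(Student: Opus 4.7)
The proof naturally splits into the cases $\Delta=1$ and $\Delta\ge 2$, both relying on Finner's inequality (Theorem \ref{thm:finner}) to reduce the nonlinear subgraph count to a more tractable quantity.

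For $\Delta=1$, $H$ is a matching, and applying Theorem \ref{thm:finner} with unit weights $\lam_e=1$ on each edge of $H$ (valid since $\sum_{e\ni v}\lam_e=\deg_H(v)\le 1$) gives $t(H,G/p)\le \|G/p\|_{L_1}^{\edges(H)} = (2\edges(G)/(n^2p))^{\edges(H)}$. The event $\{t(H,G/p)\ge s\}$ is thus contained in $\{\edges(G)\ge \tfrac12 s^{1/\edges(H)} n^2 p\}$; since $\E\edges(\Gnp)\le n^2 p/2$ and $s\ge 2$ makes $s^{1/\edges(H)}>1$, a standard Chernoff bound on $\edges(\Gnp)\sim\mathrm{Bin}(\binom n2,p)$ yields the claimed rate $-c(H)s^{1/\edges(H)} n^2 p \log s$, noting that $1/\edges(H)=2/\verts(H)=\Delta/\edges(H)$ when $\Delta=1$.

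For $\Delta\ge 2$ I would invoke the quantitative upper-\abbr{LDP} of Proposition \ref{prop:LD.Bstar}(a). The hypothesis $np^{\Delta+1}\ge C(H)$ combined with $p\le 1/2$ ensures that $\log n \ls \log(1/p)$ up to a factor depending on $C(H)$, so the sparsity condition there is satisfied for $C(H)$ chosen large enough depending on the parameters $\delta,K_0$ below. Proceeding by induction on $\edges(H)$ (with base case $\edges(H)=1$ reducing to Chernoff on a binomial), I would decompose
\[
\P(t(H,\bG/p)\ge s) \le \P(\cE_s\cap \cL_\subseteq(H,L)) + \sum_{H'\subsetneq H}\P(t(H',\bG/p)>L),
\]
where $\cE_s=\{t(H,\bG/p)\ge s\}$ and $\cL_\subseteq(H,L)=\{G:t(H',G/p)\le L\,\forall H'\subsetneq H\}$, choosing $L=L(H,s)$ (polynomial in $s$) large enough that each term in the second sum, bounded via the induction hypothesis, is dominated by $\exp(-c(H)s^{\Delta/\edges(H)}r_{n,p})$. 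For the main term, choose $\delta=\delta(H,L)$ small enough that the counting lemma (Proposition \ref{prop:LD.Bstar}(b)) yields $|t(H,Q/p)-t(H,G/p)|\le 1$ for every $Q\in\cK_G^\star(\delta)$ and $G\in\cL_\subseteq(H,L)$, so that $(\cE_s\cap\cL_\subseteq)_{\mathbb{K}_\star(\delta)}\subseteq\{Q\in\cQ_n:t(H,Q/p)\ge s-1\}$. Applying Proposition \ref{prop:LD.Bstar}(a) with $K_1\asymp s^{\Delta/\edges(H)}$ and $K_0$ a sufficiently large polynomial of $K_1$ then produces
\[
\log\P(\cE_s\cap\cL_\subseteq) \le -\min\{K_1 r_{n,p},\ \eye_p(\cU_p(H,s-2))-o(K_1r_{n,p})\}.
\]

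The critical step, and main obstacle, is obtaining the non-asymptotic variational lower bound $\eye_p(\cU_p(H,s))\gs_H s^{\Delta/\edges(H)} r_{n,p}$ uniformly in $s\ge 2$. Using Finner's inequality once more, any $Q\in\cQ_n$ with $t(H,Q/p)\ge s$ satisfies $\sum_{i,j} Q_{ij}^\Delta \gs s^{\Delta/\edges(H)} n^2 p^\Delta$. Combining this with Lemma \ref{lem:Jp} via a dyadic level-set decomposition of the values $Q_{i,j}$ would extract the requisite $\log(1/p)$ factor from the entropy functional: either the constraint is met through a few entries near $1$ (each contributing $\Theta(\log(1/p))$), or through many intermediate-scale entries where $\jay_p(x)\gs x\log(x/p)$ for $x\ge 2p$ supplies the logarithm. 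This level-set analysis, rather than the \abbr{LDP} framework itself, is the technical heart of the argument; the corresponding asymptotic statement $\eye_p(\cU_p(H,s))/r_{n,p}\to\phi_H(s)$ from \cite{BhDe} together with $\phi_H(s)\gs s^{\Delta/\edges(H)}$ from Proposition \ref{prop:opt}(a) provides structural guidance, but a quantitative non-asymptotic form must be established directly for the rough bound to hold uniformly in $s$.
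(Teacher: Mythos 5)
Your overall architecture is correct: induction on $\edges(H)$, a union bound with the "all subgraph counts controlled" event $\cL$, the $\Base^\star$-norm upper-\abbr{LDP} and counting lemma from Proposition~\ref{prop:LD.Bstar}, and a non-asymptotic variational lower bound. The $\Delta=1$ case is fine and equivalent to the paper's (Finner with unit weights on a matching is just the factorization $t(H_1\cup H_2,\cdot)=t(H_1,\cdot)t(H_2,\cdot)$).

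The genuine gap is in the choice of covering resolution. You propose to take $L=L(H,s)$ polynomial in $s$ and then shrink $\delta=\delta(H,L)$ so the counting lemma yields error $\le 1$. Since Proposition~\ref{prop:LD.Bstar}(b) gives error $\ls_H L\eps$, forcing error $\le 1$ requires $\delta \ls 1/L$, which then makes the sparsity hypothesis \eqref{K0-cond} of the upper-\abbr{LDP}, $np^{\Delta+1}\ge K_0\log n/(\delta^2(1\vee\log(1/p)))$, depend on $L$ and hence on $s$. Since $s$ can be as large as $p^{-\edges(H)}$, this cannot be absorbed into the fixed hypothesis $np^{\Delta+1}\ge C(H)$. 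The paper avoids this by taking $L=\up$ and only requiring the counting-lemma error to be at most $\up/2$ (not $\le 1$), so that the bound $\ls_H \up\cdot\delta$ is already $\le \up/2$ for $\delta=\delta(H)$ a \emph{fixed} constant, and the covering produces $(\cU_p(H,\up)\cap\cL_{\subsetneq}(H,\up))_{\mathbb{K}^\star(\delta)}\subseteq\cU_p(H,\up/2)$. Relatedly, the induction step for subgraphs with $\Delta(F)=1$ produces a rate with $\log\up$ rather than $\log(1/p)$, and the paper verifies the required inequality \eqref{goal2} using the a priori bound $\up\le p^{-\edges(H)}$; your plan implicitly assumes this can be beaten by taking $L$ large, which runs into the same $\delta$ degradation.

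Finally, the variational lower bound $\eye_p(\cU_p(H,\up))\gs_H \up^{\Delta/\edges(H)}\ratenp$ that you flag as needing to be ``established directly'' is in fact available non-asymptotically as \eqref{Phi-lbd}, quoted from \cite[Lemma 7.2]{CDP}; you correctly identify it as the heart of the matter but are not required to reprove it. Your sketch of a level-set proof is plausible, but the cited lemma is already in the needed quantitative form.
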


\begin{remark}
Using \cite[Thms.\ 2.10 and 3.1]{CDP}, the same proof below yields the 
corresponding bound in the case that $\Gnp$ is the $r$-uniform \ER hyper-graph, 
for any $r \ge 2$, any $r$-graph $H$, assuming $np^{\Delta'(H)}\ge C(H)$, 
with $\Delta'(H)$ as defined in \cite{CDP}.
\end{remark}

\begin{proof} We argue by induction on $\edges(H)$, having as induction hypothesis that 
the bound \eqref{goal} holds with $F$ and $\Delta(F)$ in place of $H$ and $\Delta(H)$, 
for all graphs $F$ with $\edges(F)<\edges(H)$.  

The claim in the  case $\Delta=1$ (which includes the base case $\edges(H)=1$) follows from a standard tail bound for the binomial distribution, along with the fact that $t(H_1\cup H_2, X) = t(H_1,X)t(H_2,X)$ for $H$ a 
disjoint union of two graphs $H_1,H_2$. (In particular we get $c(H)=c/\edges(H)$ in this case.)

Assume now that $\Delta\ge2$. To establish \eqref{goal} for $H$ we use the union bound,
\begin{equation}\label{eq:union}
\P\big(\Gnp\in\cU_p(H,\up) \big) \le \P\big(\Gnp\in\cU_p(H,\up) \cap \cL_{\subsetneq} (H,\up) \big) + \sum_{F \subsetneq H} \P\big( \Gnp\in
\cU_p(F,\up) \big)\,,
\end{equation}
where 
\[
\cL_{\subsetneq} (H, \up) := \bigcap_{F\subsetneq H}  \{Q\in \cQ_n: t(F, Q/p)\le \up \} \,.
\]
As $t(H, Q/p)\le p^{-\edges(H)}$ for any $Q \in \cQ_n$, it suffices to consider only  
\begin{equation}\label{UB.better}
\up \le p^{-\edges(H)} \,.
\end{equation}
Since \eqref{LAo-cond} applies for any $\cE \subset \cG_n$ intersecting $\cL_{\subsetneq} (H,L)$, upon taking $\delta = \delta(H)>0$ small we get 
by \eqref{eq1:count} of Proposition \ref{prop:LD.Bstar}(b) that 
\[
\big(\cU_p(H,\up) \cap \cL_{\subsetneq} (H,\up) \,\big)_{\mathbb{K}^\star(\delta)} \subseteq \cU_p(H,\up/2) \,.
\]
Consequently, see \eqref{Phi-lbd},
\[
\eye_p\Big( \big( \cU_p(H,\up) \cap \cL_{\subsetneq} (H,\up) 
\, \big)_{\mathbb{K}^\star(\delta)} \Big) \gs_H \up^{\Delta/\edges(H)} \rate_{n,p} \,. 
\]
We thus get the bound on the \abbr{rhs} of \eqref{goal} for the first term in \eqref{eq:union}
(and some small $c(H)>0$), as a consequence of the upper-\abbr{LDP} 
of Proposition \ref{prop:LD.Bstar}(a) at $K_1=c(H) \up^{\Delta/\edges(H)}$.
Indeed, \eqref{K0-cond} is satisfied for the assumed range of $p$ once we set
\[
C(H) \ge 2 K_0 (\Delta+1) \delta^{-2} \,.
\] 
For the remaining terms in \eqref{eq:union},
 taking 
$c(H) \le \min\{c(F) : F \subsetneq H\}$, we have by the 
induction hypothesis, for any $F \subsetneq H$ with $\Delta(F)\ge 2$, 
\begin{align}\label{induct}
\log \P(t(F, \Gnp/p)\ge \up) 
&\le - c(F)( \up^{1/\edges(F)} p )^{\Delta(F)} n^2 \log (1/p) \nonumber \\
&\le  - c(H)( \up^{1/\edges(H)} p )^{\Delta(H) } n^2 \log (1/p) \,,
\end{align}
since $\edges(F) <\edges(H)$ (we only need $\edges(F)\le \edges(H)$), $\up p^{\edges(H)} \le 1$ (see \eqref{UB.better}) and $\Delta(F)\le \Delta(H)$. 
For the case that $F \subsetneq H$ with $\Delta(F)=1$, we get from the case $\Delta=1$ of \eqref{goal} that 
\begin{align}	\label{goal2.D1}
\log \P(t(F, \Gnp/p)\ge \up) 
&\le  - c(F) \up^{1/\edges(F)} p  n^2 \log \up \,.
\end{align}
The \abbr{rhs} of \eqref{induct} increases in $\Delta(H) \ge 2$, thereby \eqref{goal2.D1}
yields \eqref{induct}  whenever 
\begin{equation}
\label{goal2}
\up^{1/\edges(F) - 2/\edges(H)} \gs p\log (1/p)\,.
\end{equation}
Note that \eqref{goal2} trivially holds if $\edges(F)\le \edges(H)/2$, while otherwise we
get from  \eqref{UB.better} upon recalling that $\edges(H) \ge \edges(F)+1$, that 
\[
\up^{1/\edges(F) - 2/\edges(H)} \ge
p^{-\edges(H)(1/\edges(F) - 2/\edges(H))} = p^{2-\edges(H)/\edges(F)} \ge p^{1-1/\edges(F)}
\gs p\log(1/p) \,.
\]
We have thus established \eqref{goal2} and thereby \eqref{induct} for any $F \subsetneq H$.
Plugging this back in \eqref{eq:union} completes our induction step.
\end{proof}

\end{appendix}

\begin{acks}
 The first author was supported in part by NSF grant DMS-2154029.
 The second author was supported in part by NSF grant DMS-1954337.
We thank the anonymous referees for the valuable suggestions which
improved the presentation of our results.
\end{acks}



\bibliographystyle{imsart-number} 
\bibliography{LDT}       


\end{document}